\def\mf#1{\mathfrak{#1}}
\def\mc#1{\mathcal{#1}}
\def\mb#1{\mathbb{#1}}
\def\tx#1{\textrm{#1}}
\def\R{\mathbb{R}}
\def\P{\mathbb{P}}
\def\C{\mathbb{C}}
\def\Q{\mathbb{Q}}
\def\Z{\mathbb{Z}}
\def\lmod{\backslash}
\def\ul#1{\underline{#1}}
\def\Ad{\tx{Ad}}
\def\hat{\widehat}
\def\from{\leftarrow}
\def\lrw{\longrightarrow}
\def\llw{\longleftarrow}
\def\sm{\smallsetminus}
\def\<{\langle}
\def\>{\rangle}
\newenvironment{mytitle}
{\begin{center}\large\sc}
{\end{center}}
\newtheorem{thm}{Theorem}[subsection]
\newtheorem{lem}[thm]{Lemma}
\newtheorem{pro}[thm]{Proposition}
\newtheorem{cor}[thm]{Corollary}
\theoremstyle{definition}
\newtheorem{dfn}[thm]{Definition}
\newtheorem{rem}[thm]{Remark}
\numberwithin{equation}{section}
\newcommand{\n}{\mathfrak n}
\newcommand{\s}{\mathfrak s}
\newcommand{\g}{\mathfrak g}
\renewcommand{\O}{\mathcal O}
\renewcommand{\P}{\mathcal P}
\newcommand{\K}{\mathcal K}
\newcommand{\w}{\mathfrak w}
\newcommand{\Cent}{\mathrm{Cent}}
\newcommand{\WF}{\mathrm{WF}}
\newcommand{\AC}{\mathrm{AC}}
\newcommand{\SL}{\mathrm{SL}}
\newcommand{\ch}[1]{\negthinspace\negthinspace\negthinspace\phantom{a}^\vee\negthinspace #1}
\begin{document}

\begin{mytitle} Discrete series $L$\-packets for real reductive groups \end{mytitle}
\begin{center} Jeffrey Adams and Tasho Kaletha \end{center}

{\let\thefootnote\relax\footnotetext{T.K. was supported in part by NSF grant DMS-2301507.}}

\begin{abstract}
We give a modern exposition of the construction, parameterization, and character relations for discrete series $L$\-packets of real reductive groups, which are fundamental results due to Langlands and Shelstad. This exposition incorporates recent developments not present in the original sources, such as normalized geometric transfer factors and the canonical double covers of tori and endoscopic groups, allowing for simpler statements and proofs. We also prove some new results, such as a simple criterion for detecting generic representations for a prescribed Whittaker datum, and an explicit formula for the factor $\Delta_I$ in terms of covers of tori.
\end{abstract}

\tableofcontents

\section{Introduction}

The construction of the discrete series of representations  for a real reductive group was a fundamental achievement of Harish-Chandra and a key step towards understanding the tempered representations and obtaining an explicit Plancherel formula. Motivated by the emerging theory of the trace formula, Langlands organized the tempered representations into packets \cite{Lan89} and Shelstad proved that these packets satisfy character relations with respect to endoscopic groups \cite{She82}. These fundamental results lie at the core of the stabilization of the spectral side of the Arthur-Selberg trace formula and are the cornerstone of many applications of the Langlands program to arithmetic questions. 

In this paper we give an exposition of these important results that is largely self-contained and incorporates a number of developments and insights that have occurred since the results were originally obtained. We focus on the case of an essentially discrete series representation (which we henceforth abbreviate to eds, see Definition \ref{dfn:eds}). Our motivation to do so is two-fold. First, the available expositions of the internal structure and character identities treat the general case of tempered representations, which is more complicated due to the reducibility of parabolic induction and the theory of the Knapp-Stein $R$-group. The case of eds representations on the other hand is more direct and is a good setting in which the basic ideas become visible. At the same time, it is sufficient for many important applications.

Second, a number of developments over the ensuing years have enabled a simplified construction of the $L$\-packets and their internal structure, a more natural statement of the character identities, and a more direct approach to their proof. These developments include the introduction of pure and rigid inner forms and the resulting normalizations of transfer factors \cite{Vog93}, \cite{KalECI}, \cite{KalRI}, the introduction of double covers of tori and the recognition of the role they play in the classification of eds representations and the construction of their packets \cite{AV92}, \cite{AV16}, \cite{KalDC}, and the introduction of double covers of endoscopic groups, which alleviate the need for ad-hoc constructions and technical work-arounds in the considerations of endoscopy \cite{KalHDC}. 

We utilize these developments to give a clean and direct account of the construction, internal structure, and character identities of discrete series $L$\-packets. This exposition is rather different from the classical literature and follows more closely the developments in the $p$-adic case such as \cite{KalRSP}, whose combination with double covers was hinted at in \cite{KalDC}. In particular, the statement of the character identities that we ultimately present and prove in this paper does not yet exist in the literature in this form. We do however additionally present an alternative statement, for which an indirect and more complicated proof was given in \cite{KalRI} based on the classical works of Langlands and Shelstad.

To improve readability of the paper, we have kept the main arguments short and to the point: the construction and internal structure of $L$-packets is treated in \S\ref{sec:cons} in about 7 pages, and the endoscopic transfer in \S\ref{sec:endo} in about 10 pages. At the same time, we have included a somewhat lengthy discussion of relevant background material in \S\ref{sec:recoll}, as well as a discussion of genericity in \S\ref{sec:gen} containing results that do not yet appear in the literature.

To say more precisely what we do and how it differs from classical results, we first review these results in the setting of discrete parameters. We refer to \S\ref{sec:recoll} for an overview of the concepts that will appear. Let $G$ be a connected reductive $\R$-group, $\hat G$ its complex dual group, and $^LG=\hat G \rtimes \Gamma$ its $L$\-group, where $\Gamma$ is the Galois group of $\C/\R$. A discrete Langlands parameter is an $L$\-homomorphism $W_\R \to {^LG}$ whose image is not contained in a proper parabolic subgroup of $^LG$, or equivalently for which the centralizer group $S_\varphi=\tx{Cent}(\varphi,\hat G)$ contains $Z(\hat G)^\Gamma$ with finite index; the finite group $S_\varphi/Z(\hat G)^\Gamma$ turns out to be an abelian $2$-group. Two such $\varphi$ are considered equivalent if they are $\hat G$-conjugate. To an equivalence class of $\varphi$ Langlands constructs in \cite[\S3]{Lan89} a finite set $\Pi_\varphi(G)$ of eds representations of $G(\R)$. This construction is a bit complicated due to the fact that Langlands extracts by hand the explicit data contained in $\varphi$ and uses it to specify the representations in $\Pi_\varphi(G)$. The finite sets $\Pi_\varphi(G)$ form a disjoint partition of the set of equivalence classes of irreducible eds representations of $G(\R)$.

In \cite{She82}, Shelstad established an injective map $\pi \mapsto \<\pi,-\>$ from $\Pi_\varphi(G)$ to the set of characters of the finite abelian group $S_\varphi/Z(\hat G)^\Gamma$. This map depends on some auxiliary choices. Shelstad then proceeded to prove the following result. Let $(H,s,\mc{H},\eta)$ be an endoscopic datum with $\eta(s) \in S_\varphi$. Assume that there exists, and fix, an $L$\-isomorphism $^LH \to \mc{H}$ and let $^L\eta : {^LH} \to {^LG}$ denote its composition with $\eta : \mc{H} \to {^LG}$. We note that such an $L$\-isomorphism need not exist in general. By construction $\varphi$ factors through $^L\eta$ as $\varphi = {^L\eta}\circ\varphi'$ for a necessarily discrete parameter $\varphi' : W_\R \to {^LH}$. Let $\Pi_{\varphi'}(H)$ be the associated $L$\-packet on $H(\R)$. Shelstad proved the existence of a function $\Delta : H(\R)_\tx{sr} \times G(\R)_\tx{sr} \to \C$, which is implicitly defined and unique up multiplication by the constant $-1$, and shows that there exists a constant $c(\Delta)$ depending on $\Delta$ such that for all $\delta \in G(\R)_\tx{sr}$ the following character identity holds
\begin{equation} \label{eq:s1}
c(\Delta)\sum_{\pi \in \Pi_\varphi(G)}\<\pi,s\>\Theta_\pi(\delta) = \sum_\gamma \Delta(\gamma,\delta)\sum_{\sigma \in \Pi_{\varphi'}(H)} \Theta_{\sigma}(\gamma),
\end{equation}
where $\gamma$ runs over the set of (representatives for) the stable conjugacy classes of strongly regular semi-simple elements of $H(\R)$, and $\Theta_\pi$ and $\Theta_\sigma$ are the Harish-Chandra character functions of the representations $\pi$ and $\sigma$, respectively. The proof of this identity is rather complicated due to the implicit nature of the functions $\pi \mapsto \<\pi,-\>$ and $\Delta$, and the constant $c(\Delta)$.

In \cite{LS87}, Langlands and Shelstad succeeded in making the functions $\Delta$ explicit. More precisely, they explicitly defined such functions for all local fields of characteristic zero, in such a way that they satisfy a global product formula related to the stabilization of the Arthur-Selberg trace formula. These functions still remained ambiguous up to multiplication by a non-zero complex root of unity (of order $2$ over $\R$). In a subsequent paper \cite[Theorem 2.6.A]{LS90} they proved that these explicitly defined functions must coincide up to scalar with those implicitly defined by Shelstad earlier. This result was indirect and relied on Shelstad's identity \eqref{eq:s1}. This prompted Arthur to pose the question \cite{Art08} of rederiving Shelstad's identity \eqref{eq:s1} directly in terms of the factors of \cite{LS87}, and without using the arguments of \cite{LS90} and the implicit work of \cite{She82}. This was undertaken by Shelstad in her papers \cite{SheTE1} and \cite{SheTE2}. There she explicitly  defined the so called ``spectral transfer factors'' $\Delta(\sigma,\pi)$ and ``compatibility factors'' $\Delta(\sigma,\pi,\gamma,\delta)$, where $\sigma$ and $\pi$ are eds representations of $H(\R)$ and $G(\R)$ respectively, $\gamma \in H(\R)_\tx{sr}$, $\delta \in G(\R)_\tx{sr}$. The spectral transfer factor is, just like the geometric transfer factor $\Delta(\gamma,\delta)$, well-defined only up to a non-zero complex scalar multiple. The compatibility factor $\Delta(\sigma,\pi,\gamma,\delta)$ is canonical. Its purpose is to link the arbitrary choices for normalizations of geometric and spectral factors, by saying that a chosen normalization of $\Delta(\gamma,\delta)$ is compatible with a chosen normalization of $\Delta(\sigma,\pi)$ if the identity
\[ \Delta(\sigma,\pi)  = \Delta(\sigma,\pi,\gamma,\delta) \cdot \Delta(\gamma,\delta) \]
holds for all possible $\sigma,\pi,\gamma,\delta$, see \cite[\S4]{SheTE2}. Just like the variable $\gamma$ matters only up to stable conjugacy, the variable $\sigma$ matters only up to $L$\-packets, so we may write $\Delta(\varphi',\pi)$ in place of $\Delta(\sigma,\pi)$ whenever $\sigma \in \Pi_{\varphi'}(H)$. With this notation, Shelstad proved the following modern version of her classical result
\begin{equation} \label{eq:s2}
	\sum_{\pi \in \Pi_\varphi(G)}\Delta(\varphi',\pi)\Theta_\pi(\delta) = \sum_\gamma \Delta(\gamma,\delta)\sum_{\sigma \in \Pi_{\varphi'}(H)} \Theta_{\sigma}(\gamma).
\end{equation}
The advantage of this result is that the terms $\Delta(\gamma,\delta)$ and $\Delta(\varphi',\pi)$ are now explicitly constructed. The relationship between the spectral factor $\Delta(\varphi',\pi)$ and the character $\<\pi,s\>$ in the previous formulation is
\[ \frac{\Delta(\varphi',\pi_1)}{\Delta(\varphi',\pi_2)} = \frac{\<\pi_1,s\>}{\<\pi_2,s\>}.\]
Note that both fractions are well-defined, since the ambiguity of all objects cancels in the fractions. This relative identity is weaker than what is desired: a version of \eqref{eq:s1} without implicit constants and arbitrary choices, or equivalently a version of \eqref{eq:s2} but with the spectral transfer factor $\Delta(\varphi',\pi)$ replaced by a pairing $\<\pi,s\>$ between $\Pi_\varphi(G)$ and a suitable version of $S_\varphi$.

The next step towards that goal was given by the ``generic packet conjecture'' formulated by Shahidi in \cite{Sha90}, which in its strong form states that any tempered $L$\-packet containes a unique member that is generic with respect to a fixed Whittaker datum $\mf{w}$. The validity of this conjecture for real groups can be extracted from the work of Kostant \cite{Kos78} and Vogan \cite{Vog78}. This prompted Kottwitz and Shelstad to single out \cite[\S5.3]{KS99} a normalization of the geometric factor $\Delta(\gamma,\delta)$ depending on $\mf{w}$ when $G$ is a quasi-split group. Shelstad was able to prove \cite[Theorem 11.5]{SheTE3} that the compatibly normalized spectral transfer factor $\Delta(\varphi,\pi_\mf{w})$ is equal to $1$ when $\pi_\mf{w}$ is that unique generic constituent. If we normalize the pairing $\<\pi_\mf{w},s\>$ to equal $1$ for that same $\pi_\mf{w}$, then for general $\pi \in \Pi_\varphi(G)$ one has
\[ \Delta(\varphi',\pi) = \<\pi,s\> \]
and this finally leads to the clean formulation
\begin{equation} \label{eq:s3}
	\sum_{\pi \in \Pi_\varphi(G)}\<\pi,s\>\Theta_\pi(\delta) = \sum_\gamma \Delta(\gamma,\delta)\sum_{\sigma \in \Pi_{\varphi'}(H)} \Theta_{\sigma}(\gamma)
\end{equation}
of the character identities, where $\pi \mapsto \<\pi,-\>$ is an injection of $\Pi_\varphi(G)$ into $(S_\varphi/Z(\hat G)^\Gamma)^*$, uniquely determined to make the above identity true, and $\Delta(\gamma,\delta)$ is the Whittaker normalization of the geometric transfer factor.

The main problem with \eqref{eq:s3} was that it was only available for quasi-split groups (slightly more generally, quasi-split $K$-groups), because other groups lack the concept of a Whittaker datum. For such more general groups, Statement \eqref{eq:s2} was still the only option. 

The ideas that led to the resolution of this problem originated in the work of Adams-Barbasch-Vogan \cite{ABV92}. There the authors showed that the theory becomes more balanced if one considers all groups in a given inner class together. However, one has to rigidify the concept of an inner form appropriately. An easy, but incomplete, way to do this for general fields was introduced in \cite{ABV92} over $\R$ and in \cite{Vog93} over general local fields, under the name ``pure inner form''. It was shown in \cite[\S2.2]{KalECI} that pure inner forms can be used to normalize transfer factors beyond the quasi-split case. To go beyond pure inner forms, the concept of strong rational form was introduced in \cite{ABV92} over $\R$, but the associated methods didn't generalize to other local fields, and didn't appear to have a connection to transfer factors. The ideas needed to resolve this problem were based on the notion of Galois gerbe from \cite{LR87}, and began taking shape in Kottwitz's study \cite{Kot97} of isocrystals with additional structure. They led to the construction of rigid inner forms and to the normalizations of transfer factors for all groups in \cite{KalRI} for characteristic zero local fields, and in \cite{Dillery20} for positive characteristic local fields. It was shown in \cite[\S5.2]{KalRI} and that over the base field $\R$ the concepts of rigid inner forms and strong rational forms coincide. Moreover, it was shown in \cite[\S5.6]{KalRI} that a suitable formulation of \eqref{eq:s3} holds for all real groups. 

More precisely, there is a bijection $\pi \mapsto \<\pi,-\>$ between the union of the packets $\Pi_\varphi(G)$ as $G$ varies over all rigid inner forms of a given group, and $\pi_0(S_\varphi^+)^*$, where $S_\varphi^+$ is the preimage of $S_\varphi$ in the universal cover of $\hat G$. This bijection is normalized so that $\<\pi_\mf{w},-\>=1$ when $\pi_\mf{w}$ is the unique generic member in the $L$\-packet for $\varphi$ on the unique quasi-split inner form. Then \eqref{eq:s3} hold for all $G$, provided $\Delta(\gamma,\delta)$ has been normalized using both the Whittaker datum $\mf{w}$ and the rigid inner form datum, as discussed in \cite[\S5.3]{KalRI}. This claim was proved in \cite[\S5.6]{KalRI} for all tempered $\varphi$, not just the discrete ones, using the results of Shelstad from \cite{SheTE2} and \cite{SheTE3}. Thus, while the statement was clean, the proof was again roundabout and involved the construction and study of objects, such as $\Delta(\varphi',\pi)$ and $\Delta(\sigma,\pi,\gamma,\delta)$, that were not needed for the statement. Another issue with the proof was that it relied on \cite[Theorem 11.5]{SheTE3}, whose proof was not direct, but rather involved a series of reductions steps to the case of $\tx{SL}_2(\R)$, which was then left to the reader as a ``well-known computation'', but, at least to the authors of this article, that computation was not known in any form other than the result for general groups that we will formulate and prove in this note, see Proposition \ref{p:whittaker} and Lemma \ref{lem:gen}.

The final issue in the classical set-up that we want to discuss is that of the assumed existence of an $L$\-embedding $^LH \to {^LG}$. In general such an $L$\-embedding does not exist. Instead, one is given an endoscopic datum $(H,s,\mc{H},\eta)$, where $\eta$ is an $L$\-embedding $\mc{H} \to {^LG}$, but there is generally no $L$\-isomorphism $^LH \to \mc{H}$. In the classical set-up one makes an arbitrary choice of a $z$-extension $H_1 \to H$ and an $L$\-embedding $\mc{H} \to {^LH_1}$ and then works not with representations $\sigma$ of $H(\R)$, but rather with representations $\sigma_1$ of $H_1(\R)$ that transform under a certain non-trivial character of the kernel $H_1(\R) \to H(\R)$. This brings yet another amount of ambiguity and complication into the theory.

Work of Adams and Vogan \cite{AV92} introduced the idea that groups of the form $\mc{H}$ should be understood as $L$\-groups of topological covers of $H(\R)$. In \cite{KalHDC} it was shown that, in the setting of an endoscopic datum $(H,s,\mc{H},\eta)$, there is a natural double cover $H(\R)_\pm \to H(\R)$ whose $L$\-group is canonically identified with $\mc{H}$. In other words, there is a canonical $L$\-embedding $^LH_\pm \to {^LG}$. Therefore, instead of assuming the existence of an  (arbitrary) $L$\-embedding $^LH \to {^LG}$ and working with the $L$\-packet $\Pi_{\varphi'}(H)$, or choosing a $z$-extension $H_1 \to H$ and an (again arbitrary) $L$\-embedding $\mc{H} \to {^LH_1}$ and working with the $L$\-packet $\Pi_{\varphi_1}(H_1)$, one ought to consider an $L$\-packet $\Pi_{\varphi'}(H_\pm)$ consisting of genuine representations of $H(\R)_\pm$, where $\varphi' : W_\R \to {^LH_\pm}$ is the factorization of $\varphi$ through the canonical $L$\-embedding $^LH_\pm \to {^LG}$. Here the word ``genuine'' means that the non-trivial element in the kernel of $H(\R)_\pm \to H(\R)$ acts on the representation by the scalar $-1$. This allows the identity \eqref{eq:s3} to be stated without the assumption of an existence of an $L$\-embedding $^LH \to {^LG}$, and without an arbitrary choice of a $z$-extension. At this point, the identity \eqref{eq:s3} becomes fully canonical and does not depend on any choices. It is also worth pointing out that the  explicit construction given in \cite{LS87} of the transfer factor $\Delta$ is rather complicated and involves multiple factors, each of which depends on auxiliary data. In contrast, the construction of $\Delta$ in the setting of the canonical cover $H(\R)_\pm$, which is given in \cite[\S4.3]{KalHDC}, simplifies noticeably and becomes the product of two natural invariants, one of which has a close relationship to the simple form of the transfer factor for Lie algebras derived by Kottwitz in \cite{Kot99}. Moreover, the entire factor attains the pleasant property of being a locally constant function on the set of pairs of regular semi-simple elements, whose values are complex roots of unity of bounded order. This simplifies the analytic arguments.

After this overview of the classical statements and constructions and the historical development of ideas, let us state the form in which we formulate and prove the main result, and briefly describe the approach taken in this note. 

We spend about the first half of the note to give in \S\ref{sec:recoll} a review of much of the background that is needed for the main argument, in order to make the note as self-contained as possible. Then, in \S\ref{sec:gen}, we discuss how to detect that an eds representation is generic for a particular fixed Whittaker datum. This goes beyond the classical work of Kostant \cite{Kos78} and Vogan \cite{Vog78}, which treats the question of when a representation is generic with respect to \emph{some} Whittaker datum. The main result of that section, Proposition \ref{p:whittaker}, describes a simple answer to this question that is entirely analogous to the answer for supercuspidal representations of $p$-adic groups (originally proved by DeBacker-Reeder in a special case \cite{DR10} and shown to hold much more generally in \cite[Lemma 6.2.2]{KalRSP}). As far as we know this result has not yet been recorded in the literature. It allows us to give a direct and explicit proof of Shelstad's result \cite[Theorem 11.5]{SheTE3}.

In \S\ref{sec:cons} we give the construction of the $L$\-packet for a discrete series parameter. Here we systematically use the device of admissible embeddings of tori into reductive groups, and the existence of a canonical $L$\-embedding from the $L$\-group of the natural double cover of a maximal torus into the $L$\-group of $G$. This provides a canonical factorization of the $L$\-parameter into the $L$\-group of that double cover, hence a genuine character of that double cover. From the genuine character we can explicitly produce a function on the set of regular elements of that torus, and basic results of Harish-Chandra provide the desired eds representation. The internal structure of the resulting $L$\-packet is a simple consequence of Tate-Nakayama duality. This approach is very conceptual and avoids any explicit calculations using cocharacters and Galois cohomology. It is also parallel to the constructions employed in the $p$-adic case, such as in \cite{KalRSP}.

The main result of \S\ref{sec:cons} is the following. Let $G_0$ be a quasi-split connected reductive $\R$-group with dual group $G$ and $L$-group $^LG$. For each discrete $L$-parameter $\varphi : W_\R \to {^LG}$ we explicitly construct the compound $L$-packet $\Pi_\varphi$, consisting of eds representations of pure (resp. rigid) inner twists of $G_0$. Given a Whittaker datum $\mf{w}$ for $G_0$, we construct a bijection $\iota_\mf{w} : \Pi_\varphi \to \pi_0(S_\varphi)^*$ in the pure case, and $\iota_\mf{w} : \Pi_\varphi \to \pi_0(S_\varphi^+)^*$ in the rigid case, fitting into the commutative diagrams
\[ \xymatrix{
	\Pi_\varphi\ar[r]^{\iota_\mf{w}}\ar[d]&\pi_0(S_\varphi)^*\ar[d]\\
	H^1(\R,G_0)\ar[r]&\pi_0(Z(\hat G)^\Gamma)^*
}\quad\tx{resp.}\quad
\xymatrix{
	\Pi_\varphi\ar[r]^{\iota_\mf{w}}\ar[d]&\pi_0(S_\varphi^+)^*\ar[d]\\
	H^1_\tx{bas}(\mc{E}_\R,G_0)\ar[r]&\pi_0(Z(\hat{\bar G})^+)^*
}.
\]
While rigid inner forms are more general than pure inner forms, and hence subsume the latter, we have taken in this note the expositional approach of treating the two cases side-by-side, and sometimes even treating only the case of pure inner forms. The reason is that this simplifies the language, and the argument in the rigid case is exactly the same as in the pure case.

If we are given a pure or rigid inner twist $(\xi,z) : G_0 \to G$  of $G_0$, then the fiber of the left vertical map in the appropriate diagram over the isomorphism class of $(\xi,z)$ is the $L$-packet $\Pi_\varphi(G)$ consisting of eds representations of the group $G(\R)$, and the restriction of $\iota_\mf{w}$ to that subset provides the internal structure of that particular $L$-packet. Note that every connected reductive $\R$-group $G$ arises this way, provided we use rigid inner forms.

In \S\ref{sec:endo} we prove the endoscopic character identities. Our proof is direct and does not involve considerations of spectral transfer factors or compatibility factors, and does not appeal to prior expositions. Instead, we work directly with the factor $\Delta(\gamma,\delta)$, which in the setting of the canonical double cover $H(\R)_\pm$ is the product of two natural invariants. We show how one of these invariants simply measures the difference between the factorizations of $\varphi$ and $\varphi'$ through the $L$\-groups of the natural double covers of the elliptic tori, while the other invariant detects which member of $\Pi_\varphi(G)$ is generic. In this way, we show that the factor $\Delta(\gamma,\delta)$ plays both a geometric and a spectral role. For the convenience of the reader, we give two versions of the character identities, one using the language of covers, and one using the classical language. For each of those, we give a statement using distributions (Theorem \ref{thm:main1}), and using the language of functions (Theorem \ref{thm:main2}). We show that these two statements are equivalent (Lemma \ref{lem:equi}) using the Weyl integration formula and its stable analog; this equivalence is stated and proved for all local fields of characteristic zero. We then begin the proof of Theorem \ref{thm:main2} and first reduce it to the case of simply connected semi-simple groups and elliptic elements (Lemma \ref{lem:redell}), relying on Harish-Chandra's results about uniqueness of invariant eigendistributions. The treatment of this essential case rests on an analysis of the structure of the transfer factor and Harish-Chandra's character formula for discrete series representations. As part of the argument, we derive a new formula for the term $\Delta_I$ of the transfer factor in terms of covers of tori, which is purely Lie-theoretic and does not appeal to Galois cohomology, see Proposition \ref{pro:magic}.

We now state the character identities of Theorem \ref{thm:main2}. Keeping the already established notation of the quasi-split group $G_0$, the discrete parameter $\varphi : W_\R \to {^LG}$, the Whittaker datum $\mf{w}$ for $G_0$, and the rigid inner twist $(\xi,z) : G_0 \to G$, we now take in addition a refined endoscopic datum $(H,s,\mc{H},\eta)$ for $G$, equivalently for $G_0$, such that $\eta(s) \in S_\varphi$ in the pure and $\eta(s) \in S_\varphi^+$ in the rigid case. This implies that $\varphi$ factors through the inclusion $\eta : \mc{H} \to {^LG}$. There is a canonical topological double cover $H(\R)_\pm$ of $H(\R)$ whose $L$-group $^LH_\pm$, as defined in \cite[\S2.6]{KalHDC}, comed equipped with a canonical isomorphism $^LH_\pm \to \mc{H}$ according to \cite[\S3.1]{KalHDC}. Writing $^L\eta$ for the composition of $\eta : \mc{H} \to {^LG}$ with this canonical isomorphism we obtain a discrete $L$-parameter $\varphi' : W_\R \to {^LH_\pm}$ satisfying $\varphi = {^L\eta}\circ\varphi'$. The construction of \S\ref{sub:packetcover} provides an $L$-packet of genuine eds representations $\Pi_{\varphi'}(H)$ on $H(\R)_\pm$ and Theorem \ref{thm:main2}(1) asserts the identity
\begin{equation} \label{eq:s4}
	e(G)\sum_{\pi \in \Pi_\varphi(G)}\<\pi,s\>\Theta_\pi(\delta) = \sum_\gamma \Delta(\dot\gamma,\delta)\sum_{\sigma \in \Pi_{\varphi'}(H_\pm)} \Theta_{\sigma}(\dot\gamma),
\end{equation}
where $e(G)$ is the Kottwitz sign of $G$, defined in \cite{Kot83}, $\<\pi,s\>=\iota(\pi)(s)$, $\Delta$ is the transfer factor constructed in \cite[\S4.3]{KalHDC}, and $\gamma$ runs over a set of representatives for the stable classes of strongly regular semi-simple elements of $H(\R)$, while $\dot \gamma \in H(\R)_\pm$ is an arbitrary lift of $\gamma$. Note that both $\Delta(\dot\gamma,\delta)$ and $\Theta_{\sigma}(\dot\gamma)$ are genuine functions of $\dot\gamma$, so their product only depends on $\gamma$.

For readers who prefer to avoid the language of covers, we also give the following alternative formulation. Choose a $z$-extension $H_1 \to H$ and an $L$-embedding ${^L\eta_1} : \mc{H} \to {^LH_1}$; such a choice is always available \cite[\S2.2]{KS99}. Since $\varphi$ factors through $\eta$ we can let $\varphi_1$ be the composition of $\varphi$ with $^L\eta_1$. This is a discrete parameter for $H_1$ and we have the eds $L$-packet $\Pi_{\varphi_1}(H_1)$ for the group $H_1(\R)$. Theorem \ref{thm:main2}(2) asserts the identity
\begin{equation} \label{eq:s5}
	e(G)\sum_{\pi \in \Pi_\varphi(G)}\<\pi,s\>\Theta_\pi(\delta) = \sum_\gamma \Delta(\gamma_1,\delta)\sum_{\sigma_1 \in \Pi_{\varphi_1}(H_1)} \Theta_{\sigma_1}(\gamma_1),
\end{equation}
Here again $\gamma$ runs over a set of representatives for the stable classes of strongly regular semi-simple elements of $H(\R)$, while $\gamma_1 \in H_1(\R)$ is an arbitrary lift of $\gamma$ under the surjective map $H_1(\R) \to H(\R)$. We note that there is a character $\lambda$ of $K(\R) = \tx{ker}(H_1(\R) \to H(\R))$, depending on the choices of $H_1$ and $^L\eta_1$, such that all representations in $\Pi_{\varphi_1}(H_1)$ transform under $K(\R)$ via $\lambda$, while $\Delta(\gamma_1,\delta)$ transforms under $K(\R)$ via $\lambda^{-1}$ in the first variable. Therefore the right hand side again depends only on $\gamma$ and not on the lift $\gamma_1$.

We end this note with an apology about its length. Our goal was to give an exposition of the construction of eds $L$-packets and their character identities that is as simple and short as possible. This is achieved in sections 4 resp. 5 in the span of 7 reps. 11 pages. But in order to make the note readable to mathematicians in neighboring fields, we decided to include a lengthy introduction and review sections, as well as a careful treatment of genericity, which became ultimately responsible for a much longer text. 

We thank Eric Opdam for some helpful advice.

\section{Recollections} \label{sec:recoll}

\subsection{The $L$\-group}

We review the $L$\-group of a connected reductive group following \cite[\S2]{Vog93}.

Let $F$ be a field. Assume first that $F$ is separably closed. Let $G$ be a connected reductive $F$-group. Given a Borel pair $(T,B)$ of $G$ one has the based root datum $\tx{brd}(T,B,G)=(X^*(T),\Delta,X_*(T),\Delta^\vee)$, where $\Delta \subset X^*(T)$ is the set of $B$-simple roots for the adjoint action of $T$ on $\tx{Lie}(G)$, and $\Delta^\vee \subset X_*(T)$ are the corresponding coroots. For a second Borel pair $(T',B')$, there is a unique element of $T'(F)\lmod G(F)/T(F)$ that conjugates $(T,B)$ to $(T',B')$. This element provides an isomorphism $\tx{brd}(T,B,G) \to \tx{brd}(T',B',G)$. This procedure leads to a system of based root data and isomorphisms, indexed by the set of Borel pairs of $G$. The limit of that system is the based root datum $\tx{brd}(G)$ of $G$.

One can formalize the notion of a based root datum: we refer the reader to \cite[\S7.4]{Spr98} for the formal notion of a root datum, to which one has to add a set of simple roots to obtain the formal notion of a based root datum. Based root data can be placed into a category, in which all morphisms are isomorphisms, for the evident notion of isomorphism of based root data. The classification of connected reductive $F$-groups \cite[Theorem 9.6.2, Theorem 10.1.1]{Spr98} can be stated as saying that $G \mapsto \tx{brd}(G)$ is a full essentially surjective functor from the category of connected reductive $F$-groups and isomorphisms to the category of based root data and isomorphisms. Moreover, two morphisms lie in the same fiber of this functor if and only if they differ by an inner automorphism.

Consider now a general field $F$, let $F^s$ a separable closure, $\Gamma=\tx{Gal}(F^s/F)$ the Galois group. Given a connected reductive $F$-group $G$, there is a natural action of $\Gamma$ on the set of Borel pairs of $G_{F^s}$, and this leads to a natural action of $\Gamma$ on $\tx{brd}(G_{F^s})$. We denote by $\tx{brd}(G)$ the based root datum $\tx{brd}(G_{F^s})$ equipped with this $\Gamma$-action. Given two connected reductive $F$-groups $G_1,G_2$, an isomorphism $\xi : G_{1,F^s} \to G_{2,F^s}$ is called an \emph{inner twist}, if $\xi^{-1}\circ\sigma\circ\xi\circ\sigma^{-1}$ is an inner automorphism of $G_{1,F^s}$ for all $\sigma \in \Gamma$. The two groups $G_1,G_2$ are then called inner forms of each other. The functor $G \mapsto \tx{brd}(G)$ from the category of connected reductive $F$-groups to the category of based root data over $F$ and isomorphisms is again essentially surjective. It maps inner twists to isomorphisms, and two inner twists map to the same isomorphism if they differ by an inner automorphism. The fiber over a given based root datum over $F$ consists of all reductive groups that are inner forms of each other.

Given a based root datum $(X,\Delta,Y,\Delta^\vee)$ over $F$, its dual $(Y,\Delta^\vee,X,\Delta)$ is also a based rood datum over $F$. If $G$ is a connected reductive $F$-group with based root datum $(X,\Delta,Y,\Delta^\vee)$, its dual $\hat G$ is the unique split connected reductive group defined over a chosen base field (we will work with $\C$) with based root datum $(Y,\Delta^\vee,X,\Delta)$. Thus, given a Borel pair $(\hat T,\hat B)$ of $\hat G$ and a Borel pair $(T,B)$ of $G_{F^s}$, one is given an identification $X_*(\hat T)=X^*(T)$ that identifies the Weyl chambers associated to $\hat B$ and $B$.

To form the $L$\-group, one chooses a pinning $(\hat T,\hat B,\{Y_\alpha\})$ of $\hat G$. The group of automorphisms of $\hat G$ that preserve this pinning is in natural isomorphism with the group of automorphisms of $\tx{brd}(\hat G)$, hence with that of $\tx{brd}(G)$. The $\Gamma$-action on $\tx{brd}(G)$ then lifts to an action on $\hat G$ by algebraic automorphisms, and $^LG=\hat G \rtimes \Gamma$.

When $G$ is quasi-split, $(T,B)$ is an $F$-Borel pair, and $(\hat T,\hat B)$ is a $\Gamma$-stable Borel pair of $\hat G$, then the identification $X_*(T)=X^*(\hat T)$ is $\Gamma$-equivariant.

\subsection{Inner forms}
\label{s:inner}

Let $G$ be a connected reductive $F$-group. An \emph{inner twist} of $G$ is a pair $(G_1,\xi)$ where $G_1$ is a connected reductive $F$-group and $\xi : G_{F^s} \to G_{1,F^s}$ is an isomorphism, such that for each $\sigma \in \Gamma$ the automorphism $\xi^{-1}\sigma(\xi) = \xi^{-1}\circ\sigma\circ\xi\circ\sigma^{-1}$ of $G_{F^s}$ is inner. An isomorphism of inner twists $(G_1,\xi_1) \to (G_2,\xi_2)$ is a homomorphism $f : G_1 \to G_2$ of $F$-groups such that $\xi_2^{-1}\circ f \circ \xi_1$ is an inner automorphism of $G_{F^s}$. From an inner twist $(G_1,\xi)$ we obtain the function $\Gamma \to G/Z(G)$ given by $\sigma \mapsto \xi^{-1}\sigma(x)$. It is an element of $Z^1(F,G/Z(G))$, whose cohomology class depends only on the isomorphism class of $(G_1,\xi)$. In this way the set of isomorphism classes of inner twists is in bijection with $H^1(F,G/Z(G))$.

Following Vogan \cite{Vog93}, a \emph{pure inner twist} of $G$ is a triple $(G_1,\xi,z)$, where $G_1$ is a connected reductive $F$-group, $\xi : G_{F^s} \to G_{1,F^s}$ is an isomorphism and $z \in Z^1(\Gamma,G)$, subject to $\xi^{-1}\sigma(\xi)=\tx{Ad}(\bar z_\sigma)$, where $\bar z \in Z^1(F,G/Z(G))$ is the image of $z$ under the natural projection $G \to G/Z(G)$. An isomorphism of pure inner twists $(G_1,\xi_1,z_1) \to (G_2,\xi_2,z_2)$ is a pair $(f,g)$ consisting of an isomorphism $f : G_1 \to G_2$ of $F$-groups and $g \in G_0(F^s)$ such that $\xi_2^{-1}\circ f \circ \xi_1=\tx{Ad}(g)$ and $z_2(\sigma)=gz_1(\sigma)\sigma(g)^{-1}$ for all $\sigma \in \Gamma$. The map $(G_1,\xi,z) \mapsto z$ induces a bijection from the set of isomorphism classes of pure inner twists to $H^1(F,G)$.

There is a lighter notation in which inner twists and pure inner twists can be recorded. It is grounded on the fact that, if $(G_1,\xi)$ is an inner twist of $G$, with $\bar z \in Z^1(F,G/Z(G))$ given by $\bar z(\sigma)=\xi^{-1}\sigma(\xi)$, and we let $G_{\bar z}$ be the algebraic $F$-group obtained from $G$ by twisting the $F$-structure by $\bar z$, then $\xi$ becomes an isomorphism of $F$-groups $G_{\bar z} \to G_1$, in fact an isomorphism of inner twists $(G_{\bar z},\tx{id}) \to (G_1,\xi)$. Therefore the map $\bar z \mapsto (G_{\bar z},\tx{id})$ induces an injection from $Z^1(F,G/Z(G))$ into the class of all inner twists of $G$ which meets every isomorphism class. Two elements of $Z^1(F,G/Z(G))$ map into the same isomorphism class if and only if they lie in the same orbit for the action of $G(F^s)$ on $Z^1(F,G/Z(G))$ given by $(g \cdot z)(\sigma) = gz(\sigma)\sigma(g)^{-1}$. The orbits space for this action is $H^1(F,G/Z(G))$, and this gives the same identification between that orbit space and the set of isomorphism classes of inner twists as above.

The same simplification applies to the notion of pure inner twist. There we work with the set $Z^1(F,G)$ and obtain the embedding $z \mapsto (G_z,\tx{id},z)$ from that set into the class of pure inner twists of $G$. Again the orbit space for the action of $G(F^s)$ on $Z^1(F,G)$ by $(g\cdot z)(\sigma)=gz(\sigma)\sigma(g)^{-1}$ equals $H^1(F,G)$ and is identified with the set of isomorphism classes of pure inner twists.

We now take $F=\R$, hence $F^s=\C$. Then $\Gamma=\{1,c\}$, where $c$ is complex conjugation. An element of $Z^1(F,G)$ can be identified with the image of $c$, which is an element of $G(\C)$. We can also think of this as an element of the coset $G(\C) \rtimes c \subset G(\C) \rtimes \Gamma$. The elements of $Z^1(F,G)$ correspond precisely to the elements of $G(\C) \rtimes c$ of order $2$.

In \cite{ABV92} the notion of strong real form was introduced. This is an element $\delta \in G(\C) \rtimes c$, such that $\delta^2$ is a finite order element of $Z(G)(\C)$. In \cite{KalRI} this notion was given a cohomological interpretation and was extended to non-archimedean local fields. Following the latter reference, a \emph{rigid inner twist} of $G$ is a triple $(G_1,\xi,z)$, where $G_1$ is a connected reductive $\R$-group, $\xi : G_\C \to G_{1,\C}$ is an isomorphism and $z \in Z^1_\tx{bas}(\mc{E}_\R,G)$, subject to $\xi^{-1}\sigma(\xi)=\tx{Ad}(\bar z_\sigma)$. Here $1 \to u_{\R}(\C) \to \mc{E}_\R \to \Gamma \to 1$ is a certain extension of the Galois group, $Z^1_\tx{bas}(\mc{E}_\R,G)$ denotes the group of continuous 1-cocycles $\mc{E}_\R \to G(\C)$ whose restriction to $u_\R(\C)$ takes values in $Z(G)(\C)$, and $\bar z$ is again the image of $z$ under the natural projection map $G \to G/Z(G)$; it factors through the quotient $\mc{E}_\R \to \Gamma$. An isomorphism of pure inner twists $(G_1,\xi_1,z_1) \to (G_2,\xi_2,z_2)$ is a pair $(f,g)$ consisting of an isomorphism $f : G_1 \to G_2$ of $\R$-groups and $g \in G_0(\C)$ such that $\xi_2^{-1}\circ f \circ \xi_1=\tx{Ad}(g)$ and $z_2(e)=gz_1(e)\sigma_e(g)^{-1}$ for all $e \in \mc{E}_\R$, where $\sigma_e \in \Gamma$ is the image of $e$. The set of isomorphism classes of rigid inner twists is in bijection with $H^1_\tx{bas}(\mc{E}_\R,G)$. It is shown in \cite[\S5.2]{KalRI} that an element of $G(\C) \rtimes c$ whose square is a finite order element of $Z(G)(\C)$ naturally gives an element of $Z^1_\tx{bas}(\mc{E}_\R,G)$, and this sets up a bijection between the set of $G(\C)$-conjugacy classes of the former and the set of cohomology classes of the latter.

Each of these three notions partitions the set of isomorphism classes of connected reductive $\R$-groups into equivalence classes. The notion of inner twist is the most classical, stemming from the classification of reductive groups, and each equivalence class has exactly one quasi-split member. Unfortunately, this notion is not rigid enough for representation theory -- the group of automorphisms of an inner twist $(G_1,\xi)$ is $(G_1/Z(G_1))(\R)$, and the conjugation by such an element can be an outer automorphism of the Lie group $G_1(\R)$.

The notion of pure inner twist is sufficiently rigid -- the group of automorphisms of a pure inner twist $(G_1,\xi,z)$ is $G_1(\R)$, hence acts by inner automorphisms of $G_1(\R)$. Unfortunately, the equivalence classes induced by this notion are generally smaller, and not all of them contain a quasi-split member.

The notion of a rigid inner twist is again sufficiently rigid, and in addition the equivalence classes in induces coincide with those induced by the notion of inner twist. This will be the notion that we will use.

For further discussion of this topic we refer the reader to \cite{Vog93} and \cite{KalSimons}.

Given an inner twist $\xi : G \to G_1$, the isomorphism $\xi$ induces an isomorphism $\tx{brd}(G) \to \tx{brd}(G_1)$, which is $\Gamma$-equivariant, even though $\xi$ itself is not. This leads to an identification of dual groups $\hat G = \hat G_1$ and $L$\-groups $^LG = {^LG}_1$.

\subsection{Admissible embeddings of tori} \label{sub:adm}

Let $F$ be a field and let $G$ be a connected reductive $F$-group. Let $\hat G$ be its dual group, defined over any base field, which we take to be $\C$, equipped with a $\Gamma$-action.

Let $S$ be an $F$-torus. We recall from \cite[\S5.1]{KalRSP} that, given a $\Gamma$-stable $\hat G$-conjugacy class $\hat J$ of embeddings $\hat S \to \hat G$ whose images are maximal tori, there is an associated $\Gamma$-stable $G(F^s)$-conjugacy class $J$ of embeddings $S \to G$. 

To obtain $J$, we first assume that $G$ is quasi-split. Fix $\Gamma$-stable Borel pairs $(\hat T,\hat B)$ and $(T,B)$ in $\hat G$ and $G$, respectively. Thus we have the identifications $X_*(T)=X^*(\hat T)$ and $\Omega(T,G)=\Omega(\hat T,\hat G)$ as $\Gamma$-modules. There is $\hat\jmath \in \hat J$ with image $\hat T$ and we obtain the isomorphism $X_*(T)=X^*(\hat T) \to X^*(\hat S)=X_*(S)$, hence an isomorphism $j : S_{F^s} \to T_{F^s}$. We let $J$ be the $G(F^s)$-conjugacy class of the composition of $j$ with the inclusion $T \to G$.

For $\sigma \in \Gamma$ the embedding $\sigma(\hat\jmath\,)=\sigma_{\hat G}\circ \hat\jmath \circ \sigma_{\hat S}$ is $\hat G$-conjugate to $\hat\jmath$, but has the same image because $\hat T$ is $\Gamma$-stable, so there exists $w \in \Omega(\hat T,\hat G)$ such that $\sigma(\hat\jmath)=w\circ\hat\jmath$. By construction of $j$ we have $\sigma(j)=w\circ j$, using $\Omega(T,G)=\Omega(\hat T,\hat G)$, and conclude that $J$ is $\Gamma$-stable. Since all $\Gamma$-stable Borel pairs of $\hat G$ are conjugate under $\hat G^\Gamma$, and all $\Gamma$-stable Borel pairs of $G$ are conjugate under $G(F)$, the construction of $J$ does not depend on the choices of Borel pairs.

Now drop the assumption that $G$ is quasi-split. We consider an inner twist $\xi : G_0 \to G$ with $G_0$ quasi-split. It gives an identification $\hat G_0=\hat G$ and we obtain from $\hat J$ a $\Gamma$-stable $G_0(F^s)$-conjugacy class $J_0$ of embeddings $S \to G_0$. Composing with $\xi$ we obtain the desired $G(F^s)$-conjugacy class $J$ of embeddings $S \to G$. It is $\Gamma$-stable, because the $G(F^s)$-conjugacy class of $\xi$ is.

This completes the construction of $J$. We will refer to it as the set of \emph{admissible} embeddings $S \to G$. If we want to record the group $G$ we will write $J^G$.

Write $J(F)$ for the set of $\Gamma$-fixed points in $J$, i.e. the set of embeddings $S \to G$ defined over $F$. When $G=G_0$ is quasi-split, a result of Kottwitz \cite[Corollary 2.2]{Kot82} guarantees that this set is non-empty. For a general $G$ this set may be empty. The group $G(F)$ acts on $J(F)$ by conjugation and we will write $J(F)/G(F)$ for the set of orbits under this action.

Given any two elements $j_1,j_2 \in J(F)$ there exists $g \in G(F^s)$ such that $j_2 = \tx{Ad}(g)\circ j_1$. The map $\sigma \mapsto j_1^{-1}(g^{-1}\sigma(g))$ is a 1-cocycle of $\Gamma$ valued in $S(F^s)$. Its class is independent of $g$ and will be denoted by $\tx{inv}(j_1,j_2)$. For a fixed $j \in J(F)$ the map $j_2 \mapsto \tx{inv}(j_1,j_2)$ is a bijection between $J(F)/G(F)$ and $\tx{ker}(H^1(j_1) : H^1(F,S) \to H^1(F,G))$.

One can combine multiple inner forms in this discussion to obtain a more uniform picture. This requires the use of pure or rigid inner twists. Fix a quasi-split group $G_0$. We start with the case of pure inner twists and consider tuples $(G,\xi,z,j)$, where $(G,\xi,z)$ is a pure inner twist of $G_0$ and $j \in J^G(F)$. An isomorphism $(G_1,\xi_1,z_1,j_1) \to (G_2,\xi_2,z_2,j_2)$ between such tuples is an isomorphism $(f,g) : (G_1,\xi_1,z_1) \to (G_2,\xi_2,z_2)$ of inner twists that satisfies $j_2=f\circ j_1$. Let $\mc{J}(F)$ be the category whose objects are these tuples and whose morphisms are these isomorphisms. Given two tuples as above there exists $g \in G_0(F^s)$ such that $j_2 = \xi_2 \circ \tx{Ad}(g)\circ\xi_1^{-1}\circ j_1$. The map 
\[ \sigma \mapsto j_1^{-1}(\xi_1(g^{-1}z_2(\sigma)\sigma(g)z_1(\sigma)^{-1})) \] 
is a 1-cocycle $\Gamma \to S(F^s)$ whose class $\tx{inv}(j_2,j_1)$ depends only on the isomorphism classes of the tuples. If we fix the tuple $(G_1,\xi_1,z_1,j_1)$ then the map $(G_2,\xi_2,z_2,j_2) \mapsto \tx{inv}(j_1,j_2)$ induces a bijection from the set of isomorphism classes $[\mc{J}(F)]$ to $H^1(F,S)$.

The individual groups can be extracted from the combined picture as follows. For a fixed $(G,\xi,z)$ we can view the set $J^G(F)$ as the set of objects in a category, with set of morphisms $j_1 \to j_2$ given by $\{g \in G(F)|j_2=\tx{Ad}(g)\circ j_1\}$. Then we obtain an embedding (fully faithful functor) from $J^G(F)$ to $\mc{J}(F)$. The category $\mc{J}(F)$ decomposes into blocks, indexed by $H^1(F,G_0)$, and each block is equivalent to $J^G(F)$ for some $(G,\xi,z)$. In particular, the set of isomorphism classes $[\mc{J}(F)]$ is the disjoint union $\bigcup_{H^1(F,G_0)} (J^G(F)/G(F))$. If we choose $j_0 \in J^{G_0}(F)$, then under the bijection $[\mc{J}(F)] \to H^1(F,S)$ given by $(G,\xi,z,j) \mapsto \tx{inv}(j,j_0)$, an individual $J^G(F)/G(F)$ coming from $(G,\xi,z)$ is mapped bijectively onto the fiber of $H^1(j_0) : H^1(F,S) \to H^1(F,G)$ over the class of $z$.

The bijection $[\mc{J}(F)] \to H^1(F,S)$ coming from fixing $(G,\xi,z,j)$ can be understood as the orbit map for a natural action of $H^1(F,S)$ on $[\mc{J}(F)]$ that does not depend on any choices. To see this action it is easier to consider the simplified notation, where a pure inner twist of $G_0$ is understood simply as an element $z \in Z^1(F,G_0)$, in the sense that it corresponds to $(G_z,\tx{id},z)$, where $G_z$ is the $F$-group obtained by twisting the rational structure of $G_0$ by $z$. Then $\mc{J}$ consists of pairs $(z,j)$, where $z \in Z^1(F,G_0)$ and $j \in J^{G_0}$. Such a pair lies in $\mc{J}(F)$ if and only if $j \in J^{G_z}(F)$, which is explicitly given as $\tx{Ad}(z(\sigma))\sigma_{G_0} \circ j \circ \sigma_S^{-1}=j$. The group $G_0(F^s)$ acts on $\mc{J}$ by $g \cdot (z,j) = (gz(\sigma)\sigma(g)^{-1},\tx{Ad}(g)\circ j)$. This action preserves $\mc{J}(F)$ and the orbit space is $[\mc{J}(F)]$. We introduce the action of $Z^1(F,S)$ on $\mc{J}(F)$ as $x \cdot (z,j) = (j(x) \cdot z,j)$ for $x \in Z^1(F,S)$ and $(z,j) \in \mc{J}(F)$. One checks directly that the actions of $Z^1(F,S)$ and $G(F^s)$ on $\mc{J}(F)$ commute and that the action of $S(F^s)$ on $Z^1(F,S)$ is compatible with the action of $Z^1(F,S)$ on $\mc{J}(F)$ in the sense that $(s\cdot x) \cdot (z,j) = j(s) \cdot (x \cdot (z,j))$. Thus we obtain an action of $H^1(F,S)$ on $[\mc{J}(F)]$. 
\begin{lem} \label{lem:simtrans}
The above action is simply transitive.
\end{lem}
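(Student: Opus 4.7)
The plan is to prove transitivity and freeness directly from the definitions, with the main input being the rationality condition on admissible embeddings, namely that an $F$-rational admissible $j\in J^{G_{\bar z}}(F)$ intertwines the $z$-twisted Galois action $\sigma_z=\tx{Ad}(z(\sigma))\circ\sigma_{G_0}$ on $G_0$ with the standard action $\sigma_S$ on $S$.

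For transitivity, take two classes in $[\mc{J}(F)]$ represented by $(z_1,j_1)$ and $(z_2,j_2)$. Since $J^{G_0}$ is a single $G_0(F^s)$-orbit, I first use the $G_0(F^s)$-action on $\mc{J}$ to replace the representatives by tuples in which $j_1=j_2=:j$; this only alters $z_1,z_2$ within their classes and does not change the problem. Set $T=j(S_{F^s})$, a maximal torus of $G_0$. The defining condition $(z_i,j)\in\mc{J}(F)$ reads $\sigma_{z_i}(j(s))=j(\sigma_S(s))$ for every $s\in S(F^s)$. Comparing these two relations shows that $\tx{Ad}(z_1(\sigma)z_2(\sigma)^{-1})$ fixes $T$ pointwise, so $z_1(\sigma)z_2(\sigma)^{-1}\in T(F^s)=j(S(F^s))$. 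This produces a unique function $x\colon\Gamma\to S(F^s)$ with $j(x(\sigma))=z_2(\sigma)z_1(\sigma)^{-1}$. A direct computation using the cocycle identity for $z_1,z_2$ and the rationality of $j$ (invoked to push $\sigma_{G_0}$ on $j$-values through as $\sigma_S$) verifies $x\in Z^1(F,S)$, and by construction $x\cdot(z_1,j)=(z_2,j)$ in $\mc{J}(F)$.

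For freeness, suppose $x\in Z^1(F,S)$ fixes the class of $(z,j)$, so there is $g\in G_0(F^s)$ with $g\cdot(j(x)z,j)=(z,j)$. The condition $\tx{Ad}(g)\circ j=j$ forces $g$ to centralize $T$, hence $g=j(s)$ for some $s\in S(F^s)$. The residual identity $j(s)j(x(\sigma))z(\sigma)\sigma_{G_0}(j(s))^{-1}=z(\sigma)$, combined once more with $\sigma_z(j(s))=j(\sigma_S(s))$, collapses to $x(\sigma)=s^{-1}\sigma_S(s)$. Hence $x$ is a coboundary and $[x]=0\in H^1(F,S)$, giving freeness.

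The computation is purely formal manipulation of cocycles, so the only potential obstacle is bookkeeping: one must keep the standard action $\sigma_{G_0}$ and the twisted action $\sigma_z$ cleanly distinguished, and invoke the rationality of $j$ at exactly the places where a $\sigma_{G_0}$ on a $j$-value needs to be converted into a $\sigma_S$. Everything else is an immediate consequence of the fact that the centralizer of a maximal torus $T$ in $G_0$ is $T$ itself. It is also worth noting that this lemma gives a canonical, choice-free interpretation of the bijection $[\mc{J}(F)]\to H^1(F,S)$ described just above via a fixed base-tuple: that bijection is the orbit map of the action, and the lemma says precisely that it is a torsor structure.
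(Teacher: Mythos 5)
Your proof is correct and follows essentially the same route as the paper: both arguments reduce transitivity to tuples sharing the same embedding $j$, use that the centralizer of the maximal torus $T=j(S)$ is $T$ itself to define the cocycle $x$ from the ratio $z_2 z_1^{-1}$, and prove freeness by writing $g=j(s)$ and collapsing the cocycle identity to exhibit $x$ as a coboundary. The only differences are cosmetic rearrangements of where the twisted Galois action is invoked.
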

\begin{proof}
For simplicity, let $x \in Z^1(F,S)$, $g \in G_0(F^s)$, and $(z,j) \in \mc{J}(F)$ be such that $g \cdot (z,j)=x\cdot (z,j)$. Then $j=\tx{Ad}(g)\circ j$ from which we see that $g=j(s)$ for some $s \in S(F^s)$. We also have $j(s)z(\sigma)\sigma_{G_0}(j(s))^{-1}=j(x(\sigma))\cdot z(\sigma)$ and multiplying on the right by $z(\sigma)^{-1}$ and using that $(z,j) \in \mc{J}(F)$ we conclude $j(s \cdot \sigma_S(s)^{-1})=j(x(\sigma))$, thus $[x]=1$ in $H^1(F,S)$, as desired.

For transitivity, we need to show that any two elements of $[\mc{J}(F)]$ are in the same $H^1(F,S)$ orbit. Since the members of $J^{G_0}$ are all conjugate under $G_0(F^s)$ we may represent the two elements of $[\mc{J}(F)]$ by $(z_1,j)$ and $(z_2,j)$ with the same $j$. The fact that both lie in $\mc{J}(F)$ implies that $\tx{Ad}(z_1(\sigma))\circ\sigma_{G_0}\circ j = \tx{Ad}(z_2(\sigma))\circ\sigma_{G_0}\circ j$. It follows that $\sigma_{G_0}^{-1}(z_2(\sigma)^{-1}z_1(\sigma))$ lies in the image of $j$, and we write it as $j(\sigma_S^{-1}(x(\sigma)))$ for some $x(\sigma) \in S(F^s)$. Using that $(z_1,j),(z_2,j) \in \mc{J}(F)$ we see that $j(x(\sigma))=z_1(\sigma)z_2(\sigma)^{-1}$. From this one easily checks that $x \in Z^1(F,S)$ and concludes $(z_1,j)=x\cdot (z_2,j)$, as desired.
\end{proof}

The procedure that produced $J$ from $\hat J$ is invertible. Given a $\Gamma$-stable $G(F^s)$-conjugacy class $J$ of embeddings $S \to G$, we compose with $\xi^{-1}$ to obtain a $\Gamma$-stable $G_0(F^s)$-conjugacy class $J_0$ of embeddings $S \to G_0$. Pick a $\Gamma$-stable Borel pairs $(T,B)$ of $G_0$ and $(\hat T,\hat B)$ of $\hat G$. Choose $j \in J_0$ giving an isomorphism $S \to T$ and use it to get identifications $X^*(\hat S)=X_*(S)=X_*(T)=X^*(\hat T)$. The resulting isomorphism $\hat S \to \hat T$ is not $\Gamma$-equivariant, but translates the $\Gamma$-structure on $\hat S$ to a twist by $\Omega(\hat T,\hat G)$ of the $\Gamma$-structure on $\hat T$. Therefore the composition of this isomorphism with the tautological inclusion $\hat T \to \hat G$ has the property that its $\hat G$-conjugacy class $\hat J$ is $\Gamma$-stable.

The same discussion applies to the setting of rigid inner forms when the base field is local. One should only replace $H^1(F,-)$ with $H^1(u_F \to \mc{E}_F,Z(G) \to -)$.

\subsection{Representations and elements in inner forms} \label{sub:across}

In \S\ref{sub:adm} we reviewed the idea of grouping the various admissible embeddings of a torus $S$ into the different inner forms of a fixed group $G_0$, thereby obtaining a set $\mc{J}(F)$ with an action of $G_0(F^s)$ on it. Here we follow the same idea, but group elements or representations. 

We focus on $F=\R$, although the same procedure applies for any $F$ of interest. Let $\tilde\Pi$ be the set of pairs $(z,\pi)$, where $z \in Z^1(F,G_0)$ and $\pi$ is an isomorphism class of irreducible admissible representations of $G_z(F)$, where $G_z$ is again the $\R$-group obtained by twisting the $\R$-structure of $G_0$ by $z$. The group $G_0(\C)$ acts on this set by $g\cdot (z,\pi) = (gz(\sigma)\sigma(g)^{-1},\pi \circ\tx{Ad}(g)^{-1})$. Note that $\tx{Ad}(g) : G_z \to G_{gz(\sigma)\sigma(g)^{-1}}$ is an isomorphism of algebraic $\R$-groups. Let $\Pi=\tilde\Pi/G_0(\C)$. This is the set of isomorphism classes of irreducible admissible representations of pure inner forms of $G_0$. The pairs $(z,\pi) \in \tilde\Pi$ with a fixed $z$ correspond to all isomorphism classes of representations of $G_z(\R)$.

Similarly, consider the set of pairs $(z,\delta)$, where $z \in Z^1(F,G_0)$ and $\delta \in G_z(F)$ is a regular semi-simple element. The group $G_0(\C)$ acts on this set by $g\cdot (z,\delta)=(gz(\sigma)\sigma(g)^{-1},g\delta g^{-1})$. If two pairs $(z_1,\delta_1)$ and $(z_2,\delta_2)$ are in the same orbit for this action, we will call them \emph{stably conjugate}. Just as with the case of admissible embeddings, there is a cohomological invariant $\tx{inv}((z_1,\delta_1),(z_2,\delta_2)) \in H^1(F,S_1)$, where $S_1 \subset G_0$ is the centralizer of $\delta_1$, a maximal $\R$-torus of $G_{z_1}$. This invariant is the class of the 1-cocycle $\Gamma \to S_1$ given by
\[ \sigma \mapsto g^{-1}z_2(\sigma)\sigma(g)z_1(\sigma)^{-1}, \]
where $g \in G_0(\C)$ is any element satisfying $(z_2,\delta_2)=g \cdot (z_1,\delta_1)$. If $z_1=z_2=z$ and $\tx{inv}((z,\delta_1),(z,\delta_2))=1$, then $g$ can be chosen to lie in $G_z(\R)$, and we see that $\delta_1$ and $\delta_2$ are $G_z(\R)$-conjugate. More generally, $\tx{inv}((z_1,\delta_1),(z_2,\delta_2))=1$, the element $g$ can be chosen to be an isomorphism $G_1 \to G_2$ defined over $\R$. In either case, we will call $(z_1,\delta_1)$ and $(z_2,\delta_2)$ \emph{rationally} conjugate. With this in mind, we call the orbit space for this action the set of rational classes of regular semi-simple elements of pure inner forms. 

The same discussion applies with pure inner forms replaced by rigid inner forms, using the action of $\mc{E}_\R$ on $G_0(\C)$ via the projection map $\mc{E}_\R \to \Gamma$.

\subsection{Weyl denominators} \label{sub:weyldenom}

Let $G$ be a connected reductive $\R$-group and $T \subset G$ a maximal $\R$-torus. One can consider the function $T(\R) \to \R$ defined as
\[ D_T(t) = D_T^G(t) = \textstyle\prod\limits_{\alpha \in R(T,G)} (1-\alpha(t)). \]
An element $t \in T(\R)$ is called regular if $T$ is the identity component of its centralizer, and strongly regular if $T$ is its centralizer. In either case, $T$ is uniquely determined by $t$, and we can write $D_T(t)=D(t)$.

In this paper we will normalize orbital integrals and characters by multiplying them by $|D(t)|^{1/2}$. Thus, for $t \in T(\R)$ strongly regular and $f \in \mc{C}_c^\infty(G(\R))$ we set
\[ O_\gamma(f) = |D(t)|^{1/2}\int_{G(\R)/T(\R)} f(g\gamma g^{-1})dg/dt, \]
while for $\pi$ admissible representation of $G(\R)$ we have the normalized character function $\Theta_\pi : G(\R)_\tx{sr} \to \C$ determined by
\[ \tx{tr}(\pi(f)) = \Theta_\pi(f) = \int_{G(\R)_\tx{sr}} |D(g)|^{-1/2}\Theta_\pi(g)f(g)dg. \]
This has the advantage that the functions $O_\gamma(f)$ and $\Theta_\pi(\gamma)$  remain bounded as $\gamma$ approaches singular elements in $T(\R)$.

We note here that the distribution $f \mapsto O_\gamma(f)$ depends on the choice of measures $dg$ on $G(\R)$ and $dt$ on $T(\R)$, the distribution $f\mapsto \Theta_\pi(f)$ depends on $dg$, while the function $\gamma \mapsto \Theta_\pi(\gamma)$ does not depend on any choices.

An important role in this paper will be played by a function $D_B$ which has the property that $|D_B|=|D_T|^{1/2}$. To see this function, let us interpret $D_T$ as an element of the group ring $\Z[Q]$, where $Q \subset X^*(T)\otimes\Q$ is the root lattice, we we write the group operation on $Q$ multiplicatively. Given a Borel $\C$-subgroup $B \subset G$ containing $T$ we write $\alpha>0$ when $\alpha$ is a $B$-positive root, and define
\begin{equation} \label{eq:weyldenom}
D_B' = \prod_{\alpha>0} (1-\alpha^{-1}) \in \Q[Q],\qquad D_B = \prod_{\alpha>0} (\alpha^{1/2}-\alpha^{-1/2}) \in \Q[Q].
\end{equation}
In $\Q[Q]$ we have the identity
\begin{equation} \label{eq:dgb}
D_B = \rho \cdot D_B',	
\end{equation}
where $\rho=\prod_{\alpha>0} \alpha^{1/2} \in Q$. This implies
\[ D_T = D_B' \cdot D_{\bar B}' = D_B \cdot D_{\bar B}, \]
where $\bar B$ is the Borel subgroup opposite to $B$. Moreover, for $w \in \Omega(T,G)$ we have
\begin{equation} \label{eq:wsd}
wD_B = D_{w^{-1}Bw} = \tx{sgn}(w)D_B.	
\end{equation}
In particular, $|D_B|$ is independent of the choice of $B$ and hence $|D_T|^{1/2}=|D_B|$, provided we can interpret $D_B$ as a function on $T(\R)$.

It is clear that $D_B'$ is a function on $T(\R)$. If we want to interpret $D_B$ as a function of $T(\R)$, the occurrence of $\alpha^{1/2}$ in the formula causes a problem. From \eqref{eq:dgb} we see that $D_B$ will be a function of $T(\R)$ if and only if $\rho$ is, which is equivalent to the element $\rho$ lying in $X^*(T)$. This is always the case when $G$ is semi-simple and simply connected, but can fail in general. To remedy this situation, one can introduce a double cover of $T(\R)$, which will be discussed in the next section.

\subsection{Double covers of tori and $L$\-embeddings} \label{sub:covtori}

Let $G$ be a connected reductive $\R$-group and let $T \subset G$ be a maximal torus. An obstruction to the element $D_B$ defining a function on $T(\R)$ is the fact that $\rho \in \frac{1}{2}X^*(T)$ may not lie in $X^*(T)$. To remedy this, Adams--Vogan introduce in \cite{AV92}, \cite{AV16} the $\rho$-double cover $T(\R)_\rho$ as the pull-back of the diagram
\[ T(\R)\stackrel{\rho^2}{\lrw} \C^\times \stackrel{(-)^2}{\llw} \C^\times, \]
which comes equipped with a natural character $\rho : T(\R)_\rho \to \C^\times$, namely the projection onto the right factor $\C^\times$. By construction we have an exact sequence
\[ 1 \to \{\pm 1\} \to T(\R)_\rho \to T(\R) \to 1 \]
and $\rho$ is a genuine character, i.e. $\rho(-x)=-\rho(x)$ for $x \in T(\R)_\rho$, where $-x$ denote the product of $x$ and the element $-1$.

While the double cover $T(\R)_\rho$ appears to depend on $\rho$, this is actually not so. Indeed, for any other Borel $\C$-subgroup $B'$ we have $\rho'/\rho \in X^*(T)$, which allows us to define the genuine character $\rho' : T(\R)_\rho \to \C^\times$ as $\rho \cdot (\rho'/\rho)$. Combining this character with the natural projection $T(\R)_\rho \to T(\R)$ gives a map from $T(\R)_\rho$ to the diagram defining $T(\R)_{\rho'}$, hence an isomorphism $T(\R)_\rho \to T(\R)_{\rho'}$.

To emphasize the independence of $T(\R)_\rho$ on $\rho$, and emphasize the dependence on the ambient group $G$, we will write $T(\R)_G$ for this cover. For each Borel $\C$-subgroup $B$ we have the genuine character $\rho_B : T(\R)_G \to \C^\times$.

In this paper we will be particularly interested in the case when $T$ is elliptic. Then there is a different way to obtain the $\rho$-cover that generalizes to all local fields, as discussed in \cite{KalDC}. One first defines the ``big cover'' of $T(\R)$ as follows. Each root provides a homomorphism $\alpha : T(\R) \to \mb{S}^1$. Combining these homomorphisms for a pair $A=\{\alpha,-\alpha\}$ provides a homomorphism $A : T(\R) \to \mb{S}^1_-$, where $\mb{S}^1_- \subset \mb{S}^1 \times \mb{S}^1$ is the kernel of the product map $\mb{S}^1 \times \mb{S}^1 \to \mb{S}^1$. Define the ``big cover'' as the pull-back of
\begin{equation} \label{eq:bigcover}
T(\R) \stackrel{(\alpha)}{\lrw} \prod \mb{S}^1_- \stackrel{z/\bar z}{\llw} \prod(\C^\times/\R_{>0})_-
\end{equation}
where the products run over the set of pairs $A=\{\alpha,-\alpha\}$ consisting of a root and its negative, and $(\C^\times/\R_{>0})_-$ denotes analogously the anti-diagonal in $(\C^\times/\R_{>0}) \times (\C^\times/\R_{>0})$. The result is an extension
\[ 1 \to \prod\{\pm1\} \to T(\R)_{GG} \to T(\R) \to 1. \]
Under the isomorphism $\mb{S}^1 \to \C^\times/\R_{>0}$ the map $z/\bar z$ becomes the squaring map, and we see that $T(\R)_{GG}$ is equipped with a character $\alpha^{1/2} : T(\R)_{GG} \to \mb{S}^1$ for each root $\alpha$, and that $\beta^{1/2}=(\alpha^{1/2})^{-1}$ whenever $\beta=\alpha^{-1}$. There is an obvious surjective homomorphism $T(\R)_{GG} \to T(\R)_G$ whose kernel is the kernel of the multiplication map $\prod\{\pm1\} \to \{\pm1\}$. The function $\alpha^{1/2}-\alpha^{-1/2}$ is well-defined on the big cover, while for any choice Borel $\C$-subgroup $V$ the function
\[ D_B := \prod_{\alpha>0}(\alpha^{1/2}-\alpha^{-1/2}) \]
descends to the double cover $T(\R)_G$.

The action of the Weyl group $\Omega_G(T)(\R)$ lifts naturally to an action on $T(\R)_G$, even on $T(\R)_{GG}$, because $\Omega_G(T)(\R)$ acts naturally on each term in \eqref{eq:bigcover}. The identity \eqref{eq:wsd} holds for this function.

What makes the double cover $T(\R)_G$ very useful in the setting of the Langlands program is the fact that there is an associated $L$\-group $^LT_G$, as well as a \emph{canonical} $\hat G$-conjugacy class of $L$\-embeddings $^LT_G \to {^LG}$, cf. \cite[\S4.1]{KalDC}. The property of the $L$\-group $^LT_G$ is that the set of $\hat T$-conjugacy classes of $L$\-homomorphisms $W_\R \to {^LT}_G$ is in natural bijection with the set of genuine characters of $T(\R)_G$. Therefore, any $L$\-parameter for $G$ that factors through the image of the embedding of $^LT_G$ provides in a canonical way an $\Omega_G(T)(\R)$-orbit of genuine characters of $T(\R)_G$.

In contrast to $^LT_G$, there is generally no canonical $L$\-embedding $^LT \to {^LG}$. In fact, if the Galois form of $^LT$ is used, there is generally no $L$\-embedding $^LT \to {^LG}$ at all, let alone a canonical one. If the Weil form of $^LT$ is used, then there always do exist $L$\-embeddings $^LT \to {^LG}$, but there is generally no canonical choice. If one chooses a genuine character of $T(\R)_G$, then the pointwise product of its $L$\-parameter $W_\R \to {^LT_G}$ with the natural inclusion $\hat T \to {^LT_G}$ does lead to an $L$\-isomorphism $^LT \to {^LT_G}$ between the Weil forms of the $L$\-groups for $T(\R)$ and $T(\R)_G$. Composing this isomorphism with the canonical $L$\-embedding $^LT_G \to {^LG}$ provides an $L$\-embedding $^LT \to {^LG}$, and every $L$\-embedding arises from this construction. A convenient choice for a genuine character on $T(\R)_G$ is the character $\rho$ associated to some Borel $\C$-subgroup of $G$ containing $T$.

It is worth pointing out that all $L$\-embeddings $^LT \to {^LG}$, as well as all $L$\-embeddings $^LT_G \to {^LG}$, that extend a fixed embedding $\hat\jmath : \hat T \to \hat G$, have the same image, namely
\begin{equation} \label{eq:lembim}
\{x \in N_{^LG}(\hat T)\,|\, x\cdot\hat\jmath(t)\cdot x^{-1} = \hat\jmath(\sigma_x(t))\ \forall t \in \hat T\},
\end{equation}
where $\sigma_x \in \Gamma$ is the image of $x$ under the natural projection $^LG \to \Gamma$.

\subsection{Essentially discrete series representations} \label{sub:essds}

We will recall here the definition of eds representations of real reductive groups and their classification due to Harish-Chandra. We will need to allow for a slightly different set-up than that in Harish-Chandra's original papers.

The main results on discrete series representations appear in \cite{HCDSI}, where Harish-Chandra considers a real Lie group $\mc{G}$ that is connected, semi-simple, and acceptable. An example is $\mc{G}=G(\R)$ for a connected semi-simple simply connected $\R$-group $G$. Later, in \cite{HC-R1}, \cite{HC-R2}, \cite{HC-R3}, Harish-Chandra works with a wider class of groups (known as ``Harish-Chandra's class'', consisting of real Lie groups satisfying the conditions of \cite[\S3]{HC-R1}), but his results on discrete series still assume that the center of the group is compact. 

The setting that we will need here is one that encompasses the groups $G(\R)$, where $G$ is a connected reductive $\R$-group, as well as the covers of $G(\R)$ obtained from \cite{KalHDC}, because they will be important for endoscopy, as will be discussed in \S\ref{sub:covendo} below.

Thus, we take $\mc{G}$ to be a real Lie group, $\mf{g}(\R)$ its Lie algebra, and $\mf{g}$ the complexification. We make the following assumptions:
\begin{enumerate}
	\item $\mf{g}$ is a complex reductive Lie algebra.
 	\item If $G_\tx{ad}$ is the complex connected semi-simple adjoint group associated to the derived subalgebra $\mf{g}'$ of $\mf{g}$, i.e. $G_\tx{ad}=\tx{Aut}(\mf{g}')^\circ$, then the action of $\mc{G}$ on $\mf{g}$ comes from a (necessarily unique) homomorphism $\mc{G} \to G_\tx{ad}(\R)$ of real Lie groups. Its kernel is equal to the center of $\mc{G}$.
  	\item If $G_\tx{sc}$ is the complex connected semi-simple simply connected group associated to $\mf{g}'$, i.e. the universal cover of $G_\tx{ad}$, then the natural homomorphism $G_\tx{sc}(\R) \to G_\tx{ad}(\R)$ lifts (necessarily uniquely) to a homomorphism $G_\tx{sc}(\R) \to \mc{G}$ of Lie groups.
   	\item Let $\mc{G}^\natural$ be the image of $G_\tx{sc}(\R) \to \mc{G}$. Then $[\mc{G}^\natural \cdot Z(\mc{G}):\mc{G}] < \infty$.
\end{enumerate}

\begin{lem} A group $\mc{G}$ satisfying the above conditions is in the Harish-Chandra class provided $\pi_0(Z(\mc{G}))$ is finite.
\end{lem}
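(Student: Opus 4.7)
The plan is to verify directly each of the four defining conditions of the Harish-Chandra class: (i) $\mf{g}$ is reductive; (ii) $\mc{G}$ has finitely many connected components; (iii) $[\mc{G}^\circ,\mc{G}^\circ]$ has finite center; and (iv) $\Ad(\mc{G}) \subset \tx{Int}(\mf{g})$. Conditions (i) and (iv) are essentially immediate from the hypotheses. Indeed, (i) is assumption (1), while for (iv) we note that $\Ad$ acts trivially on the center $\mf{z}(\mf{g})$, so the adjoint action of $\mc{G}$ on $\mf{g}$ is determined by its restriction to $\mf{g}'$, which by assumption (2) factors through $G_\tx{ad}(\R) \subset G_\tx{ad}(\C) = \tx{Int}(\mf{g})$.

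For condition (iii), I would first identify $[\mc{G}^\circ,\mc{G}^\circ]$ with $\mc{G}^\natural$. Since $G_\tx{sc}$ is a connected semi-simple simply connected $\C$-group, the Lie group $G_\tx{sc}(\R)$ is connected, so $\mc{G}^\natural$ is connected as well. Its Lie algebra is $\mf{g}'(\R)$, which is also the Lie algebra of $[\mc{G}^\circ,\mc{G}^\circ]$; as connected subgroups of $\mc{G}^\circ$ with the same Lie algebra they must coincide. Next, the surjection $G_\tx{sc}(\R) \srw \mc{G}^\natural$ has central kernel contained in the finite group $Z(G_\tx{sc})(\R)$, and any element of $Z(\mc{G}^\natural)$ acts trivially under $\Ad$ on $\mf{g}'$, hence each of its lifts in $G_\tx{sc}(\R)$ lies in $Z(G_\tx{sc})(\R)$. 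Thus $Z(\mc{G}^\natural)$ is the image of the finite group $Z(G_\tx{sc})(\R)$, and is finite.

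The main work lies in condition (ii). By assumption (4), $\mc{G}^\natural \cdot Z(\mc{G})$ has finite index in $\mc{G}$, so it suffices to prove $\pi_0(\mc{G}^\natural Z(\mc{G}))$ is finite. From the short exact sequence
\[
1 \rw \mc{G}^\natural \rw \mc{G}^\natural Z(\mc{G}) \rw Z(\mc{G})/(Z(\mc{G}) \cap \mc{G}^\natural) \rw 1
\]
and the connectedness of $\mc{G}^\natural$, this reduces to showing that the right-hand quotient has finite $\pi_0$. Because $Z(\mc{G}) \cap \mc{G}^\natural \subset Z(\mc{G}^\natural)$ is finite by the previous paragraph, the projection $Z(\mc{G}) \srw Z(\mc{G})/(Z(\mc{G}) \cap \mc{G}^\natural)$ has finite kernel and induces a surjection on component groups; the assumed finiteness of $\pi_0(Z(\mc{G}))$ then yields the claim.

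The only genuine conceptual obstacle I foresee is the structural analysis in step (iii), namely the identification $Z(\mc{G}^\natural) = \tx{image}(Z(G_\tx{sc})(\R))$ and the resulting finiteness of $Z(\mc{G}) \cap \mc{G}^\natural$. Once these are established, everything else is routine bookkeeping with exact sequences and component groups.
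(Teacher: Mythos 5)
Your proof is correct and takes essentially the same approach as the paper: the inner-automorphism condition comes from assumption (2), the finiteness of $Z(\mc{G}^\natural)$ comes from lifting central elements to $Z(G_\tx{sc})(\R)$, and the finiteness of $\pi_0(\mc{G})$ comes from assumption (4) together with the finiteness of $\pi_0(Z(\mc{G}))$. The only cosmetic difference is in the last step, where the paper bounds the index directly via $[\mc{G}:\mc{G}^\circ]\leq[\mc{G}:\mc{G}^\natural \cdot Z(\mc{G})]\cdot[Z(\mc{G}):Z(\mc{G})^\circ]$ rather than through your exact-sequence bookkeeping.
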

\begin{proof}
	Condition (1) of \cite[\S3]{HC-R1} follows from Condition 2. above.
	
	Condition (2) of \cite[\S3]{HC-R1} follows from the fact that the analytic subgroup of $\mc{G}$ corresponding to $[\mf{g},\mf{g}]$ is equal to the image $\mc{G}^\natural$ of $G_\tx{sc}(\R) \to \mc{G}$. Any element of the center of $\mc{G}^\natural$ is therefore the image of an element $x \in G_\tx{sc}(\R)$ that maps trivially to $G_\tx{ad}(\R)$, hence lies in the center of $G_\tx{sc}(\R)$. We conclude that the center of $G_\tx{sc}(\R)$, which is finite, surjects onto the center of $\mc{G}^\natural$.

	For condition (3) we use that $\mc{G}^\natural \subset \mc{G}^\circ$ and see that $[\mc{G}^\circ : \mc{G}] \leq [\mc{G}^\natural \cdot Z(\mc{G})^\circ : \mc{G}] \leq [\mc{G}^\natural \cdot Z(\mc{G}) : \mc{G}] \cdot [Z(\mc{G})^\circ:Z(\mc{G})] < \infty$.
\end{proof}

One class of examples of this set-up arise as $\mc{G}=G(\R)$, where $G$ is a connected reductive $\R$-group. Another class of examples comes from the covers $G(\R)_x$ of such groups constructed in \cite{KalHDC} from characters $x : \bar\pi_1(G) \to \C^\times$. In both cases $\pi_0(Z(\mc{G}))$ is finite, so these groups are in  Harish-Chandra's class.

Recall that, for any Lie group $\mc{H}$ one defines $[\mc{H},\mc{H}] \subset \mc{H}$ to be the subgroup generated by the commutators in $\mc{H}$.

\begin{lem} \label{lem:comm}
	The group $\mc{G}^\natural$ is a connected semi-simple Lie group, and a closed Lie subgroup of $\mc{G}$ with Lie algebra $\mf{g}'$. We have
	\[ [\mc{G}^\natural,\mc{G}^\natural] = \mc{G}^\natural = [\mc{G},\mc{G}]\quad\tx{and}\quad Z(\mc{G}^\natural) = Z(\mc{G}) \cap \mc{G}^\natural. \]
	The subgroup $\mc{G}^\natural \cdot Z(\mc{G})$ of $\mc{G}$ is an open and closed Lie subgroup.
\end{lem}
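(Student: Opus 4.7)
My strategy is to separate the four claims and treat them in order of increasing difficulty, with the equality $[\mc{G},\mc{G}]=\mc{G}^\natural$ as the main obstacle.

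First I would establish the Lie algebra and the connected-semi-simple nature of $\mc{G}^\natural$. The derivative of the homomorphism $G_\tx{sc}(\R)\to\mc{G}$ of hypothesis (3) is the canonical inclusion $\mf{g}'(\R)\hookrightarrow\mf{g}(\R)$, so the image $\mc{G}^\natural$ is an immersed Lie subgroup with Lie algebra $\mf{g}'$. Since $G_\tx{sc}$ is a simply connected semi-simple algebraic $\R$-group, its real points form a connected Lie group, whence $\mc{G}^\natural$ is connected. The kernel of $G_\tx{sc}(\R)\to\mc{G}$ lies inside the kernel $Z(G_\tx{sc})(\R)$ of $G_\tx{sc}(\R)\to G_\tx{ad}(\R)$, which is finite; thus $\mc{G}^\natural$ is a quotient of a connected semi-simple Lie group by a finite central subgroup, and is therefore a connected semi-simple Lie group with finite center.

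Next I would prove closedness of $\mc{G}^\natural$ via the reductive decomposition $\mf{g}(\R)=\mf{g}'(\R)\oplus\mf{z}(\R)$. The connected abelian Lie subgroup $Z(\mc{G})^\circ$ has Lie algebra $\mf{z}(\R)$, commutes with $\mc{G}^\natural$, and meets $\mc{G}^\natural$ in a discrete subgroup which, being central in the semi-simple $\mc{G}^\natural$, is finite. Their commuting product $\mc{G}^\natural\cdot Z(\mc{G})^\circ$ is a connected subgroup with full Lie algebra $\mf{g}(\R)$, hence open, and consequently equal to $\mc{G}^\circ$. Realising $\mc{G}^\circ$ as the quotient of $\mc{G}^\natural\times Z(\mc{G})^\circ$ by the finite antidiagonal copy of $\mc{G}^\natural\cap Z(\mc{G})^\circ$ shows that the preimage of $\mc{G}^\natural$ under the multiplication map is the closed set $\mc{G}^\natural\times(\mc{G}^\natural\cap Z(\mc{G})^\circ)$, so $\mc{G}^\natural$ is closed in $\mc{G}^\circ$ and hence in $\mc{G}$.

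The two formal identities follow quickly. By hypothesis (2), $\mc{G}^\natural\cdot Z(\mc{G})$ is the preimage of $G_\tx{ad}(\R)^+:=\mathrm{Im}(G_\tx{sc}(\R)\to G_\tx{ad}(\R))$ under $\mc{G}\to G_\tx{ad}(\R)$; since $G_\tx{ad}(\R)^+$ has full Lie algebra in $G_\tx{ad}(\R)$ it is open, hence also closed, and the preimage inherits both properties. For $Z(\mc{G}^\natural)=Z(\mc{G})\cap\mc{G}^\natural$ only the inclusion $\subseteq$ needs work: the quotient $\mc{G}^\natural/(Z(\mc{G})\cap\mc{G}^\natural)$ maps isomorphically onto $G_\tx{ad}(\R)^+$, which acts faithfully on $\mf{g}'(\R)$ and therefore has trivial center. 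The perfectness $[\mc{G}^\natural,\mc{G}^\natural]=\mc{G}^\natural$ is the standard fact that connected semi-simple Lie groups are perfect.

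The main obstacle is the identity $[\mc{G},\mc{G}]=\mc{G}^\natural$. The inclusion $\supseteq$ is immediate. For $\subseteq$, the easy case is $g\in\mc{G}^\natural\cdot Z(\mc{G})$: writing $g=mz$ with $m\in\mc{G}^\natural$ and $z$ central gives $[g,h]=[m,h]=m\cdot(hm^{-1}h^{-1})\in\mc{G}^\natural$ by normality of $\mc{G}^\natural$ in $\mc{G}$, so commutators with one factor in the finite-index subgroup $\mc{G}^\natural\cdot Z(\mc{G})$ automatically lie in $\mc{G}^\natural$. The delicate part is when both $g,h$ lie in non-trivial cosets of $\mc{G}^\natural\cdot Z(\mc{G})$: this amounts to showing that the induced alternating pairing $F\times F\to Z(\mc{G})/(Z(\mc{G})\cap\mc{G}^\natural)$ on the finite quotient $F=\mc{G}/(\mc{G}^\natural\cdot Z(\mc{G}))$ vanishes. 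The key input is that under Conditions (1)--(4), $F$ embeds into the elementary abelian $2$-group $\pi_0(G_\tx{ad}(\R))$, and one can choose lifts of coset representatives of $F$ in $\mc{G}$ whose pairwise commutators already lie in $\mc{G}^\natural$, yielding the desired vanishing.
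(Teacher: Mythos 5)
Up through the last step your plan is sound and, apart from packaging, close to the paper's: the connectedness and semi-simplicity of $\mc{G}^\natural$, its Lie algebra, the identity $Z(\mc{G}^\natural)=Z(\mc{G})\cap\mc{G}^\natural$ (via triviality of the center of the image in $G_\tx{ad}(\R)$), perfectness of $\mc{G}^\natural$, and the open-and-closedness of $\mc{G}^\natural\cdot Z(\mc{G})$ are all proved correctly; your closedness argument via the finite covering $\mc{G}^\natural\times Z(\mc{G})^\circ\to\mc{G}^\circ$ and your identification of $\mc{G}^\natural\cdot Z(\mc{G})$ as the preimage of the open subgroup $\mathrm{Im}(G_\tx{sc}(\R)\to G_\tx{ad}(\R))=G_\tx{ad}(\R)^\circ$ are legitimate substitutes for the paper's appeals to Bourbaki's integral-subgroup machinery.

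The genuine gap is exactly where you locate the main obstacle: the inclusion $[\mc{G},\mc{G}]\subseteq\mc{G}^\natural$ when both elements lie in nontrivial cosets of $\mc{G}^\natural\cdot Z(\mc{G})$. Your ``key input'' --- that one can choose lifts of coset representatives of $F$ whose pairwise commutators already lie in $\mc{G}^\natural$ --- is not an input at all: it is a restatement of the vanishing of the commutator pairing $F\times F\to Z(\mc{G})/(Z(\mc{G})\cap\mc{G}^\natural)$, i.e.\ of the very claim to be proved, and you offer no argument for it. Moreover, it cannot be deduced from conditions (1)--(4) as stated. Take $\mf{g}'=\mf{sl}_2\oplus\mf{sl}_2$ and let $\mc{G}$ be the fibre product of $PGL_2(\R)\times PGL_2(\R)$ and the quaternion group $Q_8$ over $(\Z/2)^2$ (using $\pi_0$ on one side and $Q_8/\{\pm1\}$ on the other). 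This is a Lie group in which the adjoint action factors through the projection to $G_\tx{ad}(\R)=PGL_2(\R)^2$ with kernel $\{(1,\pm1)\}$ equal to the center, the lift of $SL_2(\R)^2$ exists (giving $\mc{G}^\natural=PSL_2(\R)^2\times\{1\}$), and $[\mc{G}:\mc{G}^\natural Z(\mc{G})]=4$; so (1)--(4) hold and $F\cong(\Z/2)^2$. Yet any lifts $g,h$ of the two generators of $F$ have $Q_8$-components in $\{\pm i\}$ and $\{\pm j\}$, so $[g,h]\in(1,-1)\cdot\mc{G}^\natural$, never in $\mc{G}^\natural$: no choice of lifts makes the pairing vanish, so your plan for the delicate case cannot be completed at this level of generality. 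The paper handles this step by an entirely different mechanism, namely by quoting Bourbaki (Ch.~III, \S 9, no.~2, Prop.~4) on commutator groups, applied with $A=B=\mc{G}$ and $C=\mc{G}^\natural$, thereby deducing $[\mc{G},\mc{G}]=\mc{G}^\natural$ from the fact that the Lie algebra of $\mc{G}^\natural$ is the derived algebra of $\mf{g}(\R)$; note that result is stated for integral (connected) subgroups, so any honest repair of your argument must either invoke a theorem of that type and verify its hypotheses, or use information about $\mc{G}$ beyond the formal consequences of (1)--(4) (for instance that the relevant component group is abelian, as it is for the groups $G(\R)$ and their covers that the paper actually needs).
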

\begin{proof}
	We will cite some results from \cite{BourLie1-3}. For this, we note that what is called a \emph{Lie subgroup} in that reference (\cite[Chapter III, \S1, no. 3, Definition 3]{BourLie1-3}) is usually referred to as a closed Lie subgroup (see \cite[Chapter III, \S1, no. 3, Proposition 5]{BourLie1-3}), and what is called an \emph{integral subgroup} in that reference (\cite[Chapter III, \S6, no. 2, Definition 1]{BourLie1-3}) is usually referred to as a Lie subgroup.

	The Lie group $G_\tx{sc}(\R)$ is connected \cite[\S7.2, Proposition 7.6]{PR94}. Therefore, \cite[Chapter III, \S3, no. 2, Corollary 1]{BourLie1-3} shows that $\mc{G}^\natural$ is a Lie subgroup of $\mc{G}$. Since the kernel of $G_\tx{sc}(\R) \to \mc{G}$ is contained in the kernel of $G_\tx{sc}(\R) \to G_\tx{ad}(\R)$, it is central and finite. Thus $\mc{G}^\natural$ is a finite central quotient of the connected semi-simple Lie group $G_\tx{sc}(\R)$, hence itself a connected semi-simple Lie group. According to \cite[Chapter III, \S9, no. 8, Proposition 29]{BourLie1-3}, $\mc{G}^\natural$ equals its own commutator subgroup. The homomorphism $G_\tx{sc}(\R) \to \mc{G}$ induces an isomorphism on Lie algebras and hence identifies $\mf{g}'$ with the Lie algebra of $\mc{G}^\natural$.
	
	Moreover, the finiteness of the kernel of $G_\tx{sc}(\R) \to \mc{G}^\natural$ implies, via \cite[Chapter III, \S3, no. 2, Corollary 1]{BourLie1-3}, that the Lie algebra of $\mc{G}^\natural$ equals the Lie algebra of $G_\tx{sc}(\R)$, which is $\mf{g}'(\R)$. The latter is the derived subgalgebra of $\mf{g}(\R)$. It now follows from \cite[Chapter III, \S9, no. 2, Proposition 4]{BourLie1-3}, applied with $A=B=\mc{G}$ and $C=\mc{G}^\natural$, that $[\mc{G},\mc{G}]=\mc{G}^\natural$.

	Next we show that $\mc{G}^\natural$ is a closed Lie subgroup, and $\mc{G}^\natural \cdot Z(\mc{G})$ is an open and closed Lie subgroup. The latter is the image of the Lie group homomorphism $G_\tx{sc}(\R) \times Z(\mc{G}) \to \mc{G}$ given by composing the product operation on $\mc{G}$ with the homomorphism $G_\tx{sc}(\R) \to \mc{G}$ and the inclusion $Z(\mc{G}) \to \mc{G}$. This Lie group homomorphism induces an isomorphism on Lie algebras, via the decomposition $\mf{g}=\mf{g}' \oplus \mf{z}$, where $\mf{z}$ is the center of $\mf{g}$, and where we note that $\mf{z}(\R)$ is the center of $\mf{g}(\R)$ and also the Lie algebra of $Z(\mc{G})$, see \cite[Chapter III, \S9, no. 3, Proposition 9]{BourLie1-3}. Therefore $\mc{G}^\natural \cdot Z(\mc{G})$ contains an open neighborhood of the identity of $\mc{G}$, and is hence open in $\mc{G}$, and hence also closed. Since $G_\tx{sc}(\R) \to \mc{G}^\natural$ is a quotient map, a subset $U \subset \mc{G}^\natural$ is open if and only if its preimage $V$ in $G_\tx{sc}(\R)$ is open. Then $V \times Z(\mc{G})$ is open in $G_\tx{sc}(\R) \times Z(\mc{G})$, so its image $U'$ in $\mc{G}^\natural \cdot Z(\mc{G})$ is open. We have $U' \cap \mc{G}^\natural = U$. Since $\mc{G}^\natural \cdot Z(\mc{G})$ is open in $\mc{G}$ we see that $U'$ is open in $\mc{G}$, and conclude that $\mc{G}^\natural$ carries the subspace topology inherited from $\mc{G}$. According to \cite[Chapter III, \S6, no. 2, Proposition 2]{BourLie1-3}, both $\mc{G}^\natural$ and $\mc{G}^\natural \cdot Z(\mc{G})$ are closed Lie subgroups of $\mc{G}$.

	Now we compute $Z(\mc{G}^\natural)$. This group is equal to the kernel of the action of $\mc{G}^\natural$ on the Lie algebra of $\mc{G}^\natural$, which is $\mf{g}'$. By assumption 2. above, this is the same as the kernel of $\mc{G}^\natural \to G_\tx{ad}(\R)$, which in turn equals $Z(\mc{G}) \cap \mc{G}^\natural$.
\end{proof}

\begin{dfn} \label{dfn:eds}
	Let $\pi$ be an admissible representation of $\mc{G}$.
	\begin{enumerate}
		\item $\pi$ is called \emph{square-integrable}, or \emph{in the discrete series}, if every matrix coefficient of $\pi$ is square-integrable, i.e. belongs to $L^2(\mc{G})$.
  		\item $\pi$ is called \emph{relatively square-integrable}, or in the \emph{in the relative discrete series}, if $\pi$ is unitary and 
                  every matrix coefficient of $\pi$ is square-integrable modulo the center of $\mc{G}$, i.e. lies in $L^2(\mc{G}/Z(\mc{G}))$.

  		\item $\pi$ is called  \emph{essentially square-integrable}, or  \emph{in the essentially discrete series} (\emph{eds} for short), if there exists a character $\chi : \mc{G} \to \C^\times$ such that $\pi \otimes \chi$ is relatively square-integrable.
	\end{enumerate}
\end{dfn}

\begin{rem}
It is clear that a square-integrable representation is relatively square-integrable, and a relatively square-integrable representation is essentially square-integrable. When $Z(\mc{G})$ is not compact, a square-integrable representation cannot exist, so the concepts of relatively square-integrable and essentially square-integrable are the only ones that are available.

Some authors say that an irreducible representation $\pi$ (not necessarily unitary) is in the \emph{relative discrete series} if the restrictions of its matrix coefficients to the commutator subgroup are square-integrable. This commutator subgroup equals $\mc{G}^\natural$ by Lemma \ref{lem:comm}, so the next lemma shows that this is equivalent to our notion of essentially square-integrable.
\end{rem}

\begin{lem} \label{lem:eds-equiv}
  Let $\pi$ be an irreducible admissible (not necessarily unitary) representation of $\mc{G}$.
  Then $\pi$ is essentially square-integrable if and only its restriction to $\mc{G}^\natural$ is square-integrable, in which case this restriction is a finite direct sum of irreducible square-integrable representations.
\end{lem}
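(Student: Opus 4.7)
The plan is to build on the structural facts from Lemma \ref{lem:comm}: $\mc{G}^\natural = [\mc{G},\mc{G}]$, the center $Z(\mc{G}^\natural) = Z(\mc{G}) \cap \mc{G}^\natural$ is finite (since $\mc{G}^\natural$ is a central quotient of $G_\tx{sc}(\R)$, which has finite center), and $H := \mc{G}^\natural \cdot Z(\mc{G})$ is open and of finite index in $\mc{G}$. Two consequences guide the argument. First, any continuous character $\chi : \mc{G} \to \C^\times$ factors through $\mc{G}/\mc{G}^\natural$ and is therefore trivial on $\mc{G}^\natural$, so $\pi|_{\mc{G}^\natural} = (\pi\otimes\chi)|_{\mc{G}^\natural}$ for any such $\chi$. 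Second, $H/Z(\mc{G}) \cong \mc{G}^\natural/Z(\mc{G}^\natural)$ is an open finite-index subgroup of $\mc{G}/Z(\mc{G})$, and since $Z(\mc{G}^\natural)$ is finite, $L^2$-integrability on $\mc{G}^\natural$ coincides with that on $\mc{G}^\natural/Z(\mc{G}^\natural)$ up to a normalization constant.

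The finite decomposition follows from admissibility together with Clifford theory. Since $H$ has finite index in $\mc{G}$ and $\pi$ is admissible and irreducible, $\pi|_H$ is admissible and of finite length, hence a finite direct sum of irreducible constituents. Since $Z(\mc{G})$ acts by a scalar on each constituent and $\mc{G}^\natural \cap Z(\mc{G}) = Z(\mc{G}^\natural)$, further restriction from $H$ to $\mc{G}^\natural$ preserves the irreducibility of each constituent, yielding the claimed finite direct sum decomposition of $\pi|_{\mc{G}^\natural}$.

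For the forward implication, if $\pi\otimes\chi$ is relatively square-integrable then its matrix coefficients lie in $L^2(\mc{G}/Z(\mc{G}))$. Restricting to the open subgroup $H/Z(\mc{G})$ preserves $L^2$, and identifying $H/Z(\mc{G}) \cong \mc{G}^\natural/Z(\mc{G}^\natural)$ together with the triviality of $\chi$ on $\mc{G}^\natural$ shows that the matrix coefficients of $\pi|_{\mc{G}^\natural}$ are in $L^2(\mc{G}^\natural)$. For the reverse implication, assuming $\pi|_{\mc{G}^\natural}$ is square-integrable, the central character $\omega$ of $\pi$ is automatically unitary on the finite group $Z(\mc{G}^\natural)$, so $|\omega|^{-1}: Z(\mc{G}) \to \R_{>0}$ is trivial on $Z(\mc{G}^\natural)$ and thus factors through the image of $Z(\mc{G})$ in the abelian Lie group $\mc{G}/\mc{G}^\natural$. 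Using that the cokernel of this inclusion is finite and that $\R_{>0}$ is divisible, $|\omega|^{-1}$ extends to a continuous character $\chi : \mc{G} \to \R_{>0}$; then $\pi\otimes\chi$ has unitary central character. Its matrix coefficients are square-integrable on $\mc{G}/Z(\mc{G})$ by summing the finitely many cosets of $H/Z(\mc{G})$, each contribution reducing via the summand decomposition to the hypothesis on $\pi|_{\mc{G}^\natural}$; and unitarity of $\pi\otimes\chi$ on $\mc{G}$ is propagated from unitarity of $\pi|_{\mc{G}^\natural}$ through the finite-index inclusion.

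The principal technical obstacle is the reverse direction: both the construction of the character $\chi$, which requires an extension-of-characters argument along the inclusion $Z(\mc{G})/Z(\mc{G}^\natural) \hookrightarrow \mc{G}/\mc{G}^\natural$ of abelian Lie groups with finite cokernel, and the verification that the invariant Hermitian form on $\pi|_{\mc{G}^\natural}$ (which exists because square-integrable representations of a semi-simple group with finite center are unitarizable) propagates to an invariant Hermitian form on $\pi$ itself via the finite-index subgroup $H \subset \mc{G}$.
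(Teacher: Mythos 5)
Your argument is correct and is essentially the paper's proof: the same ingredients appear in the same roles — triviality of characters on $\mc{G}^\natural=[\mc{G},\mc{G}]$, the central character of $\pi$ via Dixmier's lemma, extension of $|\omega|^{-1}$ to $\mc{G}$ by divisibility of $\R_{>0}$, comparison of $L^2(\mc{G}^\natural)$ with $L^2(\mc{G}/Z(\mc{G}))$ through the open finite-index subgroup $\mc{G}^\natural\cdot Z(\mc{G})$ and the finiteness of $Z(\mc{G}^\natural)$, and averaging over the finite quotient to propagate unitarity. The only difference is organizational: the paper isolates these steps in a preliminary ``Claim'' and reduces to the case $\mc{G}=\mc{G}^\natural\cdot Z(\mc{G})$, whereas you carry out the same finite-index bookkeeping (Clifford decomposition, coset summation, form averaging) directly on $\mc{G}$.
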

\begin{proof}
The main part of the proof is contained in the following statement.

\ul{Claim:} There exists a continuous group homomorphism $\chi : \mc{G} \to \R_{>0}$ such that $\pi\otimes\chi$ is unitary if and only if $\pi|_{\mc{G}^{\natural}}$ is unitary, in which case $\pi|_{\mc{G}^{\natural}}$ is a finite direct sum of irreducible unitary representations.

\ul{Proof:} We begin by noticing that the claimed statement is insensitive to replacing $\mc{G}$ by a finite index subgroup $\mc{H}$ that contains $\mc{G}^\natural$. First, the representation $\pi|_{\mc{G}^\natural}$ is unaffected by this change. Second, if there exists $\chi : \mc{G} \to \C^\times$ such that $\pi \otimes \chi$ is unitary, then $\pi|_\mc{H} \otimes \chi|_\mc{H} = (\pi\otimes\chi)|_\mc{H}$ is also unitary. Third, if there exists $\chi : \mc{H} \to \C^\times$ such that $\pi \otimes \chi|_\mc{H}$ is unitary, then the divisibility of $\R_{>0}$ implies that $\chi$ has a unique extension to $\mc{G}$, and now we can use the finiteness of $\mc{G}/\mc{H}$ to average any $\mc{H}$-invariant scalar product on $\pi\otimes\chi$ to obtain a $\mc{G}$-invariant scalar product. Fourth, while $\pi|\mc{H}$ may no longer be irreducible, it is still semi-simple and of finite length \textcolor{red}{ref}, so we can apply the reasoning to each of its finitely many irreducible summands.

This observation allow us to assume that $\mc{G}=\mc{G}^\natural \cdot Z(\mc{G})$. Since $\pi$ is irreducible and admissible, its $(\g,K)$-module is also irreducible and admissible \textcolor{red}{ref}, and Dixmier's version of Schur's lemma
\cite[Lemma 0.5.3]{wallach}
implies that it has a central character
$\omega : Z(\mc{G}) \to \C^\times$, so its restriction to $\mc{G}^\natural$ is still irreducible. If there is $\chi : \mc{G} \to \R_{>0}$ so that $\pi \otimes \chi$ is unitary, then $(\pi\otimes\chi)|_{\mc{G}^\natural}$ is still unitary, but $\chi$ has trivial restriction to $\mc{G}^\natural$ by Lemma \ref{lem:comm}, so $\pi|_{\mc{G}^\natural}$ is unitary. Conversely, assume that $\pi|_{\mc{G}^\natural}$ is unitary. Note that, since the center of $G_\tx{sc}(\R)$ is finite, $|\omega|$ restricts trivially to $Z(\mc{G}) \cap \mc{G}^\natural$, hence extends to $\mc{G}$, and $\pi\otimes|\omega|^{-1}$ is a unitary representation of $\mc{G}$.\hfill $\sslash$

Now suppose $\pi|_{\mc{G}^\natural}$ is square integrable. In particular it is unitary, so by the above claim there is $\chi : \mc{G} \to \R_{>0}$ so that $\pi\otimes\chi$ is unitary. The matrix coefficients of $\pi|_{\mc{G}^\natural}$ are the restrictions of the matrix coefficients of $\pi$ to $\mc{G}^\natural$. It follows that the matrix coefficients of $\pi\otimes\chi$ lie in $L^2(\mc{G}^\natural/Z(\mc{G}^\natural))$. Since $\mc{G}^\natural/Z(\mc{G}^\natural)$ is of finite index in $\mc{G}/Z(\mc{G})$ we see that these matrix coefficients also lie in $L^2(\mc{G}/Z(\mc{G}))$, thus $\pi$ is essentially square-integrable.

Conversely, suppose that $\pi$ is essentially square-integrable, so that $\pi\otimes\chi$ is relatively square-integrable for some $\chi : \mc{G} \to \C^\times$. By above claim, $(\pi\otimes\chi)|_{\mc{G}^\natural}$ is a finite direct sum of irreducible unitary representations. By Lemma \ref{lem:comm}, $\chi|_{\mc{G}^\natural}$ is trivial, so $\pi|_{\mc{G}^\natural}$ is a finite direct sum of irreducible unitary representations. The matrix coefficients of these representations are the restrictions of those of $\pi\otimes\chi$, hence belong to $L^2(\mc{G}^\natural)$.
\end{proof}

\begin{lem} \label{lem:ani}
Assume that $G_\tx{ad}$ has an anisotropic maximal torus $S_\tx{ad}$. The preimage $\mc{S} \subset \mc{G}$ of $S_\tx{ad}(\R)$ equals $Z(\mc{G}) \cdot \mc{S}^\natural$, where $\mc{S}^\natural = \mc{S} \cap \mc{G}^\natural$. In particular, $\mc{S}$ is abelian.
\end{lem}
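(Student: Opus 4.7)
The plan is to reduce the first assertion to showing that every element of $S_\tx{ad}(\R)$ lifts to $\mc{G}^\natural$ under the homomorphism $\mc{G} \to G_\tx{ad}(\R)$. Granted this, for $s \in \mc{S}$ with image $\bar s \in S_\tx{ad}(\R)$, I pick any $s' \in \mc{G}^\natural$ mapping to $\bar s$ and set $z = s(s')^{-1}$. By assumption (2) the kernel of $\mc{G} \to G_\tx{ad}(\R)$ is $Z(\mc{G})$, so $z \in Z(\mc{G})$, and since $s$ and $z$ both lie in $\mc{S}$ we automatically have $s' = z^{-1}s \in \mc{S}$, hence $s' \in \mc{S}^\natural$. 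This yields $s \in Z(\mc{G})\cdot \mc{S}^\natural$.

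To produce such a lift, I observe that by assumption (3) the image of $\mc{G}^\natural$ in $G_\tx{ad}(\R)$ coincides with the image of $G_\tx{sc}(\R) \to G_\tx{ad}(\R)$. Since $G_\tx{sc}(\R)$ is connected (as already invoked in the proof of Lemma \ref{lem:comm}, citing \cite[\S7.2, Proposition 7.6]{PR94}) and $G_\tx{sc} \to G_\tx{ad}$ is a central isogeny inducing an isomorphism on Lie algebras, this image is exactly $G_\tx{ad}(\R)^\circ$. It therefore suffices to check $S_\tx{ad}(\R) \subseteq G_\tx{ad}(\R)^\circ$, and this is immediate from the anisotropy hypothesis: the real points of an anisotropic $\R$-torus form a compact torus and are in particular connected, so $S_\tx{ad}(\R) = S_\tx{ad}(\R)^\circ \subseteq G_\tx{ad}(\R)^\circ$.

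For the second assertion, let $S_\tx{sc} \subset G_\tx{sc}$ denote the preimage of $S_\tx{ad}$; it is a maximal $\R$-torus of $G_\tx{sc}$. An element $g \in G_\tx{sc}(\R)$ maps into $\mc{S}^\natural$ under $G_\tx{sc}(\R) \to \mc{G}$ iff its image in $G_\tx{ad}(\R)$ lies in $S_\tx{ad}(\R)$, iff $g \in S_\tx{sc}(\R)$. Thus $\mc{S}^\natural$ is a quotient of the abelian group $S_\tx{sc}(\R)$, hence abelian; and since $Z(\mc{G})$ is central in $\mc{G}$, the product $\mc{S} = Z(\mc{G}) \cdot \mc{S}^\natural$ is abelian as well. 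The argument is essentially without obstacle: the only nontrivial inputs are the compactness/connectedness of the real points of an anisotropic torus, and the standard identification of $\mathrm{Im}(G_\tx{sc}(\R) \to G_\tx{ad}(\R))$ with $G_\tx{ad}(\R)^\circ$, both of which are already in play in the preceding lemmas.
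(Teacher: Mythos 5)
Your argument is correct and is essentially the paper's: both reduce to lifting elements of $S_\tx{ad}(\R)$ into $\mc{G}^\natural$ through $G_\tx{sc}(\R)$ by a connectedness argument, the paper by proving $S_\tx{sc}(\R) \to S_\tx{ad}(\R)$ is surjective (an isogeny of compact connected tori on real points), you by noting $S_\tx{ad}(\R)$ is connected and hence lies in $G_\tx{ad}(\R)^\circ = \mathrm{Im}\big(G_\tx{sc}(\R) \to G_\tx{ad}(\R)\big)$. Your explicit identification $\mc{S}^\natural = \mathrm{Im}\big(S_\tx{sc}(\R)\big)$, used for the abelianness, is the same observation the paper leaves implicit, so no gap remains.
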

\begin{proof}
	Let $S_\tx{sc} \subset G_\tx{sc}$ be the preimage of $S_\tx{ad}$. It is also an anisotropic maximal torus. Then $S_\tx{sc}(\R)$ and $S_\tx{ad}(\R)$ are connected compact abelian Lie groups. Therefore, the natural map $S_\tx{sc}(\R) \to S_\tx{ad}(\R)$ is surjective. This implies $\mc{S} = Z(\mc{G}) \cdot \mc{S}^\natural$.
\end{proof}

Let $S_\tx{ad}(\R)_\pm \to S_\tx{ad}(\R)$ be the double cover defined in \S\ref{sub:covtori}. We define the double cover $\mc{S}_\pm \to \mc{S}$ as the fiber product of $\mc{S} \to S_\tx{ad}(\R) \from S_\tx{ad}(\R)_\pm$. The action of $\Omega(S_\tx{ad},G_\tx{ad})$ on $S_\tx{ad}(\R)$ lifts naturally to both $\mc{S}$ and $S_\tx{ad}(\R)_\pm$, hence also to $\mc{S}_\pm$.

\begin{thm} \label{thm:eds}
The set of eds representations of $\mc{G}$ is in bijection with the set of $\mc{G}$-conjugacy classes of pairs $(\mc{S},\tau)$, where $\mc{S}$ is the preimage in $\mc{G}$ is a compact maximal torus of $G_\tx{ad}(\R)$ and $\tau$ is a regular genuine character of $\mc{S}_\pm$. This representation is uniquely determined, among irreducible tempered representations, by the fact that its character $\Theta$ is given on $\mc{S}$ by the formula
\begin{equation} \label{eq:charfmla}
		(-1)^{q(G_\tx{sc})}\sum_{w \in N(\mc{S},\mc{G})/\mc{S}} \frac{\tau}{d_\tau}(w\dot\delta) = (-1)^{q(G_\tx{sc})}\sum_{w \in N(\mc{S},\mc{G})/\mc{S}} \frac{\tau'}{d'_\tau}(w\delta).
\end{equation}
Furthermore, $\Theta$ is supported on $Z(\mc{G})\cdot \mc{G}^\natural$.
\end{thm}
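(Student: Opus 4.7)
The plan is to reduce to Harish-Chandra's classical classification of discrete series on the connected semisimple group $\mc{G}^\natural$, and then to extend both by a central character for $Z(\mc{G})$ and by the finite-index inclusion $\mc{G}^\natural \cdot Z(\mc{G}) \hookrightarrow \mc{G}$. The starting input is Lemma \ref{lem:eds-equiv}: an irreducible admissible $\pi$ is eds iff $\pi|_{\mc{G}^\natural}$ is a finite direct sum of irreducible square-integrable representations of $\mc{G}^\natural$, and these summands form a single orbit under $\mc{G}$-conjugation. Since $\mc{G}^\natural$ is a connected semisimple Lie group with finite center (a finite central quotient of $G_\tx{sc}(\R)$), it lies in Harish-Chandra's class, and by Lemma \ref{lem:ani} it admits a compact Cartan $\mc{S}^\natural$ iff $G_\tx{ad}$ has an anisotropic maximal torus; this matches the domain of pairs $(\mc{S},\tau)$ in the theorem.

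Next I would invoke Harish-Chandra's discrete series theorem for $\mc{G}^\natural$: its irreducible square-integrable representations are parameterized by $\mc{G}^\natural$-conjugacy classes of regular Harish-Chandra parameters on $\mc{S}^\natural$. Via the formalism of \S\ref{sub:covtori}, these parameters are the regular genuine characters of the double cover $\mc{S}^\natural_\pm$, the passage to the double cover absorbing the $\rho$-shift intrinsic to Harish-Chandra's parameterization. His character formula on $\mc{S}^\natural$ then matches the right-hand side of \eqref{eq:charfmla} restricted to $\mc{G}^\natural$, with the sign $(-1)^{q(G_\tx{sc})}$ coming from the classical convention; equivalence of the two expressions in \eqref{eq:charfmla} reduces to the identity \eqref{eq:dgb}. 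Harish-Chandra's support theorem gives that the character of a discrete series of $\mc{G}^\natural$ is supported on the $\mc{G}^\natural$-conjugates of $\mc{S}^\natural$, from which the support statement for $\Theta$ on $Z(\mc{G}) \cdot \mc{G}^\natural$ follows after twisting by a central character.

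To extend to $\mc{G}$, I would write $\mc{S} = Z(\mc{G}) \cdot \mc{S}^\natural$ (Lemma \ref{lem:ani}), so that a genuine character $\tau$ of $\mc{S}_\pm$ is the same data as a pair $(\tau^\natural,\omega)$ consisting of a regular genuine character $\tau^\natural$ of $\mc{S}^\natural_\pm$ and a character $\omega$ of $Z(\mc{G})$ agreeing with $\tau^\natural$ on the finite intersection $Z(\mc{G}) \cap \mc{S}^\natural$. This determines a representation of $\mc{G}^\natural \cdot Z(\mc{G})$, and since this subgroup has finite index in $\mc{G}$ by condition (4), Clifford--Mackey theory produces from it an irreducible admissible representation of $\mc{G}$; $\mc{G}$-conjugacy of the pair $(\mc{S},\tau)$ is precisely the equivalence relation needed to match isomorphism classes of the resulting representations. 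Uniqueness among tempered irreducibles is a consequence of Harish-Chandra's rigidity theorem for invariant tempered eigendistributions: such a distribution is determined by its restrictions to the regular sets of the various Cartans, and the combination of vanishing off $Z(\mc{G}) \cdot \mc{G}^\natural$ with the explicit Weyl-anti-symmetric formula \eqref{eq:charfmla} on $\mc{S}$ leaves no freedom. The main obstacle I anticipate is the last step -- carefully reconciling $\mc{G}$-conjugacy classes of $(\mc{S},\tau)$ with isomorphism classes of the Mackey-induced representations, and verifying that the character formula survives this induction -- but the finiteness of $\mc{G}/(\mc{G}^\natural \cdot Z(\mc{G}))$ makes this a controlled bookkeeping step rather than a conceptual difficulty.
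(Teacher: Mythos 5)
Your overall route is the same as the paper's (reduce to Harish-Chandra on the semisimple part, splice with a central character of $Z(\mc{G})$, then induce across the finite-index normal subgroup $Z(\mc{G})\cdot\mc{G}^\natural$), but two steps as you state them do not hold up. First, your justification of the support statement is false: there is no ``Harish-Chandra support theorem'' asserting that the character of a discrete series of $\mc{G}^\natural$ is supported on the $\mc{G}^\natural$-conjugates of the compact Cartan $\mc{S}^\natural$. Already for $\tx{SL}_2(\R)$ the discrete series characters are nonzero on the split Cartan. The correct (and much softer) argument is the one the paper uses: once $\pi$ is realized as the induction of $\omega\cdot\pi^\natural$ from $Z(\mc{G})\cdot\mc{G}^\natural$, the Frobenius formula for the character of an induced representation, together with the normality of $Z(\mc{G})\cdot\mc{G}^\natural$ in $\mc{G}$, shows $\Theta$ vanishes off $Z(\mc{G})\cdot\mc{G}^\natural$; the character of $\pi^\natural$ itself is of course not supported on the elliptic set.

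Second, the heart of the matter is exactly the step you dismiss as ``controlled bookkeeping.'' Writing that Clifford--Mackey theory ``produces an irreducible admissible representation of $\mc{G}$'' begs the question: the induction of $\omega\cdot\pi^\natural$ from $Z(\mc{G})\cdot\mc{G}^\natural$ to $\mc{G}$ is irreducible if and only if the stabilizer of (the isomorphism class of) $\omega\cdot\pi^\natural$ in $\mc{G}$ is precisely $Z(\mc{G})\cdot\mc{G}^\natural$, and if this failed you would get several constituents per pair $(\mc{S},\tau)$ and the claimed bijection would break. The needed input is not formal: one uses the regularity of $\tau$ (equivalently of $d\tau_\tx{sc}$). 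Any $g\in\mc{G}$ stabilizing $\omega\cdot\pi^\natural$ stabilizes the $G_\tx{sc}(\R)$-conjugacy class of $(S_\tx{sc},\tau_\tx{sc})$; after multiplying by an element of $\mc{G}^\natural$ it stabilizes the pair itself, and regularity then forces its image in $G_\tx{ad}(\R)$ into $S_\tx{ad}(\R)$, hence $g\in\mc{S}=Z(\mc{G})\cdot\mc{S}^\natural\subset Z(\mc{G})\cdot\mc{G}^\natural$. With this stabilizer computation in hand, Frobenius reciprocity also gives that any eds representation whose restriction contains $\omega\cdot\pi^\natural$ equals the induction, which is what makes the map $(\mc{S},\tau)\mapsto\pi$ well defined and bijective, and it is also what feeds the uniqueness argument (the character formula exhibits $\omega\cdot\pi^\natural$ inside $\pi|_{Z(\mc{G})\cdot\mc{G}^\natural}$, and $\pi^\natural$ is determined by its character on $\mc{S}^\natural$ by Harish-Chandra); your appeal to a general ``rigidity theorem'' for tempered eigendistributions is too vague to substitute for this. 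So the skeleton is right, but the regularity-based stabilizer argument and the induced-character argument for the support claim must be supplied.
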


\begin{rem}
	We want to clarify the terms ``anisotropic'', ``elliptic'', and ``compact'', for maximal tori. We call an algebraic $\R$-torus $S$ anisotropic if it contains no non-trivial split torus. This is equivalent to the action of complex conjugation on its character module being multiplication by $-1$. This is also equivalent to the real Lie group $S(\R)$ being compact. We call a maximal torus $S \subset G$ elliptic, is $S/Z(G)$ is anisotropic. We may thus call $\mc{S}$ an ``elliptic'' maximal torus of $\mc{G}$.
\end{rem}

Before we give the proof we explain the notation. Due to the construction of the covers $S_\tx{sc}(\R)_\pm$ and $S_\tx{ad}(\R)_\pm$, the natural map $S_\tx{sc}(\R) \to S_\tx{ad}(\R)$ lifts naturally to a map $S_\tx{sc}(\R)_\pm \to S_\tx{ad}(\R)_\pm$, which leads to a lift of $S_\tx{sc}(\R) \to \mc{S}$ to a map $S_\tx{sc}(\R)_\pm \to \mc{S}_\pm$. But the cover $S_\tx{sc}(\R)_\pm$ splits canonically, because $\rho$ is divisible by $2$ in $X^*(S_\tx{sc})$. Therefore, we have a map $S_\tx{sc}(\R) \to \mc{S}_\pm$. Pulling back $\tau$ under this map produces a character $\tau_\tx{sc}$ of $S_\tx{sc}(\R)$. This character is algebraic since $S_\tx{sc}(\R)$ is compact, i.e. it is the restriction to $S_\tx{sc}(\R)$ of a (unique) element of  $X^*(S_\tx{sc})=\tx{Hom}_\tx{alg.grp}(S_\tx{sc}(\C),\C^\times)$. The regularity assumption on $\tau$ is that this element does not lie on any (absolute) root hyperplane. Note that this element of $X^*(S_\tx{sc})$ also coincides with the differential $d\tau_\tx{sc} : \tx{Lie}(S_\tx{sc})(\R) \to \tx{Lie}(\mb{S}^1)$ under the identifications $\tx{Lie}(S_\tx{sc})(\R)=[X_*(S_\tx{sc}) \otimes_\Z \C]^\Gamma = X_*(S_\tx{sc})\otimes_\Z i\R$ and $\tx{Lie}(\mb{S}^1)=i\R$. Hence $d\tau_\tx{sc}$ is a regular element of $X^*(S_\tx{sc})$, and specifies a Weyl chamber, i.e. a Borel $\C$-subgroup $B_\tau$ of $G_\tx{ad}$ containing $S_\tx{ad}$. Write $D_\tau$ in place of $D_{B_\tau}$ for the Weyl denominator \eqref{eq:weyldenom}. Since our convention (cf. \S\ref{sub:weyldenom}) is to normalize orbital integrals and characters by the absolute value of this denominator, we will only need $d_\tau=\tx{arg}D_\tau$. We compose $d_\tau$ with the map $\mc{S}_\pm \to S_\tx{ad}(\R)_\pm$. Both $\tau$ and $d_\tau$ are genuine functions of $\mc{S}_\pm$, so their quotient $\tau/d_\tau$ descends to $\mc{S}$.

In the second sum we have set $\tau'=\tau \cdot \rho_\tau^{-1}$, and $d_\tau'=d_\tau \cdot \rho_\tau^{-1}$, where $\rho_\tau=\rho_{B_\tau}$
and we are using the notation of \eqref{eq:dgb}. In this way, both numerator and denominator are functions on $\mc{S}$. Note that $\rho_B$ takes values in $\mb{S}^1$ because $S_\tx{ad}$ is elliptic.

\begin{dfn} \label{dfn:hcpar}
	We shall call the $\mc{G}$-conjugacy class of the pair $(\mc{S},\tau)$ the \emph{Harish-Chandra parameter} of the corresponding eds representation.
\end{dfn}

\begin{rem} \label{rem:hcpar}
	Given a pair $(\mc{S},\tau)$ as above we can consider the differential $d\tau : \mf{s}(\R) \to \tx{Lie}(\C^\times)=\C$, where $\mf{s}=\tx{Lie}(S)$. It is an $\R$-linear map which we may extend to a $\C$-linear map $\mf{s} \to \C$, and thus obtain an element of the dual space $\mf{s}^*$. We have the decomposition $\mf{s}=\mf{z} \oplus \mf{s}'$, where $\mf{z}=\tx{Lie}(Z(\mc{G}))$ and $\mf{s}'=\mf{s} \cap \mf{g}'$ and $\mf{g}'=\tx{Lie}(\mc{G}^\natural)$. This dualizes to $\mf{s}^*=\mf{z}^* \oplus (\mf{s}')^*$ and we can decompose $d\tau = d\tau_\mf{z} + d\tau'$. Note that $d\tau'$ coincides with the restriction of $d\tau$ to $\mf{s'} \subset \mf{s}$ and hence with the differential of $\tau_\tx{sc}$, i.e. $d\tau'=d\tau_\tx{sc}$. As discussed above we have $d\tau' \in X^*(S_\tx{sc}) \subset i\mf{s}'(\R)^*$. Finally note that the quotient $\mf{g}^* \to \mf{s}^*$ splits canonically by taking the subspace of $\mf{g}^*$ that is fixed by the coadjoint action of $\mc{S}$. The same holds for the quotient $\mf{g}' \to \mf{s}'$. Therefore we can identify $d\tau$ with an element of $\mf{z}^* \oplus i\mf{g}'(\R) \subset \mf{g}^*$.
\end{rem}

\begin{proof}[Proof of Theorem \ref{thm:eds}]
	When $\mc{G}=G(\R)$ with $G$ connected semi-simple simply connected $\R$-group, this theorem is a celebrated result of Harish-Chandra \cite{HCDSI}. The general case reduces to this by elementary considerations, which we now give for the convenience of the reader.

Let $\pi$ be an eds representation of $\mc{G}$. Since $\pi$ is irreducible and $Z(\mc{G})\cdot \mc{G}^\natural$ is of finite index in $\mc{G}$, the restriction of $\pi$ to $Z(\mc{G})\cdot \mc{G}^\natural$ is a finite direct sum of irreducible representations. They all share the same central character $\omega$, namely that of $\pi$. Therefore, each of them is of the form $\omega \cdot \pi^\natural$, with $\pi^\natural$ an irreducible representation of $\mc{G}^\natural$. The square-integrability of $\pi^\natural$ follows from that of $\pi$. The inflation of $\pi^\natural$ to $G_\tx{sc}(\R)$ is still square-integrable. By Harish-Chandra's result applied to $G_\tx{sc}(\R)$, there exists an anisotropic maximal torus $S_\tx{sc}$ of $G_\tx{sc}$ and a character $\tau_{\tx{sc}}$ of $S_{\tx{sc}}(\R)$ whose differential $d\tau_{\tx{sc}}\in X^*(S_{\tx{sc}})$ is regular, with the property that
$d\tau_{\tx{sc}}-\rho_\tau$ is in the same Weyl chamber as $\rho_\tau$, and so
the character of $\pi^\natural$ on $S_{\tx{sc}}(\R)$ is given by the character formula \eqref{eq:charfmla} for the group
 $G_{\tx{sc}}(\R)$ and the character $\tau_{\tx{sc}}$.

Since $\pi^\natural$ is pulled back from $\mc{G}^\natural$ and the central character of this pull-back is $\tau'_\tx{sc}|_{Z(G_\tx{sc})(\R)}$, we see that $\tau'_\tx{sc}$ factors through the surjection $S_\tx{sc}(\R) \to \mc{S}^\natural$. Its restriction to $\mc{S}^\natural \cap Z(\mc{G})$ matches the central character $\omega$ of $\pi$. Therefore $\tau'_\tx{sc}$ and $\omega$ splice to a character $\tau'$ of $Z(\mc{G}) \cdot \mc{S}^\natural$, which equals  $\mc{S}$ according to Lemma \ref{lem:ani}. Let $\tau=\tau' \cdot \rho_\tau$. Thus we have a pair $(\mc{S},\tau)$ as in the statement of the theorem. Since the $\mc{G}$-orbit of $\pi^\natural$ is determined by $\pi$, so is the $\mc{G}$-orbit of $(\mc{S},\tau)$.

	Conversely, given a $\mc{G}$-orbit of pairs $(\mc{S},\tau)$ we can invert the above procedure and obtain a $\mc{G}$-orbit of representations of $Z(\mc{G}) \cdot \mc{G}^\natural$, all sharing the same character of $Z(\mc{G})$, hence of the form $\omega \cdot \pi^\natural$, and with $\pi^\natural$ square-integrable.

	Next we claim that the induction of $\omega\cdot\pi^\natural$ to $\mc{G}$ is irreducible, and when $\omega\cdot\pi^\natural$ arises as an irreducible constituent of the restriction of an eds representation $\pi$ of $\mc{G}$, then $\pi$ is equal to that induction. Indeed, let $\pi_1$ be this induction. The latter claim reduces to the former by using Frobenius reciprocity to obtain a map $\pi \to \pi_1$ and using the irreducibility of $\pi$ and $\pi_1$. To show that $\pi_1$ is irreducible it suffices to check that the stabilizer of $\omega\cdot\pi^\natural$ for the action of $\mc{G}$ equals $\mc{Z}(G) \cdot \mc{G}^\natural$. If $g \in \mc{G}$ stabilizes $\omega\cdot\pi^\natural$, then it stabilizes the $G_\tx{sc}(\R)$-conjugacy class of $(S_\tx{sc},\tau_\tx{sc})$. Multiplying $g$ by an element of $\mc{G}^\natural$ we may assume it stabilizes the pair $(S_\tx{sc},\tau_\tx{sc})$. The regularity of $\tau_\tx{sc}$ now implies that the image of $g$ in $G_\tx{ad}(\R)$ lies in $S_\tx{ad}(\R)$, hence $g$ lies in $\mc{S}$, which by Lemma \ref{lem:ani} equals $Z(\mc{G})\cdot \mc{S}^\natural$ and hence lies in $Z(\mc{G})\cdot \mc{G}^\natural$. The claim has thus been proved.

	We have thus established that (isomorphism classes of) eds representations of $\mc{G}$ are in bijection with $\mc{G}$-conjugacy classes of pairs $(\mc{S},\tau)$ as in the statement of the theorem.

	Next we consider the character of a representation $\pi$ corresponding to $(\mc{S},\tau)$ evaluated at a regular element $s \in \mc{G}$. Since $\pi$ is induced from $\omega\cdot\pi^\natural$, the Frobenius formula gives
	\[ \Theta_\pi(s) = \sum_{\substack{g \in \mc{G}/Z(\mc{G})\cdot \mc{G}^\natural\\ g^{-1}sg \in Z(\mc{G})\cdot \mc{G}^\natural}} \Theta_{\omega\cdot\pi^\natural}(g^{-1}sg).\]
	Noting that $Z(\mc{G}) \cdot \mc{G}^\natural$ is normal in $\mc{G}$ we see that this character vanishes unless $s \in Z(\mc{G}) \cdot \mc{G}^\natural$. Next we take $s \in \mc{S} = Z(\mc{G}) \cdot \mc{S}^\natural$. Then the second condition in the sum is vacuous. Since all anisotropic tori in $G_\tx{ad}$ are conjugate under $G_\tx{sc}(\R)$ every coset in $\mc{G}/Z(\mc{G})\cdot \mc{G}^\natural$ has a representative lying in $N(\mc{S},\mc{G})$. The desired character formula for $\pi$ now follows from the corresponding formula for $\pi^\natural$.

	To show that $\pi$ is uniquely determined by its character restricted to $\mc{S}$ it is enough to observe that this is true for $\pi^\natural$ by Harish-Chandra's results, but the formula for $\pi$ shows that $\omega \cdot \pi^\natural$ is contained in the restriction of $\pi$ to $Z(\mc{G}) \cdot \mc{G}^\natural$.	
\end{proof}

% \subsection{Orbital integrals} \label{sub:orbit}

% Let $\gamma \in G(\R)_\tx{sr}$ be a strongly regular semi-simple element, i.e. one whose centralizer is a maximal torus $T$. Choose Haar measures $dg$ and $dt$ on $G(\R)$ and $T(\R)$. For a smooth compactly supported function $f : G(\R) \to \C$, its orbital integral is defined as
% \[ O_\gamma(f,dg/dt) = \int_{G(\R)/T(\R)} f(g \gamma g^{-1})dg/dt. \]
% Its stable orbital integral is defined as
% \[ SO_\gamma(f,dg/dt) = \sum_{\gamma'}O_{\gamma'}(f,dg/dt), \]
% where $\gamma'$ runs over a set of representatives for the $G(\R)$-conjugacy classes of strongly regular semi-simple elements of $G(\R)$ that are $G(\C)$-conjugate to $\gamma$. If $T'$ is the centralizer of such $\gamma'$, then any $g \in G(\C)$ with $g\gamma g^{-1}=\gamma'$ induces an isomorphism $\tx{Ad}(g) : T \to T'$ of $\R$-tori that depends only on $\gamma$ and $\gamma'$, but not on the choice of $g$. We use that isomorphism to transport $dt$ to $T'(\R)$.

\subsection{Endoscopic groups and double covers} \label{sub:covendo}

For the next few sections we will work with an arbitrary local field $F$; the case $F=\R$ is one example. Let $G$ be a connected reductive $F$-group.

The notion of endoscopic data is introduced in \cite[\S1.2]{LS87}, and is a variation of the notion of endoscopic pairs or endoscopic triples discussed in \cite{Kot84} and \cite{Kot86}. It can be described equivalently as follows. 

An endoscopic datum for $G$ is a tuple $(H,s,\mc{H},\eta)$ consisting of
\begin{enumerate}[label=(\arabic*)]
	\item a quasi-split connected reductive group $H$,
	\item an extension $1 \to \hat H \to \mc{H} \to \Gamma \to 1$ of topological groups,
	\item a semi-simple element $s \in Z(\hat H)$, and
	\item an $L$\-embedding $\mc{H} \to {^LG}$.
\end{enumerate}
It is required that
\begin{enumerate}[label=(\alph*)]
	\item the extension $\mc{H}$ admits a splitting by a continuous group homomorphism $\Gamma \to \mc{H}$,
	\item the homomorphism $\Gamma \to \tx{Out}(\hat H)$ provided by $\mc{H}$ coincides with the one provided by the extension $^LH$,
	\item $\eta$ identifies $\hat H$ with the identity component of the centralizer of $\eta(s)$ in $\hat G$,
	\item there exists $z \in Z(\hat G)$ such that $s\eta^{-1}(z) \in Z(\hat H)^\Gamma$.
\end{enumerate}
The map $\eta$ produces a $\Gamma$-equivariant embedding $Z(\hat G) \to Z(\hat H)$, that we will use without explicit notation.

An isomorphism $(H_1,s_1,\mc{H}_1,\eta_1) \to (H_2,s_2,\mc{H}_2,\eta_2)$ is an element $g \in \hat G$ that satisfies the following properties. First, $\tx{Ad}(g)\eta_1(\mc{H}_1)=\eta_2(\mc{H}_2)$. In particular, $\eta_2^{-1}\circ\tx{Ad}(g)\circ\eta_1$ is an $L$\-isomorphism $\mc{H}_1 \to \mc{H}_2$, and restricts to a $\Gamma$-equivariant isomorphism $Z(\hat H_1) \to Z(\hat H_2)$. The second condition is that the resulting isomorphism $\pi_0(Z(\hat H_1)/Z(\hat G)) \to \pi_0(Z(\hat H_2)/Z(\hat G))$ maps the coset of $s_1$ to the coset of $s_2$.

In this paper we are working with pure (resp. rigid) inner twists, and this necessitates a slight refinement of the notion of endoscopic datum. A \emph{pure refined} endoscopic datum is one in which it is required that $s \in Z(\hat H)^\Gamma$ in point (3), and this eliminates the need for condition (d). An isomorphism of such data is required to map the coset of $s_1$ to the coset of $s_2$ under $\pi_0(Z(\hat H_1)^\Gamma) \to \pi_0(Z(\hat H_2)^\Gamma)$, without dividing by $Z(\hat G)$. A \emph{rigid refined} endoscopic datum replaces $s \in Z(\hat H)^\Gamma$ by $\dot s \in Z(\hat{\bar H} )^+$. An isomorphism of such data is required to map the coset of $\dot s_1$ to the coset of $\dot s_2$ under $\pi_0(Z(\hat{\bar H_1})^+) \to \pi_0(Z(\hat{ \bar H_2})^+)$. We recall here that $\hat{\bar G}$ is the universal cover (as a complex Lie group) of $\hat G$, and $\hat{\bar H}$ is the fiber product of $\hat H \to \hat G \from \hat{\bar G}$ and the superscript $+$ denotes the inverse image in $\hat{\bar H}.$

Given an $L$\-parameter $\varphi : W_\R \to {^LG}$ and a semi-simple element $s \in S_\varphi$, where $S_\varphi=\tx{Cent}(\varphi,\hat G)$, one obtains a pure refined endoscopic datum as follows. Set $\hat H = \tx{Cent}(s,\hat G)^\circ$. The homomorphism  $\varphi : W_\R \to \tx{Cent}(s,\hat G) \to \tx{Out}(\hat H)$ factors through the projection $W_\R \to \Gamma$. There is a unique (up to isomorphism) quasi-split connected reductive $\R$-group $H$ with dual group $\hat H$ such that the homomorphism $\Gamma \to \tx{Out}(H)=\tx{Out}(\hat H)$ induced by the $\R$-structure of $H$ matches the one induced by $\varphi$. Set $\mc{H}=\hat H \cdot \varphi(W_\R)$, and let $\eta$ be the tautological inclusion $\mc{H} \to {^LG}$. In the rigid setting, the same construction works starting with $\dot s \in S_\varphi^+$, where $S_\varphi^+$ is the preimage in $\hat{\bar G}$ of $S_\varphi$.  

By construction the parameter $\varphi$ takes values in $\mc{H}$. However, the extensions $\mc{H}$ and $^LH$ of $\Gamma$ by $\hat H$ need not be isomorphic, and even if they are, there is no natural isomorphism between them. Therefore, $\varphi$ is \emph{not} a parameter for $H$ in any natural way. There are two ways to remedy this situation.

The classical approach is to choose a $z$-extension $H_1 \to H$ and an $L$\-embedding $\mc{H} \to {^LH_1}$ that extends the natural embedding $\hat H \to \hat H_1$. These choices (which always exist, cf. \cite[\S2.2]{KS99}) are called a $z$-pair. They provide a parameter $\varphi_1$ for $H_1$.

An approach introduced in \cite{KalHDC} is to extend the theory of double covers of tori from \cite{KalDC} to the setting of quasi-split connected reductive groups. The datum $\mc{H}$ then leads to a \emph{canonical} double cover $H(F)_\pm$ of $H(F)$ and a \emph{canonical} isomorphism $^LH_\pm \to \mc{H}$. In this way, $\varphi$ naturally becomes a parameter for $H(F)_\pm$, and there are no choices involved. We will write $\varphi' : W_\R \to {^LH_\pm}$ for this parameter, so that $\varphi$ is the composition of $\varphi'$ with the natural isomorphism $^LH_\pm \to \mc{H}$ and the inclusion $\mc{H} \to {^LG}$.

\subsection{Transfer of orbital integrals} \label{sub:transfer}

We continue with a local field $F$ (the main example for this note being $F=\R$), a connected reductive $F$-group $G$, and an endoscopic datum $(H,s,\mc{H},\eta)$ for $G$. We shall further assume that $F$ has characteristic zero, because Theorem \ref{thm:orbtrans} below has so far been proved only under this assumption. In the pure setting we demand $s \in Z(\hat H)^\Gamma$, and in the rigid setting we demand $s \in Z(\hat{\bar H})^+$. We will review here the transfer of orbital integrals, which is dual to the transfer of characters that is the subject of this paper.

As explained in \S\ref{sub:covendo}, in the classical setting order to formulate endoscopic transfer (both geometric and spectral), one has to make an arbitrary choice of a $z$-pair $(H_1,\eta_1)$, where $H_1 \to H$ is a surjective homomorphism of algebraic groups whose kernel is an induced torus, and $\eta_1 : \mc{H} \to {^LH_1}$ is an $L$\-embedding. 

Alternatively, \cite{KalHDC} shows that there is a canonical double cover $H(F)_\pm \to H(F)$ and a canonical isomorphism $^LH_\pm \to \mc{H}$, whose composition with $\eta$ then becomes an $L$\-embedding $^LH_\pm \to {^LG}$. In this framework, no auxiliary choices are needed.

The transfer of orbital integrals and characters between $G$ and $H$ is governed by the transfer factor. In the classical case this is a function
\[ \Delta : H_1(F)^\tx{sr} \times G(F)^\tx{sr} \to \C \]
that depends on the $z$-pair datum, while in the setting of covers it is a function
\[ \Delta : H(F)_\pm^\tx{sr} \times G(F)^\tx{sr} \to \C \]
that is genuine in the first argument. 

In the classical case, it is given as the product
\[ \epsilon \cdot \Delta_I^{-1}\Delta_{II}\Delta_{III_1}^{-1}\Delta_{III_2}. \]
The individual factors are defined in \cite{LS87}, except for $\epsilon$, which is defined in the more general twisted setting in \cite[\S5.3]{KS99}, and $\Delta_{III_1}$, whose relative definition is given in \cite{LS87}, but whose absolute definition is given in \cite{KalECI} in the setting of pure inner forms, and in \cite{KalRI} in the setting of rigid inner forms. The inverses appear due to the conventions of \cite[(1.0.4)]{KS12}, which we will use in this paper. The term $\Delta_{IV}$ is missing because we have normalized orbital integrals and characters by the Weyl denominator. We will not review the construction of the individual pieces here, as it has been reviewed in various other places, such as \cite[\S3.5,\S4.2,\S4.3]{KalIMS}. The individual factors depend on auxiliary data, known as $a$-data and $\chi$-data. The total factor depends on a choice of Whittaker datum, and $z$-datum.

In the case of covers, the transfer factor becomes the product
\[ \epsilon \cdot \Delta_I^{-1} \cdot \Delta_{III}.\]
The terms $\Delta_I$ and $\Delta_{III}$ are slightly different from the original ones, and are defined in \cite[\S4.3]{KalHDC}. Neither of them depends on the auxiliary $a$-data and $\chi$-data, but they are defined on certain covers of tori, and one could argue that the elements of those covers count as auxiliary data. The product does not require a $z$-datum, but it does depend on a Whittaker datum.

Recall the notation $O_\gamma(f)$ for normalized orbital integrals from \S\ref{sub:weyldenom}. We define the stable orbital integral
\[ SO_\gamma(f) = \sum_{\gamma'} O_{\gamma'}(f), \]
where $\gamma'$ runs over a set of representatives for the $G(F)$-conjugacy classes of those elements that are stably conjugate, i.e. $G(F_s)$-conjugate, to $\gamma$.

In the setting of covers, orbital integrals and stable orbital integrals are defined analogously, using the fact that every admissible isomorphism $T \to T'$ between maximal tori of $G$ maps $R(T,G)$ to $R(T',G)$ and hence lifts canonically to an isomorphism $T(F)_G \to T'(F)_G$. We apply this to the isomorphism induced by conjugation or stable conjugation. 

We now state the theorem asserting transfer of orbital integrals. In the case $F=\R$ it is a fundamental result of Shelstad, \cite{She82}, \cite{SheTE1}. In the case of $F/\Q_p$ it is a culmination of the work of many people, including Langlands and Shelstad \cite{LS87}, \cite{LS90}, Waldspurger \cite{Wal97}, \cite{Wal06ECC}, and Ngo \cite{Ngo10}. We state two versions, one using the cover $H(F)_\pm$ and one using a $z$-pair $(H_1,\eta_1)$. 

\begin{thm} \label{thm:orbtrans}
Let $f \in \mc{C}_c^\infty(G(F))$.
\begin{enumerate}
	\item There exists a genuine function $f^{H_\pm} \in \mc{C}_c^\infty(H(F)_\pm)$ such that for all $\dot\gamma \in H(F)_\pm^\tx{sr}$
	\[ SO_{\dot\gamma}(f^{H_\pm}) = \sum_\delta \Delta(\dot\gamma,\delta) O_\delta(f). \]
	\item Assume we have chosen a $z$-pair $(H_1,\eta_1)$. There exists a genuine function $f^{H_1} \in \mc{C}_c^\infty(H_1(F))$ such that for all $\gamma_1 \in H_1(F)^\tx{sr}$
	\[ SO_{\gamma_1}(f^{H_1}) = \sum_\delta \Delta(\gamma_1,\delta) O_\delta(f). \]
\end{enumerate}

In both cases $\delta$ runs over the set of $G(F)$-conjugacy classes in $G(F)^\tx{sr}$.
\end{thm}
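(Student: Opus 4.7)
The plan is first to reduce the two statements to one another. Statement (2) involves functions on the $z$-extension $H_1(F)$ transforming by a character $\lambda$ of $K(F)=\tx{ker}(H_1(F)\to H(F))$, while statement (1) involves genuine functions on the double cover $H(F)_\pm$. As recalled in the introduction, choosing a $z$-pair $(H_1,\eta_1)$ determines such a $\lambda$, and the covers $H(F)_\pm$ and the $\lambda$-isotypic functions on $H_1(F)$ are matched in a way compatible with the two forms of the transfer factor. This reduces us to proving (1).

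For (1) over $F=\R$ I would follow Shelstad's approach. The key input is Harish-Chandra's characterization of the image of the normalized orbital integral map $f\mapsto O_\bullet(f)$: in its stable form for $H(F)_\pm$, it singles out the genuine functions $\Phi$ on the strongly regular locus that are smooth on every maximal torus, are compactly supported modulo center, are Weyl-compatible, and satisfy prescribed jump relations at singular elements expressed through Cayley transforms along non-compact imaginary roots. The strategy is to set
\[ \Phi(\dot\gamma) = \sum_\delta \Delta(\dot\gamma,\delta)\, O_\delta(f) \]
and verify these conditions for $\Phi$; Harish-Chandra's theorem then yields a genuine $f^{H_\pm}\in \mc{C}_c^\infty(H(F)_\pm)$ with $SO_{\dot\gamma}(f^{H_\pm})=\Phi(\dot\gamma)$, which is the function we need. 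Smoothness, Weyl-compatibility and support are straightforward: in the cover formulation of \cite{KalHDC}, $\Delta(\dot\gamma,\delta)$ is a locally constant function of $\dot\gamma$ on the regular locus with values in a bounded set of roots of unity, and for each compact set of $\gamma$ only finitely many $G(F)$-classes $\delta$ contribute.

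The main obstacle is verifying the jump relations at singular elements of $H(F)_\pm$. This is a delicate local computation around a regular element of a Levi: one must use an admissible embedding of a common torus $S$ into both $H$ and $G$, match the roots of $S$ in $H$ with a subsystem of its roots in $G$, identify the non-compact imaginary roots on the $H$-side with the corresponding ones on the $G$-side, and show that the jumps of the $O_\delta(f)$ weighted by $\Delta(\dot\gamma,\delta)$ combine on the nose to the jump of $\Phi$ on $H$. The argument rests on the factorization $\Delta=\epsilon\cdot\Delta_I^{-1}\cdot\Delta_{III}$ together with the compatibility of $\Delta_I$ and $\Delta_{III}$ with Cayley transforms; this is the technical heart of Shelstad's original proof, and the place where the explicit form of the transfer factor is indispensable. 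I would expect this to be the hardest step, and would rely on reducing it via Harish-Chandra descent to the case of a semi-simple group with a single non-compact imaginary root at the singular point, where the matching can be checked directly using the shape of $\Delta_I$ on the relevant cover of the torus (cf.\ Proposition \ref{pro:magic} cited in the introduction).

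For $F$ non-archimedean of characteristic zero the proof follows a completely different route. Waldspurger reduced the general transfer theorem to the fundamental lemma for the unit element of the unramified spherical Hecke algebra of an endoscopic pair, and Ngô proved that fundamental lemma using the geometry of the Hitchin fibration. Since the focus of the present paper is $F=\R$, I would invoke the $p$-adic case as a black box, citing \cite{LS87}, \cite{LS90}, \cite{Wal97}, \cite{Wal06ECC}, and \cite{Ngo10}, rather than reprove it here.
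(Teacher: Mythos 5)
The paper does not prove this theorem at all: it is stated in \S\ref{sub:transfer} as a recollection and attributed to the literature, with $F=\R$ credited to Shelstad (\cite{She82}, \cite{SheTE1}) and the $p$-adic case to Langlands--Shelstad, Waldspurger, and Ng\^o (\cite{LS87}, \cite{LS90}, \cite{Wal97}, \cite{Wal06ECC}, \cite{Ngo10}). Your proposal is therefore consistent with the paper's treatment: citing the $p$-adic case as a black box and sketching Shelstad's strategy for $\R$ (characterize the image of the stable orbital integral map by smoothness, Weyl-compatibility, support, boundedness and jump conditions, then verify these for $\Phi(\dot\gamma)=\sum_\delta\Delta(\dot\gamma,\delta)O_\delta(f)$) is exactly how the result is established in the sources the paper points to.

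Two caveats so that you do not mistake the sketch for a proof. First, the step you rightly call the technical heart --- matching the jump relations across singular elements on the $H$-side with the weighted jumps of the $O_\delta(f)$, via Cayley transforms along non-compact imaginary roots --- is only described, not carried out; this is precisely the content of Shelstad's work and cannot be absorbed into a remark about the shape of $\Delta_I$ (and invoking Proposition \ref{pro:magic} here is somewhat out of place, since that proposition is proved later in the paper for the spectral identities, not for geometric transfer). Second, your reduction of statement (2) to statement (1) needs the comparison between the transfer factor on $H(F)_\pm$ and the classical factor on a $z$-pair $(H_1,\eta_1)$, i.e.\ that the genuine functions on $H(F)_\pm$ and the $\lambda^{-1}$-equivariant picture on $H_1(F)$ are matched compatibly with both factors; this is available in \cite{KalHDC} but should be cited explicitly rather than treated as immediate. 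With those provisos, your outline is an accurate account of how the theorem is proved in the literature, which is all the paper itself does.
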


\begin{dfn} \label{dfn:matching}
The functions $f$ and $f^{H_\pm}$ are called \emph{matching}. The functions $f$ and $f^{H_1}$ are called \emph{matching}.
\end{dfn}

\begin{rem} \label{rem:matchmeasures}
	Since the orbital integral $O_\delta(f)$ depends on the choice of measures on $G(F)$ and $T(F)$, $T=\tx{Cent}(\delta,G)$, the concept of matching functions also depends on the choice of measures on $G(F)$, $H(F)$, and all tori in $G$ and $H$. There is a way to synchronize the various tori, see Remark \ref{rem:measures}.

	We note further that the normalization of orbital integrals we are using (\S\ref{sub:weyldenom}) cancels the normalization of the transfer factor we are using (it is missing the $\Delta_{IV}$ piece) and therefore the notion of matching functions of Definition \ref{dfn:matching} is the same as that obtained by using un-normalized orbital integrals and a transfer factor containing the $\Delta_{IV}$ piece.
\end{rem}

For the computations of this paper it would be useful to review some basic properties of the transfer factor. The first property concerns the behavior of $\Delta$ under stable conjugacy in the variables $\gamma$ and $\delta$. If $\dot\gamma$, resp. $\gamma_1$, is replaced by a stable conjugate, then the value of $\Delta$ doesn't change. 

To speak about stably conjugacy in $\delta$, we need to be a bit more precise about the group $G$. In order to have a normalized transfer factor, one must realize $G$ as a pure (or rigid) inner form of its quasi-split form, thus fix a pure (or rigid) inner twist $(\xi,z) : G_0 \to G$, with $G_0$ quasi-split. It can be notationally convenient to write $\Delta(\dot\gamma,(z,\delta))$ in place of $\Delta(\dot\gamma,\delta)$ in order to record $z$. Then we have 
\begin{equation} \label{eq:tfstab}
	\Delta(\dot\gamma,(z_2,\delta_2)) = \Delta(\dot\gamma,(z_1,\delta_1)) \cdot \<\tx{inv}((z_1,\delta_1),(z_2,\delta_2)),\hat \jmath\,^{-1}(s)\>.
\end{equation}
This is in the setting of covers and we refer to \cite[Lemma 4.3.1]{KalHDC}. In the classical setting, where $\dot\gamma$ is replaced by $\gamma_1$, we refer to \cite[Definitions 4.2.7,4.3.11]{KalIMS}, or \cite[\S4.1]{LS87}.

To explain this formula let us denote by $\gamma \in H(F)$ the image of $\dot\gamma$ resp. $\gamma_1$, and let $S \subset H$ be the centralizer of $\gamma \in H(F)$. Let $T \subset G_{z_1}$ be the centralizer of $\delta_1$. Using the discussion of \S\ref{sub:adm} we obtain from the inclusion $S \to H$ a $\Gamma$-stable $\hat H$-conjugacy class of embeddings $\hat S \to \hat H$. Composing this with the inclusion $\eta : \hat H \to \hat G$ we obtain a $\Gamma$-stable $\hat G$-conjugacy class of embeddings $\hat S \to \hat G$, hence conversely a $\Gamma$-stable $G(F^s)$-conjugacy class of embeddings $S \to G$. Among them, there is a unique one that maps $\gamma$ to $\delta_1$. It is automatically defined over $F$, and we call it $j$. Its dual-inverse $\hat\jmath^{-1}$ is an isomorphism $\hat S \to \hat T$. Composing with the canonical inclusion $Z(\hat H) \to \hat S$ we can use it to transport the element $s \in Z(\hat H)^\Gamma$ to $\hat T^\Gamma$, where it can be paired with elements of $H^1(F,T)$ by Tate-Nakayama duality. This is in the setting of pure inner twists, and the setting of rigid inner twists is analogous.

The second property is related to the definition of $\Delta_{III}$. We  first present an idealized situation. Assume that an $L$\-embedding $^LH \to {^LG}$ exists and has been fixed. Assume further that $L$\-embeddings $^LS \to {^LH}$ and $^LT \to {^LG}$ have been fixed. These always exist if we use the Weil-forms of the $L$\-groups, but are not unique. The isomorphism $j : S \to T$ from the previous paragraph induces an $L$\-isomorphism $^LS \to {^LT}$. We now have four maps that fit in a square, but this square has no reason to commute. More precisely, there does exist an $L$\-isomorphism $^LS \to {^LT}$ that does make the square commute, but it is not necessarily the one induced from $j$. Rather, it differs from it by multiplication by an element $a \in H^1(W_F,\hat S)$. This element is the $L$\-parameter of a character of $S(F)$, which we may denote by $\<a,-\>$. By definition
\[ \Delta_{III}(\gamma,\delta)=\<a,\gamma\>. \]
Even though $\delta$ doesn't appear on the right, it influences the construction via the map $j$, which depends on $\delta$. 

The key property of this definition that we will need is the following. Assume given $L$\-parameters $\varphi_S : W_F \to {^LS}$ and $\varphi_T : W_F \to {^LT}$, with the property that composing them with the $L$\-embeddings $^LS \to {^LH}$, $^LH \to {^LG}$, and $^LT \to {^LG}$, provides a commutative triangle. If $\theta_S$ and $\theta_T$ are the characters of $S(F)$ and $T(F)$ corresponding to these parameters, then we have
\begin{equation} \label{eq:tfd3}
\Delta_{III}(\gamma,\delta) = \theta_T(\delta)/\theta_S(\gamma).	
\end{equation}
To see this one observes that in the diagram
\[ \xymatrix{
	&^LS\ar[dd]_a\ar[r]&^LH\ar[dd]\\
	W_F\ar[ru]^{\varphi_S}\ar[rd]_{\varphi_T}\\
	&^LT\ar[r]&^LG,
}
\]
the square commutes by definition of $a$, hence the trangle commutes by assumptions on $\varphi_S$ and $\varphi_T$.

We now comment on the practical, rather than idealized, situation. In the classical setting an $L$\-embedding $^LH \to {^LG}$ doesn't always exist. Instead, one has to choose a $z$-pair $(H_1,\eta_1)$ and then has $^LG \leftarrow \mc{H} \to {^LH_1}$. This makes the first variable of $\Delta_{III}$ an element of $S_1(F)$, where $S_1$ is the preimage of $S$ in $H_1$. Moreover, the $L$\-embeddings $^LS \to {^LH}$ and $^LT \to {^LG}$ are specified in terms of auxiliary data, called $\chi$-data, and the term $\Delta_{III}$ depends on that choice, as well as on the choice of $z$-pair.

In the setting of covers, all $L$\-embeddings are canonical, and no auxiliary data need to be chosen, but the $L$\-groups that are involved are those of certain covers. Therefore, $\Delta_{III}$, while independent of any choices, has its first variable coming from the double cover $S(F)_\pm$ of $S(F)$ that is the pull-back of $S(F)$ under the canonical double cover $H(F)_\pm \to H(F)$. With these provisos, the obvious analog of \eqref{eq:tfd3} still holds.

% this factor depends on the choice of $\chi$-data. This choice produces $L$\-embeddings $^LS \to {^LH}$ and $^LT \to {^LG}$. If we assume that $H_1=H$ and $\eta_1 : {^LH} \to {^LG}$ is an $L$\-embedding, then composing we obtain $L$\-embeddings $^LS \to {^LG}$ and $^LT \to {^LG}$. On the other hand we have the isomorphism $j : S \to T$ reviewed above, and it induces an $L$\-isomorphism $^LS \to {^LG}$. Composing again we now have two $L$\-embedings $^LS \to {^LG}$. They necessarily differ by an element of $H^1(W_\R,\hat S)$, which is the parameter of a character $\chi$ of $S(\R)$. Then $\Delta_{III}(\gamma,\delta)=\chi(\gamma)$. From this definition one sees the following property. If $\varphi : W_\R \to {^LS}$ is a the parameter of a character $\theta$ of $S(\R)$ and $\varphi' : W_\R \to {^LT}$ is the parameter of a character $\theta'$ of $T(\R)$, and if composing these parameters with $^LS \to {^LH} \to {^LG}$ and $^LT \to {^LG}$ we obtain the same parameter for $^LG$, then
% \begin{equation} \label{eq:tfd3}
% 	\Delta_{III}(\gamma,\delta) = \theta(\gamma_1,\delta))/\theta'((\gamma_1,\delta)),
% \end{equation}
% where we have transported $\theta_1$ and $\theta_1'$ to characters $\theta$ and $\theta'$ on $S_1 \times_S T$.

% The setting of non-trivial $z$-pair is similar, but more notationally heavy, and one has $\Delta_{III}(\gamma_1)=\chi(\gamma_1)$ for a character $\chi$ of $S_1(\R)$, where $S_1 \subset H_1$ is the preimage of $S$. 

Finally, we  need to review the factor $\epsilon$ and compute it in the case of the base field $\R$, where the computation is rather straightforward. Consider the universal maximal torus $T_0^G$ of $G$ and $T_0^H$ of $H$. The complexified character modules $V_G := X^*(T_0^G)\otimes_\Z\C$ and $V_H := X^*(T_0^H)\otimes_\Z\C$ are self-dual Artin representations of the same dimension. Choose an $\R$-pinning of the quasi-split form $G_0$ of $G$ and a non-trivial additive character $\Lambda : \R \to \C$, which combine to give the Whittaker datum fixed for $G_0$. Then
\[ \epsilon = \epsilon(1/2,V_G-V_H,\Lambda), \]
where we have use Langlands' convention \cite[(3.6.4)]{TateCor} for the $\epsilon$-factor. The pinning is used in the construction of $G_0$.

\begin{lem} \label{lem:epsilon}
Let $\Lambda(x)=e^{irx}$ with $r>0$. Then
\[ \epsilon = (-1)^{q(H)-q(G_0)}i^{r_G/2-r_H/2}, \]
where $r_G$ is the number of roots in the absolute root system of $G$, and $r_H$ is the analogous number for $H$.
\end{lem}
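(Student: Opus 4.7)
The proof proceeds in three steps: decompose $V_G$ and $V_H$ as Artin representations of $W_\R$, apply the standard epsilon-factor formulas at $s=1/2$, and identify the resulting power of $i$ with the right-hand side by means of a structural identity for real reductive groups.

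First, since $V_G$ is an Artin representation of $W_\R$ factoring through $\Gamma=\{1,c\}$, it decomposes as a direct sum of trivial and sign characters: $V_G = a_G\,\mathbf{1}\oplus b_G\,\tx{sgn}$, with $a_G + b_G = \rk G$, and analogously $V_H = a_H\,\mathbf{1}\oplus b_H\,\tx{sgn}$. The integer $b_G$ equals the dimension of the $(-1)$-eigenspace of complex conjugation on $V_G$, equivalently the compact rank of the maximally split pinning torus of $G_0$. Since an endoscopic pair shares absolute rank, $a_G + b_G = a_H + b_H$, so $a_G - a_H = b_H - b_G$.

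Second, with Langlands' convention \cite[(3.6.4)]{TateCor} and the additive character $\Lambda(x)=e^{irx}$ with $r>0$, the standard epsilon factors read $\epsilon(1/2,\mathbf{1},\Lambda)=1$ and $\epsilon(1/2,\tx{sgn},\Lambda)=i\cdot\tx{sgn}(r)=i$. Multiplicativity on virtual Artin representations then gives
\[ \epsilon \;=\; \epsilon(1/2, V_G - V_H, \Lambda) \;=\; i^{b_G - b_H}. \]

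Third, it remains to match this with the asserted expression. Equivalently, the lemma is the congruence
\[ b_G - b_H \;\equiv\; \tfrac{1}{2}(r_G - r_H) + 2\bigl(q(G_0) - q(H)\bigr)\pmod 4. \]
I would prove this by showing that the quantity $F(G) := i^{b_G - r_G/2}\cdot (-1)^{q(G_0)}$ is invariant under the elliptic endoscopic relation, i.e.\ $F(G_0) = F(H)$. One approach is to partition the roots of $G$ into (i) roots of $H$ (those $\alpha$ with $\alpha(s)=1$) and (ii) Galois orbits of ``endoscopic'' roots (those with $\alpha(s)\neq 1$), and verify that each orbit in (ii) contributes in a compatible way to $b_G - b_H$, to $r_G - r_H$, and to the count of noncompact positive imaginary roots (which governs $q(G_0) - q(H)$), so that the displayed congruence modulo $4$ holds.

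\textbf{The main obstacle} is Step~3, because the three invariants are naturally computed with respect to different tori: $b_G$ and $r_G$ live on the maximally split pinning torus of $G_0$, whereas $q(G_0)$ is most easily expressed in terms of a fundamental compact Cartan subgroup via the noncompact-imaginary-root count. Reconciling the two requires a Cayley-transform analysis, or alternatively an appeal to Kottwitz's sign formula $e(G)=(-1)^{q(G)-q(G^*)}$ together with a case analysis by root type (real, compact imaginary, noncompact imaginary, complex). An attractive alternative would be to first verify the identity in the case where $H$ is an elliptic maximal torus of $G_0$ (where $r_H=0$ and the formulas simplify considerably), and then propagate the result to the general elliptic endoscopic pair by a compatibility argument.
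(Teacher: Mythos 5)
Your Steps 1 and 2 are correct and agree with the paper: decomposing $V_G$ into $a_G$ copies of $\mathbf{1}$ and $b_G$ copies of $\tx{sgn}$ (so $b_G$ is the compact rank of the pinning torus, $b_G=\tx{rk}(G)-\dim A_0^G$ in the notation of the paper), Tate's formulas give $\epsilon=i^{b_G-b_H}$. The genuine gap is Step 3, which you yourself flag as the main obstacle: the congruence $b_G-b_H\equiv \tfrac12(r_G-r_H)+2\bigl(q(G_0)-q(H)\bigr)\pmod 4$ is never proved. What you offer is a plan, not an argument: the orbit-by-orbit bookkeeping over roots with $\alpha(s)\neq 1$ must compare invariants that live on different Cartan subgroups ($b_G,r_G$ on the maximally split torus, $q$ on a fundamental/compact one), and this is precisely the difficulty you acknowledge but do not resolve; the alternative ``do the elliptic-torus case and propagate'' leaves the propagation step, which is the whole content, unjustified.

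The missing step has a short direct proof, and it is the one the paper uses. Both $G_0$ and $H$ are quasi-split, so an $\R$-Borel exists and the Lie algebra of its unipotent radical has real dimension equal to the number of positive absolute roots, i.e.\ $r_G/2$ (every absolute root restricts nontrivially to $A_0^G$ because $\tx{Cent}(A_0^G)=T_0^G$ is a torus). The Iwasawa decomposition $\tx{Lie}(G_0)(\R)=\mf{a}\oplus\mf{n}\oplus\mf{k}$ then gives $2q(G_0)=\dim\mf{a}+\dim\mf{n}=\dim A_0^G+r_G/2$, and likewise $2q(H)=\dim A_0^H+r_H/2$. Since $G$ and $H$ have the same absolute rank, subtracting yields the exact integer identity $b_G-b_H=\dim A_0^H-\dim A_0^G=2\bigl(q(H)-q(G_0)\bigr)+\tfrac12(r_G-r_H)$ — no reduction mod $4$ and no case analysis by root type is needed — and substituting into $\epsilon=i^{b_G-b_H}$ gives the lemma. (The integrality of $q(G_0)$ and $q(H)$, implicitly needed for the sign $(-1)^{q(H)-q(G_0)}$, holds because both groups have elliptic maximal tori in the present setting.)
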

\begin{proof}
Let $A_0^G \subset T_0^G$ be the maximal split torus ($X_*(A_0^G)=X_*(T_0^G)^\Gamma$), and $S_0^G \subset T_0^G$ the maximal anisotropic torus ($X^*(S_0^G)=X^*(T_0^G)/X^*(T_0^G)^\Gamma$). Then $X^*(T_0^G)_\C=X^*(A_0^G)_\C \oplus X^*(S_0^G)_\C$, where we have abbreviated $\otimes_\Z\C$ by the subscript $\C$. One has $\epsilon(1/2,\textbf{1},\Lambda)=1$ and $\epsilon(1/2,\tx{sgn},\Lambda)=i$ according to \cite[(3.2.4)]{TateCor}, hence
\[ \epsilon(1/2,V_G,\Lambda)=i^{d-\dim(A_0^G)}, \]
where $d=\dim(T_0^G)$. We use the same computation for $H$ and conclude
\[ \epsilon=\frac{\epsilon(1/2,X^*(T_0^G)_\C,\Lambda)}{\epsilon(1/2,X^*(T_0^H)_\C,\Lambda)} = \frac{i^{d-\dim(A_0^G)}}{i^{d-\dim(A_0^H)}}=i^{\dim(A_0^H)-\dim(A_0^G)}. \]

The Iwasawa decomposition $\tx{Lie}(G_0)=\mf{a} \oplus\mf{n} \oplus \mf{k}$ shows  $2q(G_0)=\dim(\mf{a})+\dim(\mf{n})=\dim(A_0^G)+r_G/2$. We note that $q(G_0)$ is an integer, becaus e $G_0$ has an elliptic maximal torus, and $q(G_0)$ equals the number of positive non-compact roots with respect to any Weyl chamber. Therefore
\[ \dim(A_0^H)-\dim(A_0^G) = 2(q(H)-q(G_0))+(r_G/2-r_H/2).\qedhere \]
\end{proof}

\subsection{The Weyl integration formula and its stable analog} \label{sub:weylint}

In this section we work with an arbitrary local field $F$ of characteristic zero. While our intended application is $F=\R$, the arguments work for an arbitrary $F$ and do not admit any significant simplification for $F=\R$, so we take this opportunity to record the statement for a general $F$. 

Let $G$ be a connected reductive $F$-group. Given a maximal torus $T \subset G$, write $\Omega(T,G)=N(T,G)/T$ for its absolute Weyl group. This is a finite $F$-group and we may consider the group $\Omega(T,G)(F)$ of its $F$-points. Write $\Omega_F(T,G)=N(T,G)(F)/T(F)$. This is an abstract finite group and we have $\Omega_F(T,G) \subset \Omega(T,G)(F)$. The inclusion is often proper.

\begin{thm}[Weyl integration formula] \label{thm:weyl}
	Let $f$ be a smooth compactly supported functon on $G(F)$. Let $\mc{T}$ be a set of representatives for the $G(F)$-conjugacy classes of maximal tori of $G$.
	Then
	\[ \int_{G(F)}f(g)dg = \sum_{T \in \mc{T}}|\Omega_F(T,G)|^{-1}\int_{T(F)_\tx{sr}} |D_T(\gamma)|^{1/2}O_\gamma(f)d\gamma. \]
\end{thm}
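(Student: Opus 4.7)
The plan is to treat this as a classical change-of-variables computation, valid uniformly for any local field of characteristic zero. First I would observe that the complement $G(F) \setminus G(F)_\tx{sr}$ is the zero set of a non-trivial polynomial (the Weyl discriminant) on $G$, hence has Haar measure zero, so the left-hand integral may be replaced by one over $G(F)_\tx{sr}$. Every element of $G(F)_\tx{sr}$ lies in a unique maximal $F$-torus, namely its centralizer, and this torus is $G(F)$-conjugate to exactly one $T \in \mc{T}$. Consequently $G(F)_\tx{sr}$ decomposes as the disjoint union over $T \in \mc{T}$ of the sets $U_T$ consisting of elements whose centralizer is $G(F)$-conjugate to $T$. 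One should invoke the finiteness of the set of $G(F)$-conjugacy classes of maximal $F$-tori so that the sum is genuinely a sum (it is in fact finite).

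Next, for each $T \in \mc{T}$ consider the orbit map
\[ \phi_T : G(F)/T(F) \times T(F)_\tx{sr} \to U_T,\qquad (g,\gamma) \mapsto g\gamma g^{-1}. \]
I would check that $\phi_T$ is surjective, that it is an analytic submersion (hence a local analytic isomorphism by dimension count), and that its fibers are precisely the orbits of the finite group $\Omega_F(T,G)$ acting diagonally by $w \cdot (g,\gamma) = (gw^{-1}, w\gamma w^{-1})$. The last point uses that if $g_1\gamma_1 g_1^{-1} = g_2\gamma_2 g_2^{-1}$ with both on the regular set, then $g_2^{-1}g_1$ normalizes $T$, and it is defined over $F$ as an element of $N(T,G)(F)/T(F)$. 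Thus $\phi_T$ is a $|\Omega_F(T,G)|$-to-one covering.

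The key computation is the Jacobian of $\phi_T$. Differentiating $g\gamma g^{-1}$ in the product decomposition $\mf{g}/\mf{t} \oplus \mf{t}$ at the point $(e,\gamma)$ gives the map $(X,Y) \mapsto \gamma\bigl((\Ad(\gamma^{-1})-1)X + Y\bigr)$, whose determinant on $\mf{g}/\mf{t}$ equals $\prod_{\alpha \in R(T,G)}(\alpha(\gamma)^{-1}-1)$. Since roots pair up into $\{\alpha,-\alpha\}$, the $\alpha(\gamma)^{-1}$ factors combine into $\pm 1$, so the absolute value of the Jacobian is $\prod_\alpha|1-\alpha(\gamma)| = |D_T(\gamma)|$. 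Putting this together with the covering degree gives
\[ \int_{G(F)_\tx{sr}} f(g)\,dg = \sum_{T \in \mc{T}} \frac{1}{|\Omega_F(T,G)|} \int_{T(F)_\tx{sr}} |D_T(\gamma)| \int_{G(F)/T(F)} f(g\gamma g^{-1}) \,dg/dt\, d\gamma, \]
and the definition $O_\gamma(f) = |D_T(\gamma)|^{1/2}\int_{G(F)/T(F)} f(g\gamma g^{-1})\,dg/dt$ absorbs one factor of $|D_T(\gamma)|^{1/2}$, yielding the stated identity.

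The main obstacles are essentially measure-theoretic bookkeeping rather than deep structure: namely, (i) justifying that the non-Archimedean change of variables for an analytic submersion is valid, which follows from the implicit function theorem for $F$-analytic manifolds, and (ii) verifying compatibility of the chosen Haar measures on $G(F)$, on each $T(F)$, and on the quotient $G(F)/T(F)$ (these enter the definition of $O_\gamma(f)$). Neither obstacle is serious: (i) is standard, and (ii) is the reason the formula is written in terms of $O_\gamma$ rather than raw integrals, so that the measure dependence is bundled cleanly into a single object.
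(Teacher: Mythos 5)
Your argument is correct, and it is the standard proof of the Weyl integration formula; there is nothing to compare it against in the text, because the paper does not prove Theorem \ref{thm:weyl} at all. It states the formula as a classical fact (adding only the remark about which Haar measures enter and why the exponent on $|D_T|$ is $1/2$, namely the normalization of $O_\gamma$ from \S\ref{sub:weyldenom}), and the only thing actually proved in that subsection is the stable analogue, Corollary \ref{cor:stabweyl}, which is deduced \emph{from} Theorem \ref{thm:weyl} by regrouping rational classes inside stable classes. Your write-up supplies exactly the missing classical argument: measure-zero complement of the strongly regular set, the finite decomposition of $G(F)_\tx{sr}$ according to the conjugacy class of the centralizing torus, the orbit map $G(F)/T(F)\times T(F)_\tx{sr}\to U_T$ being a local analytic isomorphism with fibers the free orbits of $\Omega_F(T,G)$ (here the key point, which you state, is that strong regularity forces $g_2^{-1}g_1\in N(T,G)(F)$), and the Jacobian $|\det(\Ad(\gamma^{-1})-1\mid \mf{g}/\mf{t})|=|D_T(\gamma)|$, with the roots pairing into $\{\alpha,-\alpha\}$ so that the product of the $\alpha(\gamma)^{-1}$'s has absolute value $1$. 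The final bookkeeping, absorbing one factor $|D_T(\gamma)|^{1/2}$ into the normalized orbital integral, matches the paper's conventions, and your two caveats (validity of change of variables for $F$-analytic submersions, compatibility of the measures $dg$, $d\gamma$, $dg/d\gamma$) are precisely the points the paper's surrounding remarks address.
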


In this theorem we have chosen a Haar measure $dg$ on $G(\R)$, and a Haar measure $d\gamma$ on $T(\R)$ for any $T \in \mc{T}$. The orbital integral $O_\gamma(f)$ is formed with respect to the quotient measure $dg/d\gamma$. The power $1/2$ occurs in the Weyl discriminant because $O_\gamma$ has been normalized in \S\ref{sub:weyldenom}.

\begin{cor}[Stable Weyl integration formula] \label{cor:stabweyl}
	Let $f$ be a smooth compactly supported functon on $G(F)$. Let $\mc{ST}$ be a set of representatives for the stable conjugacy classes of maximal tori of $G$.
	Then
	\[ \int_{G(F)}f(g)dg = \sum_{T \in \mc{ST}}|\Omega(T,G)(F)|^{-1}\int_{T(F)_\tx{sr}} |D_T(\gamma)|^{1/2}SO_\gamma(f)d\gamma. \]
\end{cor}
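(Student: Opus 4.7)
The plan is to deduce Corollary \ref{cor:stabweyl} from Theorem \ref{thm:weyl} by partitioning the sum over $G(F)$-conjugacy classes of tori according to their stable conjugacy classes, and then collapsing the contribution from each stable class onto a single representative by an averaging argument.

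\textbf{Setup.} For each $T_0 \in \mc{ST}$, let $\mc{T}(T_0) \subset \mc{T}$ denote those $G(F)$-conjugacy classes contained in the stable class of $T_0$, so $\mc{T} = \bigsqcup_{T_0 \in \mc{ST}} \mc{T}(T_0)$. For each $T \in \mc{T}(T_0)$, I would fix an $F$-rational stable isomorphism $\varphi_T : T_0 \to T$, i.e.\ the restriction of $\tx{Ad}(g_T)$ for some $g_T \in G(F^s)$; the set of such isomorphisms is a torsor under $\Omega(T_0, G)(F)$ acting by precomposition, and I adopt the standard convention that Haar measures are transported compatibly under stable isomorphisms (well-defined because $\Omega(T_0,G)(F)$ acts on $T_0$ by algebraic automorphisms, which preserve Haar measure). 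Since $\varphi_T$ identifies root systems, $|D_T(\varphi_T(\gamma))| = |D_{T_0}(\gamma)|$, and Theorem \ref{thm:weyl} rewrites as
\[
\int_{G(F)} f(g)\, dg = \sum_{T_0 \in \mc{ST}} \sum_{T \in \mc{T}(T_0)} |\Omega_F(T,G)|^{-1} \int_{T_0(F)_\tx{sr}} |D_{T_0}(\gamma)|^{1/2} O_{\varphi_T(\gamma)}(f) \, d\gamma.
\]

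\textbf{Key identity.} The main step is to prove, for each $T_0 \in \mc{ST}$,
\[
\sum_{T \in \mc{T}(T_0)} |\Omega_F(T,G)|^{-1} \int_{T_0(F)_\tx{sr}} |D_{T_0}|^{1/2} O_{\varphi_T(\gamma)}(f)\, d\gamma = |\Omega(T_0,G)(F)|^{-1} \int_{T_0(F)_\tx{sr}} |D_{T_0}|^{1/2} SO_\gamma(f)\, d\gamma.
\]
Since $|D_{T_0}|^{1/2}\, d\gamma$ is $\Omega(T_0,G)(F)$-invariant, the substitution $\gamma \mapsto \omega\gamma$ for $\omega \in \Omega(T_0,G)(F)$ shows that the left-hand side equals the average
\[
\frac{1}{|\Omega(T_0,G)(F)|} \int_{T_0(F)_\tx{sr}} |D_{T_0}|^{1/2} \sum_{T \in \mc{T}(T_0)} |\Omega_F(T,G)|^{-1} \sum_{\omega \in \Omega(T_0,G)(F)} O_{\varphi_T(\omega\gamma)}(f)\, d\gamma.
\]
For a regular $\gamma \in T_0(F)$, the map $\omega \mapsto \varphi_T(\omega\gamma)$ is a bijection from $\Omega(T_0,G)(F)$ onto $T(F) \cap [\gamma]_\tx{st}$ (surjectivity uses that a stable isomorphism sending $\gamma$ to $\gamma' \in T(F)$ is automatically $F$-rational by regularity; injectivity uses regularity too). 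Two values $\omega_1,\omega_2$ yield $G(F)$-conjugate elements precisely when they lie in the same left coset of $\varphi_T^{-1}\Omega_F(T,G)\varphi_T \subset \Omega(T_0,G)(F)$, so each $G(F)$-orbit in $T(F) \cap [\gamma]_\tx{st}$ is hit exactly $|\Omega_F(T,G)|$ times. Thus
\[
|\Omega_F(T,G)|^{-1} \sum_{\omega} O_{\varphi_T(\omega\gamma)}(f) = \sum_{[\gamma'] \in (T(F) \cap [\gamma]_\tx{st})/\Omega_F(T,G)} O_{\gamma'}(f),
\]
and summing over $T \in \mc{T}(T_0)$ recovers $SO_\gamma(f)$, using the disjoint decomposition $[\gamma]_\tx{st}/G(F) = \bigsqcup_{T \in \mc{T}(T_0)} (T(F) \cap [\gamma]_\tx{st})/\Omega_F(T,G)$ (each $G(F)$-orbit in the stable class is distinguished by the $G(F)$-conjugacy class of its centralizer).

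\textbf{Conclusion and main obstacle.} Summing the displayed identity over $T_0 \in \mc{ST}$ immediately yields the stable Weyl integration formula. The main work is the bookkeeping in the second step: verifying the torsor structure of the set of stable isomorphisms $T_0 \to T$, the freeness of $\Omega_F(T,G)$ on regular elements, the passage between the combinatorics of stable isomorphisms and the combinatorics of $G(F)$-orbits in the stable class, and the compatibility of the chosen Haar measures on stably conjugate tori. All of these are standard facts about stable conjugacy, so once they are assembled the deduction from Theorem \ref{thm:weyl} is purely formal.
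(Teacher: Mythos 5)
Your proposal is correct and takes essentially the same route as the paper's proof: both decompose the sum over $\mc{T}$ by stable classes of tori, transport everything to a fixed representative via admissible $F$-rational isomorphisms $\tx{Ad}(g)$, and use the facts that stable (resp.\ rational) conjugacy of strongly regular elements of $T(F)$ is conjugacy under $\Omega(T,G)(F)$ (resp.\ $\Omega_F(T,G)$) together with the parameterization of the $G(F)$-classes inside a stable class to recover $SO_\gamma(f)$. Your averaging over $\Omega(T_0,G)(F)$ with the factor $|\Omega(T_0,G)(F)|^{-1}$ is just a repackaging of the paper's passage to integrals over the quotients $T(F)_\tx{sr}/\Omega$.
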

\begin{proof}
	Let $T \in \mc{ST}$ and let $T_1,\dots,T_n \in \mc{T}$ be those tori that  are stably conjugate to $T$. In fact, it is known that $n=1$ (a special feature of $F$) but we will not need to know that.
	
	It is enough to show that 
	\[ |\Omega(T,G)(F)|^{-1}\int_{T(F)_\tx{sr}} |D_T(\gamma)|^{1/2}SO_\gamma(f)d\gamma = \sum_{i=1}^n|\Omega_F(T_i,G)|^{-1}\int_{T_i(F)_\tx{sr}} |D_{T_i}(\gamma)|^{1/2}O_{\gamma_i}(f)d\gamma_i. \]
	
	Two elements of $T(F)_\tx{sr}$ are stably conjugate if and only if they are conjugate under $\Omega(T,G)(F)$, and are rationally conjugate if and only if they are conjugate under $\Omega_F(T,G)$. Since $SO_\gamma(f)$ and $|D_T(\gamma)|$ are invariant under stable conjugacy in $\gamma$, and $O_\gamma(f)$ is invariant under rational conjugacy in $\gamma$, and since the action of $\Omega(T,G)(F)$ on $T(F)_\tx{sr}$ is free,  we can write the above identity as 
	\[ \int_{T(F)_\tx{sr}/\Omega(T,G)(F)} |D_{T}(\gamma)|^{1/2} SO_\gamma(f)d\gamma = \sum_{i=1}^n\int_{T_i(F)_\tx{sr}/\Omega_F(T_i,G)} |D_{T_i}(\gamma)|^{1/2}O_{\gamma_i}(f)d\gamma_i. \]
	The domain of integration on the left is the set of elements of $T(F)_\tx{sr}$ up to stable conjugacy, and the domain of integration on the right is the set of elements of $T_i(F)_\tx{sr}$ up to rational conjugacy.

	Choose $g_1,\dots,g_n \in G(\C)$ so that $\tx{Ad}(g_i) : T \to T_i$ is an isomorphism of $F$-tori and induces an isomorphism of $F$-groups $\Omega(T,G) \to \Omega(T_i,G)$. For any $\gamma \in T(F)_\tx{sr}$, the set 
	\[ \bigcup_{i=1}^n [\Omega_F(T_i,G) \lmod \Omega(T_i,G)(F)] \cdot \tx{Ad}(g_i)\gamma \]
	represents the $G(F)$-conjugacy classes in the stable class of $\gamma$. Therefore 
	\[ SJ(\gamma,f) = \sum_{i=1}^n \sum_{w \in \Omega_F(T_i,G) \lmod \Omega(T_i,G)(F)} J(w\tx{Ad}(g_i)\gamma,f).\]
	Moreover $D_T(\gamma)=D_{T_i}(\tx{Ad}(g_i)\gamma)$.
	As $\gamma$ runs over all elements of $T(F)_\tx{sr}/\Omega(T,G)(F)$, $\tx{Ad}(g_i)\gamma$ runs over all elements of $T_i(F)_\tx{sr}/\Omega(T_i,G)(F)$.
\end{proof}

\begin{rem} \label{rem:weyl1}
	As discussed in the proof, we can rewrite the above formulas as
	\[ \int_{G(F)}f(g)dg = \sum_{T \in \mc{T}}\int_{T(F)_\tx{sr}/\Omega_F(T,G)} |D_T(\gamma)|^{1/2} O_\gamma(f)d\gamma \]
	and
	\[ \int_{G(F)}f(g)dg = \sum_{T \in \mc{ST}}\int_{T(F)_\tx{sr}/\Omega(T,G)(F)} |D_T(\gamma)|^{1/2} SO_\gamma(f)d\gamma \]
	respectively.
\end{rem}

\subsection{Endoscopic lifting of distributions}

In this subsection we continue to work with an arbitrary local field $F$ of characteristic zero. Let $(H,s,\mc{H},\eta)$ be an endoscopic datum of $G$. The transfer of orbital integrals given by Theorem \ref{thm:orbtrans} defines a dual transfer of distributions. Let $\mc{I}(G)$ denote the space of invariant distributions on $G(F)$. This space consists of linear functionals on $\mc{C}^\infty_c(G(F))$, with a certain continuity property in the case $F=\R$, and which are invariant under the conjugation action of $G(F)$. By a deep theorem of Harish-Chandra this invariance property is equivalent to the requirement that such a functional vanishes on any test function for which all regular semi-simple orbital integrals vanish. Let $\mc{S}(G) \subset \mc{I}(G)$ denote the subspace of \emph{stable} distributions, defined to be those that vanish on all test functions for which all stable regular semi-simple orbital integrals vanish.

Analogously we have the spaces $\mc{S}(H) \subset \mc{I}(H)$. But the transfer of functions doesn't involve $H(F)$. Rather, it involves either the canonical double cover $H(F)_\pm$, or a chosen $z$-extension $H_1(F)$. Therefore, we write $\mc{I}_\tx{gen}(H_\pm)$ for the space of $H(F)$-invariant (continuous when $F=\R$) linear functionals on the space $\mc{C}^\infty_{c,\tx{gen}}(H(F)_\pm)$ of genuine test functions on $H(F)_\pm$, and similarly $\mc{I}_\tx{gen}(H_1)$ for the space of $H(F)$-invariant (continuous when $F=\R$) linear functionals on the space $\mc{C}^\infty_{c,\tx{gen}}(H_1(F))$ of genuine test functions on $H_1(F)$.

\begin{dfn} \label{dfn:stabtrans}
The \emph{endoscopic lifting} is the linear map 
\[ \tx{Lift} : \mc{S}_\tx{gen}(H_\pm) \to \mc{I}(G) \quad \tx{resp.}\quad \tx{Lift} : \mc{S}_\tx{gen}(H_1) \to \mc{I}(G)\] 
defined by $\tx{Lift}(d)(f)=d(f^{H_\pm})$ resp. $\tx{Lift}(d)(f)=d(f^{H_1})$.
\end{dfn}

An invariant distribution can be given by integration against a locally integrable class function $\phi \in L^1_\tx{loc}(G(F))$, namely 
\[ d_\phi(f) = \int_{G(F)_\tx{rs}} |D(x)|^{-1/2}\phi(x)f(x)dx. \]
Here locally integrable means that $\phi$ is measurable on $G(F)$ and integrable on any compact subset of $G(F)$, and again $d_\phi$ depends on the choice of Haar measure $dx$. We say that the distribution $d_\phi$ is represented by $\phi$. We have inserted the factor $|D(x)|^{-1/2}$ as a means of normalization, following our earlier convention. Note that only the restriction of $\phi$ to $G(F)_\tx{rs}$ is relevant, since the complement of this set has measure zero.

The distribution $d_\phi$ will be stable if and only if the function $\phi$ is stably invariant, i.e. stable on regular semi-simple conjugacy classes. If we replace $G(F)$ by $H(F)_\pm$ or $H_1(F)$, then we will be interested in functions that are locally integrable, stably invariant, and anti-genuine, meaning they transform by the inverse of the  character of $H(F)_\pm \to H(F)$ resp. $H_1(F) \to H(F)$; for the double cover $H(F)_\pm$ this is equivalent to genuine, since the central character has order $2$.

We can now apply the Weyl integration formula and its stable analog to show that the endoscopic lifting of a stable distribution represented by a function is an invariant distribution that is again represented by a function, and moreover the two functions are related by an explicit formula.

\begin{lem} \label{lem:equi}
	Let $\phi^H$ be a stably-invariant locally integrable anti-genuine function on $H(F)_\pm$ resp. $H_1(F)$. Then 
	\[ \tx{Lift}(d_{\phi^H}) = d_{\phi^G}, \]
	where $\phi^G$ is the locally integrable class function on $G(F)$ given by the following formula
	\[ \phi^G(\delta) = \sum_{\gamma \in H(F)_\tx{rs}/\tx{st} } \Delta(\dot\gamma,\delta) \phi^H(\dot\gamma). \]
	\end{lem}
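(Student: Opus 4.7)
The plan is to compute both sides of the desired equality $\tx{Lift}(d_{\phi^H})(f)=d_{\phi^G}(f)$ by expanding each as an integral over regular semi-simple elements and then reorganizing those integrals using the Weyl integration formula (Theorem \ref{thm:weyl}) on the $G$-side and its stable analog (Corollary \ref{cor:stabweyl}) on the $H$-side. The bridge between the two expansions will be provided by Theorem \ref{thm:orbtrans}, which relates $SO_{\dot\gamma}(f^{H_\pm})$ to $O_\delta(f)$ through the transfer factor, and by the observation that the transfer factor $\Delta(\dot\gamma,\delta)$ is supported on pairs related by admissible embeddings of tori, as reviewed in \S\ref{sub:adm}.

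For the left-hand side, I would unfold
\[
	\tx{Lift}(d_{\phi^H})(f) = d_{\phi^H}(f^{H_\pm}) = \int_{H(F)_\tx{rs}}|D(\gamma)|^{-1/2}\phi^H(\dot\gamma)f^{H_\pm}(\dot\gamma)d\gamma.
\]
Since $\phi^H$ and $|D|^{-1/2}$ are stably invariant on $H(F)$, Corollary \ref{cor:stabweyl} (in the form of Remark \ref{rem:weyl1}) rewrites this as a sum, over stable classes $T^H$ of maximal tori of $H$, of integrals $\int_{T^H(F)_\tx{sr}/\Omega(T^H,H)(F)}\phi^H(\dot\gamma)SO_{\dot\gamma}(f^{H_\pm})d\gamma$, after the cancellation $|D_{T^H}(\gamma)|^{1/2}\cdot|D(\gamma)|^{-1/2}=1$. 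Applying Theorem \ref{thm:orbtrans}(1) to replace each $SO_{\dot\gamma}(f^{H_\pm})$ by $\sum_\delta\Delta(\dot\gamma,\delta)O_\delta(f)$ produces a double expression indexed by pairs $(\dot\gamma,\delta)$, weighted by $\Delta(\dot\gamma,\delta)\phi^H(\dot\gamma)O_\delta(f)$.

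For the right-hand side, I would apply Theorem \ref{thm:weyl} (again in the form of Remark \ref{rem:weyl1}) to $d_{\phi^G}(f)$, obtaining $\sum_{T\in\mc{T}_G}\int_{T(F)_\tx{sr}/\Omega_F(T,G)}\phi^G(\delta)O_\delta(f)d\delta$, and then substitute the definition $\phi^G(\delta)=\sum_{\gamma\in H(F)_\tx{rs}/\tx{st}}\Delta(\dot\gamma,\delta)\phi^H(\dot\gamma)$. This yields an expression involving the same summand $\Delta(\dot\gamma,\delta)\phi^H(\dot\gamma)O_\delta(f)$, but now indexed by a rational class $\delta$ (inside some maximal $F$-torus $T$ of $G$) together with a stable class $\gamma$ in $H$. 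The claim reduces to showing that the two indexing conventions give the same total.

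The heart of the argument, and its main obstacle, is the combinatorial/measure-theoretic matching of the two indexing schemes. Since $\Delta(\dot\gamma,\delta)$ vanishes unless $\gamma$ and $\delta$ are related, both sums localize on pairs $(\dot\gamma,\delta)$ with $\delta=j(\gamma)$ for some $F$-rational admissible embedding $j:T^H\hrw G$ (\S\ref{sub:adm}). For fixed stable class $T^H$, the set of admissible embeddings $j$ defined over $F$, modulo $G(F)$-conjugacy, is in natural bijection with the rational classes of maximal tori of $G$ that are stably conjugate to the transferred torus, and each such $j$ identifies $T^H(F)_\tx{sr}$ with $T(F)_\tx{sr}$ as measure spaces once compatible measures are fixed, as in Remark \ref{rem:matchmeasures}. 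The remaining verification is that summing over rational classes of $\delta$ in $G$ and stable classes of $\gamma$ in $H$ (right-hand side) produces the same weighted count as summing over stable classes of $\gamma$ in $H$ with $\delta$ ranging over rational classes in $G$ matching $\gamma$ via transfer (left-hand side); here the Weyl-group denominators $|\Omega_F(T,G)|^{-1}$ and $|\Omega(T^H,H)(F)|^{-1}$ that appear if one writes the integrals over the un-quotiented tori must cancel correctly, using that $\Omega(T^H,H)(F)$ acts freely on stable classes of $\gamma$ while $\Omega_F(T,G)$ acts freely on rational classes of $\delta$, and both act compatibly with $j$. Once this bookkeeping is done the two double sums are identical term by term.
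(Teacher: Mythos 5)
Your main computation follows the same route as the paper's own proof: unfold $\tx{Lift}(d_{\phi^H})(f)=d_{\phi^H}(f^{H_\pm})$, apply the stable Weyl integration formula on $H$ (Corollary \ref{cor:stabweyl} in the form of Remark \ref{rem:weyl1}), insert Theorem \ref{thm:orbtrans}, reindex, and apply the Weyl integration formula on $G$. The only stylistic difference is in the reindexing step: the paper avoids your embedding-by-embedding bookkeeping (and the slightly imprecise claim that $F$-rational admissible embeddings modulo $G(F)$-conjugacy biject with rational classes of tori) by introducing the incidence set $\mf{X}$ of related pairs consisting of a stable class in $H(F)$ and a rational class in $G(F)$, with its two finite-to-one projections; the two measures pulled back to $\mf{X}$ agree precisely because of the synchronization of measures on tori (Remark \ref{rem:measures}), and no Weyl-group denominators need to be tracked since the Remark \ref{rem:weyl1} form already integrates over quotients. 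Your outline is compatible with this and would go through.

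The genuine gap is that you never verify that the formula $\phi^G(\delta)=\sum_\gamma \Delta(\dot\gamma,\delta)\phi^H(\dot\gamma)$ actually defines a \emph{locally integrable class function}, which is part of the assertion of the lemma and is needed for $d_{\phi^G}$ to be a well-defined distribution (and, implicitly, to justify the Fubini-type rearrangement of your double expression). Conjugation invariance follows from the corresponding property of $\Delta$ in the second variable, but local integrability requires an argument: near a fixed strongly regular $\delta$ there are only finitely many stable classes $\gamma_1,\dots,\gamma_n$ in $H(F)$ with nonvanishing transfer factor; choosing the admissible isomorphisms $j_i:S\to S_i$ sending $\delta$ to $\gamma_i$ and shrinking a neighborhood $U$ of $\delta$ so that $j_1(\delta'),\dots,j_n(\delta')$ represent the matching stable classes for every $\delta'\in U$ and the covers $S_i(F)_\pm$ split over $j_i(U\cap S(F))$, one uses that $\delta'\mapsto\Delta(j_i(\delta'),\delta')$ is locally constant (\cite[Corollary 4.3.4]{KalHDC}) to reduce the local integrability of $\phi^G$ to that of $\phi^H$. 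Without some such argument the right-hand side of the identity you are proving is not yet known to make sense, so this step should be supplied.
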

	\begin{proof}
	The proofs of the two cases $H(F)_\pm$ and $H_1(F)$ are almost identical, so we will only give the proof for $H(F)_\pm$.
	
	We have $d_{\phi^H}(f^{H_\pm}) = \int_{H(F)}|D^H(\gamma)|^{-1/2}\phi^H(\dot\gamma)f^{H_1}(\dot\gamma)d\gamma$, where $\dot\gamma \in H(F)_\pm$ is any preimage of $\gamma$. Note that, while the factors in the integrand depends on $\dot\gamma$, the dependence cancels out in the product, because both $\phi^H$ and $f^{H_\pm}$ are genuine.
	
	We apply the stable Weyl integration formula (Corollary \ref{cor:stabweyl}, but in the form of Remark \ref{rem:weyl1}) and use the stable invariance of $\phi^H$ to rewrite the above as
	\[ \sum_{T_H \in \mc{ST}_H} \int_{T_H(F)_\tx{sr}/\Omega(T_H,H)(F)}\phi^H(\dot\gamma) SO_{\dot\gamma}(f^{H_\pm})d\gamma.\]
	According to Theorem \ref{thm:orbtrans} this equals
	\begin{equation} \label{eq:x1}
	\sum_{T_H \in \mc{ST}_H} \int_{T_H(F)_\tx{sr}/\Omega(T_H,H)(F)}\phi^H(\dot\gamma)  \sum_\delta \Delta(\dot\gamma,\delta) O_\delta(f) d\gamma.
	\end{equation}
	We will now rework the indexing sets. Let $\mf{X}$ be the set of pairs $([[\gamma]],[\delta])$, where $[[\gamma]]$ is a stable class of strongly regular semi-simple elements of $H(F)$, $[\delta]$ is a rational class of strongly regular semi-simple elements of $G(F)$, and $\gamma$ and $\delta$ are related. Projecting onto the first coordinate provides a surjective map
	\[ \mf{X} \to \bigcup_{T_H \in \mc{ST}_H} T_H(F)_\tx{sr}/\Omega(T_H,H)(F) \]
	with finite fibers. Pulling back the measure on the target that is comprised of the various Haar measures on the tori $T_H(F)$, we obtain a measure on $\mf{X}$, and \eqref{eq:x1} becomes
	\begin{equation} \label{eq:x2}
	\int_{\mf{X}} \phi^H(\dot\gamma) \Delta(\dot\gamma,\delta) O_\delta(f) d([[\gamma]],[\delta]).
	\end{equation}
	On the other hand, projecting onto the second coordinate provides a surjective map
	\[ \mf{X} \to \bigcup_{T \in \mc{T}_G} T(F)_\tx{sr}/\Omega_F(T,G) \]
	and pulling back the measure on the target that is comprised of the various Haar measures on the tori $T(F)$, we obtain a measure on $\mf{X}$. Since the measures on the tori in $H$ and those on the tori in $G$ are synchronized as in Remark \ref{rem:measures}, the two measures on $\mf{X}$ agree. Therefore, we can rewrite \eqref{eq:x2} as 
	\[ 
	\sum_{T \in \mc{T}} \int_{T(F)_\tx{sr}/\Omega_F(T,G)}O_\delta(f) \sum_\gamma \Delta(\dot\gamma,\delta) \phi^H(\dot\gamma) d\delta,	
	\] 
	where now $\gamma$ runs over the set of stable classes of strongly regular semi-simple elements of $H(F)$. Applying the Weyl integration formula (Theorem \ref{thm:weyl}, in the form of Remark \ref{rem:weyl1}) we can rewrite this as 
	\begin{equation} \label{eq:x3}
	\int_{G(F)_\tx{sr}}f(\delta) |D^G(\delta)|^{-1/2}\sum_\gamma \Delta(\dot\gamma,\delta) \phi^H(\dot\gamma) d\delta,
	\end{equation}
	which is equal to $d_{\phi^G}(f)$ provided we define
	\begin{equation} \label{eq:x4}
		\phi^G(\delta) = \sum_\gamma \Delta(\dot\gamma,\delta) \phi^H(\dot\gamma).
	\end{equation}
	It remains to remark that this definition does indeed produce a locally integrable class function. The conjugation-invariance is clear from the corresponding property of the transfer factor. For the local integrability, it is enough to show that for any regular semi-simple $\delta$ there is an open neighborhood $U$ on which $\phi^G$ is integrable. This follows from the fact that there are only finitely many stable classes of elements of $H(F)$ that match $\delta$, and the local constancy of $\Delta(\dot\gamma,\delta)$. More precisely, let $\gamma_1,\dots,\gamma_n \in H(F)$ be representatives for the stable classes such that $\Delta(\dot\gamma,\delta) \neq 0$ and let $S_1,\dots,S_n$ be the centralizers of $\gamma_1,\dots,\gamma_n$. There is a unique admissible isomorphism $j_i : S \to S_i$ mapping $\delta$ to $\gamma_i$; here $S$ is the centralizer of $\delta$. Choosing $U$ small enough we can ensure that $j_1(\delta'),\dots,j_n(\delta')$ is a set of representatives for the stable classes that match $\delta'$, for all $\delta' \in U$. Shrinking $U$ further if required, the double cover $S_i(F)_\pm$ splits over $U_i=j_i(U \cap S(F))$, and \cite[Corollary 4.3.4]{KalHDC} shows that the function $\Delta(j_i(\delta'),\delta')$ is constant in $\delta' \in U$, where we have composed $j_i$ with an arbitrary splitting of the cover $S_i(F)_\pm$ over $U_i$. Now the local integrability of $\phi^G$ follows from that of $\phi^H$.
\end{proof}

\subsection{Derivatives of class functions}

This section is specific to the base field $\R$. In the last section we discussed distributions that are representably by locally integrable class functions. The distributions that appear in the endoscopic character identities are of this form. We will now recall a result of Harish-Chandra (Proposition \ref{pro:diff}) about differentiating class functions on a semi-simple Lie group, which will play an important technical role in the proof of the endoscopic character identities.

For the duration of this section we switch notation and let $G$ be a connected semi-simple Lie group (rather than an algebraic $\R$-group) and let $T \subset G$ be a Cartan subgroup. The main case of interest for us will be when $G$ is the group of $\R$-points of a connected semi-simple  algebraic $\R$-group and $T$ is the group of $\R$-points of a maximal torus. Let $\mf{g}_0$ and $\mf{t}_0$ be the Lie algebras of $G$ and $T$ and let $\mf{g}=\mf{g}_0\otimes_\R\C$ and $\mf{t}=\mf{t}_0\otimes_\R\C$ be their complexifications. We denote by $G'$ the open subset of strongly regular semi-simple elements of $G$ and set $T'=T \cap G'$.

Recall the $\C$-algebra $\mc{D}(G)$ of differential operators on $G$. More generally, we have the $\C$-algebra $\mc{D}(V)$ for an open subset $V \subset G$. It is a subalgebra of the space $\tx{End}_\C(\mc{C}^\infty(V))$ defined as follows. If $U \subset V$ is an open subset contained in a local coordinate chart, then $\mc{D}(U)$ is the subalgebra of $\tx{End}_\C(\mc{C}^\infty(U))$ generated by the following two kinds of operators: multiplication by an element of $\mc{C}^\infty(U)$, and partial derivative with respect to a local coordinate. An element $D \in \tx{End}_\C(\mc{C}^\infty(V))$ lies in $\mc{D}(V)$ if there is an open cover of $V$ by open subsets $U$, each lying in a coordinate chart, such that $D$ maps $\mc{C}^\infty(U)$ to itself and the resulting element of $\tx{End}_\C(\mc{C}^\infty(U))$ lies in $\mc{D}(U)$, for all $U$ in that open cover. 

The actions of $G$ on itself by left and right translations induce actions of $G$ on $\mc{C}^\infty(G)$, hence also on $\tx{End}_\C(\mc{C}^\infty(G))$. Both of these actions preserve the subalgebra $\mc{D}(G)$. More precisely, we have for $x,y \in G$, $f \in \mc{C}^\infty(G)$, and $D \in \mc{D}(G)$
\[ L_xf(y) = f(x^{-1}y), R_xf(y)=f(yx), L_xD = L_x \circ D \circ L_{x^{-1}}, R_xD = R_x \circ D \circ R_{x^{-1}}.\]
Furthermore, we have the conjugation action $C_x=L_x \circ R_{x^{-1}}$. An element of $\mc{D}(G)$ is called \emph{left-}, \emph{right-}, or \emph{conjugation-invariant}, if it is invariant under the appropriate action of $G$.

We recall that any $X \in \mf{g}$ acts as a left-invariant differential operator on the space of smooth functions on $G$ by the formula
\[ (Xf)(g) = \frac{d}{dt}\Big|_{t=0}f(g \cdot \exp(tX)). \]
The resulting map $\tau : \mf{g} \to \mc{D}(G)$ of complex vector spaces takes the Lie bracket on $\mf{g}$ to the commutator bracket of the associative algebra $\mc{D}(G)$, and thus extends to a homomorphism of $\C$-algebras
$\tau : \mc{U}(\mf{g}) \to \mc{D}(G)$.
Let $\mc{D}_l(G) \subset \mc{D}(G)$ denote the subalgebra of left-invariant differential operators. Then $\tau$ is an isomorphism onto $\mc{D}_l(G)$:
\[
\tau : \mc{U}(\mf{g}) \overset\simeq\longrightarrow \mc{D}_l(G)\subset \mc{D}(G)
\]
The restriction of $\tau$ to the center $\mc{Z}(\mf{g})$ of $\mc{U}(\mf{g})$ induces an isomorphism between $\mc{Z}(\mf{g})$ and the subalgebra of differential operators that are both left- and right-invariant.

Henceforth we suppress $\tau$ from the notation and identify an element of $\mc{U}(\g)$ with a left invariant differential operator.
We apply this to $\mc{U}(\mf{t})$ as well.

The Harish-Chandra isomorphism  $\gamma : \mc{Z}(\mf{g}) \overset\simeq\to \mc{S}(\mf{t})^\Omega$ of commutative $\C$-algebras identifies the center $\mc{Z}(\mf{g})$ of the universal enveloping algebra $\mc{U}(\mf{g})$  of $\mf{g}$ with the invariants under the Weyl group $\Omega=\Omega(T,G)$ in the symmetric algebra $\mc{S}(\mf{t})$, the latter being equal to the universal enveloping algebra $\mc{U}(\mf{t})$ of $\mf{t}$.

Here is the result of Harish-Chandra which we need.

\begin{pro} \label{pro:diff}
	Let $f$ be a smooth conjugation-invariant function $G' \to \C$. For any $z \in \mc{Z}(\mf{g})$ we have the identity
	\[ (|D_T|^{1/2} \cdot zf)|_{T'} =  \gamma(z)(f|_{T'} \cdot |D_T|^{1/2}). \]
\end{pro}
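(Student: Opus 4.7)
The approach is the classical \emph{radial parts} method of Harish-Chandra. Any $z\in\mc{Z}(\mf{g})$ acts as a bi-invariant—in particular conjugation-invariant—differential operator on $G$, so $zf$ is conjugation-invariant whenever $f$ is. The plan is first to establish the existence of a \emph{radial part}: a differential operator $R(z)$ on $T'$ such that $(zf)|_{T'}=R(z)(f|_{T'})$ for every smooth conjugation-invariant $f\in\mc{C}^\infty(G')$. The proposition then amounts to the identity
\[ R(z)=|D_T|^{-1/2}\circ\gamma(z)\circ|D_T|^{1/2} \]
of differential operators on $T'$, with $|D_T|^{\pm 1/2}$ acting by multiplication and $\gamma(z)\in\mc{S}(\mf{t})^\Omega=\mc{U}(\mf{t})^\Omega$ acting as the corresponding constant-coefficient operator on $T$.

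For the local setup near a point $t_0\in T'$, I would use the root-space decomposition $\mf{g}=\mf{t}\oplus\mf{q}$ with $\mf{q}=\bigoplus_{\alpha\in R(T,G)}\mf{g}_\alpha$. Strong regularity of $t_0$ makes $1-\tx{Ad}(t_0^{-1})$ invertible on $\mf{q}$, so $(t,X)\mapsto t\exp(X)$ is a local diffeomorphism from a neighborhood of $(t_0,0)$ in $T\times\mf{q}$ onto a neighborhood of $t_0$ in $G$. Conjugation-invariance forces the pullback of $f$ along this chart to depend only on $t$. Expanding $z$ in a PBW basis adapted to a positive system $R^+$, giving the decomposition $\mc{U}(\mf{g})=\mc{U}(\mf{t})\oplus(\mf{n}^-\mc{U}(\mf{g})+\mc{U}(\mf{g})\mf{n}^+)$, the key calculation pushes the $\mf{n}^\pm$-factors in $z$ through by repeated use of $\tx{Ad}(t)X_\alpha=\alpha(t)X_\alpha$ and $[X_\alpha,X_{-\alpha}]\in\mf{t}$. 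When applied to a function of $t$ alone, the pairs $X_\alpha X_{-\alpha}$ collapse into first-order $\mf{t}$-derivatives whose coefficients are rational functions of shape $(1-\alpha(t)^{-1})^{-1}$. Organising the contributions root-by-root, conjugation by $D_B=\prod_{\alpha>0}(\alpha^{1/2}-\alpha^{-1/2})$ absorbs precisely these denominators, and what survives is a constant-coefficient operator on $T$ that, by the defining property of the Harish-Chandra homomorphism (i.e.\ $z-\gamma(z)$ lies in $\mf{n}^-\mc{U}(\mf{g})+\mc{U}(\mf{g})\mf{n}^+$ after the $\rho$-shift), equals $\gamma(z)$.

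The main technical obstacle is the square root: the formal expression $D_B$ involves $\rho$, which need not lie in $X^*(T)$, so $D_B$ need not define a function on $T(\R)$ but only on the double cover $T(\R)_G$ of \S\ref{sub:covtori}. My plan is to carry out the conjugation identity $R(z)\circ D_B=D_B\circ\gamma(z)$ upstairs on $T(\R)_G$, where both sides make sense as genuine operators, and then descend. Since $|D_B|=|D_T|^{1/2}$ is invariant under the deck transformation, it is a well-defined function on $T(\R)$; on each connected component of $T'$ it differs from a chosen local branch of $D_B$ only by a locally constant sign, and such a sign commutes with every differential operator and cancels out of the conjugation. Gluing the resulting local identities across $T'$—the differential operators on both sides being uniquely determined by their germs at any point—yields the global formula stated in the proposition.
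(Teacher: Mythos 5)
Your overall route---first establishing a radial part $R(z)$ on $T'$ for conjugation-invariant functions, then proving the operator identity $R(z)=|D_T|^{-1/2}\circ\gamma(z)\circ|D_T|^{1/2}$---is in substance the content of the Harish-Chandra results that the paper simply assembles by citation (the local expressions $D_y$, the map $\delta_a$, and \cite[\S5, Lemma 18; \S6, Lemma 13]{HC_inv_eigen}); so you are proposing to reprove those lemmas rather than quote them, and the proof stands or falls with that reproof. Two steps do not hold up as written. First, the chart: $(t,X)\mapsto t\exp(X)$ with $X\in\mf{q}$ is a local diffeomorphism simply because $\mf{g}=\mf{t}\oplus\mf{q}$, but conjugation-invariance does \emph{not} make the pullback of $f$ along this chart independent of $X$. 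That property holds for the conjugation chart $(t,Y)\mapsto \exp(Y)\,t\exp(-Y)$, and it is there (equivalently, in using the invariance relations to solve for the derivatives of $f$ transverse to $T$) that the invertibility of $1-\tx{Ad}(t^{-1})$ on $\mf{q}$ actually enters. This is repairable, but as stated the step is false.

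Second, and more seriously, the decisive step is asserted rather than proved. The claim that after pushing the $\mf{n}^{\pm}$-factors through and conjugating by $D_B$ ``what survives is a constant-coefficient operator equal to $\gamma(z)$'' is precisely Harish-Chandra's radial-component theorem, i.e.\ the paper's citation \cite[\S6, Lemma 13]{HC_inv_eigen}; it does not follow merely from the statement that $z$ minus its $\mc{U}(\mf{t})$-component lies in $\mf{n}^-\mc{U}(\mf{g})+\mc{U}(\mf{g})\mf{n}^+$. The pushed-through terms produce operators with genuinely non-constant coefficients (the $(1-\alpha(t)^{-1})^{-1}$ denominators you mention), and identifying their total contribution with $D_B^{-1}\circ\gamma(z)\circ D_B$---including the $\rho$-shift hidden in the relation $D_B=\rho\cdot D_B'$ of \eqref{eq:dgb}---requires a real argument, e.g.\ an induction on the order of $z$ or verification against a separating family of invariant eigenfunctions; this is several pages in Harish-Chandra and is the heart of the proposition. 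Unless you either carry out that computation or cite it (as the paper does), the proof has a hole at its center. Your treatment of the square root is fine in spirit, with the minor correction that the locally constant ratio $|D_B|/D_B$ on components of (the cover of) $T'$ is a root of unity taking values such as $\pm 1,\pm i$, not merely a sign; since it is locally constant this does not affect the descent argument.
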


This result is not stated in exactly  this form in the works of Harish-Chandra, so we give the proof, starting with some preliminaries.

The first step is to recall that Harish-Chandra constructs in \cite[\S4]{HC_characters} a family of retractions $\mc{D}(G) \to \mc{D}_l(G)$ to the inclusion $\mc{D}_l(G) \to \mc{D}(G)$. This family is indexed by the set of points of $G$, and the retraction $\mc{D}(G) \to \mc{D}_l(G)$ associated to $y \in G$ is denoted by $D \mapsto D_y$. The left-invariant operator $D_y$ is called the ``local expression of $D$ at $y$'', and has the defining property that 
\begin{equation} \label{eq:diff1}
 (Df)(y) = (D_yf)(y)\quad(\forall f \in \mc{C}^\infty(G)).
\end{equation}
This is \cite[\S4, Corollary to Lemma 13, page 112]{HC_characters}.

The next step concerns the passage from $G$ to $T$. Suppose $a\in T'$. 
Let $U_G$ be an open neighborhood of $a$ in $G'$ and set
\[
\label{e:UT}
U_T = a^{-1}(U_G\cap T)\subset a^{-1}T'.
\]
This is an open neighborhood $T$ containing $1$.

 In \cite[\S 5]{HC_inv_eigen} Harish-Chandra constructs a map
\[ \delta_a : \mc{D}(U_G) \to \mc{D}(U_T)\]
which is characterized by the property 
\[  \delta_a(D)_y = \alpha_y(D_{ay}) \quad (\forall y \in U_T) \]
where $\alpha_y : \mc{U}(\mf{g}) \to \mc{U}(\mf{t})$ $(y\in a^{-1}T')$ is defined  in \cite[\S5, Corollary 1, page 463]{HC_inv_eigen}.

Now assume $f \in \mc{C}^\infty(U_G)$  is locally invariant. This means: the map $\R \to \R, t \mapsto f(\exp(tX)y\exp(-tX))$ has trivial derivative at $t=0$  for all $X \in \mf{g}$ and $y \in U_G$, cf. \cite[\S8]{HC_inv_dist}. In particular, the restriction to $U_G$ of any conjugation-invariant function on $G'$ is locally invariant.
Then 
\cite[Lemma 18]{HC_inv_eigen} gives the identity:
\begin{equation} \label{eq:diff3}
	D(f)(ay) = \delta_a(D)_y(f)(ay)\quad(y\in U_T).
\end{equation}

To aide the reader, we mention that \cite[Lemma 18]{HC_inv_eigen} uses the notation $\Xi'=\Xi'(a)=a^{-1}(\Xi\cap G')$ on page 462, which is \emph{not} a set of regular elements in the usual sense, but rather the $a^{-1}$-translate of a set of regular elements. We are taking $\Xi=T$, and $U_T=a^{-1}U_G \cap a^{-1}T'=a^{-1}(U_G\cap T')$. In particular, note that $1 \in U_T$.

We mention here that  \cite[Lemma 18]{HC_inv_eigen} has a typographical error: it states \eqref{eq:diff3} with $\delta_a(D)$ in place of
$\delta_a(D)_y$.  However $\delta_a(D)$ is a differential operator on $U_T$, but $ay\not\in U_T$, so $\delta_a(D)(ay)$ is not well defined.
However the last line of the proof  says
\[ D_{ay}f(ay) = \delta_a(D)_y(f)(ay), \]
which is exactly \eqref{eq:diff3}, when we note that by \eqref{eq:diff1}  the left hand side is $Df(ay)$.
Finally we note that \cite[Lemma 18]{HC_inv_eigen} is used only in the proof of loc. cit. Lemma 19, and it is in fact \eqref{eq:diff3} which is used.

The final step is to connect the operator $\delta_a(z)$, for $z \in \mc{Z}(\mf{g})$, to the operators $z$ and $\gamma(z)$.
Set 
\[
|\nu_a(h)| = |D_T(ah)|\quad(h\in T).
\]
Viewing $|\nu_a|$ as the operator of multiplication by this function, 
\cite[\S6, Lemma 13]{HC_inv_eigen} states that
\begin{equation} \label{eq:diff2}
\delta_a(z) = |\nu_a|^{-1/2} \circ \gamma(z) \circ |\nu_a|^{1/2}.
\end{equation}

We can now combine the three steps outlined so far to prove Proposition \ref{pro:diff}.
\begin{proof}[Proof of Proposition \ref{pro:diff}]
Suppose $a\in T'$. Let $y=1\in U_T$ (cf. \eqref{e:UT}). Apply \eqref{eq:diff3} with $D=z$ 
(recall we've identified $\mc{U}(\g)$ with $\mc{D}_l(\g)$ via $\tau$)
to conclude:
\[
 (zf)(a) = \delta_a(z)_1(f)(a) =[L_{a^{-1}}\circ\delta_a(z)_1](f)(1)
\]
Since $\delta_a(z)_1$ is left-invariant on $T$ we have
\[
[L_{a^{-1}}\circ\delta_a(z)_1](f))(1)=[\delta_a(z)_1](L_{a^{-1}}(f))(1)=\delta_a(z)(L_{a^{-1}}(f))(1)
\]
where the last equality is \eqref{eq:diff1}.
We now apply \eqref{eq:diff2} and see
\[ \delta_a(z)(L_{a^{-1}}(f))(1) = [|\nu_a|^{-1/2} \circ \gamma(z) \circ |\nu_a|^{1/2}](L_{a^{-1}}(f))(1). 
\]

A direct computation shows 
$$
|\nu_a|^{1/2} \circ L_{a^{-1}}=L_{a^{-1}} \circ |\nu_1|^{1/2}=L_{a^{-1}} \circ |\Delta_T|^{1/2}.
$$
Using this and the left-invariance of $\gamma(z)$ we see 
\[ 
\begin{aligned}
[|\nu_a|^{-1/2} \circ \gamma(z) \circ |\nu_a|^{1/2}](L_{a^{-1}}f)(1) &= L_{a^{-1}}\circ(|\Delta_T|^{-1/2} \circ \gamma(z) \circ |\Delta_T|^{1/2})(f)(1)\\
&=(|\Delta_T|^{-1/2} \circ \gamma(z) \circ |\Delta_T|^{1/2})(f)(a)
\end{aligned}
\]

\end{proof}

\section{Genericity of essentially discrete series representations} \label{sec:gen}

Work of Kostant \cite{Kos78} and Vogan \cite{Vog78} provides a description of which eds representations are generic (Definition \ref{dfn:generic} below). More precisely, \cite[Theorem 6.2(a,f)]{Vog78} shows that such a representation is \emph{large} if its Harish-Chandra parameter lies in a Weyl chamber for which all simple roots are non-compact, and \cite[Theorem L]{Kos78} shows that \emph{large} is equivalent to \emph{generic}. In order to make the internal structure of $L$-packets precise, one needs finer information -- a condition for an eds to be $\mf{w}$-generic for a given Whittaker datum $\mf{w}$ (Definition \ref{dfn:whit}). Since in general there are multiple choices of $\mf{w}$, this information is not obtainable from \cite[Theorem 6.2(a,f)]{Vog78}.
 
Proposition \ref{p:whittaker} of this section provides an answer to this question. It is an exact analog of a result in the p-adic case, originally due to DeBacker and Reeder \cite[Proposition 4.10]{DR10} under unramifiedness assumptions, but whose proof applies in much greater generality, see \cite[Lemma 6.2.2]{KalRSP} and \cite[\S4.4]{FKS}. This allows us to prove the strong form of Shahidi's generic packet conjecture \cite[\S9]{Sha90} for discrete series packets, originally established by Shelstad \cite{SheTE3} using a less direct argument. Note that the general form of the conjecture (for tempered $L$\-packets) reduces immediately to the case of discrete $L$\-packets.

\subsection{Whittaker data and regular nilpotent elements} \label{sub:whit}

We assume  $G$ is a quasi-split connected reductive
$\R$-group. Suppose $B$ is a Borel $\R$-subgroup of $G$, $N$ its
unipotent radical, and $\n=\mathrm{Lie}(N)$.
The real Lie group $N(\R)$ is connected, nilpotent,
and simply connected (see \cite[Theorem 6.46]{KnappLie}, where the assumption that $G$ is semi-simple is unnecessary). The connectedness of $N(\R)$ implies that any character $\eta : N(\R) \to \C^\times$ is determined by its differential $d\eta : \n(\R) \to \C$, which is an $\R$-linear form. Furthermore the exponential map $\exp : \n(\R) \to N(\R)$ is a diffeomorphism by \cite[Theorem 1.127]{KnappLie}, and we have $\eta(\exp(Y))=e^{d\eta (Y)}$ for $Y \in \n(\R)$.

\begin{dfn} \label{dfn:whit}
A  \emph{Whittaker datum} is a $G(\R)$-conjugacy class of  pairs  $(B,\eta)$ where $B$ is an $\R$-Borel subgroup
	and $\eta$ is a non-degenerate unitary character of $N(\R)$. We  write $\w=[(B,\eta)]$ for the $G(\R)$-conjugacy class of $(B,\eta)$. 		
\end{dfn}

We  now give two constructions of Whittaker data.

We say an element $X$ of $\g^*$ is \emph{nilpotent} if the $G$-orbit of $X$
is a (weak) cone (closed under multiplication by $\C^*$), and is
\emph{regular} if $\dim_{\Cent(G)}(X)=\mathrm{rank}(G)$.  Suppose
$X\in i\g(\R)^*\subset \g^*$ is a regular nilpotent element.  The
kernel of $iX$ intersects $\g(\R)$ in a Borel subalgebra $\overline{\mathfrak b}(\R)$, and in this way defines an $\R$-Borel subgroup $\overline B$ of $G$.
Choose an $\R$-Cartan subgroup $T$ of
$\overline B$, and let $B$ be the corresponding opposite Borel
subgroup (characterized by $B\cap \overline B=T$). 
Let $N$ be the nilradical of $B$.  Then $X$ defines a unitary
character of $N(\R)$ by
\[ 
\eta_X(\exp Y)=e^{X(Y)}\quad (Y\in\n(\R)).
\] 
Since all $\R$-Cartan subgroups of $B$ are $N(\R)$-conjugate,
\cite[Theorem 19.2]{Bor91} this is independent of the choice of $T$. We define $\w_X$ to be the $G(\R)$-conjugacy class of $(B,\eta_X)$. 

The second construction starts with a character $\Lambda : \R \to \mb{S}^1$ and an $\R$-pinning $\P=(T,B,\{X_\alpha\})$ of $G$ (thus $T$ and $B$ are defined over $\R$, $X_\alpha \in \mf{g}_\alpha(\C)$ for each absolute $B$-simple root of $T$ in $G$, and the set $\{X_\alpha\}$ is permuted by complex conjugation).  
Let $u_\alpha$ be the isomorphism of $\C$-groups taking $\mb{G}_a$ to $U_\alpha$ (it can be explicitly given as $u_\alpha(t)=\exp(t X_\alpha)$). 
If $N$ is the unipotent radical of $B$ then the
composition $\prod_{\alpha \in \Delta} U_\alpha \to N \to N/[N,N]$ is
an isomorphism of complex algebraic groups, which we compose with
$(u_\alpha)$. The inverse of the result, composed with the product map
$\prod_\alpha \mb{G}_a \to \mb{G}_a$, becomes a homomorphism of
algebraic groups $\gamma : N/[N,N] \to \mb{G}_a$ which is  defined over $\R$, and 
 $\eta_{\P,\Lambda}=\Lambda\circ \gamma$ is a non-degenerate unitary character of $N(\R)$.
The $G(\R)$-conjugacy class of $\eta_{\P,\Lambda}$ only depends on the $G(\R)$-conjugacy class of  $\P$.
  Write $\w_{\P,\Lambda}$ for the Whittaker datum
$[(B,\eta_{\P,\Lambda})]$. 

If we consider an element $i=\sqrt{-1} \in \C$ fixed, we obtain the ``standard'' character $\Lambda : \R \to \mb{S}^1$ given by $x \mapsto e^{ix}$. We can then relate the two constructions as follows. For each $B$-simple root $\alpha$ let $X_{-\alpha} \in \mf{g}_{-\alpha}$ be determined by $[X_\alpha,X_{-\alpha}]=H_\alpha$, where $H_\alpha=d\alpha^\vee(1) \in \tx{Lie}(T)$ is the coroot. Define $X_\mc{P} \in i\mf{n}(\R)^*$ by $(X_\mc{P},Y)=i\sum_\alpha [Y_\alpha,X_{-\alpha}]/H_\alpha$, where $Y_\alpha \in \mf{g}_\alpha$ is the projection of $Y$ to the direct summand $\mf{g}_\alpha \subset \mf{g}$.

\begin{lem} \label{lem:w1}
\begin{enumerate}
\item The map $X \mapsto \w_X$ is an $\tx{Aut}(G)(\R)$-equivariant bijection from the set of $G(\R)$-conjugacy classes of regular nilpotent elements of $i\g(\R)^*$ to the set of Whittaker data of $G$.
\item Fix a non-trivial unitary character $\Lambda$ of $\R$. The map $\P\mapsto \w_{\P,\Lambda}$ is an $\tx{Aut}(G)(\R)$-equivariant bijection from $G(\R)$-conjugacy classes of real pinnings to Whittaker data.
\item If we take $\Lambda(x)=e^{ix}$ then $\eta_{\mc{P},\Lambda}=\eta_{X_\mc{P}}$ and hence $\mf{w}_{\mc{P},\Lambda}=\mf{w}_{X_\mc{P}}$.
\end{enumerate}
\end{lem}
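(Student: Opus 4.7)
My approach is to establish (1) and (2) by constructing explicit inverse maps, and to derive (3) by a direct computation on simple root subspaces.

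For part~(1), I would first verify well-definedness of $X \mapsto \mf{w}_X$: different choices of $\R$-Cartan $T \subset \overline B$ are $N(\R)$-conjugate by \cite[Theorem~19.2]{Bor91}, so the resulting pairs $(B, \eta_X)$ lie in a single $G(\R)$-conjugacy class; $\tx{Aut}(G)(\R)$-equivariance is automatic since every step of the construction is intrinsic. To invert, take a Whittaker datum $(B, \eta)$; unitarity of $\eta$ forces $d\eta$ to take values in $i\R$, and non-degeneracy makes $d\eta$ non-trivial on each $B$-simple root subspace of $\n(\R)$. Fix any $\R$-Cartan $T \subset B$, let $\overline B$ be the opposite Borel, and define $X \in i\g(\R)^*$ by declaring $X|_{\n(\R)} = d\eta$ and $X|_{\overline{\mf{b}}(\R)} = 0$. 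Under the Killing-form identification $\g^* \cong \g$, the element $X$ corresponds to an element of $i\n(\R)$ whose image in $\n/[\n,\n]$ is a sum of non-zero multiples of the simple root vectors, which is the classical criterion for $X$ to be principal, hence regular, nilpotent. The two constructions are mutually inverse because all $\R$-Cartans of $B$ are $N(\R)$-conjugate, so the ambiguity in the auxiliary choice of $T$ is absorbed in the Whittaker equivalence.

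For part~(2), well-definedness and equivariance are again immediate from the functoriality of the construction. To invert, given $(B, \eta)$ and the fixed $\Lambda$, pick an $\R$-Cartan $T \subset B$; for each $B$-simple root $\alpha$ the pullback $d\eta \circ du_\alpha : \R \to i\R$ is non-zero, so there is a unique $c_\alpha \in \R^\times$ with $d\eta \circ du_\alpha = c_\alpha \cdot d\Lambda$, and one sets $X_\alpha = c_\alpha \cdot \tfrac{d}{dt}|_{t=0} u_\alpha(t) \in \g_\alpha$. Galois-stability of the set $\{X_\alpha\}$ follows from $\Gamma$-equivariance of $d\eta$ together with the fact that complex conjugation permutes the $u_\alpha$ in the same way it permutes simple roots; different choices of $T$ differ by $N(\R)$-conjugation and yield a $G(\R)$-conjugate pinning. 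For part~(3), both $d\eta_{X_\mc{P}}$ and $d\eta_{\mc{P},\Lambda}$ vanish on $[\n,\n](\R)$ and are $\R$-linear, so it suffices to match them on $\g_\alpha(\R)$ for each simple $\alpha$. For $Y_\alpha = tX_\alpha$ with $t \in \R$, the defining formula gives $(X_\mc{P}, Y_\alpha) = it$, hence $\eta_{X_\mc{P}}(\exp Y_\alpha) = e^{it}$; on the other hand $u_\alpha^{-1}(\exp(tX_\alpha)) = t$, so $\gamma(\exp Y_\alpha) = t$ and $\eta_{\mc{P},\Lambda}(\exp Y_\alpha) = \Lambda(t) = e^{it}$, matching.

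The step I expect to require the most care is the regularity claim in part~(1). Nilpotency of $X$ is immediate from $X \in \n$ and the existence of a scaling cocharacter, but regularity requires translating non-degeneracy of $\eta$ into the condition that the image of $X$ in $\n/[\n,\n]$ have all simple-root components non-zero simultaneously. The subtlety is the $\R$-structure: the simple root subspaces $\g_\alpha$ are generally not individually defined over $\R$ but only Galois-permuted, so one must verify that $\Gamma$-invariance of $d\eta$ together with simple-transitivity of $\Gamma$ on each Galois orbit of simple roots forces all coefficients $c_\alpha$ to be non-zero at once, rather than orbit-by-orbit.
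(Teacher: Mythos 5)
Parts (1) and (3) of your argument follow essentially the paper's own route: the inverse in (1) is obtained by lifting $\eta$ to an element of $i\n(\R)^*$ and extending by zero on $\ol{\mf{b}}(\R)$, and (3) is the same evaluation on simple root directions. Two cosmetic points: under the Killing identification a functional supported on $\n$ and killing $\ol{\mf{b}}$ corresponds to an element of the \emph{opposite} nilradical $\ol{\n}$, not of $\n$ (harmless, since the criterion ``all simple components nonzero $\Rightarrow$ regular'' is the same); and in (3) the elements $\exp(tX_\alpha)$ with $t\in\R$ do not lie in $N(\R)$ when $\alpha$ is a complex simple root, so the comparison should be made on the $\C$-linear extensions of the differentials, or on real elements $Y=\sum_\alpha t_\alpha X_\alpha \bmod [\n,\n]$ as in the paper. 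Also, the difficulty you flag at the end is not really located in (1): non-degeneracy of $\eta$ means by definition that $d\eta$ is nonzero on \emph{each} complex simple root space $\g_\alpha\subset\n$, so all simple components are nonzero simultaneously and no orbit-by-orbit argument is needed there.

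The genuine issue is in your inverse for (2), where you diverge from the paper. As written the construction is circular: the maps $u_\alpha$ are only defined once root vectors $X_\alpha$ (i.e.\ a pinning) have been chosen, and for a \emph{complex} simple root $\alpha$ the value $d\eta(E_\alpha)$ at an arbitrary root vector $E_\alpha\in\g_\alpha$ need not lie in $i\R$, so the constant $c_\alpha$ you solve for need not be real; consequently the Galois-stability argument ``conjugation permutes the $u_\alpha$'' presupposes exactly the Galois-compatible normalization you are trying to produce (there is also an inversion slip: your $X_\alpha=c_\alpha E_\alpha$ should be $c_\alpha^{-1}E_\alpha$). The repair is easy and salvages your strategy: write $\Lambda(x)=e^{sx}$ with $s\in i\R^\times$ and define $X_\alpha\in\g_\alpha$ as the unique vector with $d\eta(X_\alpha)=s$ (no $u_\alpha$ needed); since $d\eta(\sigma Y)=-\ol{d\eta(Y)}$ for the $\C$-linear extension and $-\bar s=s$, uniqueness gives $\sigma(X_\alpha)=X_{\sigma\alpha}$, so the pinning is real, and the constructions are mutually inverse because $d\eta_{\P,\Lambda}(X_\alpha)=d\Lambda(1)=s$ while both differentials kill $[\n,\n]$. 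With this fix your proof of (2) is a valid and arguably more direct alternative to the paper's, which instead shows that $\P\mapsto X_-=\sum_\alpha X_{-\alpha}$ is a bijection onto regular nilpotent $G(\R)$-orbits in $\g(\R)$ — injectivity from uniqueness of real $\mf{sl}_2$-triples, surjectivity from a conjugation argument under $U^-(\R)$ using $H^1(\Gamma,U')=0$ — and then combines this with (1); the paper's route has the side benefit of setting up the $\mf{sl}_2$-triple and nilpotent-orbit picture reused for the Kostant sections in \S\ref{s:kostant}.
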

\begin{proof}

The $\tx{Aut}(G)(\R)$-equivariance of the maps in 1) and  2) are clear. We need to prove they are bijections.

1) We construct the inverse map.  Suppose $\w$ is the $G(\R)$-conjugacy class of
$(B,\eta)$. Choose a maximal $\R$-torus $T$ of $B$, use it to define
$\overline B$, and write $\g=\n\oplus \overline{\mathfrak b}$ as 
before. Then the unitary character $\eta$ of $N$ lifts to an element
of $i\n(\R)^*$. Using the decomposition we view this as an element of
$i\g(\R)^*$, which is nilpotent (because it lies in $i\n(\R)^*$) and regular (because the character is non-degenerate). As before the $G(\R)$-orbit of
this element is independent of the choice of $T$.

2) Let $(T,B,\{X_\alpha\})$ be an $\R$-pinning and set
$X_-=\sum_\alpha X_{-\alpha}$. Let $H \in \mf{t}(\R)$ be the sum of
the positive coroots. Since $\{H_\alpha\}$ forms a basis of the coroot
lattice there exist positive integers $n_\alpha$ such that
$H=\sum_\alpha n_\alpha H_\alpha$. Since $H$ is $\Gamma$-fixed, the
collection of integers $\{n_\alpha\}$ is $\Gamma$-stable. Then
$X_+=\sum_\alpha n_\alpha X_\alpha \in \mf{g}(\R)$ and $(X_+,H,X_-)$
forms an $\mf{sl}_2$-triple with all three elements in
$\mf{g}(\R)$. All three elements $X_+$, $H$, and $X_-$, are
regular. The intersection of their centralizers is the center of
$G$. In particular, the only unipotent element that centralizes the
triple $(X_+,H,X_-)$ is the identity element.

For injectivity, assume two $\R$-pinnings
$(T^{(i)},B^{(i)},\{X_\alpha^{(i)}\})$, $i=1,2$, lead to the same
regular nilpotent element $X_- \in \mf{g}(\R)$. Let $B^-$ be the
unique (due to regularity of $X_-$) Borel subgroup such that $X_-$ is
contained in the Lie algebra of its unipotent radical $U^-$. The two
triples $(X_+^{(i)},H^{(i)},X_-)$ are then conjugate by a unique
element of the centralizer of $X_-$ in $U^-$. This element must then
lie in $U^-(\R)$. Since $H^{(i)}$ and $X_+^{(i)}$ are regular
elements, after conjugating by that element of $U^-(\R)$ we may assume
that $T^{(1)}=T^{(2)}$ and $B^{(1)}=B^{(2)}$. But then each
$X_\alpha^{(i)}$ can be recovered as the $\alpha$-component of
$X_+^{(i)}$. We conclude that the two pinnings are indeed conjugate
under $G(\R)$.

For surjectivity consider a regular nilpotent element
$X_- \in \mf{g}(\R)$ and let $B^-$ be the unique Borel subgroup such
that $X_-$ is contained in the Lie algebra of the unipotent radical
$U^-$ of $B^-$. If we can find a maximal torus $T \subset B^-$ such
that the decomposition of $X_-$ according to the root spaces for $T$
has non-zero components only for the $B^-$-simple roots, then writing
$X_-=\sum_\alpha X_{-\alpha}$ accordingly and setting
$X_\alpha \in \mf{g}_\alpha$ such that
$[X_\alpha,X_{-\alpha}]=H_\alpha$ we obtain an $\R$-pinning
$(T,B,\{X_\alpha\})$ giving rise to $X_-$, where $B$ is the
$T$-opposite of $B^-$. In order to find such a $T$, we first choose an
arbitrary maximal $\R$-torus $T \subset B^-$ and want to conjugate
this torus under $U^-(\R)$ to obtain the desired torus. This is
equivalent to conjugating $X_-$ under $U^-(\R)$ to achieve that its
components for roots that are not $B^-$-simple are all zero. To see
that this is possible, write $X_-=\sum_\alpha X_{-\alpha}$, where
$\alpha$ runs over \emph{all} roots of $T$ in $B^-$, and set
$X_-'=\sum_\alpha X_{-\alpha}$, where now $\alpha$ runs over
\emph{only the simple} roots of $T$ in $B^-$. Thus
$X_--X_-' \in [\mf{n}^-,\mf{n}^-](\R)$. Since both $X_-$ and $X_-'$
are regular and contained in $\mf{n}^-$, they are conjugate under an
element of $B^-(\C)$. But conjugation by $U^-(\C)$ doesn't change the
simple components, while conjugation by $T(\C)$ does so unless the
element lies in $Z_G(\C) \subset T(\C)$. Therefore we conclude that
$X_-$ and $X_-'$ are conjugate under an element $u \in U^-(\C)$. Now
$u^{-1}\sigma(u)$ lies in the centralizer of $X_-$ in $U^-(\C)$, which
is a connected unipotent subgroup $U'(\C)$ of $U^-(\C)$. But
$H^1(\Gamma,U'(\C))=\{0\}$ and we see that $uU'(\C)$ intersects
$U^-(\R)$, i.e. $X_-$ and $X_-'$ are indeed conjugate under $U^-(\R)$,
as claimed.

3) It is enough to check that $\eta_{\mc{P},\Lambda}(\exp(Y))=\eta_{X_\mc{P}}(\exp(Y))$ for $Y \in \mf{n}(\R)$. We have $Y \in \sum_\alpha t_\alpha X_\alpha + [\mf{n},\mf{n}]$ for some $t_\alpha \in \C$ with $\sum_\alpha t_\alpha \in \R$. Then $X_\mc{P}(Y)=i\sum_\alpha t_\alpha$, so $\eta_{X_\mc{P}}=e^{i\sum_\alpha t_\alpha}$, but we also have $\eta_{\mc{P},\Lambda}=e^{i\sum_\alpha t_\alpha}$ when $\Lambda(X)=e^{ix}$.
\end{proof}

In the literature Whittaker models are usually defined by a regular
nilpotent orbit in $\g(\R)$ (not the dual). It is helpful to relate
these formulations.

To do so, fix a non-degenerate, $G(\R)$-invariant bilinear form $\kappa$ on $\g'(\R)=[\g,\g](\R)$, and extend it by complex-linearity to 
a $G(\C)$-invariant form on $\g'$, still denoted $\kappa$ (for example, we could take the Killing form). Suppose $T\subset B$ are a Cartan and Borel subgroup, both defined over $\R$. 
Let $\overline B$ be the $T$-opposite Borel subgroup, and let $N,\overline N$ be the nilpotent radicals of $B,\overline B$, respectively.

Given $\P,\Lambda$ and $\kappa$ write $\Lambda(t)=e^{st}$ for some $s\in i\R$ and define
$$
X_-(P,\kappa,\Lambda)=s\sum_\alpha \kappa(X_{-\alpha},X_\alpha)^{-1}X_{-\alpha}\in i\overline n(\R).
$$
Here the sum is over the absolute $B$-simple roots, and as above $X_{-\alpha}$ satisfies $[X_{\alpha},X_{-\alpha}]=H_\alpha$.
Then a straightforward calculation gives
$$
\eta_{\P,\Lambda}(\exp Y)=e^{\kappa(X_-(\P,\kappa,\Lambda),Y)}\quad (Y\in \n(\R)).
$$
Note that, in particular, the right hand side is independent of $\kappa$.

In other words let $\phi_\kappa$ be the isomorphism $\overline\n(\R)\rightarrow \n(\R)^*$ given by
$$
\phi_\kappa(Y)=\kappa(X,Y)\quad (Y\in \n(\R)).
$$
Then
$$
\eta_{\P,\Lambda}=\eta_{\phi_\kappa(X_-(\P,\kappa,\Lambda))}.
$$
See \cite[Proof of Lemma 6.2.2]{KalRSP}.

It is helpful to understand the set of all Whittaker data.
Let
$$
Q(G(\R))=G_{\mathrm{ad}}(\R)/\mathrm{ad}(G(\R)).
$$
This is a finite abelian $2$-group, which 
acts by automorphisms on $G(\R)$, and therefore  induces an action of $Q(G(\R))$ on the set of Whittaker data.

\begin{lem}
\label{l:Q}
	$Q(G(\R))$ acts simply transitively on each of:
\begin{enumerate}
\item the set of Whittaker data;
\item the set of regular nilpotent orbits in $i\g(\R)^*$;
% \item the set of generic eds representations in an L-packet. \warn{it's too early to talk about $L$-packets here, we will only construct them in the next section. it's also not necessary in my view}
\end{enumerate}
\end{lem}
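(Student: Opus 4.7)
The plan is to reduce statement (1) to statement (2) using Lemma \ref{lem:w1}(1), and then to establish (2) using Kostant's description of the centralizer of a principal nilpotent element together with the vanishing of Galois cohomology of connected unipotent $\R$-groups.

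First, Lemma \ref{lem:w1}(1) provides an $\tx{Aut}(G)(\R)$-equivariant bijection between the sets in (1) and (2), and the action of $Q(G(\R))$ on either set is induced from the natural map $Q(G(\R)) = G_\tx{ad}(\R)/\tx{Ad}(G(\R)) \to \tx{Aut}(G)(\R)/\tx{Inn}(G(\R))$. So the two statements are equivalent, and I would focus on (2). Nonemptiness of the set in (2) is guaranteed by the quasi-split assumption, via the $\R$-pinning construction $\mc{P}\mapsto X_\mc{P}$ from Lemma \ref{lem:w1}.

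For transitivity I would invoke Kostant's theorem: there is a unique $G(\C)$-orbit $\mc{O}\subset\g^*$ of regular nilpotent elements, and for any $X\in\mc{O}$ the stabilizer $U:=\tx{Stab}_{G_\tx{ad}}(X)$ is a connected unipotent $\R$-subgroup of $G_\tx{ad}$ of dimension $\tx{rk}(G)$. Given $X,Y\in\mc{O}\cap i\g(\R)^*$, choose $g\in G_\tx{ad}(\C)$ with $g\cdot X=Y$ (possible since the action factors through $G_\tx{ad}$). Then $\sigma\mapsto g^{-1}\sigma(g)$ is a 1-cocycle valued in $U$, but $H^1(\R,U)=1$ because $U$ is connected unipotent. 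Adjusting $g$ by a coboundary, I may assume $g\in G_\tx{ad}(\R)$, proving that $\mc{O}\cap i\g(\R)^*$ is a single $G_\tx{ad}(\R)$-orbit.

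To descend this to a simply transitive action of $Q(G(\R))$ on $G(\R)$-orbits, I need to show $U(\R)\subset\tx{Ad}(G(\R))$. Here I would use the exact sequence $1\to Z(G)\to G\to G_\tx{ad}\to 1$, which gives a connecting homomorphism $\partial:G_\tx{ad}(\R)\to H^1(\R,Z(G))$ with kernel $\tx{Ad}(G(\R))$. Since $U(\R)$ is connected (as the real points of a connected unipotent $\R$-group) and $H^1(\R,Z(G))$ is discrete, $\partial$ vanishes on $U(\R)$, hence $U(\R)\subset\tx{Ad}(G(\R))$. Combining the two steps, $G_\tx{ad}(\R)$ acts transitively on $\mc{O}\cap i\g(\R)^*$ with stabilizer contained in $\tx{Ad}(G(\R))$, so $Q(G(\R))$ acts simply transitively on its set of $G(\R)$-orbits, which is precisely the set in (2). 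The main obstacle I anticipate is marshaling Kostant's centralizer computation in the coadjoint rather than the adjoint setting (the two are interchangeable via any non-degenerate invariant form on $\g'$); this structural input is what makes the two Galois-cohomology arguments above succeed.
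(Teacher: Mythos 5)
Your proposal is correct, but it takes a genuinely different route from the paper. The paper goes through the pinning parametrization: using the equivariant bijections of Lemma \ref{lem:w1} it reduces both statements to the simple transitivity of $G_\tx{ad}(\R)$ on $G(\R)$-conjugacy classes of $\R$-pinnings, then (after noting all real Borel pairs are $G(\R)$-conjugate) checks this by an explicit root-by-root computation: the fundamental coweight of a real simple root gives an $\R^\times$ acting simply transitively on $\mf{g}_\alpha(\R)\sm\{0\}$, and for a complex simple root a $\C^\times$ acting simply transitively on the real points of $\mf{g}_\alpha\oplus\mf{g}_{\sigma\alpha}$. You instead stay on the nilpotent-orbit side and argue structurally: Kostant's theorem that the stabilizer $U$ in the adjoint group of a regular nilpotent element is connected unipotent, $H^1(\R,U)=1$ to get transitivity of $G_\tx{ad}(\R)$ on regular nilpotents in $i\g(\R)^*$, and $U(\R)\subset\tx{Ad}(G(\R))$ (via connectedness of $U(\R)$ against the discrete $H^1(\R,Z(G))$, or equivalently openness of the image of $G(\R)$ in $G_\tx{ad}(\R)$) to get freeness of the $Q(G(\R))$-action; the deduction from these two facts to simple transitivity on orbits is correct since $\tx{Ad}(G(\R))$ is normal in $G_\tx{ad}(\R)$. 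What the paper's route buys is a completely elementary, self-contained verification with only torus computations; what yours buys is brevity and a conceptual argument that parallels cohomological steps the paper already uses (e.g.\ the $H^1(\Gamma,U')=0$ step in the proof of Lemma \ref{lem:w1}(2)), at the cost of importing Kostant's description of the centralizer. Two small points you should make explicit: the stabilizer $U$ of $X\in i\g(\R)^*$ is defined over $\R$ because $\sigma(X)=-X$ and stabilizers of $X$ and $-X$ coincide by linearity (this also makes your cocycle land in $U(\C)$); and for reductive $G$ the dimension of $U$ is the semisimple rank rather than $\tx{rk}(G)$, though only connectedness and unipotence are actually used.
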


See \cite[Lemma 14.14]{ABV92}.

\begin{proof}
	  We use Lemma \ref{lem:w1}(1) and the fact that $\mc{P} \mapsto \w_{\mc{P},\Lambda}$ is $G_\tx{ad}(\R)$-equivariant, to reduce to the transitivity of the action of $G_\tx{ad}(\R)$ on the set of $\R$-pinnings. Since all Borel pairs over $\R$ are conjugate under $G(\R)$, this reduces the statement to the simple transitivity of the action of $T_\tx{ad}(\R)$ on the possible choices of non-zero simple root vectors. If the simple root $\alpha$ is real, then so is the corresponding fundamental coweight $\varpi_\alpha$, which then gives a 1-parameter subgroup $\R^\times \to T_\tx{ad}(\R)$, which clearly acts simply transitively on $\mf{g}_\alpha(\R) \sm \{0\}$. If the simple root is complex, then a pair $(X_\alpha,X_{\sigma\alpha})$ contributing to the pinning is an element of $(\mf{g}_\alpha \oplus \mf{g}_{\sigma\alpha})(\R) = \C$. The fundamental coweight $\varpi_\alpha$ induces a 1-parameter subgroup $\C^\times \to T_\tx{ad}(\R)$ which again acts simply transitively on $(\mf{g}_\alpha \oplus \mf{g}_{\sigma\alpha})(\R)$.

The second assertion is immediate from Lemma \ref{lem:w1}(2).
\end{proof}

For the next lemma recall the notation $\Omega_\R(S,G)$ from \S\ref{sub:weylint}.

\begin{lem} \label{lem:g2}
	Assume there exists and fix an elliptic maximal torus $S \subset G$. The inclusion $N(S_\tx{ad},G_\tx{ad})(\R) \to G_\tx{ad}(\R)$ induces a bijection 
	\[ \Omega_\R(S_\tx{ad},G_\tx{ad})/\Omega_\R(S,G) \to Q(G(\R)). \]
\end{lem}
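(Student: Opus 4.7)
The plan is to define a natural map $\Omega_\R(S_\tx{ad},G_\tx{ad}) \to Q(G(\R))$ by sending the class of $n \in N(S_\tx{ad},G_\tx{ad})(\R)$ to the class of $n$ in $G_{\mathrm{ad}}(\R)/\mathrm{ad}(G(\R))$, and then to show it factors through $\Omega_\R(S_\tx{ad},G_\tx{ad})/\Omega_\R(S,G)$ as a bijection.

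The first task will be well-definedness of the map on cosets of $S_\tx{ad}(\R)$, which requires $S_\tx{ad}(\R) \subset \mathrm{ad}(G(\R))$. Because $S_\tx{ad}$ is anisotropic, $S_\tx{ad}(\R)$ is compact and connected, and the surjection $\mf{s}(\R) \twoheadrightarrow \mf{s}_\tx{ad}(\R)$ of Lie algebras gives a surjection $S(\R)^\circ \twoheadrightarrow S_\tx{ad}(\R)$ via the submersion theorem, which is the required statement. The further descent through $\Omega_\R(S,G)$ is immediate, since $N(S,G)(\R) \subset G(\R)$. The same argument (using $\ker(G \to G_{\mathrm{ad}}) = Z(G) \subset S$, so the preimage of $S_\tx{ad}$ in $G$ equals $S$) shows that $\Omega_\R(S,G) \to \Omega_\R(S_\tx{ad},G_\tx{ad})$ is injective, so the quotient makes sense.

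For surjectivity, I would use that any $g_{\mathrm{ad}} \in G_{\mathrm{ad}}(\R)$ acts on $G$ by an $\R$-automorphism (obtained by lifting through the central isogeny $G \to G_{\mathrm{ad}}$; the lift is unique up to $Z(G)$, which acts trivially by conjugation, and $\R$-definedness follows from $g_\mathrm{ad} \in G_\mathrm{ad}(\R)$). This automorphism carries $S$ to another elliptic maximal $\R$-torus $S'$, and by the classical fact that elliptic maximal $\R$-tori of $G$ are $G(\R)$-conjugate, I can pick $g \in G(\R)$ with $gS'g^{-1}=S$. Then $n := \mathrm{ad}(g) \cdot g_{\mathrm{ad}}$ lies in $N(S_\tx{ad},G_\tx{ad})(\R)$ and satisfies $n g_{\mathrm{ad}}^{-1} = \mathrm{ad}(g) \in \mathrm{ad}(G(\R))$, so the classes coincide in $Q(G(\R))$.

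For injectivity, suppose $n_1 n_2^{-1} = \mathrm{ad}(g)$ for $n_i \in N(S_\tx{ad},G_\tx{ad})(\R)$ and $g \in G(\R)$. Then $\Ad(g)$ preserves $S_\tx{ad}$ and hence its preimage in $G$, which again equals $S$ because $Z(G) \subset S$. Thus $g \in N(S,G)(\R)$, and its class in $\Omega_\R(S,G)$ witnesses $n_1 \equiv n_2$ modulo $\Omega_\R(S,G)$. The main obstacle I anticipate is the reliance on the $G(\R)$-conjugacy of elliptic maximal $\R$-tori, for which one should cite a classical reference; everything else reduces to tracking the inclusion $Z(G) \subset S$ through the central isogeny $G \to G_{\mathrm{ad}}$.
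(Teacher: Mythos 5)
Your proposal is correct and takes essentially the same route as the paper: the entire content of the lemma is the $G(\R)$-conjugacy of elliptic maximal tori (which drives your surjectivity step) together with the surjectivity of $S(\R) \to S_\tx{ad}(\R)$ coming from compactness and connectedness of the anisotropic torus $S_\tx{ad}(\R)$ (which drives well-definedness), and your unwinding of well-definedness, injectivity, and surjectivity is exactly the routine verification the paper leaves implicit. The only difference is that the paper proves the conjugacy of elliptic maximal tori on the spot (via $S(\R)$ compact, conjugacy of maximal compact subgroups, and conjugacy of maximal tori in a compact group), whereas you defer it to a classical citation, which is perfectly acceptable.
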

\begin{proof}
	This follows from the fact that any two elliptic maximal tori of $G$ are conjugate under $G(\R)$ -- this is a well-know fact that is seen as follows. It is enough to show conjugacy under $G_\tx{der}(\R)$, so we may assume $G$ is semi-simple, hence $S$ is anisotropic. Then $S(\R)$ is compact \cite[\S24.6(c)]{Bor91}, hence contained in a maximal compact subgroup of $G(\R)$. But any two maximal compact subgroups of $G(\R)$ are conjugate \cite[\S24.6(a)]{Bor91}, and any two maximal tori of a compact Lie group are conjugate \cite[Corollary 4.35]{KnappLie}.
\end{proof}

\subsection{Kostant sections}
\label{s:kostant}

Let $X\in\g$ be a regular nilpotent element. It can be extended to an $SL(2)$-triple $(X,H,Y)$, see \cite[Chapter 3]{CM}.  The \emph{Kostant Section} $\K(X)$ associated to 
$(X,H,Y)$ is the affine space $X+\Cent_\g(Y) \subset \g$.
Kostant showed \cite{Kos63} that the Kostant section meets every regular orbit in a unique point. If $X,H,Y \in \g(\R)$, then $\K(X)$ is Galois stable. Since any two choices of triples that contain $X$ and lie in $\g(\R)$ are conjugate under the centralizer of $X$ in $G(\R)$, it is clear that the $G(\R)$-conjugacy class of $\K(X)$ only depends on the $G(\R)$-conjugacy class of $X$. Note that any $X \in \g(\R)$ can be extended to a triple $(X,H,Y)$ with $H,Y \in \g(\R)$.

Suppose $X\in \g(\R)$ and $\O$ is a regular orbit in $\g(\C)$ which is defined over $\R$, i.e. stable under complex conjugation. Then the unique point in $\K(X)\cap \O$ is also stable under complex conjugation, i.e. contained in $\g(\R)$. The $G(\R)$-orbit of $\K(X)\cap \O$ depends only on $\O$ and the $G(\R)$-orbit of $X$.

We can just as well use elements $X \in i\g(\R)$. Then $\K(X)$ is sent to $-\K(X)$ by complex conjugation.

We now define the analog of a Kostant section for $\g^*$. Choose an isomorphism
$\phi : \g \to \g^*$ of $\C$-vector spaces that is 
equivariant for the
action of $G$ and of the Galois group.
Then for a regular nilpotent element $X\in \g^*$ define
$\K(X)\subset \g^*$ as $\K(X) = \phi(\K(\phi^{-1}(X)))$. The
rationality and uniqueness properties of $\K(X)$ are inherited from
the analogous properties in the case of $\g$. 

Choosing the isomorphism $\phi$ is equivalent to choosing a non-degenerate, $G$
and Galois invariant, bilinear form on $\g$. For example we can use any
extension of the Killing form on the derived algebra by a non-degenerate bilinear form on the
center. In fact we claim the definition of $\K(X)$ is independent of
the choice as we now show.

Decompose  $\g = \mf{z} \oplus \g_1 \oplus \dots \oplus \g_n$, where $\mf{z}$ is the center of $\g$ and $\g_i$ are the simple factors. This decomposition is canonical, up to permuting the $\g_i$.  The dual decomposes accordingly as $\g^*=\mf{z}^*\oplus \g_1^* \oplus \dots \oplus \g_n^*$, where $\mf{z}^*$ is the annihilator of $\mf{g}_1 \oplus \dots \oplus \mf{g}_n$, and also equal to the fixed point space for the coadjoint action of $G$, and $\mf{g}_i^*$ is the annihilator of $\mf{z} \oplus \mf{g}_1 \oplus \dots \oplus \mf{\hat  g}_i \oplus \dots \oplus \mf{g}_n$ (with ``hat'' signifying that we are omitting this summand), and also equal to the dual space of $\mf{g}_i$. The isomorphism $\phi$ breaks up as $\phi_\mf{z}\oplus\phi_1 \oplus \dots \oplus \phi_n$, where $\phi_\mf{z} : \mf{z} \to \mf{z}^*$ is an isomorphism of $\C$-vector spaces equivariant under complex conjugation, and $\phi_i : \g_i \to \g_i^*$ is an isomorphism of complex vector spaces equivariant under $G$ (equivalently, its simple piece $G_i$) and complex conjugation. Of these, $\phi_\mf{z}$ is arbitrary, while $\phi_i$ is uniquely determined up to a non-zero real scalar. Now $X=X_1+\dots+X_n$ and $Y=Y_1+\dots+Y_n$ break up accordingly (neither has a central piece), and we have 
\[ \K(\phi^{-1}(X)) = \mf{z} \oplus \bigoplus (\phi_i^{-1}(X_i)+\Cent_{\g_i}(\phi_i^{-1}(Y_i))),  \]
hence
\[ \K(X) = \mf{z}^* \oplus \bigoplus (X_i+\phi_i(\Cent_{\g_i}(\phi_i^{-1}(Y_i)))). \]
But it is clear that the term $\phi_i(\Cent_{\g_i}(\phi_i^{-1}(Y_i)))$ does not change if we multiply $\phi_i$ by a non-zero scalar.

Note in particular that if we decompose $\mf{g}=\mf{z} \oplus \mf{g}'$ and analogously $\mf{g}^*=\mf{z}^* \oplus (\mf{g}')^*$, with $\mf{g}'=[\g,\g]$, then a regular nilpotent element $X \in \g$ lies in $\g'$ and $\K(X)=\mf{z}\oplus \K(X)'$ with $\K(X)'=\K(X) \cap \mf{g}'$, and dually a regular nilpotent element $X \in \g^*$ lies in $(\g')^*$ and $\K(X)=\mf{z}^* \oplus \K(X)'$ with $\K(X)'=\K(X) \cap (\mf{g}')^*$.

\subsection{Generic eds representations} \label{sub:gen}

\begin{dfn} \label{dfn:generic}
Suppose $\w=[(B,\eta)]$ is a Whittaker datum. We say that a representation $\pi$ of $G(\R)$ is \emph{$\w$-generic} if there is a non-zero smooth vector $v$ in the space of $\pi$ such that $\pi(x)(v)=\eta(x)v$ for all $x\in N(\R)$. We say $\pi$ is \emph{generic} if it is $\w$-generic for some $\w$. 
\end{dfn}

\begin{rem}
	It is clear from the definition that $\pi$ is $\mf{w}$-generic if and only if its restriction to $G_\tx{sc}(\R)$ is so. We could therefore immediately restrict to the case where $G$ is semi-simple and simply connected. We will not do this in order to make referencing this section easier. But notice that this observation allows one to treat all groups discussed in \S\ref{sub:essds}.
\end{rem}

Suppose $\pi$ is an irreducible eds representation and let $(S,\tau)$ be its Harish-Chandra parameter as in Remark  \ref{rem:hcpar}, and write $d\tau=d\tau_\mf{z}+d\tau' \in \mf{z}^* \oplus i\mf{g}^*(\R) \subset \g^*$ accordingly. Let $\O_\pi \subset \mf{z}^* \oplus i\mf{g}'(\R)^* \subset \mf{g}^*$ be the $G(\R)$-orbit of $d\tau$, and $\O_\pi' \subset i\g'(\R)^*$ be the $G(\R)$-orbit of $d\tau'$. Given a regular nilpotent element $X\in i\g(\R)^*$ we have the Kostant section $\K(X) = \mf{z}^* \oplus \K(X)'$ defined in \S\ref{s:kostant}. The purpose of this subsection is to prove the following result.

\begin{pro}
  \label{p:whittaker}
Suppose $\pi$ is an eds representation and $X\in i\g(\R)^*$ is a regular nilpotent element. The following are equivalent
\begin{enumerate}
	\item $\pi$ is $\w_X$-generic;
 	\item $\O_\pi$ meets $\K(X)$;
  	\item $\O_\pi'$ meets $\K(X)'$.
\end{enumerate}
\end{pro}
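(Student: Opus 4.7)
The plan is to handle (2)$\Leftrightarrow$(3) by a direct sum decomposition, reduce (1)$\Leftrightarrow$(3) to the case of $G$ semisimple and simply connected, and then prove the core case by combining the genericity criterion of Kostant and Vogan with an equivariance argument under the group $Q(G(\R))$. The hard part will be the final base-case verification.

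For (2)$\Leftrightarrow$(3), since $X$ is regular nilpotent we have $X \in (\mf{g}')^*$ and hence $\mc{K}(X) = \mf{z}^* \oplus \mc{K}(X)'$ from the observation closing \S\ref{s:kostant}; since $G$ fixes $\mf{z}^*$ pointwise, $\mc{O}_\pi = \{d\tau_\mf{z}\} \oplus \mc{O}_\pi'$, and the equivalence is immediate. To reduce (1)$\Leftrightarrow$(3) to $G$ semisimple and simply connected, I would use that the unipotent radical $N$ sits inside $G_\tx{der}$ and lifts canonically to $G_\tx{sc}(\R)$ along with $\eta$; by Lemma \ref{lem:eds-equiv} the restriction $\pi|_{G_\tx{sc}(\R)}$ is a finite direct sum of irreducible square-integrable representations sharing the Harish-Chandra parameter $(S_\tx{sc}, \tau_\tx{sc})$ whose differential matches $d\tau'$ under the decomposition of Remark \ref{rem:hcpar}. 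Since both $\mf{w}_X$-genericity of $\pi$ and the orbit $\mc{O}_\pi'$ are intrinsic to this restriction, the statement reduces to the simply connected semisimple case.

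For the core case, the Kostant-Vogan theorem (\cite[Theorem L]{Kos78} together with \cite[Theorem 6.2(a,f)]{Vog78}) asserts that $\pi$ is $\mf{w}$-generic for some $\mf{w}$ iff the Weyl chamber of $d\tau$ in $\mf{s}^*$ is positive, i.e. every simple root is non-compact. By Lemmas \ref{l:Q}, \ref{lem:g2}, and \ref{lem:w1}, the finite $2$-group $Q(G(\R))$ acts simply transitively on the set of Whittaker data, equivalently on the set of regular nilpotent $G(\R)$-orbits in $i\mf{g}(\R)^*$, and via its identification with $\Omega_\R(S_\tx{ad},G_\tx{ad})/\Omega_\R(S,G)$ it permutes the positive chambers within a fixed stable Harish-Chandra orbit. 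Both (1) and (3) define $Q(G(\R))$-equivariant assignments $\pi \mapsto \mf{w}$; for (3) this follows from the identity $\tx{Ad}(g)\mc{K}(X) = \mc{K}(\tx{Ad}(g)X)$ for $g \in G_\tx{ad}(\R)$. Simple transitivity then reduces the proof to a single base-case verification.

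The hard part will be this base case. Fix an $\R$-pinning $\mc{P} = (T, B, \{X_\alpha\})$ with associated nilpotent $X_\mc{P}$ from Lemma \ref{lem:w1}(3), an elliptic maximal torus $S$, and the eds $\pi$ whose Harish-Chandra parameter lies in a chamber of $\mf{s}^*$ compatible with the $B$-dominant chamber of $\mf{t}^*$. The geometric half, that $\mc{O}_\pi$ meets $\mc{K}(X_\mc{P})$, should follow by a deformation argument inside $\mc{K}(X_\mc{P})$ connecting $X_\mc{P}$ to a regular semisimple element of $i\mf{t}(\R)^*$ in the dominant chamber, tracking rationality throughout. The representation-theoretic half, that $\pi$ is $\mf{w}_{\mc{P},\Lambda}$-generic, is Shahidi's strong generic packet conjecture in this special case; it should be extractable from Harish-Chandra's character formula on $S(\R)$ combined with an asymptotic analysis of $\Theta_\pi$ near the identity along $\mf{n}(\R)$, where the leading term is a Bessel-type distribution supported on closures of nilpotent orbits in $\overline{\mc{O}_\pi}$ whose pairing with $\eta_{\mc{P},\Lambda}$ detects the contribution of $G(\R) \cdot X_\mc{P}$.
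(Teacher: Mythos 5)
Your outer scaffolding (the splitting $\K(X)=\mf{z}^*\oplus\K(X)'$ for (2)$\Leftrightarrow$(3), the reduction to $G$ semisimple simply connected, and the use of Kostant--Vogan plus the simple transitivity of $Q(G(\R))$) matches ingredients the paper also uses, but your reduction to ``a single base-case verification'' is not valid as stated, and the base case itself is exactly where the content of the proposition lives. The equivariance argument only propagates the statement along the $Q(G(\R))$-orbit of one pair $(\pi_0,X_{\mc P})$; to conclude the full equivalence you would also need to know that a given eds $\pi$ cannot be generic for two distinct Whittaker data, that $\O_\pi$ cannot meet two non-conjugate Kostant sections, and that for a non-generic $\pi$ the orbit $\O_\pi$ meets \emph{no} Kostant section. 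None of these is available a priori: the first is precisely the Corollary the paper deduces \emph{from} Proposition \ref{p:whittaker}, and the latter two amount to knowing that $\AC(\O_\pi)$ contains at most one regular nilpotent $G(\R)$-orbit, which is again nontrivial. So your claim that (1) and (3) ``define $Q(G(\R))$-equivariant assignments $\pi\mapsto\w$'' presupposes single-valuedness that is part of what must be proved, and the argument is circular at that point.

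The two halves of your base case are also genuine gaps rather than deferrable computations. For the geometric half, $d\tau$ lies in $\mf{z}^*\oplus i\mf{s}'(\R)^*$ for an \emph{elliptic} (compact) Cartan $\mf{s}$, whereas your deformation inside $\K(X_{\mc P})$ targets a regular semisimple element of $i\mf{t}(\R)^*$ for the split Cartan of the pinning; real orbits through the split and the compact Cartan are different, so reaching such an element says nothing about whether $\O_\pi$ meets $\K(X_{\mc P})$. For the representation-theoretic half, the assertion that an asymptotic expansion of $\Theta_\pi$ along $\mf{n}(\R)$ with ``Bessel-type'' leading terms detects $\w_{\mc P,\Lambda}$-genericity is exactly the deep input you would need to prove, not a routine extraction from the character formula; the paper notes that even the $\SL_2(\R)$ instance of this was not a known computation. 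What the paper actually does is: for (1)$\Rightarrow$(2), invoke Matumoto's theorem ($\pi$ $\w_X$-generic implies $X\in\WF(\pi)$) together with Harris's identity $\WF(\pi)=\AC(\O_\pi)$, and then Lemma \ref{lem:g1}, which converts ``$X\in\AC(\O_\pi)$'' into ``$\K(X)$ meets $\O_\pi$'' using transversality of the Kostant section and a scaling argument; for (2)$\Rightarrow$(1), it uses Kostant--Vogan and the transitivity lemmas to produce \emph{some} $\w_X$-generic member $\pi_2$ of the packet, applies the already-proved direction to $\pi_2$, and then uses the uniqueness of the intersection point of the \emph{complex} orbit with $\K(X)$ to force $\pi=\pi_2$. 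You would need to import the wave-front-set machinery (or an equivalent substitute) and an analogue of Lemma \ref{lem:g1} to close your argument.
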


We begin with some preparations. If $W$ is a subset of a real or complex vector space $V$, define  $\AC(W)$,  the  {\it asymptotic cone} of $W$ as in   \cite[Proposition 3.7]{bvlocal}, \cite[Definition 2.9]{avav}:
$$
\AC(W)=\{v\in V\mid \exists t_n\in \R_{>0},t_n\rightarrow 0,w_n\in W,\lim_{i\rightarrow\infty}t_n w_n=v\}
$$
This is a closed cone. 
%If $\O$ is a $G(\R)$-orbit it  is a finite union of nilpotent orbits in $\g(\R)$.

\begin{lem} \label{lem:g1}
  Let $\epsilon \in \{1,-1,i,-i\}$. Let $\O_\R\subset \mf{z}^* \oplus \epsilon\g'(\R)^* \subset \g^*$ be a regular $G(\R)$-orbit and $X\in \epsilon\g(\R)^*$ be a regular nilpotent element.
Then  $X\in \AC(\O_\R)$ if and only if $\K(X)$ meets $\O_\R$.
\end{lem}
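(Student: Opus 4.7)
Via the invariant isomorphism $\phi:\g\to\g^*$ of \S\ref{s:kostant}, I would transfer the statement to $\g$. Multiplication by $\epsilon^{-1}$ is $G(\R)$-equivariant, preserves asymptotic cones (being a linear map compatible with positive rescaling), and carries $\K(\epsilon^{-1}X_0)$ to $\epsilon^{-1}\K(X_0)$ because $\Cent_\g(Y_-)$ is a complex subspace; thus I may assume $\epsilon=1$, so that $X\in\g(\R)$ is regular nilpotent and $\O_\R\subset\g(\R)$ is a regular $G(\R)$-orbit. By Jacobson--Morozov over $\R$, extend $X$ to an $SL(2)$-triple $(X,H,Y_-)\in\g(\R)^3$; the $H$-grading $\g=\bigoplus_k\g_k$ satisfies $\Cent_\g(Y_-)\subset\bigoplus_{k\le 0}\g_k$, and $\exp(sH)\in G(\R)$ for $s\in\R$.

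The easier direction $\K(X)\cap\O_\R\neq\emptyset\Rightarrow X\in\AC(\O_\R)$ is a direct contraction: for $Y\in\K(X)\cap\O_\R$, decomposing $Y-X$ by $H$-weights in $\Cent_\g(Y_-)$ yields
\[
e^{-2s}\tx{Ad}(\exp(sH))(Y)=X+\sum_{k\le 0}e^{(k-2)s}(Y-X)_k\longrightarrow X\quad(s\to+\infty),
\]
so that setting $t_n=e^{-2s_n}\to 0^+$ and $w_n=\tx{Ad}(\exp(s_nH))(Y)\in\O_\R$ gives $t_nw_n\to X$, i.e.\ $X\in\AC(\O_\R)$.

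For the converse, over $\C$ Kostant's theorem provides a unique $P\in\K(X)\cap\O_\C$, and $\Gamma$-stability of $\K(X)$ and $\O_\C$ forces $P\in\g(\R)$. Writing $\O_\R':=G(\R)\cdot P$, both $\O_\R$ and $\O_\R'$ lie in $\O_\C$, so the task is to show $\O_\R=\O_\R'$. I would fix a $\Gamma$-stable complement $\mf{t}$ to $\Cent_\g(X)$ in $\g$ and consider $\mu:\mf{t}\times\K(X)\to\g$, $\mu(A,Z)=\tx{Ad}(\exp A)(Z)$. Its differential at $(0,X)$, namely $(A,Z)\mapsto[A,X]+(Z-X)$, is an isomorphism: $\tx{ad}(X)$ restricts to a bijection $\mf{t}\to[\g,X]$ (by $\mf{t}\cap\Cent_\g(X)=0$ and a dimension count), while $\g=[\g,X]\oplus\Cent_\g(Y_-)$. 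Hence $\mu$ is a local diffeomorphism at $(0,X)$, and by $\Gamma$-equivariance together with local uniqueness of the inverse, it restricts to a local diffeomorphism of real points $\mf{t}(\R)\times(\K(X)\cap\g(\R))$ onto a neighborhood of $X$ in $\g(\R)$. For $n$ large, $t_nw_n=\tx{Ad}(\exp A_n)(Z_n)$ with $A_n\in\mf{t}(\R)$ and $Z_n\in\K(X)\cap\g(\R)$; rearranging, $Z_n=t_n\tx{Ad}(\exp(-A_n))(w_n)\in\K(X)\cap t_n\O_\R$.

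To identify $Z_n$, I would bring in the $\R_{>0}$-contraction $\phi_t(v):=t\cdot\tx{Ad}(\exp(-\tfrac12\log t\cdot H))(v)$, which fixes $X$, preserves $\K(X)$ setwise (because $\Cent_\g(Y_-)$ has non-positive $H$-weights), and sends any $G(\R)$-orbit $\O$ to the scaled orbit $t\O$ (the conjugating element lies in $G(\R)$). Therefore $\phi_{t_n}(P)\in\K(X)\cap t_n\O_\R'$, and over $\C$ the set $\K(X)\cap t_n\O_\C$ is a single point, which forces $Z_n=\phi_{t_n}(P)$; hence $\phi_{t_n}(P)\in t_n\O_\R$, and undoing the scaling and the $G(\R)$-conjugation yields $P\in\O_\R$, whence $P\in\K(X)\cap\O_\R$. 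The main obstacle is establishing the $\Gamma$-equivariant local slice theorem used in the previous paragraph; once that is in hand, the rest is a direct assembly of the two displays together with Kostant's uniqueness.
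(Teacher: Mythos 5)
Your proof is correct. The reduction to $\epsilon=1$ and to $\g$ via $\phi$, and the implication ``$\K(X)$ meets $\O_\R$ $\Rightarrow X\in\AC(\O_\R)$'' by contracting with $\tx{Ad}(\exp(sH))$ and rescaling, are the same as in the paper. For the converse your assembly is genuinely different: the paper quotes Kostant's transversality to get that $\tx{Ad}(G(\R))\K(X)(\R)$ contains a real neighborhood of $X$, places $t_nw_n$ in $\tx{Ad}(g)\K(X)(\R)$ for one large $n$, and then concludes with the scaling identity $G(\R)\cdot\K(X)=G(\R)\cdot\K(tX)$ for $t>0$ (valid because the real orbit of $X$ is a positive cone); it never needs the \emph{uniqueness} part of Kostant's theorem. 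You instead prove the real openness by hand through the slice $\mu(A,Z)=\tx{Ad}(\exp A)(Z)$, using $\g=[\g,X]\oplus\Cent_\g(Y_-)$, and then use the contraction $\phi_t$ together with uniqueness of the intersection of $\K(X)$ with the scaled complex orbit to identify the slice coordinate of $t_nw_n$ with $\phi_{t_n}(P)$, where $P$ is the Galois-fixed point of $\K(X)\cap\O_\C$, whence $P\in\O_\R$. The underlying ingredients (the Kostant section and $\exp(\R H)$-scaling) coincide; your route is more self-contained, since it does not cite the real form of transversality, and it pinpoints exactly which point of $\K(X)$ the real orbit hits, at the cost of the extra slice lemma and the uniqueness argument, while the paper's route is shorter once the transversality statement over $\R$ is granted. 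Two cosmetic points: the differential of $\mu$ at $(0,X)$ is better written as $(A,V)\mapsto[A,X]+V$ with $V$ in $\Cent_\g(Y_-)$ (the tangent space of the affine section), and, exactly as the paper does, you silently write $\O_\R\subset\g(\R)$ after the reduction even though the central component of the orbit may be a non-real element of $\mf{z}$; this is harmless because that component is constant on the orbit and splits off from both $\AC(\O_\R)$ and $\K(X)=\mf{z}\oplus\K(X)'$.
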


\begin{proof}
Choose an isomorphism $\phi : \g \to \g^*$ as in \S\ref{s:kostant}. It is immediate that $\phi(\AC(\phi^{-1}(\O_\R)))=\AC(\O_\R)$. This reduces the proof to the analogous statement but with $\g^*$ replaced by $\g$. It is also clear that $\epsilon(\AC(\epsilon^{-1}(\O_\R)))=\AC(\O_\R)$, so we may take $\epsilon=1$. We thus have a regular nilpotent element $X \in \g(\R)$ and a regular $G(\R)$-orbit $\O_\R \subset \g(\R)$. 

Assume first that $X \in \AC(\O_\R)$. By \cite{Kos63}, $\K(X)$ is transversal to any $G(\C)$-orbit in $\g(\C)$. Therefore the set $\tx{Ad}(G(\C))\K(X)$ contains an open ball around $X$ in $\g(\C)$.
This remains valid over $\R$ as well: $\tx{Ad}(G(\R))\K(X)(\R)$ contains an open ball around $X$ in $\g(\R)$.

Choose $t_n \in \R_{>0}$ with $t_n \to 0$ and $w_n \in \O_\R$ such that $t_nw_n \to X$. We conclude that there exists $g \in G(\R)$ such that $t_nw_n \in \tx{Ad}(g)\K(X)(\R)$ for some $n$. This is equivalent to $\tx{Ad}(g)^{-1}w_n \in t_n^{-1}(\K(X))(\R)$. Since $\O_\R$ is a $G(\R)$-orbit, we see $\O_\R \cap t_n^{-1}(\K(X))(\R) \neq \varnothing$. 

We now use the elementary facts that $t\K(X)=\K(tX)$ and $\tx{Ad}(h)\K(X)=\K(\tx{Ad}(h)X)$ for $t \in \R$ and $h \in G(\C)$.
Also it is well known that the $G(\R)$-orbit of $X$ is a cone. (To see this, apply the real version of the Jacobson-Morozov theorem
\cite[Theorem 9.2.1]{CM} to  find a homomorphism $\phi:SL_2\rightarrow G$ such that $d\phi\begin{pmatrix}0&1\\0&0\end{pmatrix}=X$.
Then $\Ad(\phi(\mathrm{diag}(t,t^{-1}))(X)=t^2X$.)
Together these imply
\begin{equation}
\label{e:scale}
G(\R)\cdot \K(X)=G(\R)\cdot \K(tX) \quad \text{ (for all $t>0$).}
\end{equation}
The result follows, since
$\O_\R \cap t_n^{-1}(\K(X))(\R)=\O_\R \cap G(\R)\cdot(t_n^{-1}(\K(X))(\R))=
\O_\R \cap G(\R)\cdot(\K(X)(\R))=\O_\R\cap \K(X)(\R)$.

For the other direction, suppose $\K(X)$ meets $\mathcal O_\R$.
This means there is an $SL(2)$-triple $(X,H,Y)$, and an element $Z\in \Cent_\g(Y)$ such that $W=X+Z \in \O$. 
By \cite{Kostant59}, the centralizer  $\mathrm{Cent}_{\g}(Y)$ is abelian; furthermore there exists a Cartan subalgebra $\mathfrak{h} \subset \g$, and a positive system $\Delta$ for the roots of $\mathfrak{h}$ in $\g$, such that $E$ is contained in the sum of $\Delta$-positive root spaces, while $\mathrm{Cent}_{\g}(F)$ is contained in the sum of $\Delta$-negative roots.  Therefore, if we set $h(t) = \exp(tH)$ for $t \in \R_+$, then 
\[\begin{cases} \mathrm{Ad}(h(t)) \cdot F \longrightarrow 0 & \text{as $t \to +\infty$, while} \\
 \mathrm{Ad}(h(t)) \cdot E = f(t) E & \text{ with $f(t) >0$ and $f(t)\to +\infty$ as $t \to \infty$.}\end{cases}\]
Set $Y(t) = \mathrm{Ad}(h(t)) Y$; then $f(t) \left[ E + Y(t) \right] = \mathrm{Ad}(h(t)) W$ lies on the orbit~$\O$. Thus there exists a continuous path in $\R^+ \cdot \O_\R$ with $E=X$ as a limit point; by definition this means $X \in \AC(\O_\R)$.
\end{proof}

For a closely related result see \cite[Proposition 3.5]{fm}.
  
\begin{proof}[Proof of Proposition \ref{p:whittaker}]

Since $\K(X)=\mf{z}^* \oplus \K(X)'$ and $\O_\pi = d\tau_\mf{z} + \O_\pi'$, the equivalence of 2. and 3. is clear.

Suppose $\pi$ is $\w_X$-generic.
We define the wave-front set $\WF(\pi)$ as in \cite[Section 3]{matumoto}. By definition this is a subset of $i\g(\R)^*$ consisting of nilpotent elements,
so it is a finite union of nilpotent orbits.

By \cite[Theorem A]{matumoto}  $X\in \WF(\pi)$. 
Furthermore  by \cite[Theorem 1.2]{harris} $\WF(\pi)=\AC(\O_\pi)$.
Therefore  $X\in \AC(\O)$, so  $\K(X)$ meets $\O_\pi$ by 
Lemma \ref{lem:g1}.

For the opposite implication consider the Harish-Chandra parameter $(S,\tau)$ of $\pi$. Recall that $\Omega(S,G)$ acts on $S$ by $\R$-automorphisms, due to the ellipticity of $S$. Applying an element of $\Omega(S,G)$ to $\tau$ we obtain $\tau_1$ with the property that the element $H_1 \in i\mf{s}(\R)$ corresponding to $d\tau_1$ under $\kappa$ lies in a Weyl chamber whose simple roots are non-compact (such Weyl chambers exist). By \cite[Theorem 6.2(a,f)]{Vog78} the corresponding representation $\pi_1$ is generic with respect to some Whittaker datum. By Lemmas \ref{lem:g1} and \ref{lem:g2} we can apply an element of $\Omega_\R(S_\tx{ad},G_\tx{ad})$ to $\tau_1$ to obtain $\tau_2$, such that the corresponding $\pi_2$ is $\w_X$-generic. By construction there is $n \in N(S,G)(\C)$ such that $\tau_2=\tx{Ad}(n)\tau$.

By the already proved first implication of this proposition,
$\O_{\pi_2}$ meets $K(X)$. By assumption $\O_\pi$ also meets
$K(X)$. Thus, the $G(\R)$-orbits of $\tau$ and
$\tau_2=\tx{Ad}(n)\tau$ meet $K(X)$. Since the $G(\C)$-orbit of
$H\tau$ meets $K(X)$ in a unique point we conclude that $\tau$
and $\tau_2=\tx{Ad}(n)\tau$ lie in the same $G(\R)$-orbit,
thus (by regularity) in the same $N(S,G)(\R)$-orbit. Modifying $n$ by
$N(S,G)(\R)$ arranges $\tau=\tau_2$,
without changing $\pi_2$, and we conclude $\pi=\pi_2$.
%
%\warn{Uniqueness isn't so easy. There is a reference: \cite[Lemma 14.14]{ABV92}. If you want to include a proof, I think the best one uses the associated variety argument, which was deleted as not needed but could be put back. Then: the associated variety of a discrete series is well known (reference...) to be a single orbit, so by the bijection the same holds for the Whittaker model}
\end{proof}

\begin{cor}
	Let $\pi$ be an eds representation. There exists precisely one Whittaker model $\mf{w}$ for which $\pi$ is $\mf{w}$-generic.
\end{cor}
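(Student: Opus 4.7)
The plan is to deduce this from Proposition~\ref{p:whittaker} together with Lemmas~\ref{l:Q} and~\ref{lem:g2}, treating existence and uniqueness in turn.

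For existence, let $(S,\tau)$ denote the Harish-Chandra parameter of $\pi$. Some complex Weyl translate $\tau_1$ of $\tau$ lies in a Weyl chamber all of whose simple roots are non-compact, so by \cite[Theorem~6.2(a,f)]{Vog78} and \cite[Theorem~L]{Kos78} the representation $\pi_{(S,\tau_1)}$ is $\w_0$-generic for some Whittaker datum $\w_0$. Using the identification $\Omega_\R(S_\tx{ad},G_\tx{ad})/\Omega_\R(S,G)\cong Q(G(\R))$ from Lemma~\ref{lem:g2}, I would then compensate the Weyl move by an element of $\Omega_\R(S_\tx{ad},G_\tx{ad})$ to bring $\tau_1$ back into the $\Omega_\R(S,G)$-orbit of $\tau$---mirroring the final paragraph of the proof of Proposition~\ref{p:whittaker}---and obtain a Whittaker datum $\w$ for which $\pi$ itself is $\w$-generic.

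For uniqueness, suppose $\pi$ is both $\w_{X_1}$- and $\w_{X_2}$-generic for regular nilpotents $X_1,X_2\in i\g(\R)^*$. Proposition~\ref{p:whittaker} supplies $p_i\in\O_\pi\cap\K(X_i)$, and equivariance of the Kostant section, $\tx{Ad}(h)\K(X)=\K(\tx{Ad}(h)X)$, together with the transitivity of $G(\R)$ on $\O_\pi$, lets me replace $X_1$ by a $G(\R)$-conjugate---without changing its Whittaker datum---to arrange $p_1=p_2=:p\in\K(X_1)\cap\K(X_2)$. Kostant's transversality theorem then shows that $p$ is the unique intersection of each $\K(X_i)$ with the $G(\C)$-orbit $\O_{\pi,\C}$.

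The concluding step is to promote this coincidence into a $G(\R)$-conjugacy of $X_1$ and $X_2$, which I would carry out by a counting argument. The $\Omega_\R(S_\tx{ad},G_\tx{ad})$-orbit of $\tau$ splits into $|\Omega_\R(S_\tx{ad},G_\tx{ad})/\Omega_\R(S,G)|=|Q(G(\R))|$ distinct $\Omega_\R(S,G)$-classes (by Lemma~\ref{lem:g2} and the regularity of $\tau$), yielding as many distinct eds representations. The existence statement produces a map from this finite set to the set of Whittaker data, which is itself a $Q(G(\R))$-torsor by Lemma~\ref{l:Q}(1). The main obstacle is verifying that this map is $Q(G(\R))$-equivariant; this should follow from Lemma~\ref{lem:w1}, by tracking how $G_\tx{ad}(\R)$ acts simultaneously on Harish-Chandra parameters (through its quotient $Q(G(\R))$) and on pinnings, equivalently on regular nilpotents in $i\g(\R)^*$. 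Equivariance of a map between torsors of equal finite cardinality forces bijectivity, delivering uniqueness.
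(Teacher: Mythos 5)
Your proposal has two genuine gaps, one in each half. For existence: this step cannot be carried out, and in fact the existence assertion is exactly what the paper's proof does not (and cannot) establish --- not every eds representation is generic (e.g.\ the holomorphic discrete series of $\tx{Sp}_4(\R)$), so the real content of the corollary, and all that the paper's argument proves, is that $\pi$ is $\mf{w}$-generic for \emph{at most} one $\mf{w}$. Concretely, the Weyl element carrying $\tau$ into a chamber all of whose simple roots are noncompact lies in the full group $\Omega(S,G)$, whereas you may only compensate by elements of $\Omega_\R(S_\tx{ad},G_\tx{ad})$, which is in general a proper subgroup; the cosets of $\Omega_\R(S_\tx{ad},G_\tx{ad})$ inside $\Omega(S,G)$ are precisely what separates generic from non-generic members of the packet. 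In the proof of Proposition \ref{p:whittaker} the analogous compensation is legitimate only because of the standing hypothesis that $\O_\pi$ meets $\K(X)$, which is what ultimately forces $\pi=\pi_2$ there; you have no such hypothesis.

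For uniqueness: the step you flag as the main obstacle is a real gap, and the counting argument cannot close it. Arranging $p\in\K(X_1)\cap\K(X_2)$ does not by itself give $G(\R)$-conjugacy of $X_1$ and $X_2$, and without uniqueness already in hand the assignment ``representation $\mapsto$ Whittaker datum for which it is generic'' is only a correspondence, not a map; a $Q(G(\R))$-equivariant correspondence between two $Q(G(\R))$-torsors of equal finite cardinality need not be single-valued (the correspondence relating every element to every element is equivariant), so equivariance plus counting cannot exclude that $\pi$ is generic for two distinct data. The paper sidesteps the comparison of two different Kostant sections entirely: write $\mf{w}_2=w\mf{w}_1$ with $w\in\Omega_\R(S_\tx{ad},G_\tx{ad})$ (Lemmas \ref{l:Q} and \ref{lem:g2}); then $\pi$ is $\mf{w}_2$-generic and $w\cdot\pi$, with parameter $w\tau$, is $w\mf{w}_1=\mf{w}_2$-generic, so Proposition \ref{p:whittaker} says the $G(\R)$-orbits of $d\tau$ and of $w\,d\tau$ both meet the \emph{single} section attached to $\mf{w}_2$; since the ambient $G(\C)$-orbit meets that section in exactly one point, these $G(\R)$-orbits coincide, whence after adjusting by $\Omega_\R(S,G)$ one gets $w\tau=\tau$, so $w$ is trivial in $Q(G(\R))$ by regularity and $\mf{w}_1=\mf{w}_2$. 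If you insist on your route you would need a genuinely new geometric input --- essentially that a regular elliptic $G(\R)$-orbit meets Kostant sections of only one $G(\R)$-class of regular nilpotents, cf.\ the asymptotic-cone statement in Lemma \ref{lem:g2'} --- which your proposal does not establish.
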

\begin{proof}
	Let $(S,\tau)$ be the Harish-Chandra parameter of $\pi$.
 If $\w_1,\w_2$ are two Whittaker models for which $\pi$ is generic, let $w \in \Omega_\R(S_\tx{ad},G_\tx{ad})$ be an element s.t. $w\w_1=\w_2$ by Lemmas \ref{lem:g1} and \ref{lem:g2}. Proposition \ref{p:whittaker} shows that the $G(\R)$-orbits of $\tau$ and $w\tau$ meet the same Kostant section (associated to $\w_2$.) These $G(\R)$-orbits must then be equal. Modifying $w$ by an element of $\Omega_\R(S,G)$ we see that $\tau=w\tau$. By regularity of $H_\pi$ this shows $w=1$, but then $\w_2=w\w_1=\w_1$.
\end{proof}

We now mention  a slight strengthening of Proposition \ref{p:whittaker}. 

\begin{lem} \label{lem:g2'}
Suppose $\pi=\pi(S,\tau)$ is a generic eds representation and $X\in i\g(\R)^*$ is a regular nilpotent element.
Then $\pi$ is $\w_X$-generic if and only if
$\K(X)$ meets $G(\R)\cdot\lambda$ where $\lambda\in \mf{z} \oplus i\s'(\R)^*$ is any element in the same Weyl chamber as $d\tau$.

Suppose $\pi=\pi(S,\tau)$ and $\pi'=\pi(S,\tau')$ are generic eds representations, which are $\w, \w'$-generic, respectively.
Then $\w=\w'$ if and only if $d\tau$ and $d\tau'$ are in the same Weyl chamber. 
\end{lem}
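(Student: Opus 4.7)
My plan is to reduce both parts of the lemma to a single key observation and then apply Proposition~\ref{p:whittaker}.

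The key observation I would establish is: for any regular nilpotent $X\in i\g(\R)^*$ and any regular $\mu\in \mf{z}^*\oplus i\s'(\R)^*$, the condition $\K(X)\cap G(\R)\cdot\mu\neq\varnothing$ depends only on the open Weyl chamber (under $\Omega(S,G)$) of the projection $\mu'$ of $\mu$ to $i\s'(\R)^*$. Granting this, Part~1 is immediate from Proposition~\ref{p:whittaker}: $\pi$ is $\w_X$-generic iff $\K(X)\cap G(\R)\cdot d\tau\neq\varnothing$ iff $\K(X)\cap G(\R)\cdot\lambda\neq\varnothing$ for any $\lambda$ in the same chamber. Part~2 in the $(\Leftarrow)$ direction then follows by applying Part~1 with $\lambda=d\tau'$ and invoking the uniqueness from the corollary preceding the lemma.

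To prove the key observation, I would first reduce to the case $\mu_\mf{z}=0$ using $\K(X)=\mf{z}^*\oplus\K(X)'$ and the fact that $G(\R)$ fixes $\mf{z}^*$ pointwise. By Kostant~\cite{Kos63}, there is a unique point $p_{\mu'}\in\K(X)'\cap G(\C)\cdot\mu'$, depending algebraically on $\mu'$. Locally in $\mu'$ I choose $g_{\mu'}\in G(\C)$ with $\tx{Ad}(g_{\mu'})\mu'=p_{\mu'}$; then $c_{\mu'}(\sigma)=g_{\mu'}^{-1}\sigma(g_{\mu'})$ takes values in the centralizer $S(\C)$ of $\mu'$ and represents a class $[c_{\mu'}]\in H^1(\R,S)$ that vanishes precisely when $p_{\mu'}\in G(\R)\cdot\mu'$. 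Since $c_{\mu'}$ varies continuously while $H^1(\R,S)$ is a finite discrete set, $[c_{\mu'}]$ is locally constant, hence constant on each connected open Weyl chamber.

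For Part~2 in the $(\Rightarrow)$ direction, Part~1 together with the key observation yields a well-defined map sending the $\Omega_\R(S,G)$-orbit of the chamber of $d\tau$ to $\w_\pi$, defined on chambers giving generic eds representations. This map is $Q(G(\R))$-equivariant for the action of $Q(G(\R))=\Omega_\R(S_\tx{ad},G_\tx{ad})/\Omega_\R(S,G)$ on both sides, with the action on $\Omega_\R(S,G)$-orbits of chambers coming from Lemma~\ref{lem:g2} and on Whittaker data from Lemma~\ref{l:Q}, and it is surjective by the construction in the proof of Proposition~\ref{p:whittaker}. Since $Q(G(\R))$ acts simply transitively on Whittaker data, the map is bijective, so $\w=\w'$ forces $d\tau$ and $d\tau'$ to lie in chambers in the same $\Omega_\R(S,G)$-orbit.

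The main obstacle I anticipate is in the key observation: I need continuous dependence of $p_{\mu'}$ and of a local choice of $g_{\mu'}$ on $\mu'$, which I expect to secure using Kostant's theorem together with the local triviality of the principal $S(\C)$-bundle $G(\C)\to G(\C)/S(\C)$.
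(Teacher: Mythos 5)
Your handling of the first statement is correct but takes a genuinely different route from the paper. The paper passes through Lemma \ref{lem:g1} to replace ``$\K(X)$ meets the real orbit'' by the asymptotic-cone condition $X\in\AC(\O_{d\tau})$ and then cites \cite{adams_afgoustidis_whittaker} (Proposition 3.9 there) for the fact that the asymptotic cone only depends on the chamber; you instead prove chamber-constancy of the Kostant-section condition directly, by encoding the obstruction as a class in $H^1(\R,S)$ and showing it is locally constant on each (connected) chamber. That buys a self-contained argument where the paper outsources the key input. Two details need care in a writeup: since $\mu'\in i\s'(\R)^*$ and $X\in i\g(\R)^*$, neither the orbit nor $\K(X)$ is fixed by conjugation ($\sigma$ carries $\K(X)$ to $\K(-X)=-\K(X)$), so to see that $c_{\mu'}$ lands in $S(\C)$ you must check $\sigma(p_{\mu'})=-p_{\mu'}$, which follows from Kostant's uniqueness applied to $\K(-X)$ and the orbit of $-\mu'$; and local constancy of $[c_{\mu'}]$ requires that coboundaries are open in $Z^1(\Gamma,S(\C))$ (true because $t\mapsto t\sigma(t)^{-1}$ is submersive at $1$), not merely finiteness of $H^1(\R,S)$. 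Granted this, your deduction of part 1 and of the ``if'' direction of part 2 from Proposition \ref{p:whittaker} and the uniqueness corollary matches the paper.

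For the ``only if'' direction the paper simply cites \cite{adams_afgoustidis_whittaker} (Corollary 3.12), whereas you attempt a proof; here there is a genuine gap at the last step. A $Q(G(\R))$-equivariant surjection from the set of $\Omega_\R(S,G)$-orbits of generic chambers onto the set of Whittaker data is not automatically bijective just because the target is a $Q(G(\R))$-torsor: nothing in your argument excludes the source consisting of several $Q(G(\R))$-orbits, each mapping onto the torsor. You need injectivity, i.e. that the generic chambers form a single $\Omega_\R(S_\tx{ad},G_\tx{ad})$-orbit, or an equivalent counting statement. One way to close this, consistent with the paper's logic, is via the fact that a discrete-series $L$-packet of the quasi-split form contains exactly one $\w$-generic member for each Whittaker datum (Lemma \ref{lem:uniqgen}; its proof uses only Proposition \ref{p:whittaker}, so there is no circularity): if $C_1$ and $C_2=wC_1$, $w\in\Omega(S,G)$, are generic chambers mapping to the same $\w$, pick a regular genuine $\tau$ with $d\tau\in C_1$; then $\pi(S,\tau)$ and $\pi(S,w\tau)$ lie in the same packet and are both $\w$-generic, hence equal, and regularity of $d\tau$ forces $w\in\Omega_\R(S,G)$, i.e. $[C_1]=[C_2]$.

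Finally, note that even after this repair your argument yields only that $d\tau$ and $d\tau'$ lie in chambers in the same $\Omega_\R(S,G)$-orbit, which is a priori weaker than the literal conclusion ``the same Weyl chamber''. Since the Harish-Chandra parameter $(S,\tau)$ is determined by $\pi$ only up to $N(S,G)(\R)$-conjugacy, the chamber of $d\tau$ is itself an invariant of $\pi$ only modulo $\Omega_\R(S,G)$, so your formulation is arguably the invariant content of the statement; but you should either state your conclusion in that form or make explicit the normalization of the representatives $\tau,\tau'$ under which the two formulations coincide.
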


\begin{proof}
For the first statement by Lemma \ref{lem:g1} replace the condition on the Kostant section with $X\in \AC(\O_{d\tau})$.
By \cite[Proposition 3.9]{adams_afgoustidis_whittaker}, $\AC(\O_{d\tau})=\AC(\O_\lambda)$ and the result follows.

  This implies the if direction of the second statement. For the other direction see
  See \cite[Corollary  3.12]{adams_afgoustidis_whittaker}.
\end{proof}

See \cite[Appendix]{adams_afgoustidis_whittaker} for details  in the case of $SL(2,\R)$.

\section{Construction of $L$\-packet and internal structure} \label{sec:cons}

Let $G_0$ be a quasi-split connected reductive $\R$-group with dual group $\hat G$ and $L$\-group $^LG$.
%We choose an $\R$-pinning $(T,B,\{X_\alpha\})$ of $G$ and a non-trivial additive character $\psi : \R \to \C^\times$ and denote by $\mf{w}$ the resulting Whittaker datum, as in \S\ref{sec:whit}.
Let $[\varphi] : W_\R \to {^LG}$ be a $\hat G$-conjugacy class of discrete Langlands parameters.

\subsection{Factorization of a parameter} \label{sub:fac}

Before we state the next lemma, we introduce the following notation. Given $a,b \in \C$ with $a-b \in \Z$ and $z \in \C^\times$ we define
\[ z^a \cdot \bar z^b := |z|^{a+b} \cdot (z/|z|)^{a-b}. \]
More generally, if $\hat T$ is a complex torus and $\lambda,\mu \in X_*(\hat T)_\C$ with $\lambda-\mu \in X_*(\hat T)$, and $z \in \C^\times$, we define
\[ \lambda(z) \cdot \mu(\bar z) \in \hat T \]
to be the unique element characterized by 
\[ \chi(\lambda(z) \cdot \mu(\bar z)) = z^{\<\chi,\lambda\>} \cdot \bar z^{\<\chi,\mu\>},\qquad \forall \chi \in X^*(\hat T).\]
It is well known that every continuous group homomorphism $\C^\times \to \C^\times$ is of the form $z \mapsto z^a \cdot \bar z^b$ for unique $a,b \in \C$ with $a-b \in \Z$. Therefore, every continuous group homomorphism $\C^\times \to \hat T$ is of the form $z \mapsto \lambda(z) \cdot \mu(\bar z)$ for unique $\lambda,\mu \in X_*(\hat T)_\C$ with $\lambda-\mu \in X_*(\hat T)$.

\begin{lem} \label{lem:icreg}
Choose any representative $\varphi$ within the conjugacy class $[\varphi]$. 
\begin{enumerate}
\item There exists a maximal torus $\hat T \subset \hat G$ that is normalized by the image of $\varphi$. 
\item There exist $\lambda,\mu \in X_*(\hat T)_\C$ with $\lambda-\mu \in X_*(\hat T)$ such that, for all $z \in \C^\times$,
\[ \varphi(z) = \lambda(z) \cdot \mu(\bar z) \in \hat T. \]
\item The action of $\Gamma=W_\R/\C^\times$ on $\hat T$ by conjugation via $\varphi$ induces multiplication by $-1$ on $X_*(\hat T/Z(\hat G))$.
\item For all $\alpha \in R(\hat T,\hat G)$, the a-priori complex number $\<\lambda,\alpha\>$ is a non-zero half-integer, and equals $-\<\mu,\alpha\>$.
\item $\hat T=\tx{Cent}(\varphi(\C^\times),\hat G)$, thus $\hat T$ is uniquely determined by $\varphi$.
\end{enumerate}
\end{lem}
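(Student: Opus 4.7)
The plan is to handle (1) and (5) together by forcing the centralizer of $\varphi(\C^\times)$ in $\hat G$ to be a maximal torus, derive (2) from the classification of continuous homomorphisms $\C^\times \to \hat T$ recalled just before the lemma, and extract (3) and (4) from the defining relation $\varphi(j)\varphi(z)\varphi(j)^{-1} = \varphi(\bar z)$.

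For (1) and (5), I would set $M := \tx{Cent}(\varphi(\C^\times),\hat G)$. Because $\varphi$ is an $L$\-homomorphism, $\varphi(\C^\times)$ consists of pairwise commuting semi\-simple elements and hence lies in a subtorus, so $M$ is a connected reductive Levi subgroup of $\hat G$. The relation $\varphi(j)\varphi(z)\varphi(j)^{-1} = \varphi(\bar z)$ shows $\varphi(j)$ normalizes $M$, and since $\varphi(j)^2 = \varphi(-1) \in \varphi(\C^\times) \subset Z(M)$, conjugation by $\varphi(j)$ defines an involutive automorphism of $M$. I then want to argue that $M$ must be a torus: if not, Steinberg's fixed\-point theorem applied to this involution on the non\-trivial semi\-simple group $M_\tx{der}$ yields a non\-trivial connected reductive fixed\-point subgroup, which commutes with both $\varphi(\C^\times)$ and $\varphi(j)$, hence lies in $S_\varphi$ but not in $Z(\hat G)$, contradicting the hypothesis $S_\varphi^\circ \subset Z(\hat G)^\circ$. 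Once $M$ is a torus it must be maximal, otherwise its centralizer would strictly contain it; setting $\hat T := M$ gives both (5) and (1). Statement (2) follows immediately by applying the recalled classification to the continuous homomorphism $\varphi|_{\C^\times} : \C^\times \to \hat T$.

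For (3), let $\sigma := \tx{Ad}(\varphi(j))$, an involution of $\hat T$ that realizes the $\Gamma$\-action on $X_*(\hat T)$. Any $t \in \hat T^\sigma$ commutes with both $\varphi(\C^\times)$ and $\varphi(j)$, so $\hat T^\sigma \subset S_\varphi$, and the discreteness of $\varphi$ forces $\hat T^\sigma/Z(\hat G)^\Gamma$ to be finite. Equating dimensions gives $\rk X_*(\hat T)^\sigma = \rk X_*(Z(\hat G)^\circ)^\sigma$. Tensoring the exact sequence $0 \to X_*(Z(\hat G)^\circ) \to X_*(\hat T) \to X_*(\hat T/Z(\hat G)^\circ) \to 0$ with $\Q$, the Tate $H^1(\sigma,-)$ of the left term vanishes (order two acting on a $\Q$\-vector space), so taking $\sigma$\-invariants remains exact; the rank equality then forces $(X_*(\hat T/Z(\hat G)^\circ)\otimes\Q)^\sigma = 0$, and torsion\-freeness of $X_*(\hat T/Z(\hat G))$ upgrades this to $\sigma = -1$ on $X_*(\hat T/Z(\hat G))$.

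For (4), pick any $\alpha \in R(\hat T,\hat G)$. Since $\alpha$ vanishes on $Z(\hat G)$, part (3) gives $\alpha \circ \sigma = \alpha^{-1}$ as characters of $\hat T$; evaluating at $\varphi(z)$ yields $\alpha(\varphi(\bar z)) = \alpha(\varphi(z))^{-1}$. Substituting $\varphi(z) = \lambda(z)\mu(\bar z)$ and matching the exponents of $|z|$ and $z/|z|$ on both sides produces the single relation $\<\alpha,\mu\> = -\<\alpha,\lambda\>$, which together with the integrality $\<\alpha,\lambda-\mu\> \in \Z$ gives $2\<\alpha,\lambda\> \in \Z$. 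To rule out $\<\alpha,\lambda\> = 0$, observe that vanishing would force $\alpha$ to be trivial on $\varphi(\C^\times)$, so the rank\-one reductive subgroup of $\hat G$ generated by $\hat T$ together with the root subgroups $U_{\pm\alpha}$ would centralize $\varphi(\C^\times)$, strictly enlarging $\hat T$ and contradicting (5). The main obstacle is the Steinberg fixed\-point step in (1) and (5), which is the only place discreteness is genuinely used: one must verify that the fixed\-point subgroup of the involution on $M_\tx{der}$ is both non\-trivial and not contained in $Z(\hat G)$.
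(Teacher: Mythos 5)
Your proposal is correct in substance, but it takes a genuinely different route from the paper. The paper proves (1) abstractly: it forms the Zariski closures $B\subset A$ of $\varphi(\C^\times)$ and of the full image of $\varphi$, shows they consist of semisimple elements, deduces that $A$ is supersolvable, and invokes the Springer--Steinberg theorem that such a group normalizes a maximal torus; statement (5) is then proved \emph{last}, from the regularity of $2\lambda$ obtained in (4). You reverse the order: you define $\hat T$ directly as $\tx{Cent}(\varphi(\C^\times),\hat G)$ and show it is a maximal torus by a fixed-point argument for the involution $\tx{Ad}(\varphi(j))$ on the derived group of this Levi, so (5) comes first and both (1) and the nonvanishing in (4) follow cheaply (the paper instead rules out $\<\lambda,\alpha\>=0$ by analyzing the rank-one subgroup generated by $U_{\pm\alpha}$, whose Galois involution is inner with positive-dimensional fixed points --- an argument close in spirit to your fixed-point step). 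For (3) the paper uses the ``image not contained in a proper parabolic'' formulation of discreteness (a fixed $0\neq\nu$ would put the image in $P_\nu$), while you use the equivalent formulation $[S_\varphi:Z(\hat G)^\Gamma]<\infty$ plus a rank count; both are legitimate, as the paper records the equivalence. Your route buys a more economical treatment of (4) and (5); the paper's route avoids any case analysis on the Levi at the cost of citing Springer--Steinberg.

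Two steps need shoring up. First, your justification that $M=\tx{Cent}(\varphi(\C^\times),\hat G)$ is a \emph{connected} reductive Levi (``commuting semisimple elements, hence contained in a subtorus'') is not by itself sufficient: the centralizer of a disconnected diagonalizable subgroup of a torus can be disconnected. What saves the step is that $\varphi(\C^\times)$ is connected in the analytic topology, so its Zariski closure is a connected abelian group of semisimple elements, i.e.\ a torus, and centralizers of tori in connected reductive groups are connected Levi subgroups. Connectedness is not cosmetic here, since (5) asserts that the \emph{full} centralizer equals $\hat T$. Second, the ``Steinberg fixed-point'' step only requires the standard fact that an involutive algebraic automorphism of a nontrivial connected semisimple group has positive-dimensional fixed-point subgroup (on the Lie algebra, $\mf{k}=0$ would force the $(-1)$-eigenspace to be an abelian ideal); you should also record why the resulting positive-dimensional subgroup of $M_\tx{der}$ cannot lie in $Z(\hat G)$, namely that $M_\tx{der}\cap Z(M)$ is finite. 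With these points made explicit, your argument is complete.
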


\begin{rem}
	\begin{enumerate}
		\item Note that 4. implies that the images $\lambda',\mu' \in X_*(\hat T/Z(\hat G))_\C$ lie in $\tfrac{1}{2}X_*(\hat T/Z(\hat G))$ and satisfy $\mu'=-\lambda'$. Although we will not need it, it can be shown that in fact $\lambda',\mu'$ are integral, i.e. they lie in $X_*(\hat T/Z(\hat G))$.
  		\item Furthermore, 4. implies that $\lambda'$ is a regular element of $X_*(\hat T/Z(\hat G))_\R$, and hence determines a Weyl chamber. Let $\Delta$ be the set of simple roots for that chamber. Once the integrality of $\lambda'$ is established, its regularity implies $\<\alpha,\lambda'\> \geq 1$, and hence $\lambda'-\rho \in X_*(\hat T/Z(\hat G))$ is still a dominant integral element of the chamber.
	\end{enumerate}

\end{rem}

\begin{proof}[Proof of Lemma \ref{lem:icreg}]
(1) We consider $^LG = \hat G \rtimes \Gamma$ as a disconnected algebraic group. Let $A \subset {^LG}$ denote the Zariski closure of the image of $\varphi$ and let $B \subset A$ denote the Zariski closure of $\varphi(\C^\times)$; the latter lies in $\hat G$. 

We claim that $A$ consists of semi-simple elements. Since $A^2 \subset B$ it is enough to show that $B$ consists of semi-simple elements. This is equivalent to showing that the adjoint action of $B$ on the Lie algebra of $\hat G$ is semi-simple. Thus consider $\tx{Ad} : \hat G \to \tx{GL}(\mf{\hat g})$. The subgroup $\varphi(\C^\times)$ of $\hat G$ and consists of elements which commute with each other, and are semi-simple elements by definition of $\varphi$, cf. \cite[\S8]{BorCor}. Therefore their actions on $\mf{\hat g}$ can be simultaneously diagonalized, i.e. their images in $\tx{GL}(\mf{\hat g})$ lie in a common torus. The preimage of this torus in $\hat G$ is a closed subgroup consisting of commuting semi-simple elements and contains $B$, proving the claim.

Now $B$ is normal in $A$ of index $2$, $B/B^\circ$ is a finite abelian group, and $B^\circ$ is a connected abelian algebraic groups consisting of semi-simple elements, hence a torus. We conclude that $A$ is supersolvable according to \cite[Definition 5.14]{SS70}, and \cite[Theorem 5.16]{SS70} implies that $A$ normalizes a maximal torus $\hat T \subset \hat G$.

% We now provide an alternative proof of (1).
% The image of $\C^\times$ is connected in the analytic topology, abelian, and consists of semisimple elements. It follows \warn{reference?} that $\hat L=\Cent_{\hat G}(\varphi(\C^\times))$ is a Levi subgroup.
% In particular $\hat L$ is connected, reductive, and contains a maximal torus $\hat S$ of $\hat G$. Since $\varphi(\C^\times)$ is contained in the identity component of the center of $\hat L$, it is contained in $\hat S$.
% Choose an element $g\in \varphi(W_\R)\backslash \varphi(\C^\times)$. Then $g$ normalizes $\hat L$, and since $g^2\in \varphi(\C^\times)$ conjugation by $g$ acts as an involution on $\hat L$ and is thus induces a semi-simple automorphism of $\hat L$. According to \cite[Theorem 7.5]{Ste68end}, it stablizes a Borel pair in $\hat L$.
% The maximal torus $\hat T$ in that Borel pair is a maximal torus of $\hat G$, containing $\varphi(\C^\times)$ (since the latter is contained in the center of of $\hat L$), and
% fixed by the action of $g$, hence and therefore normalized by the image of $\varphi$.

(2) We continue with a maximal torus $\hat T \subset \hat G$ normalized by the image of $\varphi$. Thus $\varphi(\C^\times)$, which is a subgroup of $\hat G$, lies in $N(\hat T,\hat G)$. By continuity of $\varphi$ for the analytic topology, the subset $\varphi(\C^\times)$ of $\hat G$ is connected in the analytic topology. The projection map $N(\hat T,\hat G) \to \Omega(\hat T,\hat G)$ is continuous in the analytic topology, from which we conclude that the image of $\varphi(\C^\times)$ in $\Omega(\hat T,\hat G)$ is trivial, and hence $B \subset \hat T$. In particular, $\varphi(z) \in \hat T$ for all $z \in \C^\times$. The discussion before the statement of the lemma provides unique $\lambda,\mu \in X_*(\hat T)\otimes_\Z\C$ with $\lambda-\mu \in X_*(\hat T)$ such that
\[ \varphi(z) = \lambda(z) \cdot \mu(\bar z),\qquad \forall z \in \C^\times. \]

For the remainder of the proof, we assume without loss of generality that $\hat G$ is adjoint.

(3) Since $\varphi(\C^\times) \subset \hat T$ by (2), the action of $W_\R$ on $\hat T$ by conjugation via $\varphi$ factors through $W_\R/\C^\times=\Gamma$. If the induced action on $X_*(\hat T)$ stabilizes some $0 \neq \nu \in X_*(\hat T)$, then the image of $\varphi$ would be contained in the proper parabolic subgroup $P_\nu$ determined by $\nu$, contradicting the assumed discreteness of $\varphi$. Therefore, the involution of $X_*(\hat T)$ induced by the action of the non-trivial element of $\Gamma$ is given by multiplication by $-1$. 

(4) Using (3) we see that, for all $z \in \C^\times$,
\[ \mu(z)\lambda(\bar z)=\varphi(\bar z)=\varphi(\sigma \cdot z \cdot \sigma^{-1})=\tx{Ad}(\varphi(\sigma))(\lambda(z)\mu(\bar z))=(-\lambda)(z) \cdot (-\mu)(\bar z), \] which shows $\lambda=-\mu$. Thus $\<\lambda,\alpha\>=-\<\mu,\alpha\>$ for all $\alpha \in R(\hat T,\hat G)$. Moreover, since $\lambda-\mu \in X_*(\hat T)$, we conclude $2\lambda \in X_*(\hat T)$.

If there is some $\alpha \in R(\hat T,\hat G)$ with $\<\lambda,\alpha\>=0$, then all elements of the root subgroups $U_\alpha$ and $U_{-\alpha}$ are fixed by $\varphi(\C^\times)$, while these two root subgroups are interchanged by $\varphi(\sigma)$ for any $\sigma \in W_\R$ projecting to the non-trivial element of $\Gamma$. The semi-simple group of rank 1 generated by $U_\alpha$ and $U_{-\alpha}$ is thus stable under the action of $W_\R$ via conjugation by $\varphi$, and this action descends to $\Gamma$ and is thus given by an involution. This involution is necessarily inner, i.e. it coincides with the conjutation action of an element of this group. The subgroup of fixed points is thus at least of dimension $1$ (it contains a maximal torus in this 3-dimensional subgroup). This subgroup lies in $\tx{Cent}(\varphi,\hat G)$ and contradicts the discreteness of $\varphi$.

(5) From (2) and (4) we have $\varphi(z)=(2\lambda)(z/|z|) \in \hat T$ and $2\lambda$ is a regular element of $X_*(\hat T)$. Since the image of $z \mapsto z/|z|$ is the unit circle $\mb{S}^1$ of $\C^\times$, which is Zariski dense in the 1-dimensional torus $\C^\times$, the centralizer of the subgroup $(2\lambda)(\mb{S}^1)$ of $\hat G$ is the same as that of $(2\lambda)(\C^\times)$, which in turn equals $\hat T$.
\end{proof}

We continue with a chosen representative $\varphi$ of the conjugacy class $[\varphi]$. Lemma \ref{lem:icreg} provides the maximal torus $\hat T \subset \hat G$ normalized by $\varphi$ and containing $\varphi(\C^\times)$.
%Conjugating $\varphi$ by $N(\hat T,\hat G)$ we may then assume that this image lies in the chamber determined by $\hat B$. This will be used in Lemma \ref{lem:a} below.
%
% At this point, $\varphi$ is well-defined up to conjugation by $\hat T$. This of course depends on the chosen Borel pair $(\hat T,\hat B)$.
%
The composition of $\varphi$ with the projection $N(\hat T,{^LG}) \to \Omega(\hat T,{^LG})=N(\hat T,{^LG})/\hat T$ factors through a homomorphism $\xi : \Gamma \to \Omega(\hat T,{^LG})$. Let $\hat S$ denote the $\Gamma$-module with underlying abelian group $\hat T$ and $\Gamma$-structure given by $\tx{Ad}\circ\xi$. Let $S$ be the $\R$-torus whose dual is $\hat S$, i.e. the $\R$-torus determined by $X^*(S)=X_*(\hat S)$ as $\Gamma$-modules.

By construction we have $R(\hat T,\hat G) \subset X^*(\hat T)=X^*(\hat S)=X_*(S)$, and we write $R^\vee(S,G)$ for this set. Analogously we have a subset $R(S,G) \subset X^*(S)$. Both of these subsets are $\Gamma$-stable and according to Lemma \ref{lem:icreg} the action of $\sigma$ on $R(S,G)$ is by negation. Thus $S/Z(G)$ is anisotropic, where $Z(G) \subset S$ is the joint kernel of all elements if $R(S,G)$.

Let $S(\R)_G$ be the double cover of $S(\R)$ reviewed in \S\ref{sub:covtori}, associated to the subset $R(S,G) \subset X^*(S)$. As discussed there, there is a canonical $\hat G$-conjugacy class of $L$\-embeddings $^LS_G \to {^LG}$. Inside of this class, there is a unique $\hat S$-conjugacy class, call it $^Lj$, whose restriction to $\hat S$ is the tautological embedding $\hat S \to \hat G$. The image of this $L$\-embedding is described in \eqref{eq:lembim}, and contains the image of $\varphi$ by construction. Thus $\varphi = {^Lj}\circ\varphi_S$ for a unique $\hat S$-conjugacy class of $L$\-homomorphisms $\varphi_S : W_\R \to {^LS_G}$. According to \cite[Theorem 3.15]{KalDC}, $\varphi_S$ corresponds to a genuine character $\tau : S(\R)_G \to \C^\times$.

Finally, the tautological inclusion $\hat T \subset \hat G$ provides an embedding $\hat\jmath : \hat S \to \hat G$. While this embedding is not $\Gamma$-equivariant, its $\hat G$-conjugacy class is, because the embedding $\hat T \to \hat G$ is $\Gamma$-equivariant and the $\Gamma$-structures of $\hat G$ and $\hat S$ differ by twisting by $\hat G$.

The construction of $(S,\tau,\hat\jmath\,)$ depended on the choice of $\varphi$ within its conjugacy class. The next lemma shows that this dependence is irrelevant.

\begin{lem} \label{lem:a}
If $(S_1,\tau_1,\hat\jmath_1)$ and $(S_2,\tau_2,\hat\jmath_2)$ are two pairs obtained from two different choices $\varphi_1$ and $\varphi_2$ of elements of the $\hat G$-conjugacy class $[\varphi]$, there exists a unique isomorphism $S_1 \to S_2$ which identifies $\tau_1$ with $\tau_2$ and whose dual intertwines $\hat\jmath_1$ and $\hat\jmath_2$. It is given by conjugation by an element of $\hat G$.
\end{lem}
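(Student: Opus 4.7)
The plan is to exploit the $\hat G$-conjugacy between $\varphi_1$ and $\varphi_2$ and transport all data simultaneously by a single conjugation. Pick $g \in \hat G$ with $\varphi_2 = \tx{Ad}(g) \circ \varphi_1$. By Lemma \ref{lem:icreg}(5), each $\hat T_i = \tx{Cent}(\varphi_i(\C^\times), \hat G)$ is canonically determined by $\varphi_i$, so $\tx{Ad}(g)$ carries $\hat T_1$ isomorphically onto $\hat T_2$.

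I would next verify that the resulting isomorphism of complex tori is $\Gamma$-equivariant for the twisted structures defining $\hat S_1$ and $\hat S_2$. The $\Gamma$-action on $\hat S_i$ is given by $\tx{Ad} \circ \xi_i$, where $\xi_i : \Gamma \to \Omega(\hat T_i, {^LG})$ is the composition of $\varphi_i$ with the projection $N(\hat T_i, {^LG}) \to \Omega(\hat T_i, {^LG})$. Taking $\varphi_i(\sigma)$ as a representative of $\xi_i(\sigma)$, the relation $g \varphi_1(\sigma) = \varphi_2(\sigma) g$ immediately gives $\tx{Ad}(g) \circ \tx{Ad}(\xi_1(\sigma)) = \tx{Ad}(\xi_2(\sigma)) \circ \tx{Ad}(g)$ as maps $\hat T_1 \to \hat T_2$, so $\tx{Ad}(g)$ is an isomorphism $\hat S_1 \to \hat S_2$ of $\Gamma$-modules. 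Dualizing yields the desired isomorphism $S_1 \to S_2$ of $\R$-tori.

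Then I would check the two compatibilities. For $\hat\jmath_i$, both are tautological inclusions $\hat S_i = \hat T_i \hookrightarrow \hat G$, so $\hat\jmath_2 = \tx{Ad}(g) \circ \hat\jmath_1$ is immediate. For the characters $\tau_i$, I would use the description of ${^Lj}_i$ as the unique $\hat S_i$-conjugacy class, within its canonical $\hat G$-conjugacy class, whose restriction to $\hat S_i$ is tautological. The $L$-embedding $\tx{Ad}(g) \circ {^Lj}_1$ also has this property with respect to $\hat S_2$, and hence is $\hat S_2$-conjugate to ${^Lj}_2$. Combined with $\varphi_2 = \tx{Ad}(g) \circ {^Lj}_1 \circ \varphi_{S,1}$, this gives that $\tx{Ad}(g) \circ \varphi_{S,1}$ is $\hat S_2$-conjugate to $\varphi_{S,2}$, and via \cite[Theorem 3.15]{KalDC} this is precisely the statement that $\tau_1$ and $\tau_2$ correspond under the isomorphism $S_1 \to S_2$. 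The main subtlety here is to carefully track the distinction between $\hat S$-conjugacy and $\hat G$-conjugacy of $L$-embeddings, which is where I expect to spend the most care.

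For uniqueness, suppose $g' \in \hat G$ also satisfies $\tx{Ad}(g') \circ \varphi_1 = \varphi_2$. Then $g^{-1} g' \in \tx{Cent}(\varphi_1, \hat G) \subset \tx{Cent}(\varphi_1(\C^\times), \hat G) = \hat T_1$, again by Lemma \ref{lem:icreg}(5). Thus $\tx{Ad}(g^{-1} g')$ acts trivially on $\hat T_1$, so $g$ and $g'$ induce the same isomorphism $\hat T_1 \to \hat T_2$, and hence the same $S_1 \to S_2$.
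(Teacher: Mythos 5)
Your existence argument is essentially the paper's: choose $g$ with $\tx{Ad}(g)\circ\varphi_1=\varphi_2$, use Lemma \ref{lem:icreg}(5) to get $\tx{Ad}(g)\hat T_1=\hat T_2$, check $\Gamma$-equivariance of $\tx{Ad}(g):\hat S_1\to\hat S_2$ exactly as you do, note the tautological compatibility with $\hat\jmath_1,\hat\jmath_2$, and transport the factorization through the canonical $L$-embeddings to match the characters. On that last step the paper is slightly more explicit than you are: since $\tau_i$ is a genuine character of the double cover $S_i(\R)_G$, one must first observe that the dual isomorphism identifies $R(S_1,G)$ with $R(S_2,G)$ and hence lifts canonically to the covers, and that $\hat S_1\to\hat S_2$ extends canonically to ${^LS_{1,G}}\to{^LS_{2,G}}$ compatibly with the embeddings into $^LG$; your appeal to the unique $\hat S$-conjugacy class with tautological restriction is the right mechanism, but it is carried out inside this identification, which your write-up leaves implicit.

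The genuine gap is in the uniqueness. What you prove is that the constructed isomorphism is independent of the choice of $g$ among elements satisfying $\tx{Ad}(g)\circ\varphi_1=\varphi_2$ — a well-definedness statement about your construction. The lemma asserts more: \emph{any} isomorphism $S_1\to S_2$ that identifies $\tau_1$ with $\tau_2$ and whose dual is compatible with $\hat\jmath_1,\hat\jmath_2$ must coincide with the constructed one. A competing isomorphism has dual implemented by conjugation by some $h\in\hat G$ carrying $\hat T_1$ to $\hat T_2$, but such an $h$ need not conjugate $\varphi_1$ to $\varphi_2$, so your centralizer computation $g^{-1}g'\in\hat T_1$ never sees it; the set of maps compatible with the $\hat\jmath_i$ alone is a torsor under $\Omega(\hat T_2,\hat G)$, not a point. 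To close the gap, compose a competitor with the inverse of your map: the result is a $\Gamma$-equivariant automorphism of $S_2$ induced by an element of $N(\hat T_2,\hat G)$, i.e.\ by a Weyl element $w$, which after the canonical lift to $S_2(\R)_G$ fixes $\tau_2$; since $\varphi$ is discrete, $\tau_2$ is regular (Lemma \ref{lem:icreg}(4) and the discussion preceding Theorem \ref{thm:eds}), so $w=1$. Equivalently, one can correct $h$ by an element of $\hat T_2$ so that it does conjugate $\varphi_1$ to $\varphi_2$, after which your computation applies. The paper's own proof disposes of uniqueness in one line via the compatibility with the $\hat\jmath_i$, but it is addressing this stronger quantifier; your proposal as written does not.
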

\begin{proof}
	The uniqueness claim is clear from the compatibility with $\hat\jmath_i$. We show existence. Let $\hat T_i$ be the centralizer of $\varphi_i(\C^\times)$, a maximal torus according to Lemma \ref{lem:icreg}. Choose any $g \in \hat G$ such that $\tx{Ad}(g) \circ\varphi_1=\varphi_2$. Then $\tx{Ad}(g)\hat T_1=\hat T_2$ and the isomorphism $\tx{Ad}(g) : \hat T_1 \to \hat T_2$ translates the $\Gamma$-action induced by $\tx{Ad}\circ\varphi_1$ to that induced by $\tx{Ad}\circ\varphi_2$. Therefore, $\tx{Ad}(g) : \hat S_1 \to \hat S_2$ is $\Gamma$-equivariant. Tautologically, it intertwines $\hat\jmath_1$ and $\hat\jmath_2$.

	The dual isomorphism $S_2 \to S_1$ identifies the subsets $R(S_1,G) \subset X^*(S_1)$ and $R(S_2,G) \subset X^*(S_2)$, hence lifts canonically to an isomorphism of double covers $S_2(\R)_G \to S_1(\R)_G$. Dually the isomorphism $\hat S_1 \to \hat S_2$ extends canonically to an isomorphism $^LS_{1,G} \to {^LS_{2,G}}$, which commutes with the canonical $L$\-embeddings into $^LG$. Therefore, it translates the factorization of $\varphi_1$ through $^LS_{1,G}$ to the factorization of $\varphi_2$ through $^LS_{2,G}$, which implies that the isomorphism $S_2(\R)_G \to S_1(\R)_G$ identifies $\tau_2$ with $\tau_1$.
\end{proof}

% \begin{cns} \label{cns:stj}
% 	Let $(G,\xi)$ be an inner twist of $G_0$. We construct a natural stable class $J_\xi$ of embeddings $j : S \to G$. \warn{TODO} Choose, in addition to the $\Gamma$-stable Borel pair $(\hat T,\hat B)$ of $\hat G$, an $F$-Borel pair $(T_0,B_0)$ of $G_0$. The resulting identification $X_*(T_0)=X^*(\hat T)$ induces a $\Gamma$-equivariant identification of Weyl groups $\Omega(\hat T,\hat G) = \Omega(T_0,G_0)$, and we obtain the homomorphism $\xi : \Gamma \to \Omega(T,G) \rtimes \Gamma$.
% \end{cns}

\subsection{Construction of the $L$\-packet}

The natural embedding $\hat S \to \hat G$ is not $\Gamma$-equivariant, but its $\hat G$-conjugacy class is. From \S\ref{sub:adm} we obtain the category $\mc{J}$ of embeddings of $S$ into all pure (or rigid) inner forms of $G_0$.

Consider $(G,\xi,z,j) \in \mc{J}(\R)$. As discussed in \S\ref{sub:essds}, there exists a unique eds representation $\pi_j$ of $G(\R)$ associated to the pair $(S,\tau)$, transported to $G$ via $j$. According to \S\ref{sub:essds} the representation $\pi_j$ depends on the $G(\R)$-conjugacy class of $j$, and two distinct such conjugacy classes produce two non-isomorphic representations.

% The images of all elements of $J_\xi$ are elliptic maximal tori in $G$. According to \warn{ref}, these are conjugate to each other under $G(\R)$. To simplify notation we fix one elliptic maximal torus $S' \subset G$ and we write $J_\xi'$ for the subset of $J_\xi$ consisting of all elements with image $S'$. Then $J_\xi'$ is a torsor under $\Omega(S',G)(\R)$.

\begin{dfn}
We define the pure (resp rigid) compound $L$\-packet
\[ \tilde\Pi_\varphi = \{(G,\xi,z,\pi_j)|(G,\xi,z,j) \in \mc{J}(F)\} \subset \tilde\Pi, \]
\[ \Pi_\varphi=\tilde\Pi_\varphi/G_0(\C) \subset \Pi. \]
For each pure (or rigid) inner twist $(G,\xi,z)$ of $G_0$, we define
\[ \Pi_\varphi(G,\xi,z)=\{\pi | (G,\xi,z,\pi) \in \tilde\Pi_\varphi\}. \]
\end{dfn}

\begin{lem}
The set of representations $\Pi_\varphi((G,\xi,z))$ equals $\{\pi_j|j \in J^G(\R)/G(\R))\}$. In particular, it is independent of $z$. It coincides with the set $\Pi_\varphi(G)$ constructed by Langlands in \cite[\S3]{Lan89}.
\end{lem}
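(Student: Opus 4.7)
The first assertion is an immediate unfolding of definitions. By construction, $\Pi_\varphi(G,\xi,z)$ consists of those $\pi$ for which $(G,\xi,z,\pi) \in \tilde\Pi_\varphi$, equivalently those $\pi \cong \pi_j$ for some $j$ with $(G,\xi,z,j) \in \mc{J}(\R)$; since $\mc{J}(\R)$ contains every such tuple with $j \in J^G(\R)$, the condition collapses to $j \in J^G(\R)$. By Theorem~\ref{thm:eds}, the eds representation $\pi_j$ is determined by the $G(\R)$-conjugacy class of its Harish-Chandra parameter, hence by the $G(\R)$-conjugacy class of $j$.

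For the independence of $z$, I would observe that the admissible class $J^G \subset \mathrm{Emb}(S,G)$ is canonically attached to $G$: it is the unique $\Gamma$-stable $G(\C)$-conjugacy class in $\mathrm{Emb}(S,G)$ corresponding to $\hat J$ under the canonical identification $\hat G = \hat G_0$ recalled in \S\ref{sub:adm}. Although the concrete description $J^G = \xi(J_0)$ invokes a choice of inner twist $\xi$, different choices produce the same class because $J_0$ is a $G_0(\C)$-conjugacy class and any two admissible inner twists to the target $G$ give the same image. In particular $\Pi_\varphi(G,\xi,z)$ depends only on $G$.

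For the comparison with Langlands's construction, the plan is to show that both sides factor through the same Harish-Chandra parametrization of Theorem~\ref{thm:eds}. Our construction in \S\ref{sub:fac} extracts from $\varphi$ an elliptic $\R$-torus $S$ and a canonical genuine character $\tau$ of $S(\R)_G$ via the canonical $L$-embedding ${^LS}_G \to {^LG}$. Langlands in \cite[\S3]{Lan89} extracts the same torus $S$ (arising in both cases from the centralizer of $\varphi(\C^\times)$ and the $\Gamma$-action via $\varphi$), but produces a character $\theta$ of $S(\R)$ by choosing an auxiliary, non-canonical $L$-embedding ${^LS} \to {^LG}$ parameterized by $\chi$-data. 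Under the dictionary of \S\ref{sub:covtori}, any such ${^LS} \to {^LG}$ equals the canonical ${^LS}_G \to {^LG}$ twisted by the $L$-parameter of $\rho_B^{-1}$ for a suitable Borel $B$ containing $T$, and so Langlands's $\theta$ coincides with $\tau \cdot \rho_B^{-1}$ as a character of $S(\R)$. Both descriptions thus determine the same genuine character on the $\rho$-cover, hence (by Theorem~\ref{thm:eds}) the same eds representation on each admissible embedding, and the resulting packets agree. The hardest step will be the careful matching of Langlands's $\chi$-data with the $\rho_B$-twist so that the correspondence between $\theta$ and $\tau$ is indeed an equality of characters and not merely a relation up to a further ambiguity.
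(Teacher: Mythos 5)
Your first two assertions are handled exactly as in the paper: the identification of $\Pi_\varphi((G,\xi,z))$ with $\{\pi_j \mid j \in J^G(\R)/G(\R)\}$ is indeed just unfolding the construction together with the fact (from \S\ref{sub:adm}) that the tuples $(G,\xi,z,j)\in\mc{J}(\R)$ with fixed $(G,\xi,z)$ are parameterized by $J^G(\R)$, which does not involve $z$; your extra remark that $J^G$ is canonically attached to $G$ independently of $\xi$ is harmless and correct.

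For the comparison with \cite{Lan89} the two arguments diverge, and here your proposal has a gap by your own admission. The paper does not pass through $L$-embeddings ${}^LS\to{}^LG$ or $\chi$-data at all: it chooses representatives of $J^G(\R)/G(\R)$ with common image an elliptic torus $S'\subset G$ (all elliptic maximal tori being $G(\R)$-conjugate), observes that these representatives form a single $\Omega(S',G)$-orbit, so the transported characters form one Weyl orbit, and then invokes the uniqueness statement of Theorem \ref{thm:eds} -- an eds representation is pinned down among tempered representations by its character restricted to $S'(\R)$ -- to reduce the comparison to the observation that the character values \eqref{eq:charfmla} prescribed here are literally the same as those prescribed by Langlands (and Adams--Vogan). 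Your route instead proposes to match Langlands's character $\theta$ of $S(\R)$, described as coming from a $\chi$-data-normalized embedding ${}^LS\to{}^LG$, with $\tau\cdot\rho_B^{-1}$; but \cite[\S3]{Lan89} predates $\chi$-data and extracts the Harish-Chandra parameter by hand, so the dictionary you invoke is not actually available in that reference in the form you state, and the ``hardest step'' you flag -- showing the identification is an equality of characters and not merely an identity up to a residual twist -- is precisely the content of the claim and is left unproved. The step can be carried out (it is essentially the content of \cite[\S4.1]{KalDC} relating genuine characters of $S(\R)_G$ to $\chi$-data-dependent embeddings), but the paper's approach of comparing Harish-Chandra parameters directly through the character formula on the elliptic torus avoids this bookkeeping entirely, which is exactly what makes its proof short.
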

\begin{proof}
	As discussed in \S\ref{sub:adm}, the set of $(G,\xi,z,j) \in \mc{J}(\R)$ with fixed triple $(G,\xi,z)$ corresponds to $J^G(\R)$, in particular is independent of $z$. Since the images of the members of $J^G(\R)$ are elliptic maximal tori, and all such are conjugate under $G(\R)$, we can choose representatives of $J^G(\R)/G(\R)$ that all have the same image, call it $S' \subset G$. Then these representatives are a single orbit under $\Omega(S',G)$. In other words, if $\tau'$ is the transport of $\tau$ under one admissible embedding $j : S \to G$ with image $S'$, then all others make out the $\Omega(S',G)$-orbit of $\tau'$. Since we have specified the representations $\pi_j$ by their character values on regular elements of $S'(\R)$ in the same way as in \cite[\S3]{Lan89} or \cite[\S4]{AV16}.
\end{proof}

We have thus recovered the $L$\-packets constructed by Langlands. At the moment they do not depend on the datum $z$ in the triple $(G,\xi,z)$. This datum will play a role in the internal parameterization of these packets, to which we turn next.

\subsection{Internal structure of the compound packet} \label{sub:intstr}

Recall from Lemma \ref{lem:simtrans} that the abelian group $H^1(\Gamma,S)$ in the pure case (resp. $H^1(u \to W,Z(G_0) \to S)$ in the rigid case) acts simply transitively on the set $\mc{J}(\R)/G_0(\C)$. By construction we have a bijection $\mc{J}(\R)/G_0(\C) \to \Pi_\varphi$. At the same time, Lemma \ref{lem:icreg} provides an identification $S_\varphi = \hat S^\Gamma$, hence by Tate-Nakayama duality $\pi_0(S_\varphi)^*=\pi_0(\hat S^\Gamma)^*=H^1(\Gamma,S)$. Analogously, in the rigid setting we obtain $\pi_0(S_\varphi^+)^*=H^1(u \to W,Z(G_0) \to S)$. This provides a simply transitive action of the abelian group $\pi_0(S_\varphi)^*$ in the pure setting, and $\pi_0(S_\varphi^+)^*$ in the rigid setting, on the set $\Pi_\varphi$.

\begin{lem} \label{lem:uniqgen}
The set $\Pi_\varphi((G_0,1,1))$ contains a unique $\mf{w}$-generic member.
\end{lem}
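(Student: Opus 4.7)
The plan is to exhibit, via $\pi \mapsto \mf{w}(\pi)$, a bijection between $\Pi_\varphi((G_0,1,1))$ and the set of Whittaker data of $G_0$; the lemma will then follow by reading off the unique preimage of $\mf{w}$. The map is well-defined by the corollary to Proposition \ref{p:whittaker}, which attaches to each eds representation $\pi$ a unique $\mf{w}(\pi)$ for which it is generic.

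First I would match cardinalities. Fixing an elliptic maximal torus $S' \subset G_0$ (which exists since $J^{G_0}(\R) \neq \emptyset$ by Kottwitz's result cited in \S\ref{sub:adm}) and using that all elliptic maximal tori of $G_0$ are $G_0(\R)$-conjugate (as in the proof of Lemma \ref{lem:g2}), every class in $J^{G_0}(\R)/G_0(\R)$ has a representative with image $S'$, and two such are $G_0(\R)$-conjugate iff they differ by an element of $\Omega_\R(S',G_0)$. Hence the packet is parametrized by $\Omega_\R(S',G_0)\lmod \Omega(S',G_0)(\R)$, which by Lemma \ref{lem:g2} equals $Q(G_0(\R))$; by Lemma \ref{l:Q}(1) the set of Whittaker data is a torsor under the same group, so the two sets have equal cardinality.

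Next I would establish injectivity. Suppose $\pi_{j_1}, \pi_{j_2}$ are both $\mf{w}_X$-generic for some regular nilpotent $X \in i\g(\R)^*$. By Proposition \ref{p:whittaker} each coadjoint orbit $\O_{\pi_{j_i}}$ meets the Kostant section $\K(X)$. Since the differentials $d\tau_{j_1}, d\tau_{j_2}$ differ by the action of an element of $\Omega(S',G_0) \subset G_0(\C)$, both real orbits sit inside a single $G_0(\C)$-coadjoint orbit $\O$. Kostant's transversality theorem \cite{Kos63} forces $\K(X) \cap \O$ to be a single point, which must therefore lie in both $\O_{\pi_{j_1}}$ and $\O_{\pi_{j_2}}$; hence $\O_{\pi_{j_1}} = \O_{\pi_{j_2}}$, and by the injectivity in Theorem \ref{thm:eds} we conclude $\pi_{j_1} = \pi_{j_2}$. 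Thus $\pi \mapsto \mf{w}(\pi)$ is an injection between finite sets of equal cardinality, hence a bijection.

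The main obstacle, as I see it, is the bookkeeping linking the three parametrizing sets ($G_0(\R)$-conjugacy classes of admissible embeddings, $Q(G_0(\R))$, and Whittaker data); once these identifications are in place, injectivity via the transversality of a single Kostant section is essentially automatic. An equivalent presentation avoiding the cardinality count would check that $Q(G_0(\R))$ acts simply transitively and compatibly on both sides via Lemmas \ref{l:Q} and \ref{lem:g2}, and verify equivariance of $\pi\mapsto\mf{w}(\pi)$ directly from the construction of $\pi_j$.
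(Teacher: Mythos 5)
Your uniqueness (injectivity) step is sound and is in fact the paper's argument: both real orbits lie in one complex coadjoint orbit, Kostant transversality gives a single intersection point with $\K(X)$, hence the orbits coincide and Theorem \ref{thm:eds} identifies the representations. The existence half, however, has a genuine gap: the claimed bijection between $\Pi_\varphi((G_0,1,1))$ and the set of Whittaker data is false in general. Your cardinality count silently identifies $\Omega(S',G_0)(\R)$ with $\Omega_\R(S'_\tx{ad},G_{0,\tx{ad}})$ when invoking Lemma \ref{lem:g2}. For an elliptic torus the Galois action on $X_*(S'/Z)$ is $-1$, which is central, so $\Omega(S',G_0)(\R)$ is the \emph{full} absolute Weyl group, whereas $\Omega_\R(S'_\tx{ad},G_{0,\tx{ad}})$ is typically a proper subgroup. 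Concretely, for quasi-split $SU(2,1)$ the packet is parametrized by $\Omega_\R(S',G_0)\lmod\Omega(S',G_0)$, of size $3$, while $Q(G_0(\R))$ is trivial, so there is exactly one Whittaker datum; no bijection with the set of Whittaker data can exist, and two members of the packet are generic for no Whittaker datum at all. This also undercuts the well-definedness of your map $\pi\mapsto\mf{w}(\pi)$: the corollary to Proposition \ref{p:whittaker} should be read as an ``at most one'' statement (its proof only establishes uniqueness), since by Kostant--Vogan only the \emph{large} discrete series are generic.

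The paper obtains existence without any counting: writing $\mf{w}=\mf{w}_X$, the elements $dj(d\tau)$, as $j$ runs over $J^{G_0}(\R)/G_0(\R)$, exhaust the $G_0(\R)$-classes inside a single regular $G_0(\C)$-orbit that is stable under complex conjugation. Since $\K(X)$ is also conjugation-stable, the unique point of $\K(X)$ meeting that complex orbit is itself fixed by conjugation (this is the rationality remark in \S\ref{s:kostant}), hence lies in $\O_{\pi_j}$ for exactly one $j$; Proposition \ref{p:whittaker} then yields both existence and uniqueness of the $\mf{w}$-generic member in one stroke. If you want to salvage your structural picture, the correct statement is not a bijection of the packet with Whittaker data, but rather that the $Q(G_0(\R))$-orbit of any generic member maps bijectively onto the set of Whittaker data, while the remaining members of the packet are generic for none.
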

\begin{proof}
By Lemma \ref{lem:w1} we have $\mf{w}=\mf{w}_X$ for some regular nilpotent $X \in i\mf{g}^*(\R)$. Proposition \ref{p:whittaker} states that implies that $\pi_j$ is $\mf{w}$-generic if and only if $dj(d\tau) \in \mf{g}^*$ meets the  Kostant section $\K(X)$. But as $j$ varies over $J^{G_0}(\R)/G_0(\R)$, the element $dj(d\tau)$ varies over the $G_0(\R)$-classes in a fixed stable class. Therefore, $dj(d\tau)$ meets any Kostant section for precisely one $j \in J^{G_0}(\R)/G_0(\R)$. At the same time, $j \mapsto \pi_j$ is a bijection from $j \in J^{G_0}(\R)/G_0(\R)$ to $\Pi_\varphi((G_0,1,1))$ by construction of the latter.
\end{proof}

Taking the unique $\mf{w}$-generic member of $\Pi((G_0,1,1)) \subset \Pi_\varphi$, provided by Lemma \ref{lem:uniqgen}, as a base-point, the simply-transitive action turns into the bijection from 
\[ \iota_\mf{w} : \pi_0(S_\varphi)^* \to  \Pi_\varphi, \quad \tx{resp.}\quad \iota_\mf{w} : \pi_0(S_\varphi^+)^* \to \Pi_\varphi. \]

We can summarize the construction and internal structure of $\Pi_\varphi$ as follows. We use the language of pure inner forms; that of rigid inner forms is entirely analogous. We use the simplified notation to write $\tilde\Pi$ for the set of pairs $(z,\pi)$, where $z \in Z^1(\R,G_0)$ and $\pi$ an isomorphism class of representations of the twist $G_z$, and $\Pi=\tilde\Pi/G_0(\C)$. Then $\tilde\Pi_\varphi \subset \tilde\Pi$ is the image of the map $\mc{J}(F) \to \tilde\Pi$ sending $(z,j)$ to $(z,\pi_j)$. The group $Z^1(\R,S)$ acts on $\mc{J}(\R)$ by $x\cdot (z,j)=(x \cdot z,j)$. If $j_\mf{w} : S \to G_0$ is an embedding for which $\pi_{j_\mf{w}}$ is $\mf{w}$-generic, then we obtain the map $Z^1(\R,S) \to \mc{J}(\R)$ sending $x$ to $(x,j_\mf{w})$. Composing this with the map $\mc{J}(\R) \to \tilde\Pi_\varphi$ and taking quotient under the action of $G_0(\C)$ produces the bijection $H^1(\R,S) \to \Pi_\varphi$, which, together with the isomorphism $\pi_0(S_\varphi)^*=H^1(\R,S)$ produces the desired bijection $\iota_\mf{w} : \pi_0(S_\varphi)^* \to \Pi_\varphi$.

\subsection{Dependence on the choice of Whittaker datum} \label{sub:whitdep}

In order to obtain the bijection from $\pi_0(S_\varphi)^*$ (resp,. $\pi_0(S_\varphi^+)^*)$ to $\Pi_\varphi$, we had to choose a Whittaker datum $\mf{w}$ and apply Lemma \ref{lem:uniqgen}. Another Whittaker datum is of the form $\mf{w'}=\tx{Ad}(\bar g)\mf{w}$ with $\bar g \in G_{0,\tx{ad}}(\R)$. According to Proposition \ref{p:whittaker}, if $\pi$ is $\mf{w}$-generic, then $\tx{Ad}(\bar g)\pi$ is $\mf{w'}$-generic. If $j : S \to G_0$ is the embedding with $\pi=\pi_j$, then $\tx{Ad}(\bar g)\pi_j$ is associated to the embedding $\tx{Ad}(\bar g)\circ j$.

As described in \S\ref{sub:intstr}, the bijection $\pi_0(S_\varphi)^* \to \Pi_\varphi$ is the composition of the orbit map for the action of $\pi_0(S_\varphi)^*=H^1(\R,S)$ on $\mc{J}(\R)/G_0(\C)$ through the embedding $j$ with the $G_0(\C)$-equivariant bijection $\mc{J}(F) \to \tilde\Pi_\varphi$. Neither the latter bijection nor the action of $H^1(\R,S)$ on $\mc{J}(\R)/G_0(\C)$ depend on $\mf{w}$, only the particular point $j \in \mc{J}(F)$ does. The orbit map through $j$ is given by $x \mapsto (x,j)$, while the orbit map through $\tx{Ad}(\bar g)\circ j$ is given by $x \mapsto (x,\tx{Ad}(\bar g)\circ j)$. But $(x,\tx{Ad}(\bar g)\circ j)=g(g^{-1}x\sigma(g),j)g^{-1}$, for any $g \in G_0(\C)$ lifting $\bar g \in G_{0,\tx{ad}}(\R)$.

Now $z(\sigma)=g^{-1}\sigma(g)$ belongs to $Z^1(\R,Z(G_0))$. We conclude that $(x,\tx{Ad}(\bar g),j)$ is equivalent modulo the action of $G_0(\C)$ to $(z\cdot x,j)$. This shows that the bijection $H^1(\R,S) \to \Pi_\varphi$ normalized via $\mf{w'}=\tx{Ad}(g)\mf{w}$ is obtained form the bijection normalized via $\mf{w}$ by shifting the latter via multiplication by $[z] \in H^1(\R,Z(G_0))$. 

Now consider the identification $\pi_0(S_\varphi)^* = H^1(\R,S)$. It can be extended to a commutative diagram
\[ \xymatrix{
	\pi_0(S_\varphi)^*\ar[r]^\cong&H^1(\R,S)\\
	H^1(W_\R,\hat G_\tx{sc} \to \hat G)^*\ar[u]\ar[r]^\cong&H^1(\R,Z(G_0)).\ar[u]
}
\]
We have used  the $W_\R$-cohomology of the crossed module $\hat G_\tx{sc} \to \hat G$ and the duality between it and the $\Gamma$-cohomology of the crossed module $G \to G_\tx{ad}$. The latter is however quasi-isomorphic to $Z(G_0)$. The left vertical map is obtained from the long exact cohomology sequence for the crossed module $\hat G_\tx{sc} \to \hat G$ endowed with $W_\R$-action given by $\tx{Ad}\circ\varphi$. The edge map $S_\varphi=H^0(W_\R,\varphi,\hat G) \to H^1(W_\R,\hat G_\tx{sc} \to \hat G)$ factors through $\pi_0(S_\varphi)$. Note that, the cohomology of $\hat G_\tx{sc} \to \hat G$ is canonically the same whether we take the $W_\R$-action coming from the dual groups, or the one coming from $\tx{Ad}\circ\varphi$, because the two structures differ by a homotopically trivial twist in the sense of \cite[\S2.4]{KalECI}. For an alternative construction without resorting to crossed modules we refer to \cite[\S4]{KalGen}.

If we denote by $(\mf{w},\mf{w'})$ the character of $H^1(W_\R,\hat G_\tx{sc} \to \hat G)$ corresponding to $[z]$, as well as its pull-back to $\pi_0(S_\varphi)$, then we obtain the formula
\begin{equation}
	\iota_\mf{w'}(x) = \iota_\mf{w}(x \cdot (\mf{w},\mf{w'})).
\end{equation}

\subsection{The case of a cover of $G$} \label{sub:packetcover}

We now consider $G$ quasi-split and a cover $G(\R)_x$ of $G(\R)$ coming from a character $x : \tilde\pi_1(G) \to \mu_n(\R)$ as in \cite{KalHDC}. This will be relevant for endoscopic groups $H$, where we will be working with $L$\-packets on the double cover $H(\R)_\pm$. It was shown in \cite[\S2.6]{KalHDC} that the local Langlands correspondence for such covers follows from the case of non-covers. We will give here an alternative approach that does not pass via reduction to non-covers, and instead works directly with the covers. This is possible due to the flexibility of Harish-Chandra's results on representation theory and harmonic analysis, as reviewed in \S\ref{sub:essds}.

The procedure is essentially the same as for the group $G(\R)$. We start with a discrete $L$\-parameter $\varphi : W_\R \to {^LG_x}$. By the same arguments as in \S\ref{sub:fac}, $\hat T = \tx{Cent}(\varphi(\C^\times),\hat G)$ is a maximal torus of $\hat G$, and the image of $\varphi$ lies in $N(\hat T,{^LG_x})$. The composition of $\varphi$ with the adjoint action $\tx{Ad}$ factors through $\Gamma$ and induces a $\Gamma$-structure on $\hat T$, which we call 
$\hat S$. The $\hat G$-conjugacy class of the inclusion $\hat\jmath : \hat S \to \hat G$ is $\Gamma$-stable, hence leads again to the set $\mc{J}(F)$ of admissible embeddings of $S$ into pure (or rigid) inner forms of $G$.

Let $S(\R)_G$ be the double cover of $S(F)$ associated to the subset $R(S,G) \subset X^*(S)$. It is the same double cover that was used in \S\ref{sub:fac}. Now we have another double cover, called $S(F)_x$, coming from the pull-back of $x : \tilde\pi_1(G) \to \mu_n(\C)$ under the natural map $\tilde\pi_1(S) \to \tilde\pi_1(G)$. Each admissible embedding $j : S \to G$ lifts naturally to an embedding $S(\R)_x \to G(\R)_x$.

Consider the Baer sum $S(\R)_{G,x}$ of the two double covers. We have the canonical $L$\-embedding $^LS_G \to {^LG}$. This embedding induces a canonical $L$\-embedding ${^LS_{G,x}} \to {^LG_x}$. The $L$-parameter $\varphi$ factors through it and provides an $L$-parameter $\varphi_S : W_\R \to {^LS_{G,x}}$, hence a genuine character $\tau : S(\R)_{G,x} \to \C^\times$. Thus again we have a triple $(S,\tau,\hat\jmath\,)$. The analog of Lemma \ref{lem:a} holds, with the same proof.

For each $(G,\xi,z,j) \in \mc{J}(\R)$ we use $j$ to identify $S$ with a maximal torus of $G$. We can view $\tau$ as a genuine character of a double cover of $S(\R)_x$. Then Theorem \ref{thm:eds} provides a genuine eds representation $\pi_j$ of $G(\R)_x$ whose character restricted to $S(\R)_x$ is given by \eqref{eq:charfmla}. The definition of the $L$-packet and its internal structure are now done in the same way as in \S\ref{sub:fac} and \S\ref{sub:intstr}.

\section{Endoscopic character identities} \label{sec:endo}

Let $\varphi : W_\R \to {^LG}$ be a discrete parameter. Let $s \in S_\varphi$ (resp. $s \in S_\varphi^+$) be a semi-simple element. Associated to the pair $(\varphi,s)$ is a (pure or rigid) refined endoscopic datum $(H,s,\mc{H},\eta)$ and a factorization $\varphi' : W_\R \to {^LH_\pm}$ of $\varphi$, whose construction was reviewed in \S\ref{sub:covendo}. Here $^LH_\pm$ is the $L$-group of the canonical double cover $H(F)_\pm$ of $H(F)$ associated to $(\mc{H},\eta)$.

% \subsection{Construction of endoscopic datum}

% \warn{Recall here how to construct} $(H,\mc{H},s,\eta)$.

% We first review the factored parameter using covers. Recall from \S\ref{sub:covendo} that there is a natural double cover $H(\R)_\pm$ of $H(\R)$ and a natural identification $^LH_\pm \to \mc{H}$. According to \warn{ref}, the image of $\varphi$ is contained in the image of $\eta$, hence $\varphi = \eta \circ \varphi'$ for an $L$\-parameter $\varphi' : W_\R \to {^LH}_\pm$. Note that $\varphi'$ is automatically discrete.

% We now review the factored parameter in the classical set up. For this, one attempts to choose an $L$\-isomorphism $\xi : {^LH} \to \mc{H}$. This is possible when the derived subgroup of $G$ is simply connected (cf. \cite{Lan79}), and in some other cases, but not in general. Therefore, the general strategy is to choose a $z$-extension $H_1 \to H$ and an $L$\-embedding $^L\eta : \mc{H} \to {^LH_1}$ which extends the tautological embedding $\hat H \to \hat H_1$. The composition of $\varphi' : W_\R \to \mc{H}$ with $^L\eta$ is then an $L$\-parameter $\varphi_1 : W_\R \to {^LH_1}$, again automatically discrete.

% \subsection{Review of transfer factors and the transfer theorem} \label{sub:trans}

% \warn{TODO}

% The reader can skim this and come back to it later.

\subsection{Statement of the main theorem}

As discussed in \S\ref{sub:packetcover} there is an associated $L$\-packet $\Pi_{\varphi'}(H_\pm)$ of genuine representations of $H(F)_\pm$. Consider the virtual character
\[ S\Theta_{\varphi'} := \sum_{\sigma \in \Pi_{\varphi'}(H_\pm)} \<\sigma,s\>\Theta_\sigma = \sum_{\sigma \in \Pi_{\varphi'}(H_\pm)} \<\sigma,1\>\Theta_\sigma = \sum_{\sigma \in \Pi_{\varphi'}(H_\pm)} \Theta_\sigma\]
on $H(\R)_\pm$, where $\<\sigma,-\>$ is the character of the irreducible representation of $\pi_0(S_\varphi)$ (resp. $\pi_0(S_\varphi^+)$) associated to $\sigma$ by the bijection of \S\ref{sub:intstr}. Let us argue the two equalities. Since $Z(\hat H)^\Gamma$ (resp. $Z(\hat{\bar H})^+$) acts trivially on this irreducible representation, and $s$ belongs by construction to this group, we see $\<\sigma,s\>=\<\sigma,1\>$, hence the first equality. The second comes from the fact that $S_\varphi$ is abelian, because it lies in $\hat S$ (and $S_\varphi^+$ lies in $\hat{\bar S}$), where $\hat S$ is the torus involved in the construction of the $L$\-packet on $H$. Note that, while the bijection of \S\ref{sub:intstr} depends on the choice of a Whittaker datum, the argument of \S\ref{sub:whit} shows that the value $\<\sigma,1\>$ does not depend on this choice.

We have a corresponding construction in the classical languange, where the cover $H(F)_\pm$ is replaced by an arbitrary choice of a $z$-pair $(H_1,{^L\eta_1})$ consisting of a $z$-extension $H_1 \to H$ and an $L$-embedding $^L\eta_1 : \mc{H} \to {^LH_1}$. We can then form the composed parameter $\varphi_1 = {^L\eta_1} \circ \varphi'$ and have the $L$-packet $\Pi_{\varphi_1}(H_1)$, from which we can form the virtual character
\[ S\Theta_{\varphi_1} := \sum_{\sigma \in \Pi_{\varphi_1}(H_1)} \<\sigma,s\>\Theta_\sigma = \sum_{\sigma \in \Pi_{\varphi_1}(H_1)} \<\sigma,1\>\Theta_\sigma = \sum_{\sigma \in \Pi_{\varphi_1}(H_1)} \Theta_\sigma\]
on $H_1(F)$.

Let $(G,\xi,z)$ be a pure (resp. rigid) inner twist of $G_0$. We have the virtual character on $G(\R)$ given by
\[ \Theta_{\varphi}^{\mf{w},s} := e(G)\sum_{\pi \in \Pi_\varphi((G,\xi,z))} \<\pi,s\>\Theta_\pi. \]
This virtual character does depend on $\mf{w}$.

The following is the main theorem of this article. It is a fundamental result of Shelstad \cite{She82}, \cite{SheTE2}, \cite{SheTE3}.
\begin{thm} \label{thm:main1}
Let $f \in \mc{C}^\infty_c(G(\R))$ be a test function.
\begin{enumerate}
	\item If $f^{H_\pm} \in \mc{C}^\infty_c(H(\R)_\pm)$ matches $f$ as in Definition \ref{dfn:matching}, then
	\[ \Theta_\varphi^{\mf{w},s}(f) = S\Theta_{\varphi'}(f^{H_\pm}). \]
	\item If $f^{H_1} \in \mc{C}^\infty_c(H_1(\R))$ matches $f$ as in Definition \ref{dfn:matching}, then
	\[ \Theta_\varphi^{\mf{w},s}(f) = S\Theta_{\varphi_1}(f^{H_1}). \]
\end{enumerate}
\end{thm}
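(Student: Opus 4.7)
My plan is to reduce the distributional identity to a pointwise identity, and then to treat it via Harish-Chandra's explicit character formula for eds representations together with a detailed analysis of the structure of the transfer factor in the framework of the canonical double cover.

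First, I would invoke Lemma \ref{lem:equi} to convert Theorem \ref{thm:main1} to the equivalent pointwise statement (the referenced Theorem \ref{thm:main2}) that the locally integrable class function on $G(\R)$ obtained by endoscopic lifting of $S\Theta_{\varphi'}$ equals $\Theta_\varphi^{\mf{w},s}$. Concretely, at a strongly regular $\delta \in G(\R)$ one must show
\[ e(G)\sum_{\pi \in \Pi_\varphi(G)}\<\pi,s\>\Theta_\pi(\delta) = \sum_\gamma \Delta(\dot\gamma,\delta)\sum_{\sigma \in \Pi_{\varphi'}(H_\pm)} \Theta_\sigma(\dot\gamma), \]
and likewise for the $z$-pair formulation. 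Both sides are stably invariant class functions on $G(\R)_{\tx{sr}}$, and both are (finite sums of tempered) eigendistributions for the center of the universal enveloping algebra with the same infinitesimal character (dictated by $\varphi|_{\C^\times}$). Using Proposition \ref{pro:diff} and Harish-Chandra's uniqueness theorems for invariant eigendistributions in the form of the promised Lemma \ref{lem:redell}, I would reduce the verification to elements $\delta$ lying in an elliptic maximal torus of the derived (and indeed simply connected semi-simple) group, since off the elliptic set the result will follow by induction on the semi-simple rank after passing to Levi subgroups, while on the elliptic set the eigendistribution is determined by its values there.

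So the core is to verify the identity at an elliptic strongly regular $\delta \in T(\R) \subset G(\R)$. The right-hand side is non-zero only for those stable classes $[[\gamma]]$ of elements of $H(\R)$ which match some element of $T(\R)$; such $\gamma$ lie in an elliptic torus $T_H \subset H$, and the matching amounts to a $\Gamma$-stable admissible isomorphism $j : T_H \to T$. For each such $\gamma$, Harish-Chandra's formula, in the form established in Theorem \ref{thm:eds}, expresses $\Theta_\sigma(\dot\gamma)$ as a Weyl-averaged ratio $\tau_\sigma/d_\tau$ on the cover $T_H(\R)_\pm$. Summing over $\sigma \in \Pi_{\varphi'}(H_\pm)$ collapses the Weyl sum (by Tate-Nakayama duality in the rigid setting, $\sum_\sigma \<\sigma,1\> \tau_\sigma$ becomes a full $\Omega(T_H,H)(\R)$-average of a single genuine character coming from $\varphi'$). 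A parallel computation for the left-hand side, summing $\<\pi,s\>\Theta_\pi$ over $\Pi_\varphi(G)$ via the bijection $\iota_\mf{w}$ of Section \ref{sec:cons}, gives a Weyl-averaged expression indexed by admissible embeddings of $T$ into the rigid inner forms of $G_0$.

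The hard step is to match these two Weyl-averaged expressions. Here one uses the factorization $\Delta = \epsilon \cdot \Delta_I^{-1} \cdot \Delta_{III}$ of \cite[\S4.3]{KalHDC}. The factor $\Delta_{III}$, by the key property \eqref{eq:tfd3}, exactly converts the genuine character of $T_H(\R)_\pm$ associated to $\varphi'$ into the corresponding genuine character of $T(\R)_\pm$ associated to $\varphi$; this supplies the character $\tau$ appearing in Harish-Chandra's formula on $T(\R)$. The factor $\epsilon \cdot \Delta_I^{-1}$ must then account for (i) the sign $e(G) \cdot (-1)^{q(G_{\tx{sc}})-q(H_{\tx{sc}})}$ coming from the $q$-terms in Theorem \ref{thm:eds} together with the $\epsilon$-computation of Lemma \ref{lem:epsilon}, and (ii) the Weyl-denominator $d_\tau$ difference between $G$ and $H$, up to a cohomological term paired with $s$. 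This last identification is precisely the content of the promised formula for $\Delta_I$ in terms of covers of tori (Proposition \ref{pro:magic}): it rewrites $\Delta_I$ purely Lie-theoretically, so that, once multiplied by the Weyl denominators, it produces the term $\<\tx{inv}(j_\mf{w},j), s\>$ paired against the Tate-Nakayama invariant distinguishing the embedding $j$ giving $\pi$ from the embedding $j_\mf{w}$ giving the $\mf{w}$-generic member of the $L$-packet on the quasi-split form. This is exactly the pairing $\<\pi,s\>$ under the normalization of $\iota_\mf{w}$, and the two averaged sums agree term-by-term after re-indexing by admissible embeddings.

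The chief obstacle will be the last step, namely the explicit bookkeeping of signs, Weyl denominators, and $\chi$-data-free cohomological terms hidden in the cover-theoretic expression of $\Delta_I$. Proposition \ref{pro:magic} is the crucial ingredient, but translating its statement into the precise scalar cancellations required to match the two sides, and verifying that the Whittaker normalization fixes the remaining overall sign via Proposition \ref{p:whittaker} (applied to the quasi-split member in the isomorphism class of the inner twist, where it pins down which embedding $j_\mf{w}$ to use), will require a careful case-analysis of compact vs. non-compact roots and of the Kostant section intersecting the relevant coadjoint orbit. The deduction of the classical $z$-pair version (2) from the cover version (1) then follows from the comparison, established in \cite[\S4.3]{KalHDC}, between the two normalizations of the transfer factor, combined with the compatibility between $\Pi_{\varphi'}(H_\pm)$ and $\Pi_{\varphi_1}(H_1)$ under the quotient map $H_1(\R) \to H(\R)$ twisted by the character $\lambda$ on the kernel.
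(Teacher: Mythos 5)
Your treatment of part (1) is essentially the paper's own argument: Lemma \ref{lem:equi} to pass to the pointwise statement, reduction to elliptic elements via the eigendistribution machinery of Proposition \ref{pro:diff} inside Lemma \ref{lem:redell}, Harish-Chandra's formula (Theorem \ref{thm:eds}) on both sides, and then the factorization $\Delta=\epsilon\cdot\Delta_I^{-1}\cdot\Delta_{III}$, with \eqref{eq:tfd3} converting the genuine character of the endoscopic torus cover and Proposition \ref{pro:magic} (whose proof indeed runs through Proposition \ref{p:whittaker}, the Kostant section, and the $\epsilon$-computation of Lemma \ref{lem:epsilon}) absorbing the remaining signs and Weyl-denominator discrepancies. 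One small correction to your framing of the reduction step: Lemma \ref{lem:redell} does not argue ``off the elliptic set by induction on Levi subgroups''; it shows both sides are invariant eigendistributions with the same infinitesimal character whose \emph{normalized} values are bounded, and then invokes Harish-Chandra's uniqueness theorem, so that agreement on the elliptic set forces agreement everywhere (after first reducing supports to $Z_G(\R)\cdot G(\R)^\natural$ and passing to $G_\tx{sc}$).

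Where you genuinely diverge is part (2). You propose to deduce the $z$-pair statement from the cover statement by comparing the two normalizations of the transfer factor and the two packet constructions on $H(\R)_\pm$ and $H_1(\R)$. The paper instead proves (2) directly, running the same elliptic computation with the classical factor $\epsilon\,\Delta_I^{-1}\Delta_{II}\Delta_{III_2}$, making specific choices ($\rho_\mf{w}$-based $\chi$-data, $(-\rho_\mf{w})$-based $a$-data), evaluating $\Delta_{II}$ against the Weyl denominator, and proving Lemma \ref{lem:gen} -- the triviality of the splitting invariant for the Whittaker pinning -- via Kottwitz's Kostant-section theorem. Your route is plausible, but it rests on compatibilities (that the cover factor of \cite[\S4.3]{KalHDC} and the Langlands--Shelstad factor, and likewise $\Pi_{\varphi'}(H_\pm)$ and $\Pi_{\varphi_1}(H_1)$, correspond under the $\lambda$-twisted pullback through $H_1(\R)\to H(\R)$) that the paper neither states nor uses; making them precise would amount to roughly the same $\chi$-data bookkeeping the paper carries out explicitly, so the direct computation buys self-containedness, while your reduction would buy economy only if those comparison statements are taken as known.
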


\begin{rem} \label{rem:measures}
	Recall from Remark \ref{rem:matchmeasures} that the concept of matching functions depends on choices of measures for $G(\R)$, $H(\R)$, and all tori in those groups. In the above theorem the distribution $\Theta^{\mf{w},s}_\varphi$ depends on the choice of measure on $G(\R)$, and $S\Theta_{\varphi'}$ depends on the choice of measure on $H(\R)$. But the measures on the tori are do not influence these distributions. Therefore, the validity of the claimed identity assumes that the measures on the tori of $G$ and $H$ are synchronized.

	More precisely, if $T_H \subset H$ and $T \subset G$ are tori and $T_H \to T$ is an admissible isomorphism, we demand that it identifies the measures  on $T_H(\R)$ and $T(\R)$. Any two admissible isomorphisms differ by conjugation by $\Omega(T,G)(\R)$. Since this action preserves any Haar measure on $T(\R)$, the choice of admissible isomorphism is irrelevant.
\end{rem}

\subsection{Reduction to the elliptic set}

We now formulate an equivalent version of Theorem \ref{thm:main1} that involves character functions, rather than character distributions. Note that the character functions are canonical, in particular independent of choices of mesures.

\begin{thm} \label{thm:main2}
\begin{enumerate}
	\item For every strongly regular semi-simple element $\delta \in G(\R)$ the following identity holds
	\[ \Theta_\varphi^{\mf{w},s}(\delta) = \sum_{\gamma \in H(\R)/\tx{st}} \Delta[\mf{w},\mf{e},z](\dot\gamma,\delta)S\Theta_{\varphi'}(\dot\gamma). \]
	\item For every strongly regular semi-simple element $\delta \in G(\R)$ the following identity holds
	\[ \Theta_\varphi^{\mf{w},s}(\delta) = \sum_{\gamma \in H(\R)/\tx{st}} \Delta[\mf{w},\mf{e},\mf{z},z](\gamma_1,\delta)S\Theta_{\varphi_1}(\gamma_1). \]
\end{enumerate}
\end{thm}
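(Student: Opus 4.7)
The plan is to first observe, via Lemma \ref{lem:equi} together with the Weyl integration formula and its stable analog, that Theorem \ref{thm:main2} is equivalent to the distributional statement of Theorem \ref{thm:main1}; either form will therefore suffice. I would then invoke Lemma \ref{lem:redell} to reduce to the situation where $G$ is simply connected semisimple and $\delta$ is strongly regular \emph{elliptic}. This reduction rests on Harish-Chandra's uniqueness theorem for invariant eigendistributions: both sides of the claimed identity are invariant eigendistributions sharing a common infinitesimal character, and parabolic descent forces their difference to vanish on the non-elliptic Cartan subgroups, so one only needs to check the identity on a single elliptic Cartan.

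Next, I would fix an elliptic maximal torus $T \subset G$ containing $\delta$, together with an admissible embedding $j_0 : S \to G_0$ providing a base point, and expand both sides using the Harish-Chandra character formula \eqref{eq:charfmla}. The left side turns into a Weyl sum over $N(T,G)/T$ of the normalized genuine characters $\tau/d_\tau$, weighted by $\langle \pi_j, s\rangle$ and summed over $j \in J^G(\R)/G(\R)$; the right side, after expanding each $\Theta_\sigma$ analogously on an elliptic torus of $H$ and summing over stable classes of $\gamma$ matching $\delta$, becomes a double sum indexed by admissible embeddings of $S$ into $H$ and by the Weyl group on that torus.

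The core step is to decompose the transfer factor as $\Delta = \epsilon\cdot \Delta_I^{-1}\cdot \Delta_{III}$ (cf. \S\ref{sub:transfer}) and to identify each piece with one of the ingredients of the character formula. I would use the defining property \eqref{eq:tfd3} of $\Delta_{III}$ to identify it with the ratio of the two genuine characters of $S(\R)_G$ and $S(\R)_H$ induced by the factorizations of $\varphi$ and $\varphi'$ through the $L$-groups of the double covers of tori; Proposition \ref{pro:magic} to rewrite $\Delta_I$ purely in terms of Lie-theoretic data on those covers, where it plays the role of a sign detecting the $\mf{w}$-generic member of the packet (via Proposition \ref{p:whittaker}); Lemma \ref{lem:epsilon} for the explicit value of $\epsilon$; and the cocycle formula \eqref{eq:tfstab} to convert dependence on the stable class of $\gamma$ into a Tate-Nakayama pairing with $s$. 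Summing over stable classes of $\gamma$ then becomes a sum over $H^1(\R,S)\simeq \pi_0(S_\varphi)^{*}$ (resp. $\pi_0(S_\varphi^+)^{*}$) of characters evaluated at $s$, which by Lemma \ref{lem:simtrans} and the construction of $\iota_\mf{w}$ in \S\ref{sub:intstr} matches the weighted sum $\sum_j \langle \pi_j, s\rangle$ on the left, term by term after the inner Weyl sums are identified via admissible embeddings.

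The hard part will be the scalar bookkeeping. One must verify that the Kottwitz sign $e(G)$, the $\epsilon$-factor of Lemma \ref{lem:epsilon} with its contributions $i^{r_G/2-r_H/2}$ and $(-1)^{q(H)-q(G_0)}$, the Harish-Chandra sign $(-1)^{q(G_\tx{sc})}$ in \eqref{eq:charfmla} appearing for both $G$ and $H$, and the genuine-cover sign extracted from $\Delta_I$ via Proposition \ref{pro:magic}, all collapse to $1$. This is precisely the step where Proposition \ref{p:whittaker} is essential: it guarantees that the $\Delta_I$ factor is compatible with the normalization of $\iota_\mf{w}$, so that the abstract pairing $\langle \pi_j, s\rangle$ produced by Tate-Nakayama duality really equals the concrete coefficient extracted from $\Delta$. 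Without this compatibility, only the weaker relative identity \eqref{eq:s2} would emerge, and one would be forced back into Shelstad's indirect argument.
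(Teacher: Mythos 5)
Your proposal follows essentially the same route as the paper: the equivalence via Lemma \ref{lem:equi}, the reduction to simply connected $G$ and elliptic $\delta$ through Lemma \ref{lem:redell} and Harish-Chandra's uniqueness of bounded invariant eigendistributions, the expansion of both sides by the character formula \eqref{eq:charfmla}, the conversion of stable classes into a Tate--Nakayama pairing via \eqref{eq:tfstab}, and the identification of $\Delta_{III}$ with a ratio of genuine characters by \eqref{eq:tfd3} together with Proposition \ref{pro:magic}, Lemma \ref{lem:epsilon}, and Proposition \ref{p:whittaker} to absorb $\Delta_I$, $\epsilon$, and the signs. The only point you do not address is part (2), where the paper runs the parallel argument with the classical decomposition $\epsilon\,\Delta_I^{-1}\Delta_{II}\Delta_{III_2}$, specific $\rho$-based $\chi$-data and $a$-data, and Lemma \ref{lem:gen} (triviality of the splitting invariant via Kottwitz) in place of Proposition \ref{pro:magic}; this follows the same pattern but needs those additional ingredients.
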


Lemma \ref{lem:equi} shows that Theorems \ref{thm:main1} and \ref{thm:main2} are equivalent. The next lemma reduces the proof further to the set of elliptic elements.

\begin{lem} \label{lem:redell}
If Theorem \ref{thm:main2} holds for all elliptic $\delta$, then it holds for all $\delta$.
\end{lem}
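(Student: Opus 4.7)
The plan is to exhibit both sides of the identity in Theorem \ref{thm:main2} as invariant eigendistributions on $G(\R)$ sharing the same regular infinitesimal character, and then to reduce the non-elliptic case to the elliptic case by Harish-Chandra descent to a proper Levi subgroup, inducting on the semisimple rank of $G$.

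First I would check that both sides are invariant eigendistributions with the regular infinitesimal character $\lambda$ attached to $\varphi$ via Lemma \ref{lem:icreg}. The LHS is a finite $\C$-linear combination of eds characters $\Theta_\pi$, each of which has infinitesimal character $\lambda$ by its Harish-Chandra parameter (Remark \ref{rem:hcpar} and Theorem \ref{thm:eds}). For the RHS, Lemma \ref{lem:equi} identifies the function $\delta \mapsto \sum_\gamma \Delta(\dot\gamma,\delta) S\Theta_{\varphi'}(\dot\gamma)$ with the character function of $\tx{Lift}(S\Theta_{\varphi'})$. The endoscopic transfer $f \mapsto f^{H_\pm}$ intertwines the natural actions of $\mc{Z}(\mf{g})$ and $\mc{Z}(\mf{h})$ via the standard Langlands--endoscopy compatibility induced by $\eta$; since $S\Theta_{\varphi'}$ is an eigendistribution with infinitesimal character determined by $\varphi'$, its lift is an eigendistribution with infinitesimal character $\lambda$ on $G(\R)$.

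Second, let $D := \Theta_\varphi^{\mf{w},s} - \tx{Lift}(S\Theta_{\varphi'})$, an invariant eigendistribution on $G(\R)$ with regular infinitesimal character $\lambda$ whose representing locally integrable function vanishes on $G(\R)^{\tx{sr}}_{\tx{ell}}$ by hypothesis. For a non-elliptic $\delta \in T(\R)^\tx{sr}$, let $A_T \subset T$ be the maximal $\R$-split subtorus (nontrivial) and set $M = \Cent(A_T, G)$, a proper Levi subgroup of $G$ containing $T$ as an elliptic maximal torus. Using Harish-Chandra's descent results for invariant eigendistributions, of which Proposition \ref{pro:diff} is a basic instance, the value $D(\delta)$ is computed from an associated invariant eigendistribution $D_M$ on $M(\R)$ evaluated at $\delta$, now viewed as an elliptic element. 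The compatibility of Langlands--Shelstad transfer factors with parabolic descent, together with the parabolic descent of the LHS expressed via Harish-Chandra's character formula on non-elliptic Cartans of $G$, shows that $D_M$ again has the form of a difference appearing in Theorem \ref{thm:main2} for $M$ with respect to a discrete $L$-parameter $\varphi_M$ of $M$, an inherited Whittaker datum $\mf{w}_M$, and the endoscopic datum induced from $(H,s,\mc{H},\eta)$ via the Levi $M_H \subset H$ matched to $M$.

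Inducting on the semisimple rank of $G$, the elliptic case of Theorem \ref{thm:main2} applied to $(M, M_H, \varphi_M, \mf{w}_M)$ forces $D_M$ to vanish at $\delta$, so $D(\delta) = 0$, completing the induction. The hard part will be the coordinated bookkeeping of the descent on the two sides. The discrete parameter $\varphi$ of $G$ typically does not factor through $^LM$, so the descent of $\Theta_\varphi^{\mf{w},s}$ to $M$ produces a linear combination of eds characters of $M$ attached to various parameters $\varphi_M$, each arising from the behavior of the Harish-Chandra formula for $\Theta_\pi$ across the non-elliptic Cartan $T$; these must match bijectively with the terms produced by descending the endoscopic side $\tx{Lift}(S\Theta_{\varphi'})$ through Levi subgroups of $H$. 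In particular, one must track the Kottwitz sign $e(G)$ versus $e(M)$, the Whittaker-datum-normalized factor $\epsilon$ of Lemma \ref{lem:epsilon}, and the descent of the individual pieces $\Delta_I$ and $\Delta_{III}$ of the transfer factor with sufficient care to verify that each descended identity on $M$ is an instance of the assumed elliptic case.
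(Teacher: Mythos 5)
Your first step (both sides are invariant eigendistributions with the same regular infinitesimal character) agrees with the paper, but your mechanism for passing from the elliptic set to all of $G(\R)_\tx{sr}$ is where the argument breaks. The paper does not descend to Levi subgroups at all: it first reduces to $G$ semi-simple and simply connected by a support/central-character argument (both sides are supported on $Z_G(\R)\cdot G(\R)^\natural$ and transform by the same character of $Z_G(\R)$), then shows that both normalized character functions are \emph{bounded} on the regular set, and finally invokes Harish-Chandra's uniqueness theorem \cite[Lemma 44]{HCDSI}: a bounded invariant eigenfunction with the given eigenvalue behavior that vanishes on the elliptic set vanishes identically. Your plan never establishes boundedness, and the substitute you propose --- Harish-Chandra descent to $M=\Cent(A_T,G)$ plus induction on semisimple rank --- cannot be closed within the statement being proved. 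The hypothesis of Lemma \ref{lem:redell} is the elliptic case of Theorem \ref{thm:main2} for the fixed discrete parameter $\varphi$ of $G$; it says nothing about Levi subgroups. Since $\varphi$ is discrete, its image lies in no proper parabolic of $^LG$, so there is no ``inherited discrete parameter $\varphi_M$'' of $M$, and the descent of $\Theta_\varphi^{\mf{w},s}$ across a non-elliptic Cartan is not a stable-plus-endoscopic combination of eds characters of $M$ governed by an identity of the same shape. What you would actually need on the Levi is a tempered (non-discrete) character identity, which is precisely the more general statement this paper is deliberately not proving and which is certainly not available as an ``induction hypothesis.'' You flag this bookkeeping as ``the hard part,'' but it is not bookkeeping: it is the point at which the inductive scheme has no valid base to stand on.

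Two smaller remarks. For the eigendistribution property of the endoscopic side you appeal to a general compatibility of transfer with the centers of the enveloping algebras; the paper instead proves this by hand, using the local constancy of $\Delta(\dot\gamma,\delta)$ near a fixed $\delta$ together with Proposition \ref{pro:diff} applied on both $G$ and $H$, and the matching of Harish-Chandra isomorphisms under the admissible isomorphisms $j_i$ --- if you want a self-contained argument you should reproduce something like this rather than cite folklore. Second, even if you were to adopt the paper's endgame, you would still need its Step 1 (reduction to $G_\tx{sc}$ via the support statement), since Harish-Chandra's results in \cite{HCDSI} are stated for connected semi-simple groups; your proposal does not address this reduction.
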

\begin{proof}

  Again the proofs of 1. and 2. are essentially the same.

  \underline{Step 1:} Let $G(\R)^\natural$ be the image of $G_\tx{sc}(\R) \to G(\R)$. Both sides are supported on $Z_G(\R) \cdot G(\R)^\natural$, and transform by the same character of $Z_G(\R)$. 

  \begin{proof}
	Letting $Z_G(\R)$ act on the elliptic maximal torus of $G$ by multiplication, the assumption that the identity of Theorem \ref{thm:main2} holds for elliptic elements implies that both sides of that identity transform under the same character of $Z_G(\R)$. It thus remains to prove the support condition.	
	
	Theorem \ref{thm:eds} shows that the character of an eds representation of $G(\R)$ is supported on $Z_G(\R) \cdot G(\R)^\natural$. It follows that the function $\Theta_\varphi^{\mf{w},s}$ is supported on $Z_G(\R) \cdot G(\R)^\natural$. In the same way, we see that the function $S\Theta_{\varphi_\pm}$ is supported on $Z_{H(\R)_\pm} \cdot H(\R)_\pm^\natural$, where $H(\R)_\pm^\natural$ is the image of the natural splitting $H_\tx{sc}(\R) \to H(\R)_\pm$.

	To show that the right hand side of the identity of Theorem \ref{thm:main2} is supported on $Z_G(\R) \cdot G(\R)^\natural$, consider a strongly regular semi-simple element $\delta \in G(\R)$ for which the right-hand side is non-zero. Thus there exists $\dot\gamma \in H(\R)_\pm$ such that $S\Theta_{\varphi'}(\dot\gamma) \neq 0$ and $\Delta(\dot\gamma,\delta)\neq 0$. 
	
	Let $T \subset G$ be the centralizer of $\delta$ and let $S \subset H$ be the centralizer of the image $\gamma \in H(\R)$ of $\dot\gamma$. Since $S\Theta_{\varphi'}(\dot\gamma) \neq 0$ we know $\gamma \in Z_H(\R) \cdot S(\R)^\natural$, where $S(\R)^\natural$ is the image of $S_\tx{sc}(\R) \to S(\R)$, and $S_\tx{sc}$ is the preimage of $S$ in $H_\tx{sc}$. Since $\Delta(\dot\gamma,\delta)\neq 0$ there is a (necessarily unique) admissible isomorphism $j : S \to T$ mapping $\gamma$ to $\delta$. The induced isomorphism $X_*(S) \to X_*(T)$ identifies the coroot lattice $Q^\vee(S,H)$ with a sublattice of the coroot lattice $Q^\vee(T,G)$, and hence restricts to a homomorphism $X_*(S_\tx{sc}) \to X_*(T_\tx{sc})$. This means that $j : S(\R) \to T(\R)$ maps $S(\R)^\natural$ into $T(\R)^\natural$. This reduces to showing that $j$ maps $Z_H(\R)$ to $Z_G(\R) \cdot T(\R)^\natural$, for which it is enough to show that the map $(S/Z_G)(\R) \to (T/Z_G)(\R)$ sends $(Z_H/Z_G)(\R)$ into the image of $T_\tx{sc}(\R) \to T_\tx{ad}(\R)$; here $T_\tx{sc}$ is the preimage of $T$ in $G_\tx{sc}$ and $T_\tx{ad}$ is the image of $T$ in $G_\tx{ad}$. 
	
	If we assume that $T$ is elliptic then the claim follows from the fact that both $T_\tx{sc}$ and $T_\tx{ad}$ are anisotropic, hence their $\R$-points are connected, and hence the map $T_\tx{sc}(\R) \to T_\tx{ad}(\R)$ is surjective.

	We will reduce the general case to this special case by means of the following general consideration. Consider maximal tori $S_1,S_2 \subset H$ and $T_1,T_2 \subset G$ and let $j_i : S_i \to T_i$ be admissible isomorphisms. By definition this means that there exist $h \in H(\C)$ and $g \in G(\C)$ such that $\tx{Ad}(g)\circ j_1=j_2 \circ \tx{Ad}(h)$. Since $j_1,j_2$ are defined over $\R$ we see $\tx{Ad}(\sigma(g))\circ j_1=j_2 \circ \tx{Ad}(\sigma(h))$. Combining with the previous equation and noting that $h^{-1}\sigma(h) \in N(S_1,H)(\C)$ and $g^{-1}\sigma(g) \in N(T_1,G)(\C)$ we conclude that the action of $h^{-1}\sigma(h)$ on $S_1$ is transported via $j_1$ to the action of $g^{-1}\sigma(g)$ on $T_1$. Use $j_1$ to identify the Weyl group of $(S_1,H)$ with a subgroup of the Weyl group of $(T_1,G)$, and the root system $R(S_1,H)$ with a subset of the root system $R(T_1,G)$. Now let $z \in Z_H(\R)$ and assume that $j_1(z) \in T_1(\R)$ lies in $Z_G(\R)\cdot T_1(\R)^\natural$. We claim that then $j_2(z) \in T_2(\R)$ lies in $Z_G(\R)\cdot T_2(\R)^\natural$. To see this, write $j_1(z)=x\bar y_1$ with $x \in Z_G(\R)$ and $\bar y_1 \in T_1(\R)$ the image of $y_1 \in T_{1,\tx{sc}}(\R)$. Since $j_2(z)=j_2(\tx{Ad}(h)z)=\tx{Ad}(g)j_1(z)$ we see that $j_2(z)=x\bar y_2$ with $y_2 = \tx{Ad}(g)y_1 \in T_{2,\tx{sc}}(\C)$. It is enough to check that $y_2 \in T_{2,\tx{sc}}(\R)$. But $\sigma(y_2)=\tx{Ad}(\sigma(g))y_1=\tx{Ad}(g)\tx{Ad}(g^{-1}\sigma(g))y_1$ and it is enough to check that the action of $g^{-1}\sigma(g) \in N(T_1,G)(\C)$ on $T_{1,\tx{sc}}$ fixes $y_1$. We have established that the image of $g^{-1}\sigma(g)$ in the Weyl group of $(T_1,G)$ lies in the Weyl group of $(S_1,H)$, and therefore is a product of simple reflections for roots in the subset $R(S_1,H)$ of $R(T_1,G)$. Therefore, the Weyl element $g^{-1}\sigma(g)$ fixes every element of $T_{1,\tx{sc}}(\C)$ that is killed by all elements of $R(S_1,H) \subset R(T_1,G)$. But $y_1$ is indeed killed by all elements of $R(S_1,H)$, because $\bar y_1$ is.

	We apply this general consideration to the case $S_1=S$, $T_1=T$, $j_1=j$, $S_2$ the maximal elliptic torus in $H$, $T_2$ the maximal elliptic torus in $G$, and $j_2 : S_2 \to T_2$ any admissible isomorphism.
  \end{proof}

With Step 1 complete, it is thus enough to compare both sides of the identity of Theorem \ref{thm:main2} after pulling back to $G_\tx{sc}(\R)$. The pull-back of $\Theta_{\varphi}^{\mf{w},s}$ to $G_\tx{sc}(\R)$ is equal to the analogous function for that group. Indeed, the $L$-group of $G_\tx{sc}$ is $^LG/Z(\hat G)$, so we can compose $\varphi$ with the projection modulo $Z(\hat G)$ and map $s$ modulo $Z(\hat G)$. The Whittaker datum $\mf{w}$ for $G$ produces one for $G_\tx{sc}$ by pull-back, and the function $\Theta_{\varphi}^{\mf{w},s}$ for $G_\tx{sc}$ equals the pullback to $G_\tx{sc}$ of the function $\Theta_{\varphi}^{\mf{w},s}$ for $G$.

The same argument applies to the right-hand side of the identity of Theorem \ref{thm:main2}. We can replace $\mc{H}$ by $\mc{H}/Z(\hat G)$ to obtain an endoscopic pair for $G_\tx{sc}$. The transfer factors for that pair and $G_\tx{sc}$ are the pull-back of the transfer factors for $G$.

This allows us to assume that $G$ is semi-simple and simply connected in what follows, and hence apply the results of \cite{HCDSI}.

  \underline{Step 2}: Both sides are invariant eigendistributions, with the same infinitesimal character, whose values on the regular set are bounded.  

	\begin{proof}

		It is known \cite[Theorem 3]{HCDSI} that the un-normalized character of a discrete series representation $\pi$ of a $G(\R)$ is represented by a conjugation-invariant function, denoted by $\Theta$ in loc. cit., which is an eigenfunction for the center $\mf{z}$ of the universal enveloping algebra and such that the normalized function $|D_T|^{1/2}\Theta$ is bounded. Let us explain more precisely the eigenfunction property. Let $(S,\tau)$ be the Harish-Chandra parameter of $\pi$. Let $\gamma : \mf{z} \to S(\mf{s})^{\Omega(S,G)}$ be the Harish-Chandra isomorphism. The differential $d\tau : \mf{s} \to \C$ is a $\C$-linear form and induces an algebra character $S(\mf{s}) \to \C$. The eigenfunction property is $z \cdot \Theta = d\tau(\gamma(z)) \cdot \Theta$.

		These statements carry over to the left-hand side of the identity of Theorem \ref{thm:main2}, which is just a complex linear combination of such functions, but we need to be mindful that we are using here normalized characters. Therefore, these properties now read
		\begin{enumerate}
			\item $z \cdot (|D(-)|^{-1/2}\Theta_\varphi^{\mf{w},s}) = d\tau(\gamma(z)) \cdot (|D(-)|^{-1/2}\Theta_\varphi^{\mf{w},s})$ for all $z \in \mf{z}$.
			\item $\sup_{x \in G(\R)_\tx{rs}}|\Theta_\varphi^{\mf{w},s}(x)|<\infty$.
		\end{enumerate}

		We now claim that the right-hand side of the identity of Theorem \ref{thm:main2} is also has those properties. Invariance (under conjugation by $G(\R)$) follows from the corresponding property of the transfer factor $\Delta(\dot\gamma,\delta)$ in the variable $\delta$. 

		To see boundedness, we note that for any fixed $\delta$, there are only finitely many $\dot\gamma$ with $\Delta(\dot\gamma,\delta) \neq 0$, and in fact their number is bounded by $|\Omega(T,G)|$. The values of the transfer factor are roots of unity (\cite[Lemma 4.3.3]{KalHDC}). The boundedness of the right-hand side now follows from the boundedness of $S\Theta_{\varphi'}$, the latter function being the analog of $\Theta_\varphi^{\mf{w},s}$ for the group $H$ and $s=1$.

		To check the eigendistribution property for the function $\delta \mapsto \sum_\gamma \Delta(\dot\gamma,\delta)S\Theta_{\varphi'}(\dot\gamma)$ we fix $\delta$ and let $T \subset G$ be its centralizer. To apply a differential operator to this function at $\delta$, it is enough to consider the function in a neighborhood of $\delta$. Let $\gamma_1\dots,\gamma_n$ be representatives for the stable classes in $H(\R)$ that transfer to the stable class of $\delta$ and let $S_1,\dots,S_n \subset H$ be their centralizers. Let $j_i : T \to S_i$ be the unique admissible isomorphism with $j_i(\delta)=\gamma_i$. There is a connected open neighborhood $U \subset T(\R)$ of $\delta$ so that, for each $\delta' \in U$, $j_1(\delta'),\dots,j_n(\delta')$ is a set of representatives for the stable classes in $H(\R)$ that transfer to $\delta'$, and such that the double cover $S_i(\R)_\pm$ splits over $j_i(U)$. We let $U_i \subset S_i(\R)_\pm$ be an arbitrary choice of one of the two connected components of the preimage of $j_i(U)$ in $S_i(\R)_\pm$. Then $j_i$ lifts to a diffeomorphism $U \to U_i$, which we still call $j_i$. The restriction to $U$ of the function $\delta \mapsto \sum_\gamma \Delta(\dot\gamma,\delta)S\Theta_{\varphi'}(\dot\gamma)$ is now given by $\delta' \mapsto \sum_{i=1}^n \Delta(j_i(\delta'),\delta')S\Theta_{\varphi'}(j_i(\delta'))$. 
		
		It will be enough to consider each summand individually. By \cite[Corollary 4.3.4]{KalHDC}, the function $\delta' \mapsto \Delta(j_i(\delta'),\delta')$ is locally constant, so after possibly shrinking $U$ we may assume that it is constant. Thus the summand under consideration becomes $\delta' \mapsto S\Theta_{\varphi'}(j_i(\delta'))$, a function defined on a connected open neighborhood $U$ in $G(\R)$ of the fixed strongly regular semi-simple element $\delta$.

		We apply Proposition \ref{pro:diff} to the smooth class function 
		\[ f_i(\delta') = |D_T(\delta')|^{-1/2}S\Theta_{\varphi'}(j_i(\delta')) \]
		and see that for $z \in \mf{z}$ we have
		\begin{equation} \label{eq:e1}
		(z \cdot f_i)|_{T(\R) \cap U} = |D_T(-)|^{-1/2}\gamma(z) \cdot (S\Theta_{\varphi'}\circ j_i)|_{T(\R) \cap U}.
		\end{equation}
		The admissible isomorphism $j_i : T \to S_i$ induces an isomorphism $S(\mf{t}) \to S(\mf{s}_i)$ which embeds $S(\mf{t})^{\Omega(T,G)}$ into $S(\mf{s}_i)^{\Omega(S_i,H)}$. We have
		\begin{equation} \label{eq:e2}
		\gamma(z) \cdot (S\Theta_{\varphi'}\circ j_i)|_{T(\R) \cap U} = (j_i(\gamma(z)) \cdot S\Theta_{\varphi'}|_{S_i(\R)_\pm \cap U_i}) \circ j_i.
		\end{equation}
		Let $\gamma_i : \mf{z}_H \to S(\mf{s}_i)^{\Omega(S_i,H)}$ be the Harish-Chandra isomorphism for $(H,S_i)$ and let $z_H \in \mf{z}_H$ be the preimage under $\gamma_i$ of $j_i(\gamma(z))$. We apply again Proposition \ref{pro:diff}, this time to the smooth class function $|D_{S_i}(-)|^{-1/2}S\Theta_{\varphi'}$ on $H(\R)_\pm$ to see
		\begin{equation} \label{eq:e3}
		j_i(\gamma(z)) \cdot S\Theta_{\varphi'}|_{S_i(\R)_\pm \cap U_i} = |D_{S_i}(-)|^{1/2}\cdot z_H \cdot (|D_{S_i}(-)|^{-1/2}S\Theta_{\varphi'})|_{S_i(\R)_\pm \cap U_i}.
		\end{equation}
		The eigenfunction property of $S\Theta_{\varphi'}$ gives
		\begin{equation} \label{eq:e4}
		z_H \cdot (|D_{S_i}(-)|^{-1/2}S\Theta_{\varphi'}) = d\tau'(\gamma'(z_H)) \cdot (|D_{S_i}(-)|^{-1/2}S\Theta_{\varphi'}),
		\end{equation}
		where $(S',\tau')$ is the Harish-Chandra parameter of any element in the $L$-packet $\Pi_{\varphi'}$, unique up to stable conjugacy, and $\gamma' : \mf{z}_H \to S(\mf{s'})^{\Omega(S',H)}$ is the Harish-Chandra isomorphism. The scalar $d\tau'(\gamma'(z_H))$ can now switch places with the Weyl discriminant and combining \eqref{eq:e2}, \eqref{eq:e3}, and \eqref{eq:e4}, we obtain
		\begin{equation} \label{eq:e5}
		\gamma(z) \cdot (S\Theta_{\varphi'}\circ j_i) = d\tau'(\gamma'(z_H)) \cdot S\Theta_{\varphi'} \circ j_i 
		\end{equation}
		as functions on $T(\R) \cap U$.

		By definition, the two Harish-Chandra isomorphisms $\gamma_i$ and $\gamma'$ are related by composition by $\tx{Ad}(h)$ for any $h \in H(\C)$ with $\tx{Ad}(h)S_i=S'$. Note that the $\R$-structure here is irrelevant, because the Harish-Chandra homomorphism is defined over $\C$. Furthermore, transporting $d\tau'$ under $\tx{Ad}(h)$ and then under the isomorphism $j_i$ produces $d\tau$ 
		%\warn{do we need more detail here?}. 
		With this we obtain $d\tau'(\gamma'(z_H))=d\tau(\gamma(z))$. Combining this with \eqref{eq:e1} and \eqref{eq:e5} we obtain
		\[  z \cdot (|D_T(-)|^{-1/2}S\Theta_{\varphi'}\circ j_i) = d\tau(\gamma(z)) \cdot (|D_T(-)|^{-1/2}S\Theta_{\varphi'}\circ j_i). \] 
		The desired eigendistribution property of the function $\delta \mapsto \sum_\gamma \Delta(\dot\gamma,\delta)S\Theta_{\varphi'}(\dot\gamma)$ has thus been proved.
	\end{proof}

	\underline{Step 3:} We have shown that both sides of the identity of Theorem \ref{thm:main2} are invariant functions on $G(\R)_\tx{sr}$, transform under the same character of $\mf{z}$, and are bounded (having already been normalized). We are assuming that they take equal values on all elliptic elements. Therefore \cite[Lemma 44]{HCDSI} implies that they are equal.
\end{proof}

\subsection{The left hand side}

% Let $\pi_\mf{w} \in \Pi_\varphi(G_0)$ be the unique $\mf{w}$-generic constituent. It corresponds to a triple $(S,\rho,\tau)$, where $S \subset G_0$ is an elliptic maximal torus, $\rho$ is a Weyl chamber in $X^*(S/Z_G)$, and $\tau$ is a character of $S(\R)$ whose differential is $\rho$-dominant. This triple is unique up to $G(\R)$-conjugacy.

% Fix based $\chi$-data for $(S,\rho)$ and consider the resulting $\hat G$-conjugacy class of embeddings ${^LS} \to {^LG}$. There exists a member of this class whose image contains the image of $\varphi$, and such that the factored parameter $\varphi_S : W_\R \to {^LS}$ gives rise to the character $\tau$. This embedding is well-defined up to conjugation by $\hat S$. It gives an isomorphism $\hat S^\Gamma \to S_\varphi$ that is independent of all choices (it depends on the choice of $\varphi$ within its $\hat G$-conjugacy class, but so does $S_\varphi$, and the whole situation is independent of this choice in the obvious way).

In this subsection we will provide a formula for the left hand side of the identity in Theorem \ref{thm:main2}, i.e. $\Theta^\mf{w}_{\varphi,s}(\delta)$, for strongly regular semi-simple elliptic $\delta \in G(\R)$. The end result is \eqref{eq:lhs}.

The members of $\Pi_\varphi((G,\xi,z))$ are parameterized by the set of $G(\R)$-conjugacy classes of admissible embedding $j : S \to G$. Given such an embedding let $\pi_j$ be the corresponding representation. Let $j_\mf{w} : S \to G_0$ be the unique embedding for which $\pi_{j_\mf{w}}$ is the unique $\mf{w}$-generic member of $\Pi_\varphi((G_0,1,1))$. Then $\tx{inv}(j_\mf{w},j) \in H^1(\Gamma,S)=\pi_0(\hat S^\Gamma)^*=\pi_0(S_\varphi)^*$ equals $\rho_{\pi_j}$. Therefore the left hand side becomes
\[ \Theta^\mf{w}_{\varphi,s}(\delta)=e(G)\sum_j \<s,\tx{inv}(j_\mf{w},j)\>\Theta_{\pi_j}(\delta), \]
where $j$ runs over (a set of representatives for) the set of $G(\R)$-conjugacy classes in $J_\xi$. Harish-Chandra's character formula \eqref{eq:charfmla} states
\[ \Theta_{\pi_j}(\delta) = (-1)^{q(G)}\sum_{w \in W_\R(G,jS)}\frac{\tau'}{d_\tau'}(j^{-1}w^{-1}\delta),\]
where we have conjugated $\delta$ within $G(\R)$ to land in $jS(\R)$. Combining the two formulas and using $e(G)=(-1)^{q(G_0)-q(G)}$, we obtain
\[ \Theta^\mf{w}_{\varphi,s}(\delta) = (-1)^{q(G_0)} \sum_j \<s,\tx{inv}(j_\mf{w},j)\> \sum_{w \in W_\R(G,jS)}\frac{\tau'}{d_\tau'}(j^{-1}w^{-1}\delta). \]

% The product in the denominator can be rewritten as
% \[ \prod\limits_{\substack{\alpha \in R(S,G)\\ \<\alpha,d\tau\>>0}}(1- \alpha(j^{-1}w^{-1}\delta)^{-1}). \]
Instead of conjugating $\delta$ to land in $jS$, we can conjugate $j$ by $G(\R)$ to achieve this, without changing $\pi_j$. With this shift in point of view we can combine the two sums and arrive at
\[ \Theta^\mf{w}_{\varphi,s}(\delta) = (-1)^{q(G_0)} \sum_j \<s,\tx{inv}(j_\mf{w},j)\> \frac{\tau'}{d_\tau'}(j^{-1}w^{-1}\delta), \]
where now the sum runs over the set of those $j \in J_\xi$ whose image contains $\delta$.

As $j$ runs over this set, $j_\mf{w}j^{-1}(\delta)$ runs over the set of elements $\delta_0 \in S_\mf{w}(\R)$ that are stably conjugate to $\delta$, where $S_\mf{w} \subset G_0$ is the image of $j_\mf{w}$, an elliptic maximal torus of $G_0$. Moreover, $j_\mf{w}$ transports $\tx{inv}(j_\mf{w},j) \in H^1(\R,S)$ to $\tx{inv}(\delta_0,\delta) \in H^1(\R,S_\mf{w})$. So we arrive at
\begin{equation} \label{eq:lhs}
\Theta^\mf{w}_{\varphi,s}(\delta) = (-1)^{q(G_0)} \sum_{\delta_0} \<s_\mf{w},\tx{inv}(\delta_0,\delta)\> \frac{\tau_\mf{w}'}{d_{\mf{w}}'}(\delta_0),
\end{equation}
where the sum runs over the set of elements $\delta_0 \in S_\mf{w}(\R)$ that are stably conjugate to $\delta$, and we have used the subscript $\mf{w}$ to indicate various transports under $j_\mf{w} : S \to S_\mf{w}$.

\subsection{The right hand side: covers} \label{sub:rhs_cover}

In this subsection we will show that the right hand side of the identity of Theorem \ref{thm:main2}(1) is also equal to \eqref{eq:lhs}.

We begin by applying \eqref{eq:lhs} to the group $H$, the parameter $\varphi'$, and the trivial endoscopic element, and obtain
\[ S\Theta_{\varphi'}(\dot\gamma) = \sum_{\dot\gamma_0} \frac{\tau_H}{d_{H}}(\dot\gamma_0), \]
where we have fixed an embedding $j_H : S \to H$ for which the corresponding eds representation of $H(\R)_\pm$ is generic with respect to some Whittaker datum and denote by subscript $H$ the various transports under $j_H$, $\dot\gamma_0$ runs over the elements of $S_H(\R)_\pm$ that are $H$-stably conjugate to $\dot\gamma$, and we have used $\tau_H/d_H=\tau'_H/d'_H$.

The right hand side of Theorem \ref{thm:main2}(1) then becomes
\[ \sum_{\gamma \in H(\R)/\tx{st}} \Delta[\mf{w},\mf{e},z](\dot\gamma,\delta)\sum_{\dot\gamma_0} \frac{\tau_H}{d_{H}}(\dot\gamma_0). \]
The first sum runs over elements of $H(\R)$ that are related to $\delta$, up to stable conjugacy under $H$. Each such stable conjugacy class consists of regular semi-simple elliptic elements (because $\delta$ is such), and hence intersects $S_H(\R)$. The second sum runs over elements $\dot\gamma_0 \in S_H(\R)_\pm$ that lie in the $H$-stable class of the lift $\dot\gamma \in S_H(\R)_\pm$ of $\gamma$. Since $\Delta$ is $H$-stably invariant in the first factor, its values at $\dot\gamma$ and $\dot\gamma_0$ are the same. We can combine the two sums together and obtain
\[ \sum_{\gamma_0 \in S_H(\R)} \Delta[\mf{w},\mf{e},z](\dot\gamma_0,\delta) \frac{\tau_H}{d_{H}}(\dot\gamma_0), \]
where now $\gamma_0$ runs over all elements of $S_H(\R)$, equivalently all those that are related to $\delta$, since the transfer factor vanishes for the others.

Having fixed the embeddings $j_\mf{w}$ and $j_H$, they provide an isomorphism $S_\mf{w} \to S_H$, and this isomorphism induces a bijection
\[ \delta_0 \leftrightarrow \gamma_0 \]
between the set of elements of $S_\mf{w}(\R)$ that are stably conjugate to $\delta$ and the set of elements of $S_H(\R)$ related to $\delta$. Using the basic property \eqref{eq:tfstab} of transfer factors we obtain
\begin{equation} \label{eq:rhs1}
\sum_{\delta_0 \in S_\mf{w}(\R)} \Delta[\mf{w},\mf{e},z](\dot\gamma_0,\delta_0)\<s_\mf{w},\tx{inv}(\delta_0,\delta)\> \frac{\tau_H}{d_{H}}(\dot\gamma_0),
\end{equation}
where $\delta_0$ runs over the elements of $S_\mf{w}(\R)$ that are stably conjugate to $\delta$, $\gamma_0 \in S_H(\R)$ denotes the element corresponding to $\delta_0$ under above bijection, and $\dot\gamma_0 \in S_H(\R)_\pm$ is an arbitrary lift of $\gamma_0$.

We now unpack the transfer factor. It is given as
\[ \Delta(\dot\gamma_0,\delta_0) = \epsilon\Delta_I^{-1}(\dot\gamma_0,\dot\delta_0)\Delta_{III}(\dot\gamma_0,\dot\delta_0), \]
where we recall that the term $\Delta_{IV}$ is missing because we are working with normalized characters and orbital integrals, and $\dot\delta_0 \in S_\mf{w}(\R)_{G/H}$ is an arbitrary lift of $\delta_0$.
%Here $\dot\delta_0 \in S_\mf{w}(\R)_\pm$ is the image of $\dot\gamma_0$ under the isomorphism $S_\mf{w}(\R)_\pm \to S_H(\R)_\pm$, and $\ddot\delta_0$ is a lift of this element to $S_\mf{w}(R)_\pm \times_{S_\mf{w}(\R)} S_\mf{w}(\R)_{G/H}$.

We claim that
\[ \Delta_{III}(\dot\gamma_0,\dot\delta_0) = \frac{\tau_\mf{w}}{\tau_H}(\dot\delta_0). \]
This is the property \eqref{eq:tfd3} that was reviewed in an idelized situation. We say here a few more words about the actual situation in the setting of double covers. We have the commutative diagram
\[ \xymatrix{
	&^LS_{H,\pm}\ar[dd]_a\ar[r]&^LH_\pm\ar[dd]\\
	W_\R\ar[ru]^{\varphi^H_S}\ar@/^4pc/[rru]_-{\varphi^H}\ar[rd]_{\varphi_S}\ar@/_4pc/[rrd]_-{\varphi}\\
	&^LS_G\ar[r]&^LG,
}
\]
Here $^LS_G$ is the $L$\-group of the double cover $S(\R)_G$ and the bottom horizontal map is the canonical $L$\-embedding. We have written $^LS_{H,\pm}$ for the $L$\-group of the double cover of $S_H(\R)$ obtained as the Baer sum of the canonical double cover $S_H(\R)_H$ coming from $R(S_H,H) \subset X^*(S_H)$ and the double cover $S_H(\R)_\pm$ that is the preimage of $S_H(\R)$ in the double cover $H(\R)_\pm \to H(\R)$. The top horizontal map is the canonical $L$\-embedding of the $L$\-group of $S_H(\R)_H$ into $^LH$, which remains an $L$\-embedding after passing to the $\pm$-covers. The right vertical map is the canonical $L$\-embedding $^LH_\pm \to {^LG}$. The map $a$ is the unique map making the square, hence also the triangle, commute. It is the paremter of a genuine character of the double cover of $S(\R)$ obtained as the Baer sum of $S_H(\R)_H$, $S_H(\R)_\pm$, and $S(\R)_G$, where we have identified $S_H$ with $S$. This Baer sum is the same as for $S_H(\R)_\pm$ and $S(\R)_{G/H}$. The claimed identity now follows.

The following result completes the proof of Theorem \ref{thm:main2}(1).

\begin{pro} \label{pro:magic}
For any $\dot\delta_0 \in S_\mf{w}(\R)_{G/H}$, the following identity holds
\[  \Delta_{I}(\dot\gamma_0,\dot\delta_0) = \epsilon\cdot(-1)^{q(G_0)-q(H)}\cdot\prod_{\substack{\alpha \in R(S,G/H)\\ \<\alpha^\vee,d\tau_\mf{w}\>>0}}\arg(\alpha^{1/2}(\dot\delta_0) - \alpha^{-1/2}(\dot\delta_0)), \]
where $\dot\gamma_0 \in S_H(\R)_{G/H}$ is the image of $\dot\delta_0$ under the isomorphism $j_H \circ j_\mf{w}^{-1}$, and we have chosen a pinning $\mc{P}$ and a unitary character $\Lambda : \R \to \C^\times$ that together produce the fixed Whittaker datum $\mf{w}$ as in \S\ref{sub:whit}, and we have used $\Lambda$ for $\epsilon$ and $\mc{P}$ for $\Delta_{I}(\dot\gamma_0,\dot\delta_0)$.
\end{pro}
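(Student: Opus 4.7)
\textbf{Proof plan for Proposition \ref{pro:magic}.} My approach is a direct comparison of both sides as roots of unity. Both sides are a-priori genuine complex-valued functions of $\dot\delta_0 \in S_\mf{w}(\R)_{G/H}$, and I will first reduce the claimed identity to an identity of signs, then unwind the definition of $\Delta_I$ in the cover setting from \cite[\S4.3]{KalHDC} to verify it.

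First, I would simplify the prefactor. Combining Lemma \ref{lem:epsilon} with the sign $(-1)^{q(G_0)-q(H)}$ gives
\[
\epsilon\cdot (-1)^{q(G_0)-q(H)} = i^{(r_G-r_H)/2}.
\]
Since $S_\mf{w}$ is elliptic (being the image of the admissible embedding $j_\mf{w}$ from the torus $S$ for the discrete parameter $\varphi$), every root $\alpha \in R(S_\mf{w},G)$ is imaginary, so $\alpha(\delta_0) \in \mb{S}^1$. Consequently $\alpha^{1/2}(\dot\delta_0) - \alpha^{-1/2}(\dot\delta_0) \in i\R$ and its argument equals $\varepsilon_\alpha\cdot i$ for a sign $\varepsilon_\alpha=\pm 1$ depending on $\dot\delta_0$. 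The set of $\alpha \in R(S,G/H)$ with $\<\alpha^\vee,d\tau_\mf{w}\>>0$ has cardinality $(r_G-r_H)/2$, so the claimed right-hand side simplifies to
\[
i^{r_G-r_H}\cdot\prod_{\alpha}\varepsilon_\alpha = (-1)^{(r_G-r_H)/2}\prod_{\alpha}\varepsilon_\alpha,
\]
which is a sign depending on $\dot\delta_0$ and on the Whittaker-determined chamber.

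Next, I would unwind the definition of $\Delta_I$ in the cover setting. According to \cite[\S4.3]{KalHDC}, $\Delta_I(\dot\gamma_0,\dot\delta_0)$ is obtained by evaluating a canonical genuine character on the Baer sum cover $S_\mf{w}(\R)_{G/H}$, where the character is built from the pinning datum $\mc{P}$ that determines $\mf{w}$. The key feature is that $\Delta_I$ factors as a product over the $\Gamma$-orbits of roots in $R(S,G/H)$, with each factor being a value in the component of the cover lying over $\alpha^{1/2}-\alpha^{-1/2}$. The regular element $d\tau_\mf{w}$ selects a Borel $B_{\tau_\mf{w}}$ through which the pinning-dependent splittings organize themselves, and this selection is the input needed to single out a set of positive roots.

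The core of the proof is then to identify, orbit by orbit, the values of $\Delta_I$ with the arguments $\arg(\alpha^{1/2}(\dot\delta_0) - \alpha^{-1/2}(\dot\delta_0))$. Here I would use Lemma \ref{lem:w1}(3) to realize $\mf{w}$ via the regular nilpotent $X_\mc{P}$ and thereby pin down how the genericity condition $\pi_\mf{w}$-generic translates, via Proposition \ref{p:whittaker}, into the concrete chamber condition $\<\alpha^\vee,d\tau_\mf{w}\>>0$. The prefactor $i^{(r_G-r_H)/2}$ accounts for the difference between $\alpha^{1/2}-\alpha^{-1/2}$ and its argument (which differ by a real scalar), matched against the imaginary contributions from the pinning-normalized splittings of the cover.

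The main obstacle will be the bookkeeping in the second step: tracking carefully the signs that the Baer-sum construction of $S_\mf{w}(\R)_{G/H}$ introduces relative to the naive product expression, and verifying that the Whittaker normalization of $\epsilon$ in \cite[\S5.3]{KS99} corresponds precisely to the pinning-based splitting used implicitly above. The $\Gamma$-orbit structure of $R(S,G/H)$ (real, imaginary, complex conjugate pairs) does not introduce additional complication here because $S_\mf{w}$ is elliptic so all roots are imaginary, but care is still needed with the noncompact-vs-compact dichotomy and with the subtlety that $\dot\gamma_0$ and $\dot\delta_0$ live in covers of different tori canonically identified via $j_H\circ j_\mf{w}^{-1}$.
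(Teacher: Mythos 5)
There is a genuine gap at the heart of your plan. Your prefactor simplification via Lemma \ref{lem:epsilon} and the observation that $\alpha^{1/2}(\dot\delta_0)-\alpha^{-1/2}(\dot\delta_0)$ is purely imaginary for the elliptic torus are fine, but the core step you propose --- identifying $\Delta_I$ ``orbit by orbit'' with the product of arguments --- rests on a product formula for $\Delta_I$ that does not exist. In the cover setting of \cite[\S4.3]{KalHDC}, $\Delta_I(\dot\gamma_0,\dot\delta_0)$ is defined as a Tate--Nakayama-type pairing of a cohomological invariant $\tx{inv}(\dot\delta_0,\mc{P})$ (built from the element of the cover and the pinning) against the endoscopic element; its value at a fixed $\dot\delta_0$ does not decompose as a product over roots of $R(S,G/H)$. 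What does decompose over roots is only its \emph{variation}: replacing $\dot\delta_0$ by $\dot\delta_1$ changes the invariant by $\prod_{\alpha>0}\alpha^\vee(b_\alpha)$ with $b_\alpha=(\dot\delta_{1,\alpha}-\dot\delta_{1,-\alpha})/(\dot\delta_{0,\alpha}-\dot\delta_{0,-\alpha})\in\R^\times$, and pairing with $s$ turns this into $\prod\tx{sgn}(b_\alpha)$ taken over the roots outside $R(S,H)$ (since $s_\alpha=1$ exactly on $R(S,H)$), which matches the change of the right-hand side. Your plan contains neither this variation argument nor any mechanism to anchor the absolute value of $\Delta_I$ at a single point, so the ``bookkeeping'' you defer is in fact the entire content of the proposition.

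The paper's proof supplies exactly these two missing pieces: after matching the variation of both sides, it evaluates at the special point $\ddot\delta_0=\exp(-irH_\mf{w})$ with $r>0$ small, where $H_\mf{w}$ corresponds to $d\tau'$ under the Killing form. There Proposition \ref{p:whittaker} (genericity of $\pi_{j_\mf{w}}$ is equivalent to the orbit of $d\tau'$ meeting the Kostant section of the regular nilpotent $X_\mc{P}$ attached to the pinning by Lemma \ref{lem:w1}(3)), together with \cite[Lemma 4.1.4]{KalHDC} and the observation that the rescaled pinning $\mc{P}_\kappa=(T_0,B_0,\{\tfrac{r}{2}\kappa(\alpha,\alpha)X_\alpha\})$ is $G(\R)$-conjugate to $\mc{P}$, forces $\tx{inv}(\ddot\delta_0,\mc{P})=0$, hence $\Delta_I=1$ at that point; on the other side one computes $\alpha^{1/2}(\dot\delta_0)-\alpha^{-1/2}(\dot\delta_0)=-2i\sin(r\<\alpha^\vee,d\tau_\mf{w}\>/2)$, so for $r$ small the product of arguments is $(-i)^{\#R(S,G/H)^+}$, and Lemma \ref{lem:epsilon} shows the right-hand side also equals $1$. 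You cite the right ingredients (Lemma \ref{lem:w1}(3), Proposition \ref{p:whittaker}, Lemma \ref{lem:epsilon}), but without the variation-plus-anchoring structure they do not assemble into a proof, and the step where you would need them --- showing the pinning-normalized invariant vanishes at a well-chosen point --- is absent from your outline.
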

\begin{proof}
Since $d\tau_\mf{w}$ is a regular element of $X_*(S)_\R$ the condition $\<\alpha^\vee,d\tau_\mf{w}\>>0$ defines a positive system $R(S,G)^+$ of roots in $R(S,G)$. We will write $\alpha>0$ when $\alpha \in R(S,G)$ belongs to that positive system. We will write $R(S,G/H)^+ = R(S,G/H) \cap R(S,G)^+$.

	We first investigate how both sides vary as functions of $\dot\delta_0$. Consider another strongly regular $\dot\delta_1$. Replacing $\dot\delta_0$ with $\dot\delta_1$ in the right hand side results in multiplication by $\prod_\alpha \tx{arg}(b_\alpha)$, where the product runs again over $R(S,G/H)^+$ and $b_\alpha=(\dot\delta_{1,\alpha}-\dot\delta_{1,-\alpha})/(\dot\delta_{0,\alpha}-\dot\delta_{0,-\alpha})$. By construction $\dot\delta_{0,-\alpha}=\sigma(\dot\delta_{0,\alpha})$, and the same holds for $\dot\delta_1$, from which follows $b_\alpha \in \R^\times$, and hence $\tx{arg}(b_\alpha)=\tx{sgn}(b_\alpha)$.

We now look at the left hand side. Replacing $\dot\delta_0$ by $\dot\delta_1$ multiplies $\tx{inv}(\dot\delta_0,\tx{pin})$ by $\prod_{\alpha>0,\sigma\alpha<0}\alpha^\vee(b_\alpha)$, and hence $\Delta_{I}$ by the Tate-Nakayama pairing of this 1-cocycle with the endoscopic element $s_\mf{w}$. Since the torus $S$ is elliptic, the conditions $\alpha>0$ and $\sigma\alpha<0$ are equivalent, and the value of the 1-cocycle at $\sigma$ equals $\prod_{\alpha>0}\alpha^\vee(b_\alpha)$. This is the product over $\alpha>0$ of the images of the 1-cocycles $b_\alpha \in Z^1(\Gamma,R_{\C/R}^1\mb{G}_m)$ under the homomorphisms $\alpha^\vee : R_{\C/R}^1\mb{G}_m \to S$, so the change in $\Delta_{I,\pm}$ is given by $\prod_{\alpha>0}\<b_\alpha,s_\alpha\>$, where $s_\alpha$ is the image of $s \in \hat S$ under $\hat\alpha : \hat S \to \C^\times$. This is a Galois-equivariant homomorphism, with $\sigma$ acting as inversion on $\C^\times$. Since $s$ is $\sigma$-fixed, so is $s_\alpha$, i.e. $s_\alpha \in \{\pm1\} \subset \C^\times$. By construction of the endoscopic group $H$, we have $s_\alpha=1$ precisely for $\alpha \in R(S,H)$. On the other hand, when $s_\alpha=-1$, then $\<b_\alpha,s_\alpha\>=\tx{sgn}(b_\alpha)$.

We have thus shown that both sides of the identity multiply by the same factor upon replacing $\dot\delta_0$ by a different element. To establish the identity we may thus evaluate at an arbitrary element $\dot\delta_0$. Our approach will be to choose this element so that both sides are equal to $1$.

To construct $\dot\delta_0$ 
we shall use the exponential map $\tx{Lie}(S_\mf{w})(\R) \to S_\mf{w}(\R)_{\pm\pm}$ discussed in \cite[\S3.7]{KalDC}. Write $\mf{s}=\tx{Lie}(S_\mf{w})$ and $\mf{s}'=\mf{s} \cap \mf{g'}$, where $\mf{g}'=[\g,\g]$.  Consider $d\tau_\mf{w} = d\tau_\mf{z}+d\tau' \in \mf{z}^* \oplus i\mf{s}'(\R)^*$ as in Remark \ref{rem:hcpar}. Let $H_{\mf{w}} \in i\mf{s}'(\R)$ be the element corresponding to $d\tau'$ via the Killing form on $\mf{g}'$, i.e. $\kappa(-,H_\mf{w})=d\tau'$. Then $-irH_\mf{w} \in \mf{s}'(\R)$ for any $r \in \R$. We will choose $r>0$ small enough, to be specified in a moment, and set $\ddot\delta_0=\exp(-irH_\mf{w}) \in S_\mf{w}(\R)_{\pm\pm}$. Let $\dot\delta_0$ be the image of $\ddot\delta_0$ in $S_\mf{w}(\R)_{\pm}$. We will now show that both sides take the value $1$ at this particular element.

We first evaluate the left hand side. By definition $\Delta_{I}(\dot\gamma_0,\dot\delta_0)$ is given by pairing the invariant $\tx{inv}(\ddot\delta_0,\mc{P})$ with the endoscopic element, see \cite[\S 4.1, \S4.3]{KalHDC}. It will be enough to show that $\tx{inv}(\ddot\delta_0,\mc{P})=0$. By \cite[Lemma 4.1.4]{KalHDC} we have $\tx{inv}(\ddot\delta_0,\mc{P})=\tx{inv}(-irH_\mf{w},\mc{P})$ provided $r$ is small enough.

Write $\mc{P}=(T_0,B_0,\{X_\alpha\})$, which, together with $\Lambda(x)=e^{ix}$, produces the chosen Whittaker datum $\mf{w}$. According to Lemma \ref{lem:w1}(3), if we define $X \in \mf{n}(\R)^*$ by $X(Y) = \sum_\alpha [Y_\alpha,X_{-\alpha}]/H_\alpha$, then $\mf{w}=\mf{w}_{iX}$. Proposition \ref{p:whittaker} implies that $d\tau'$ is $G(\R)$-conjugate to an element of $\K(iX)'$, and hence $H_\mf{w}$ is $G(\R)$-conjugate to an element of $\K(iX_\kappa)'$, where $X_\kappa \in \mf{g}'(\R)$ is the element such that $\kappa(X_\kappa,Y)=X(Y)$ for all $Y \in \mf{g}$. We can compute $X_\kappa$ as the element of $i\mf{\bar n}(\R)$ given by
\[ X_\kappa = \sum_\alpha \frac{\kappa(\alpha,\alpha)}{2} X_{-\alpha}. \]
Therefore, $-irH_\mf{w}$ is $G(\R)$-conjugate to the Kostant section $K(rX_\kappa)$, which means that $\tx{inv}(-irH_\mf{w},\mc{P}_\kappa)=0$, where $\mc{P}_\kappa$ is the pinning $(T_0,B_0,\{r_\alpha X_\alpha\})$ with $r_\alpha = \tfrac{r}{2}\kappa(\alpha,\alpha)$. It is well-known that $\kappa(\alpha,\alpha)$ is a positive rational number, see \cite[\S8.5]{Humphreys80}, and that the assignment $\alpha \to r_\alpha$ is invariant under complex conjugation. Therefore the unique element  $t \in T_{0,\tx{ad}}(\R)$ such that $\alpha(t)=r_\alpha$ for all $B_0$-simple roots $\alpha$ lies in the identity component $T_{0,\tx{ad}}(\R)^\circ$ of $T_{0,\tx{ad}}(\R)$. The isogeny $T_{0,\tx{sc}} \to T_{0,\tx{ad}}$ induces a surjective map $T_{0,\tx{sc}}(\R)^\circ \to T_{0,\tx{ad}}(\R)^\circ$, so the element $t$ lifts to $T_{0,\tx{sc}}(\R)$. Thus the pinnings $\mc{P}$ and $\mc{P}_\kappa$ are $G(\R)$-conjugate. According to \cite[Lemma 4.1.4]{KalHDC} we have $\tx{inv}(-irH_\mf{w},\mc{P})=\tx{inv}(-irH_\mf{w},\mc{P}_\kappa)=0$, and conclude that $\tx{inv}(\ddot\delta_0,\mc{P})$ as desired.

We now turn to the right hand side. We have for each $\alpha \in R(S,G)$
\[ \alpha^{1/2}(\dot\delta_0)=\dot\delta_{0,\alpha} = e^{d\alpha(-irH_\mf{w})/2}=e^{-ir\<\alpha^\vee,d\tau_\mf{w}\>/2},\]
hence
\[ \alpha^{1/2}(\dot\delta_0)-\alpha^{-1/2}(\dot\delta_0) = -2i\sin(r\<\alpha^\vee,d\tau_\mf{w}\>/2). \]
Choosing $r>0$ so that $0<r\<d\alpha,d\tau_\mf{w}\>/2 < \pi$ for all $\alpha>0$ we obtain
\[ \prod_{\alpha \in R(S,G/H)^+}\arg(\alpha^{1/2}(\dot\delta_0) - \alpha^{-1/2}(\dot\delta_0)) = (-i)^{\#R(S,G/H)^+}. \]
According to Lemma \ref{lem:epsilon} the right hand side equals $1$.
\end{proof}

\subsection{The right hand side: classical set-up}

In this subsection we will show that the right hand side of the identity of Theorem \ref{thm:main2}(2) is also equal to \eqref{eq:lhs}. The initial arguments of \S\ref{sub:rhs_cover}, which applied to the right hand side of the identity in Theorem \ref{thm:main2}(1), have direct analogs in the setting of Therem \ref{thm:main2}(2) and show that the right hand side of that identity equals the analog of the expression \eqref{eq:rhs1}, which is given by
\begin{equation} \label{eq:rhs2}
\sum_{\delta_0 \in S_\mf{w}(\R)} \Delta[\mf{w},\mf{e},z](\gamma_1,\delta_0)\<s_\mf{w},\tx{inv}(\delta_0,\delta)\> \frac{\tau'_{H_1}}{d'_{H_1}}(\gamma_1),	
\end{equation}
where we have used the parameter $\varphi_1$ of the z-extension $H_1$ to obtain the character $\tau_{H_1}'$ of the torus $S_{H_1}(\R)$.

It is the handling of the transfer factor that is slightly different. Indeed, in this setting without covers the transfer factor is given by
\[ \Delta = \epsilon\Delta_I^{-1}\Delta_{II}\Delta_{III_2}. \]
The construction of the pieces involves a choice of an admissible isomorphism $S^H \to S_\mf{w}$, which we take to be $j_\mf{w}\circ j_H^{-1}$, as we did in \S\ref{sub:rhs_cover}. It further involves choices of $\chi$-data and $a$-data for $S_\mf{w}$. We take $\rho_\mf{w}$-based $\chi$-data, and $(-\rho_\mf{w})$-based $a$-data, so that $\chi_\alpha(x)=\arg(x)$ when $\alpha>0$ and $a_\alpha=i$ when $\alpha<0$, and where $\alpha>0$ means $\<\alpha^\vee,d\tau_\mf{w}\>>0$.

We claim that
\[ \Delta_{III_2}(\gamma_1,\delta_0) = \frac{\tau'_\mf{w}(\delta_0)}{\tau'_{H_1}(\gamma_1)}.\]
This was the property \eqref{eq:tfd3} that was reviewed in an idealized situation. We say here a few more words about the actual situation, still assuming that the $z$-pair is trivial, i.e. there exists an $L$\-isomorphism $^L\eta : {^LH} \to \mc{H}$, the general case being entirely analogous by requiring more cumbersome notation. We then have the commutative diagram
\[ \xymatrix{
	&^LS_H\ar[dd]_a\ar[r]^{\rho^H}&^LH\ar[dd]^{^L\eta}\\
	W_\R\ar[ru]^{\varphi^H_S}\ar@/^4pc/[rru]_-{\varphi^H}\ar[rd]_{\varphi_S}\ar@/_4pc/[rrd]_-{\varphi}\\
	&^LS_\mf{w}\ar[r]^{\rho_\mf{w}}&^LG,
}
\]
where the horizontal arrows are the $L$\-embeddings obtained via $\rho_\mf{w}$-based and $\rho^H$-based $\chi$-data, respectively. We have $\Delta_{III_2}(\gamma_1,\delta_0)=\<a,\delta_0\>$, where $a \in Z^1(W_\R,\hat S)$ is the $1$-cocycle that makes the above diagram commute, the pairing is the Langlands pairing, and $\delta_0 \in S(\R)$ is the image of $\gamma_0 \in S^H(\R)$ to $S(\R)$ under the chosen fixed admissible isomorphism. The claim now follows from the above commutative diagram and the fact that $\tau$ and $\tau^H$ are the characters with parameters $\varphi_S$ and $\varphi_S^H$.

Next we consider
\[ \frac{\Delta_{II}(\gamma_0,\delta)}{d'_H(\gamma_0)}.\]
By definition,
\[ \Delta_{II}(\gamma_0,\delta) = \frac{\Delta_{II}^G(\gamma_0,\delta)}{\Delta_{II}^H(\gamma_0,\delta)}.\]
With the chosen $a$-data and $\chi$-data we have
\[ \Delta_{II}^H(\gamma_0)=\prod\limits_{\substack{\alpha \in R(S^H,H)\\ \<\alpha,\rho^H\>>0}}\tx{arg}\Big(\frac{\alpha(\gamma_0)-1}{-i}\Big) = i^{\#R(S^H,H)/2}\cdot\prod\limits_{\substack{\alpha \in R(S^H,H)\\ \<\alpha,\rho^H\>>0}}\tx{arg}(\alpha(\gamma_0)-1). \]
On the other hand,
\[ (\alpha(\gamma_0)-1)(1-\alpha(\gamma_0)^{-1})=\alpha(\gamma_0)+\alpha(\gamma_0)^{-1}-2=2(\tx{Re}(\alpha(\gamma_0)-1) < 0.\]
Hence
\[ \Delta_{II}^H(\gamma_0,\delta)d_H'(\gamma_0) = (-i)^{\#R(S^H,H)/2}. \]
In the same way one shows
\[ \Delta_{II}^G(\gamma_0,\delta)d_{G_0}'(\delta_0)) = (-i)^{\#R(S,G_0)/2}. \]
and we conclude
\[ \frac{\Delta_{II}(\gamma_0,\delta)}{d_H'(\gamma_0)} = \frac{i^{\#R(S^H,H)/2-\#R(S,G_0)/2}}{d_{G_0}'(\delta_0)}.\]
With this \eqref{eq:rhs2} becomes
\[ (-1)^{q(H)}i^{\#R(S^H,H)/2-\#R(S,G_0)/2}\epsilon \sum_{\delta_0 \in S_\mf{w}(\R)}\Delta_I(\gamma_0,\delta)^{-1} \<s_\mf{w},\tx{inv}(\delta_0,\delta)\> \cdot\frac{\tau_\mf{w}(\delta_0)}{d'_{G_0}(\delta_0)}. \]
The terms $\epsilon$ and $\Delta_I(\gamma_0,\delta)$ depend on the choice of a pinning $\mc{P}$ and unitary character $\Lambda : \R \to \C^\times$. These choices are to be made so that $\mf{w}=\mf{w}_{\mc{P},\Lambda}$. We are free to take $\Lambda(x)=e^{ix}$ and then $\mc{P}$ is determined by $\mf{w}$. 

We have
\[ \Delta_{I}(\gamma_0,\delta)^{-1} = \<s,\lambda\>^{-1} \]
where $\lambda \in H^1(\Gamma,S)$ is the splitting invariant of $S$ relative to the chosen $a$-data and $\mc{P}$. Using Lemma \ref{lem:epsilon} we simplify \eqref{eq:rhs2} to
\[(-1)^{q(G_0)}\<s,\lambda\>^{-1}\sum_{\delta_0}\<s,\tx{inv}(\delta_0,\delta)\>\cdot\frac{\tau_\mf{w}(\delta_0)}{d_{G_0}'(\delta_0)}. \]

The following lemma completes the proof.

\begin{lem} \label{lem:gen}
The splitting invariant $\lambda$ of $S$ computed in terms of $(-\rho_\mf{w})$-based $a$-data and the pinning $\mc{P}$ is trivial.
\end{lem}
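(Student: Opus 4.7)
The plan is to show that the splitting invariant $\lambda$, which a priori is a class in $H^1(\Gamma,S)$, admits an explicit trivialization constructed from the genericity of $\pi_{j_\mf{w}}$. This is the archimedean analog of \cite[Lemma 6.2.2]{KalRSP} (and of the argument in \cite[\S4.4]{FKS}), and the argument parallels the computation already carried out in Proposition \ref{pro:magic} in the setting of covers.

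First I would recall the construction of $\lambda$ following \cite[\S2.3]{LS87}. One chooses a Borel $\C$-subgroup $B \supset S$, picks an element $h \in G_0(\C)$ with $\tx{Ad}(h)(T_0,B_0) = (S,B)$ (where $(T_0,B_0) \subset \mc{P}$), and decomposes $h^{-1}\sigma(h) \in N(T_0,G_0)(\C)$ as a product of a Tits-section lift of its Weyl component and an element of $T_0(\C)$; the $a$-data then enter as a correction producing a cocycle in $Z^1(\Gamma,S)$. The freedom in the choice of $h$ corresponds to modification by coboundaries. I would take $B$ to be the Borel determined by the positive system $\{\alpha \in R(S,G_0) : \langle \alpha^\vee, d\tau_\mf{w}\rangle > 0\}$ associated to the Harish-Chandra parameter.

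Next I would invoke Proposition \ref{p:whittaker}: since $\pi_{j_\mf{w}}$ is the $\mf{w}$-generic member of $\Pi_\varphi((G_0,1,1))$, the $G_0(\R)$-orbit $\O_{\pi_{j_\mf{w}}}'$ of $d\tau_\mf{w}'$ meets the Kostant section $\K(iX_\kappa)'$, where $X_\kappa \in \mf{g}'(\R)$ is attached to $\mc{P}$ and the standard character $\Lambda(x) = e^{ix}$ as in the proof of Proposition \ref{pro:magic}. After further $G_0(\R)$-conjugation of $j_\mf{w}$, which affects neither the $G_0(\R)$-class of $\pi_{j_\mf{w}}$ nor the class of $\lambda$ in $H^1(\Gamma,S)$, we may assume that $-irH_\mf{w} \in \mf{s}(\R)$ lies in $\K(iX_\kappa)$ for all sufficiently small $r > 0$, and hence that $\delta_0 = \exp(-irH_\mf{w}) \in S(\R)$ sits inside the image of the Kostant section under the exponential map.

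Finally I would show that with this placement of $S$ the cocycle representing $\lambda$ is a coboundary. Following the argument in the proof of Proposition \ref{pro:magic}, one checks that $\tx{inv}(-irH_\mf{w},\mc{P}) = 0$ in the relevant cohomology by exhibiting the lift $h \in G_0(\C)$ inside the centralizer of the Kostant-section element; the pinnings $\mc{P}$ and $\mc{P}_\kappa$ are $G(\R)$-conjugate because the positive rational scalars $r_\alpha = \tfrac{r}{2}\kappa(\alpha,\alpha)$ lift to $T_{0,\tx{sc}}(\R)^\circ$. The specific choice $a_\alpha = i$ for $\alpha < 0$ (equivalently $a_\alpha = -i$ for $\alpha > 0$) is exactly calibrated so that the $a$-data correction in the LS87 cocycle reproduces the ``$\rho$-shift'' built into the cover $S(\R)_{G/H}$ in \S\ref{sub:rhs_cover}. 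With that matching, the non-covering splitting invariant $\lambda$ coincides with the covering invariant $\tx{inv}(\dot\delta_0,\mc{P})$ already shown to vanish in Proposition \ref{pro:magic}.

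The main obstacle will be the explicit bookkeeping in the last step: carefully verifying that the $(-\rho_\mf{w})$-based $a$-data cocycle of \cite[\S2.3]{LS87} and the cover-based invariant of \cite[\S4.1]{KalHDC} agree as cohomology classes under the chosen normalization, so that the vanishing established in Proposition \ref{pro:magic} transfers. Once this comparison is pinned down, triviality of $\lambda$ follows immediately.
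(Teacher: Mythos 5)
Your geometric setup matches the paper's: both use Proposition \ref{p:whittaker} to place $H_\mf{w}$ (up to $G_0(\R)$-conjugacy) in the Kostant section $\K(iX_\kappa)'$ attached to the pinning $\mc{P}$ and $\Lambda(x)=e^{ix}$, with $X_\kappa=\sum_\alpha \tfrac{\kappa(\alpha,\alpha)}{2}X_{-\alpha}$, and both handle the discrepancy between $\mc{P}$ and the rescaled pinning $\mc{P}_\kappa$ using the positivity and conjugation-invariance of $\kappa(\alpha,\alpha)$. But the crucial step — why this placement forces the Langlands--Shelstad splitting invariant to vanish — is where your argument has a genuine gap. The paper gets the vanishing directly from Kottwitz's theorem \cite[Theorem 5.1]{Kot99}, which asserts the triviality of the LS splitting invariant when it is computed with the $a$-data $d\alpha(-iH_\mf{w})$ furnished by an element conjugate into the Kostant section and with the pinning $\mc{P}_\kappa$; it then passes to the $(-\rho_\mf{w})$-based $a$-data and the pinning $\mc{P}$ using \cite[(2.3.2)]{LS87} (rescaling $a$-data by positive reals does not change the invariant) and \cite[Lemma 5.1]{KalGen}. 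You never invoke this (or any) direct computation of the LS cocycle; instead you propose to transfer the vanishing from the cover-based invariant $\tx{inv}(\dot\delta_0,\mc{P})$ of \cite[\S4.1]{KalHDC}, which was shown to vanish in Proposition \ref{pro:magic}.

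That transfer is exactly the problem. The claim that the $(-\rho_\mf{w})$-based $a$-data cocycle of \cite[\S2.3]{LS87} "reproduces the $\rho$-shift built into the cover" and hence that $\lambda$ coincides with the KalHDC invariant is not bookkeeping: it amounts to comparing the classical $\Delta_I$ (depending on $a$-data and a splitting, built via Tits-section decompositions) with the cover-based $\Delta_I$, a comparison that is neither proved in this paper nor cited in a usable form, and which for these specific $a$-data is essentially equivalent in content to Lemma \ref{lem:gen} itself. Note also that Proposition \ref{pro:magic} obtains its vanishing from \cite[Lemma 4.1.4]{KalHDC}, the cover analog of Kottwitz's theorem, so your route would in effect re-derive the classical statement from the cover statement through an unestablished bridge, whereas the classical statement has its own direct source in \cite{Kot99}. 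Likewise, "exhibiting the lift $h$ inside the centralizer of the Kostant-section element" does not by itself compute the LS cocycle: the triviality of the splitting invariant for tori meeting the Kostant section is a theorem with real content, not a consequence of choosing $h$ conveniently. To repair the proof, replace the deferred comparison by a direct appeal to \cite[Theorem 5.1]{Kot99} with the $a$-data $d\alpha(-iH_\mf{w})$, then adjust $a$-data and pinning as the paper does.
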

\begin{proof}
Let $(S_\mf{w},\tau_\mf{w})$ be the Harish-Chandra parameter of the generic member of $\Pi_\varphi((G_0,1,1))$, as in Remark \ref{rem:hcpar}, and write $d\tau_\mf{w}=d\tau_\mf{z}+d\tau' \in \mf{z}^* \oplus i\mf{s}'(\R)^*$, where $\mf{s}=\tx{Lie}(S_\mf{w})$, $\mf{s}'=\mf{s} \cap [\mf{g},\mf{g}]$. Let $H_\mf{w} \in i\mf{s}'(\R)$ be the element determined by $\kappa(H_\mf{w},-)=\<d\tau_\mf{w},-\>$ with respect to the Killing form $\kappa$ on $\mf{g}'=[\g,\g]$. Write $\mc{P}=(T_0,B_0,\{X_\alpha\})$.

If we define $X \in \mf{n}(\R)^*$ by $X(Y) = \sum_\alpha [Y_\alpha,X_{-\alpha}]/H_\alpha$, then Lemma \ref{lem:w1}(3) shows that $\mf{w}=\mf{w}_{iX}$. Proposition \ref{p:whittaker} implies that $d\tau'$ is $G(\R)$-conjugate to an element of $\K(iX)'$, and hence $H_\mf{w}$ is $G(\R)$-conjugate to an element of $\K(iX_\kappa)'$, where $X_\kappa \in \mf{g}'(\R)$ is the element such that $\kappa(X_\kappa,Y)=X(Y)$ for all $Y \in \mf{g}$. We can compute $X_\kappa$ as the element of $i\mf{\bar n}(\R)$ given by
\[ X_\kappa = \sum_\alpha \frac{\kappa(\alpha,\alpha)}{2} X_{-\alpha}. \]
Therefore, $-iH_\mf{w}$ is $G(\R)$-conjugate to the Kostant section $K(X_\kappa)$. According to \cite[Theorem 5.1]{Kot99} the splitting invariant computed in terms of the $a$-data $d\alpha(-iH_\mf{w})$ and the pinning $\mc{P}_\kappa=(T_0,B_0,\{r_\alpha X_\alpha\})$, where $r_\alpha = \tfrac{r}{2}\kappa(\alpha,\alpha)$, is trivial.

For every $\alpha>0$ the complex number $d\alpha(-iH_\mf{w})$ is a positive real multiple of $-i$. At the same time it is well-known that $\kappa(\alpha,\alpha)$ is a positive rational number, see \cite[\S8.5]{Humphreys80}, and that the assignment $\alpha \to r_\alpha$ is invariant under complex conjugation. Applying \cite[Lemma 5.1]{KalGen} we conclude the triviality of the splitting invariant computed in terms of the pinning $\mc{P}$ and in terms of $a$-data obtained from the $-\rho_\mf{w}$-based $a$-data by rescaling each element by a \emph{positive} real number. Such a rescaling doesn't change the splitting invariant, see \cite[(2.3.2)]{LS87}. 
\end{proof}

\bibliographystyle{amsalpha}
%\bibliography{/Users/kaletha/Work/TexMain/bibliography.bib}
\bibliography{bibliography}

\end{document}